\titleformat{\section}{\centering\normalfont\scshape}{\thesection.}{.5em}{#1}
\titleformat{\subsection}[runin]{\normalfont\itshape}{\textnormal{\thesubsection.}}{.5em}{#1.}
\titleformat{\subsubsection}[runin]{\normalfont\itshape}{\thesubsubsection.}{.5em}{#1.}
\titlespacing{\section}{0em}{1em}{0.5em}
\titlespacing{\subsection}{0em}{.5em}{0.5em}
\newcommand{\sB}{\mathscr B}
\newcommand{\Be}{\begin{equation}}
\newcommand{\Ee}{\end{equation}}
\newcommand{\Ba}[1]{\begin{array}{#1}}
\newcommand{\Ea}{\end{array}}
\newcommand{\Bea}{\begin{eqnarray}}
\newcommand{\Eea}{\end{eqnarray}}
\newcommand{\Beas}{\begin{eqnarray*}}
\newcommand{\Eeas}{\end{eqnarray*}}
\newcommand{\Benu}{\begin{enumerate}}
\newcommand{\Eenu}{\end{enumerate}}
\newcommand{\Bi}{\begin{itemize}}
\newcommand{\Ei}{\end{itemize}}
\def\a{\alpha}
\def\h{h}
\def\dyad{{\mathrm{dyad}}}
\def\intslash{\rlap{\kern  .32em $\mspace {.5mu}\backslash$ }\int}
\def\qsl{{\rlap{\kern  .32em $\mspace {.5mu}\backslash$ }\int_{Q_x}}}
\newcommand {\Span} {\operatorname{span}}
\def\emph#1{{\it #1 }}
\def\cf{{\it cf}}
\def\loc{{\text{\rm loc}}}
\def\dist{{\text{\it dist}}}
\def\supp{{\text{\rm supp}}}
\def\inn#1#2{\langle#1,#2\rangle}
\def\biginn#1#2{\big\langle#1,#2\big\rangle}
\def\lc{\lesssim}
\def\gc{\gtrsim}
\def\eps{\varepsilon}
\def\la{\lambda}             \def\La{\Lambda}
\def\om{\omega}
\def\fM{{\mathfrak {M}}}
\def\fX{{\mathfrak {X}}}
\def\fZ{{\mathfrak {Z}}}
\def\fa{{\mathfrak {a}}}
\def\fc{{\mathfrak {c}}}
\def\bbC{{\mathbb {C}}}
\def\bbD{{\mathbb {D}}}
\def\bbE{{\mathbb {E}}}
\def\bbN{{\mathbb {N}}}
\def\bbR{{\mathbb {R}}}
\def\bbZ{{\mathbb {Z}}}
\def\sA{{\mathscr {A}}}
\def\sH{{\mathscr {H}}}
\def\sHext{{\mathscr{H}^{\mathrm{ext}}}}
\def\cB{{\mathcal {B}}}
\def\cD{{\mathcal {D}}}
\def\cE{{\mathcal {E}}}
\def\cH{{\mathcal {H}}}
\def\cM{{\mathcal {M}}}
\def\cN{{\mathcal {N}}}
\def\cS{{\mathcal {S}}}
\def\cU{{\mathcal {U}}}
\def\cV{{\mathcal {V}}}
\def\cZ{{\mathcal {Z}}}
\def\be#1{\begin{equation}\label{ #1}}
\def\endeq{\end{equation}}
\def\endal{\end{align}}
\def\bas{\begin{align*}}
\def\eas{\end{align*}}
\def\bi{\begin{itemize}}
\def\ei{\end{itemize}}
\def\eps{\varepsilon}
\def\emph#1{{\it #1}}
\def\textbf#1{{\bf #1}}
\def\bbone{{\mathbbm 1}}
\theoremstyle{plain}
  \newtheorem{theorem}{Theorem}[section]
   \newtheorem{proposition}[theorem]{Proposition}
   \newtheorem{lemma}[theorem]{Lemma}
   \newtheorem{corollary}[theorem]{Corollary}
\theoremstyle{remark}
   \newtheorem{remark}[theorem]{Remark}
   \newtheorem*{Remark}{Remark}
   \newtheorem*{Remarks}{Remarks}
\theoremstyle{definition}
\numberwithin{equation}{section}
\newcommand {\SE} {{\mathbb E}}
\newcommand {\SN} {{\mathbb N}}
\newcommand {\SR} {{\mathbb R}}
\newcommand {\SZ} {{\mathbb Z}}
\newcommand {\e} {{\varepsilon}}
\newcommand{\al}{{\alpha}}
\newcommand{\dt}{{\delta}}
\newcommand{\Dt}{{\Delta}}
\newcommand {\mand} {{\quad\mbox{and}\quad}}
\renewcommand {\mid} {{\,\,\,\colon\,\,\,}}
\def\supp{\mathop{\rm supp}}
\def\dist{\mathop{\rm dist}}
\newcommand {\Proof} {\noindent{\bf P{\footnotesize\bf ROOF}: } \ }
\newcommand {\hdot}[1]{\langle{#1}\rangle}
\newcommand {\Hdot}[1]{\Bigl\langle{#1}\Bigr\rangle}
\def\lan#1#2{{\langle{#1},{#2}\rangle}}
\newcommand{\hdt}{h^{\dt}_{j,\mu}}
\renewcommand\th{{\widetilde{h}}}
\newcommand{\Bdyad}{{B^{s,\mathrm{dyad}}_{p,q}}}
\newcounter{reg}
\newcommand{\sline}{{\smallskip

\noindent}}
\def\Xint#1{\mathchoice
{\XXint\displaystyle\textstyle{#1}}%
{\XXint\textstyle\scriptstyle{#1}}%
{\XXint\scriptstyle\scriptscriptstyle{#1}}%
{\XXint\scriptscriptstyle\scriptscriptstyle{#1}}%
\!\int}
\def\XXint#1#2#3{{\setbox0=\hbox{$#1{#2#3}{\int}$ }
\vcenter{\hbox{$#2#3$ }}\kern-.6\wd0}}
\def\mint{\Xint-}
\newcommand{\faeven}{{\fa^{\mathrm{even}}}}
\newcommand{\faodd}{{\fa^{\mathrm{odd}}}}
\definecolor{ascol}{rgb}{0,0,1.} 
\definecolor{ggcol}{cmyk}{.74, 0, 1, .41} 
\definecolor{tucol}{rgb}{0.9,.5,0} 
\begin{document}

\title
[Haar frames and dyadic Besov-Sobolev spaces]
{Haar frame characterizations of Besov-Sobolev spaces and  optimal embeddings into their dyadic counterparts}

\author[G. Garrig\'os \ \ \ A. Seeger \ \ \ T. Ullrich] {Gustavo Garrig\'os   \ \ \ \   Andreas Seeger \ \ \ \ Tino Ullrich}

\address{Gustavo Garrig\'os\\ Department of Mathematics\\University of Murcia\\30100 Espinardo\\Murcia, Spain} \email{gustavo.garrigos@um.es}

\address{Andreas Seeger \\ Department of Mathematics \\ University of Wisconsin \\480 Lincoln Drive\\ Madison, WI,53706, USA} \email{seeger@math.wisc.edu}
\address{Tino Ullrich\\ Fakult\"at f\"ur Mathematik\\ Technische Universit\"at  Chemnitz\\09107 Chemnitz, Germany}
\email{tino.ullrich@mathematik.tu-chemnitz.de}
\begin{abstract}
We study the behavior of Haar coefficients in 
Besov and Triebel-Lizorkin spaces on $\SR$, for a parameter range 
in which the Haar system is not an unconditional basis.
First, we obtain a range of parameters, extending up to smoothness $s<1$, in which the spaces $F^s_{p,q}$ and $B^s_{p,q}$ 
are characterized in terms of doubly oversampled Haar coefficients (Haar frames). 
Secondly, in the case that $1/p<s<1$ and $f\in B^s_{p,q}$, we actually prove that the usual Haar coefficient norm, 
$\|\{2^j\langle f, h_{j,\mu}\rangle\}_{j,\mu}\|_{b^s_{p,q}}$
remains equivalent to $\|f\|_{B^s_{p,q}}$, i.e., the classical Besov space is a closed subset of its dyadic counterpart. 
At the endpoint case $s=1$ and $q=\infty$, we show that such an expression gives an equivalent norm for the Sobolev space
 $W^{1}_p(\SR)$, 
$1<p<\infty$, which is related to 
a classical result by Bo\v{c}karev. Finally, in several  endpoint cases  we give optimal inclusions between  $B^s_{p,q}$, $F^s_{p,q}$, and their dyadic counterparts.
\end{abstract}
\subjclass[2010]{46E35, 46B15, 42C40, 42B35}

\keywords{Haar system, Haar frames, Sobolev spaces, Besov and Triebel-Lizorkin spaces, dyadic versions of function spaces, wavelets, splines}


\maketitle


\section{Introduction and statement of main results}

In this paper we investigate the validity of norm characterizations for elements $f$ in Besov and Triebel-Lizorkin spaces, 
$B^s_{p,q}(\SR)$ and $F^s_{p,q}(\SR)$,
in terms of expressions involving their Haar coefficients 
or suitable variations thereof. The novelty in the current paper is that we  obtain results for a range of the parameters $(s,p,q)$ in which the Haar system is {not}
an unconditional basis of the above spaces (see 
Figures \ref{fig1} and \ref{fig_frame} below); this complements earlier work of the authors \cite{GaSeUl17_2,GaSeUl18,GaSeUl21_2,GaSeUl21, SeeUl17} 
where a complete description was given for the parameter range in which  the unconditional or Schauder basis property holds in each such space. 

We denote the (inhomogeneous) Haar system in $\SR$ by
\Be\label{HaarS}
\sH=\big\{h_{j,\mu}\mid j\geq-1, \,\mu\in\SZ\big\},
\Ee
where we let $h(x)= \bbone_{[0,\frac 12)}(x)-\bbone_{[\frac 12, 1)}(x)$ and
\Be
\label{hjnu}
h_{j,\mu} (x):= h(2^jx-\mu),\quad\text {if $\mu\in\SZ$, $j=0,1,2,\dots$}
\Ee 
Note that $h_{j,\mu}$ is supported in the closure of the dyadic interval
\[
I_{j,\mu}=\big[2^{-j}\mu, 2^{-j}(\mu+1)\big).
\]
In the case $j=-1$, we just let 
\[
h_{-1,\mu}:=\bbone_{I_{-1,\mu}}=\bbone_{[\mu,\mu+1)},\quad \mu\in\SZ.
\]

Let $F^s_{p,q}(\SR)$ and $B^s_{p,q}(\SR)$ denote the usual Triebel-Lizorkin and Besov spaces \cite{Tr83}.
It has been shown in \cite{Tr10,SeeUl17,SeeUl17_2} that $\sH$ is an unconditional basis of $F^s_{p,q}(\bbR)$ if and only if 
$s$ belongs to the range 
\Be\label{eq:uncond-range} \max\Big\{1/p-1,1/q-1\Big\} < s < \min\Big\{1/p,1/q, 1\Big\};\Ee 
moreover, in the range \eqref{eq:uncond-range}, $F^s_{p,q}$ is characterized by
the property
\Be\label{eq:TLcharHaar} 
\Big\|\Big(\sum_{j=-1}^\infty 2^{jsq}\Big[\sum_{\mu\in \bbZ} 2^j |\inn{f}{h_{j,\mu}}|
\bbone_{I_{j,\mu}} \Big]^q\Big)^{1/q}\Big\|_p<\infty,
\Ee
and this expression defines an equivalent quasinorm in $F^s_{p,q}$.

\begin{figure}[h]
 \centering
\subfigure
{\begin{tikzpicture}[scale=2]

\node [right] at (0.75,-0.5) {{\footnotesize unconditional in $F^s_{p,q}$}};

\draw[->] (-0.1,0.0) -- (2.1,0.0) node[right] {$\frac{1}{p}$};
\draw[->] (0.0,-0.0) -- (0.0,1.1) node[above] {$s$};
\draw (0.0,-1.1) -- (0.0,-1.0)  ;

\draw (1.0,0.03) -- (1.0,-0.03) node [below] {$1$};
\draw (2.0,0.03) -- (2.0,-0.03) node [below] {$2$};
\draw (0.03,1.0) -- (-0.03,1.00);
\node [left] at (0,0.9) {$1$};
\draw (0.03,.5) -- (-0.03,.5) node [left] {$\tfrac{1}{q}$};
\draw (0.03,-.5) -- (-0.03,-.5) node [left] {$\tfrac{1}{q}${\small{$-1$}}};
\draw (0.03,-1.0) -- (-0.03,-1.00) node [left] {$-1$};

\draw[dotted] (1.0,0.0) -- (1.0,1.0);
\draw[dotted] (0,1.0) -- (1.0,1.0);
\draw[dotted] (2,0.0) -- (2,1.0);

\path[fill=green!70, opacity=0.4] (0.0,0.0) -- (.5,.5)-- (1.5,0.5) -- (1,0)--(.5,-.5) -- (0,-0.5)--(0,0);
\draw[dotted] (0,0.5)--(1.5,0.5);
\draw[dotted] (0,-0.5)--(0.5,-0.5);

\draw[dashed] (0.0,-1.0) -- (0.0,0.0) -- (1.0,1.0) -- (2,1.0) -- (1.0,0.0) --
(0.0,-1.0);

\end{tikzpicture}
}
\subfigure
{
\begin{tikzpicture}[scale=2]

\node [right] at (0.75,-0.5) {{\footnotesize Schauder basis in $F^s_{p,q}$}};

\draw[->] (-0.1,0.0) -- (2.1,0.0) node[right] {$\frac{1}{p}$};
\draw[->] (0.0,-0.0) -- (0.0,1.1) node[above] {$s$};
\draw (0.0,-1.1) -- (0.0,-1.0)  ;

\draw (1.0,0.03) -- (1.0,-0.03) node [below] {$1$};
\draw (2.0,0.03) -- (2.0,-0.03) node [below] {$2$};
\draw (0.03,1.0) -- (-0.03,1.00);
\node [left] at (0,0.9) {$1$};
\draw (0.03,-1.0) -- (-0.03,-1.00) node [left] {$-1$};

\draw[dotted] (1.0,0.0) -- (1.0,1.0);
\draw[dotted] (0,1.0) -- (1.0,1.0);
\draw[dotted] (2,0.0) -- (2,1.0);

\draw[dashed] (1,0) -- (0.0,-1.0)--(0.0,0.0) -- (1,1)--(2,1);


\draw[white, fill=red!70, opacity=0.4] (0,0) -- (1,1)
-- (2,1.0) -- (1.0,0) --(0,-1)--(0,0);

\draw[very thick,red] (1.98,0.98) -- (1.0,0.0);
\fill[red] (1,0) circle (1pt);
\fill[white] (2,1) circle (1pt);
\draw (2,1) circle (1pt);


\end{tikzpicture}

}
\caption{Parameter domain  for  $\sH$ to be an unconditional basis (left figure) 
or a Schauder basis (right figure) in $F^s_{p,q}(\SR)$. 
}\label{fig1}
\end{figure}
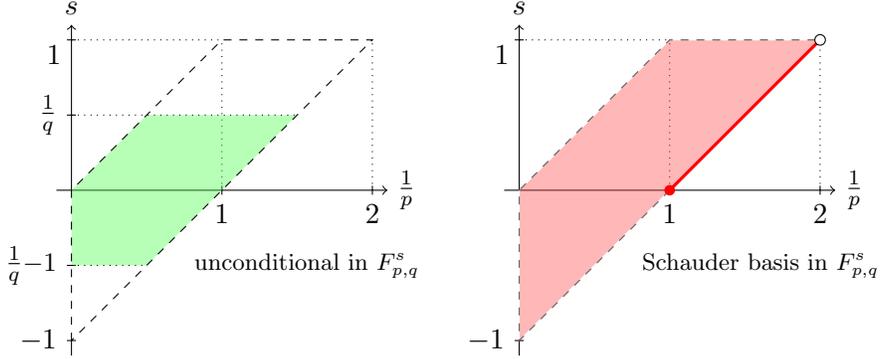

It was also shown   in \cite{GaSeUl18} that $\sH$  is a Schauder basis  of $F^s_{p,q}(\SR)$ 
(with respect to natural enumerations)
 in the larger range
\Be
1/p-1 < s < \min\Big\{1/p, 1\Big\},\quad \mbox{(for all $0<q<\infty$)}.
\label{cond_range}
\Ee
 At the endpoints, the Schauder basis property holds for $F^s_{p,q}$ if and only if 
\Be
s=1/p-1\mand 1/2<p\leq 1
\label{cond_range_end}
\Ee
also for all $0<q<\infty$; see \cite{GaSeUl21_2}.
These regions are depicted in Figure  \ref{fig1}. 

For the spaces $B^s_{p,q}(\SR)$ there is no such distinction, and $\sH$ is an unconditional basis under \eqref{cond_range} (and also a Schauder basis under \eqref{cond_range_end}, if $p=q$); see \cite{GaSeUl21}. Moreover, in the range \eqref{cond_range}, 
$B^s_{p,q}$ is characterized by
the property
\Be\label{eq:BcharHaar} 
\Big(\sum_{j=-1}^\infty 2^{j(s-\frac 1p)q}\Big(\sum_{\mu\in \bbZ} |2^j \inn{f}{h_{j,\mu}}|^p
\Big)^{q/p} \Big)^{1/q}<\infty,
\Ee
and this expression in an equivalent quasinorm in $B^s_{p,q}$.


\subsection{The oversampled Haar systems - Haar frames} 
A main feature of this paper is to show that the above characterizations in terms of Haar coefficients can be extended to the larger regions depicted in Figure \ref{fig_frame} below, provided that  we doubly oversample with Haar type coefficients obtained by a shift.



More concretely, 
we now define 
\Be
\label{thjnu}
\widetilde h_{j,\nu} (x):= 
h(2^jx-\tfrac{\nu}2) 
\quad\text { if $j=0,1,2,\dots$ and $\nu\in\SZ$.}
\Ee
Observe that for even $\nu=2\mu $ we recover the original Haar functions,  $\widetilde h_{j, 2\mu}=h_{j,\mu}$ supported in $I_{j,\mu}$,  but for odd $\nu$ we obtain a \emph{shifted Haar function}
\[
\widetilde h_{j,2\mu+1}=h_{j,\mu} (\cdot - 2^{-j-1}),
\]
which is supported in the 
interval
 $[2^{-j}(\mu+1/2), 2^{-j}(\mu+3/2))=I_{j,\mu}+2^{-j-1}$.
As before, for $j=-1$ we just let 
\[
\widetilde h_{-1,\nu}:=h_{-1,\nu}=\bbone_{[\nu, \nu+1)}.
\]
Then  the \emph{extended Haar system} is defined  by
\Be 
\sHext=  \Big\{\widetilde h_{j,\nu}\mid j\geq-1,\;\nu\in\SZ\Big\}.
\Ee

\

In what follows we will need to work with appropriate spaces of 
distributions on which the (generalized)  Haar coefficients are well defined.
Given a bounded interval $I\subset \SR$, consider the linear functional 
(distribution) 
\Be\label{defoflambdah} 
\la_I(f)=\int_I f(x)dx,
\Ee
which applied to $f\in L_1^\loc$  satisfies the trivial inequality 
\Be\label{eq:trivialL1} |\la_I(f)| \le \int_{I}|f(x)|dx. \Ee

Below, we shall also deal with distributions $f$, associated with certain negative smoothness parameters, which may not belong to $L_1^\loc$. 
To handle these we choose as a  reference space the set of distributions 
\Be
\label{super-space}
\mathscr B:= B^{-1}_{\infty,1}(\bbR).
\Ee 
By standard embeddings, see e.g. \cite[2.7.1]{Tr83}, we have
\Be
B^s_{p,q}(\SR)\hookrightarrow\sB,\quad \mbox{if $s>\frac1p-1$, or $s=\frac1p-1$ and $0<q\leq1$},
\label{BsB}
\Ee
and 
\Be
F^s_{p,q}(\SR)\hookrightarrow\sB,\quad \mbox{if $s>\frac1p-1$, or $s=\frac1p-1$ and $0<p\leq1$}.
\label{FsB}
\Ee
In particular, all the spaces that  are used in Theorems  \ref{mainT-intro}-\ref{thm:neg}  below are embedded into $\sB$.


\begin{proposition}\label{prop:test-funct} \,
For  a bounded interval $I\subset\SR$ consider  the distribution $\la_I$  in \eqref{defoflambdah}. Then
 
(i)  $\la_I$  extends  to a bounded linear functional on $\sB$, with operator norm $O(1+|I|)$. 

(ii) For every  $h\in \sHext$ the linear functional $f\mapsto \inn{f}{h} $ is bounded on $\sB$ with uniformly bounded operator norm.

(iii) If $f\in\sB$ and $\inn{f}{h}=0$, for all $h\in\sH$, then $f=0$.
\end{proposition}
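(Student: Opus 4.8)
The plan is to realize all three functionals as pairings against characteristic functions of intervals and to run everything through the standard duality between $\sB=B^{-1}_{\infty,1}(\SR)$ and $B^1_{1,\infty}(\SR)$. Fix an even, real Littlewood--Paley decomposition $\{\phi_k\}_{k\ge0}$ and set $\widetilde\phi_k=\phi_{k-1}+\phi_k+\phi_{k+1}$ (with $\phi_{-1}:=0$). Then for $f\in\sB$ and $g\in B^1_{1,\infty}$ one has the absolutely convergent representation $\langle f,g\rangle=\sum_{k\ge0}\int(\phi_k*f)(\widetilde\phi_k*g)$, which extends $\int fg$ and satisfies
\[
|\langle f,g\rangle|\le\Bigl(\sum_{k\ge0}2^{-k}\|\phi_k*f\|_\infty\Bigr)\sup_{k\ge0}\bigl(2^k\|\widetilde\phi_k*g\|_1\bigr)\lesssim\|f\|_\sB\,\|g\|_{B^1_{1,\infty}}.
\]
I would prove (i) by checking that $\|\bbone_I\|_{B^1_{1,\infty}}\lesssim 1+|I|$: the term $k=0$ contributes at most $\|\phi_0\|_1|I|$, while for $k\ge1$, since $\int\phi_k=0$ one has $\phi_k*\bbone_I=\Phi_k(\cdot-a)-\Phi_k(\cdot-b)$ for $I=[a,b)$, with $\Phi_k(u)=\int_{-\infty}^u\phi_k$ satisfying $\|\Phi_k\|_1\lesssim2^{-k}$ by the mean-zero property and rapid decay, so $2^k\|\phi_k*\bbone_I\|_1\lesssim1$. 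Part (ii) then follows at once: for $j\ge0$ the function $\widetilde h_{j,\nu}$ equals $\bbone_{I_1}-\bbone_{I_2}$ with $I_1,I_2$ disjoint intervals of length $2^{-j-1}\le1$, and $\widetilde h_{-1,\nu}=\bbone_{[\nu,\nu+1)}$, so (i) applied with $|I_i|\le1$ gives a uniform bound on $|\langle f,\widetilde h_{j,\nu}\rangle|$.

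For (iii) the first step would be to upgrade the hypothesis to the vanishing of all dyadic averages of $f$. Since $h_{-1,\mu}=\bbone_{I_{0,\mu}}$, we get $\la_{I_{0,\mu}}(f)=0$ for every $\mu$. For $j\ge0$ one has $h_{j,\mu}=\bbone_{I_{j+1,2\mu}}-\bbone_{I_{j+1,2\mu+1}}$ and $\bbone_{I_{j,\mu}}=\bbone_{I_{j+1,2\mu}}+\bbone_{I_{j+1,2\mu+1}}$; pairing with $f$ and using linearity in the second argument yields $\la_{I_{j+1,2\mu}}(f)=\la_{I_{j+1,2\mu+1}}(f)=\tfrac12\la_{I_{j,\mu}}(f)$. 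An induction on $j$ gives $\la_I(f)=0$ for every dyadic interval $I$ of length $\le1$, and then, decomposing $[a,b)$ into such intervals, $\la_{[a,b)}(f)=0$ for all dyadic rationals $a<b$.

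The remaining point — that this forces $f=0$ — is the only subtle one, because $f$ need not be locally integrable; the idea is to pass to a primitive which, thanks to the choice of $\sB$, is a genuine continuous function. I would set $\psi_k\in\mathcal S$ by $\widehat{\psi_k}=\widehat{\phi_k}/(i\xi)$ for $k\ge1$ (legitimate since $\widehat{\phi_k}$ is supported away from the origin), so that $\psi_k'=\phi_k$ and $\|\psi_k\|_1\lesssim2^{-k}$, and put $G_k=\psi_k*f$ for $k\ge1$, $G_0(x)=\int_0^x(\phi_0*f)$. Then $\|G_k\|_\infty\lesssim2^{-k}\|\widetilde\phi_k*f\|_\infty$ for $k\ge1$, so $\sum_{k\ge0}G_k$ converges locally uniformly to a continuous function $F$ with $F'=\sum_k\phi_k*f=f$ in $\mathcal S'$. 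Using the Littlewood--Paley form of the pairing and the fundamental theorem of calculus for each $G_k$,
\[
\la_{[a,b)}(f)=\langle f,\bbone_{[a,b)}\rangle=\sum_{k\ge0}\int_a^b\phi_k*f=\sum_{k\ge0}\bigl(G_k(b)-G_k(a)\bigr)=F(b)-F(a).
\]
Hence $F$ is constant on the dyadic rationals; being continuous, it is constant on $\SR$, so $f=F'=0$ in $\mathcal S'$ and a fortiori $f=0$ in $\sB$.

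The main obstacle is precisely this last step: converting "$\la_I(f)=0$ for all dyadic $I$" into "$f=0$" for a distribution of negative smoothness. Everything rests on the primitive $F$ being an honest continuous function — which is the embedding $B^0_{\infty,1}\hookrightarrow C(\SR)$ implicit in the uniform convergence of $\sum_k G_k$, and is the reason for taking $\sB=B^{-1}_{\infty,1}$ rather than, say, $B^{-1}_{\infty,\infty}$ — together with the identification of $\la_{[a,b)}(f)$ with the increment $F(b)-F(a)$.
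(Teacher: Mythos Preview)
Your argument is correct. Parts (i) and (ii) are essentially the paper's proof in dual form: the paper writes $f=\sum_k L_k f_k$ with compactly supported local means and estimates $\|L_k\bbone_I\|_1\lesssim\min\{|I|,2^{-k}\}$ directly, which is exactly your computation that $\|\bbone_I\|_{B^1_{1,\infty}}\lesssim 1+|I|$ read from the other side of the pairing.

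Part (iii) is where you genuinely diverge. The paper proceeds via the dyadic averaging operators $\bbE_N$: it quotes uniform bounds $\bbE_N:\sB\to B^{-1}_{\infty,\infty}$ from earlier work of the authors, then proves a density-type lemma showing $\|\bbE_N f-f\|_{B^{-1}_{\infty,\infty}}\to0$ for every $f\in\sB$; since vanishing Haar coefficients force $\bbE_N f\equiv0$, the conclusion follows. Your route is more self-contained and conceptually transparent: you lift $f\in B^{-1}_{\infty,1}$ to a primitive $F\in B^0_{\infty,1}\hookrightarrow C(\SR)$, identify $\la_{[a,b)}(f)=F(b)-F(a)$, and kill $f$ by showing $F$ is constant on the dense set of dyadic rationals. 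This makes the role of the specific endpoint choice $\sB=B^{-1}_{\infty,1}$ (rather than $B^{-1}_{\infty,\infty}$) completely explicit --- it is precisely what puts the primitive into $C(\SR)$ --- whereas in the paper's argument that choice enters more indirectly through the approximation lemma. The paper's approach, on the other hand, yields along the way the quantitative convergence $\bbE_N f\to f$ in $B^{-1}_{\infty,\infty}$, which is reused elsewhere (e.g.\ in Step~2 of the proof of Theorem~\ref{W1-char-intro}).
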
 

\begin{Remark}  Clearly,  using \eqref{eq:trivialL1} one can also replace in (i) the space $\sB$ with $\sB+L_1^\loc$. We note that $L_1$ is not embedded into $\sB$, see Remark \ref{R_ex3} below.
\end{Remark}

In the rest of the paper, when $f\in \sB $,  we use the following notation, combining the standard Haar coefficients with the coefficients obtained from the shifted Haar functions:
\Be\label{ext-Haar-coeff} 
\fc_{j,\mu}(f)  =2^j  |\inn{f}{\widetilde h_{j,2\mu}} | + 2^j|\inn{f}{\widetilde h_{j,2\mu+1}} |\, ,
\Ee
when $j=0,1,\dots$, and 
\[
\fc_{-1,\mu}(f) =\inn{f}{h_{-1,\mu}}=\inn{f}{\bbone_{[\mu,\mu+1)}}.
\] 

Our first main result provides a characterization where in \eqref{eq:TLcharHaar} the Haar coefficients $2^j\inn{f}{h_{j,\mu}}$ are replaced with the $\fc_{j,\mu}(f)$.  This covers as well the quasi-Banach range of parameters; 
see Figure \ref{fig_frame} below.

\begin{theorem}\label{mainT-intro} Let $1/2<p<\infty$, $1/2 < q \leq \infty$ and  
\begin{equation}\label{params}
	\max\{1/p-1,1/q-1\} < s < 1\,.
\end{equation}
Then $F^s_{p,q}(\bbR)$ is the collection of all  $f\in \sB$ such that 
\begin{equation}\label{f0}
	\Big\|\Big(\sum\limits_{j=-1}^{\infty} 2^{jsq}\Big|\sum\limits_{\mu\in \bbZ} \fc_{j,\mu}(f)
	\bbone_{I_{j,\mu}}\Big|^q \Big)^{1/q}\Big\|_p < \infty.
\end{equation}
Moreover, the latter quantity represents an equivalent quasi-norm in $F^s_{p,q}(\bbR)$.
\end{theorem}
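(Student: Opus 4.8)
The plan is to prove a two-sided equivalence between the quantity in \eqref{f0} and $\|f\|_{F^s_{p,q}}$. Two reductions come first: the level $j=-1$ contributes only the low-frequency piece $\big\|\sum_\mu\inn{f}{\bbone_{[\mu,\mu+1)}}\bbone_{[\mu,\mu+1)}\big\|_p$, which is two-sidedly controlled by $L^p$-type estimates (using Proposition~\ref{prop:test-funct}(i) to interpret it when $s\le0$); and when $(s,p,q)$ lies in the unconditional range \eqref{eq:uncond-range} the assertion follows by comparing \eqref{f0} with \eqref{eq:TLcharHaar} and its verbatim analogue on the dyadic grid shifted by a half-interval. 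So the genuinely new case is $\min\{1/p,1/q,1\}\le s<1$, in which $0<s<1$. The mechanism behind the oversampling is the elementary identity, for $f\in\sB$ and $a_{j,\nu}:=2^{j}\int_{I_{j,\nu}}f$ (well defined by Proposition~\ref{prop:test-funct}(i)),
$$2^{j}\inn{f}{h_{j,\mu}}=\tfrac12\big(a_{j+1,2\mu}-a_{j+1,2\mu+1}\big),\qquad 2^{j}\inn{f}{\widetilde h_{j,2\mu+1}}=\tfrac12\big(a_{j+1,2\mu+1}-a_{j+1,2\mu+2}\big),$$
so that $\fc_{j,\mu}(f)=\tfrac12\big(|a_{j+1,2\mu}-a_{j+1,2\mu+1}|+|a_{j+1,2\mu+1}-a_{j+1,2\mu+2}|\big)$. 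Hence $\mathfrak d_jf:=\sum_\mu\fc_{j,\mu}(f)\bbone_{I_{j,\mu}}$ registers \emph{all} consecutive differences of the level-$(j+1)$ averages of $f$, whereas the plain Haar coefficients record only those at even positions — exactly the data that the oversampling restores.

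For the bound $\eqref{f0}\lesssim\|f\|_{F^s_{p,q}}$ one writes each average difference as a double integral over adjacent subintervals of a fixed dilate of $I_{j,\mu}$, obtaining for $x\in I_{j,\mu}$
$$\fc_{j,\mu}(f)\ \lesssim\ 2^{j}\int_{|h|\le 2^{-j+1}}M\big(|f(\cdot+h)-f(\cdot)|\big)(x)\,dh,$$
where $M$ is the Hardy--Littlewood maximal operator if $\min(p,q)>1$ and an $L^r$-maximal operator with $r<\min(p,q)$ otherwise (admissible since the right-hand side already carries an average in $h$). Taking the $\ell^q(2^{js})$--$L^p$ norm and using the vector-valued maximal inequality (or its quasi-Banach substitute) reduces matters to the classical characterization of $F^s_{p,q}$ by ball means of first-order differences, valid throughout \eqref{params} because $s<1$ and $s>\max\{1/p-1,1/q-1\}$.

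The reverse inequality is the heart of the matter. Assume \eqref{f0} is finite. Let $P_J$ be convolution with $\phi_J=2^{J}\phi(2^{J}\cdot)$ for a fixed smooth bump $\phi$ of integral $1$ (equivalently, the piecewise-linear interpolation projector at mesh $2^{-J}$); by Proposition~\ref{prop:test-funct} these operators act on $\sB$ with $P_Jf\to f$, so $f=P_{-1}f+\sum_{J\ge-1}(P_{J+1}f-P_Jf)$ in $\sB$. Each increment is smooth at scale $2^{-J}$; discretizing its defining integral over the intervals $I_{J+1,\nu}$ and summing by parts in $\nu$ converts the averages $a_{J+1,\nu}$ into the consecutive differences above, exhibiting $P_{J+1}f-P_Jf$ as a bounded-overlap sum $\sum_\mu\kappa_{J,\mu}A_{J,\mu}$ of atoms adapted to $I_{J,\mu}$ (of the smoothness required for $F^s_{p,q}$ with $0<s<1$, which both choices of $P_J$ supply), with $|\kappa_{J,\mu}|$ dominated by finitely many $\fc_{J',\mu'}(f)$ having $|J-J'|,|\mu-\mu'|=O(1)$. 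Since in this case $s>\max\{1/p-1,1/q-1,0\}=:\sigma_{p,q}$, no moment conditions on the atoms are needed, and the atomic decomposition theorem for $F^s_{p,q}$ gives $f\in F^s_{p,q}$ with $\|f\|_{F^s_{p,q}}\lesssim\|\{\kappa_{J,\mu}\}\|_{f^s_{p,q}}\lesssim\eqref{f0}$; that this series recovers $f$ itself follows from Proposition~\ref{prop:test-funct}(iii).

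The principal obstacle lies in this last step in the quasi-Banach range $p\le1$ or $q\le1$: one must realize each $\kappa_{J,\mu}$ as a finite combination of neighbouring $\fc_{J',\mu'}(f)$ and then verify that the bounded-overlap regrouping together with the $O(1)$ shift in scales does not enlarge the $f^s_{p,q}$ sequence norm — which replaces the triangle inequality by $p$- and $q$-convexity and Fefferman--Stein by maximal substitutes exploiting the averaging built into $\fc_{j,\mu}(f)$. The necessity direction meets the same difficulty in milder form.
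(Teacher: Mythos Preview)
Your forward bound via the first-difference characterization of $F^s_{p,q}$ is different from the paper's (which uses a Littlewood--Paley decomposition and Peetre maximal functions, see Proposition~\ref{prop1}), but it is plausible once you have reduced to $0<s<1$; note, however, that ball means of first differences do \emph{not} characterize $F^s_{p,q}$ when $s\le 0$, so the reduction via the unconditional range is essential for your argument and not merely a convenience.

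The reverse inequality is where the real gap lies. The paper does not attempt an atomic decomposition of $f$. Instead it uses the lifting $\|f\|_{F^s_{p,q}}\approx\|f\|_{F^{s-1}_{p,q}}+\|f'\|_{F^{s-1}_{p,q}}$, handles the first term via the standard Haar characterization at smoothness $s-1$ (Corollary~\ref{cor5.5}), and controls $\|f'\|_{F^{s-1}_{p,q}}$ via the Chui--Wang spline-wavelet characterization (Theorem~\ref{thm:Chui-Wang}) together with the identity $\langle f',\cN_{2;j+1,\nu}\rangle=-2^{j+1}\langle f,\widetilde h_{j,\nu}\rangle$ from Lemma~\ref{L_Nder}. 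That identity is precisely the mechanism converting oversampled Haar coefficients of $f$ into wavelet coefficients of $f'$, and it replaces the summation-by-parts step you sketch. Your step ``discretizing its defining integral over the intervals $I_{J+1,\nu}$ and summing by parts\ldots exhibiting $P_{J+1}f-P_Jf$ as a bounded-overlap sum $\sum_\mu\kappa_{J,\mu}A_{J,\mu}$'' is not justified: for smooth mollification the Riemann-sum error in replacing $\int f(y)\psi_J(x-y)\,dy$ by $\sum_\nu 2^{-(J+1)}a_{J+1,\nu}\psi_J(x-y_\nu)$ is of the same order as the main term (since $\psi_J$ varies at scale $2^{-J}$), so summation by parts alone does not isolate the consecutive differences $a_{J+1,\nu}-a_{J+1,\nu+1}$ from the uncontrolled residual; and for the ``piecewise-linear interpolation projector'' alternative you would first need to make sense of point evaluations or nodal values for $f\in\sB$, which is exactly what the Chui--Wang construction does. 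In short, the bridge from $\fc_{j,\mu}(f)$ back to $\|f\|_{F^s_{p,q}}$ needs a concrete wavelet/spline system with the right cancellation, and your outline does not supply one.
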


Using  terminology introduced by  Gr\"ochenig \cite{Gr91}, one may say that 
$ \sHext$ is a 
{\it (quasi-)Banach frame\footnote{In a Hilbert space $H$, 
a frame is a system of vectors $\{e_j\}\subset H$, which for some constants $A,B>0$ satisfies
$
	A\|f\|_H^2 \leq \sum\limits_j |\langle f,e_j\rangle|^2 \leq B\|f\|^2_H\
$, $\forall\;f\in H$.}}
for $F^s_{p,q}(\bbR)$.  
In signal processing language, this can be interpreted by saying that one may stably recover $f$ from the sampled information $\{\langle f,h\rangle: h\in \sHext\}$. 

We remark that the condition  $s>\frac 1q-1$ in \eqref{params} is necessary, in view of the examples 
in \cite{SeeUl17}; see Remark \ref{R_fF} below.
The analogous characterization for  Besov spaces is valid in a larger range:
\begin{theorem}\label{mainB-intro} Let $1/2<p\leq \infty$, $0 < q \leq \infty$ and 
$$
	1/p-1 < s < 1\,.
$$
Then $B^s_{p,q}(\bbR)$ is the collection of all functions $f\in \sB$ such that 
$$
	\Big(\sum\limits_{j=-1}^{\infty} 2^{j(s-1/p)q}\Big(\sum\limits_{\mu\in \bbZ}| \fc_{j,\mu}(f) |^p\Big)^{q/p}\Big)^{1/q} < \infty.
$$
Moreover, the latter quantity represents an equivalent quasi-norm in ${B^s_{p,q}(\SR)}$. 
\end{theorem}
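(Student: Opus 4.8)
The plan is, first, to rewrite the quantity in the theorem as an equivalent $b^s_{p,q}$-norm of first differences of dyadic block averages of $f$, and then to prove the two inclusions separately — the ``necessity'' by a Littlewood--Paley argument, and the ``sufficiency'' by a smooth reconstruction that exploits the redundancy of the extended system $\sHext$.

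\emph{Reduction.} For $f\in\sB$ and a bounded interval $I$ write $\bbE_I f:=|I|^{-1}\la_I(f)$, which is well defined by Proposition~\ref{prop:test-funct}(i), and set $\delta_{k,\nu}(f):=\bbE_{I_{k,\nu}}f-\bbE_{I_{k,\nu+1}}f$. A direct computation with $h=\bbone_{[0,1/2)}-\bbone_{[1/2,1)}$ gives, for $j\ge0$,
\[
2^j\inn{f}{\widetilde h_{j,\nu}}=\tfrac12\,\delta_{j+1,\nu}(f),\qquad\text{so}\qquad \sum_{\mu\in\bbZ}\fc_{j,\mu}(f)^p\ \approx\ \sum_{\nu\in\bbZ}|\delta_{j+1,\nu}(f)|^p .
\]
Reindexing $j+1\mapsto j$, one sees that $\big\|\{\fc_{j,\mu}(f)\}\big\|_{b^s_{p,q}}$ is equivalent to
\[
\Big(\sum_{j\ge1}2^{j(s-\frac1p)q}\Big(\sum_{\nu\in\bbZ}|\delta_{j,\nu}(f)|^p\Big)^{q/p}\Big)^{1/q}+\Big(\sum_{\mu\in\bbZ}|\bbE_{I_{-1,\mu}}f|^p\Big)^{1/p}
\]
(with the obvious modifications when $p=\infty$ or $q=\infty$). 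The essential structural point is that the first sum runs over \emph{all} the differences $\delta_{j,\nu}(f)$: those with $\nu$ even are multiples of ordinary Haar coefficients $\inn{f}{h_{j-1,\mu}}$, those with $\nu$ odd come from the shifted Haar functions; the oversampling is exactly what makes all consecutive block-average differences available. So it suffices to prove that the displayed expression is an equivalent quasi-norm on $B^s_{p,q}(\bbR)$ whenever $\tfrac1p-1<s<1$. (In this range $B^s_{p,q}\hookrightarrow\sB$ by \eqref{BsB}, so all terms are meaningful.)

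\emph{Necessity.} I would decompose $f=\sum_k\Delta_k f$ into Littlewood--Paley pieces and expand $\delta_{j,\nu}(\Delta_k f)$. For $k\le j$, the functional $\delta_{j,\nu}$ is a first difference of averages over intervals of length $2^{-j}$ and hence annihilates constants; this gives the cancellation gain $|\delta_{j,\nu}(\Delta_k f)|\ls 2^{-(j-k)}\|\Delta_k f\|_{L^\infty(\widetilde I_{j,\nu})}$, which combined with the Plancherel--P\'olya inequality $\sum_\nu\|\Delta_k f\|^p_{L^\infty(\widetilde I_{j,\nu})}\ls 2^j\|\Delta_k f\|_p^p$ yields a contribution at scale $j$ of size $\ls 2^{ks}\|\Delta_k f\|_p\cdot 2^{-(j-k)(1-s)}$. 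For $k>j$, one uses instead the cancellation of the averaging kernel against high frequencies, i.e. the decay $|\widehat{\bbone_{[0,L]}}(\xi)|\ls\min(L,|\xi|^{-1})$ with $L=2^{-j}$, to obtain $\big\|\{\delta_{j,\nu}(\Delta_k f)\}_\nu\big\|_{\ell^p}\ls 2^{j/p}\,2^{-(k-j)}\|\Delta_k f\|_p$, i.e. a contribution $\ls 2^{ks}\|\Delta_k f\|_p\cdot 2^{-(k-j)(1+s)}$. Both exponents are positive precisely because $s<1$ (when $k\le j$) and $s>-1$ (when $k>j$; note $s>\tfrac1p-1\ge-1$); summing over $j$ via Schur's lemma and then taking $\ell^q$-norms in $k$ gives $\big\|\{\fc_{j,\mu}(f)\}\big\|_{b^s_{p,q}}\ls\|f\|_{B^s_{p,q}}$. (This half already produces the corresponding upper bound for the plain Haar coefficients, which form a sub-collection of the $\delta_{j,\nu}(f)$; only the reconstruction fails once $s\ge 1/p$.)

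\emph{Sufficiency.} Suppose $f\in\sB$ with $\big\|\{\fc_{j,\mu}(f)\}\big\|_{b^s_{p,q}}<\infty$. If $s<\min\{\tfrac1p,1\}$ —automatic when $p\le1$— then $\{2^j\inn{f}{h_{j,\mu}}\}\in b^s_{p,q}$ (sub-collection), and the classical characterization \eqref{eq:BcharHaar} directly gives $f\in B^s_{p,q}$ with $\|f\|_{B^s_{p,q}}\ls\big\|\{\fc_{j,\mu}(f)\}\big\|_{b^s_{p,q}}$. There remains the new range $\tfrac1p\le s<1$ (which forces $p>1$); here the obstruction is that the $h_{j,\mu}$ themselves are not in $B^s_{p,q}$, so $f$ cannot be recovered in $B^s_{p,q}$ from its Haar expansion, and I would reconstruct with smooth building blocks instead. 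Fix $s'\in(\tfrac1p-1,\min\{\tfrac1p,1\})$ with $s'<s$; by the case just treated $f\in B^{s'}_{p,q}$, so $\bbE_I f$ is defined on it and, $\Lambda_j$ reproducing constants, a standard Jackson estimate gives $\Lambda_j f\to f$ in $\sB$, where $\Lambda_j f:=\sum_\mu(\bbE_{I_{j,\mu}}f)\,N_{j,\mu}$ is the quasi-interpolant of the dyadic block averages of $f$ built from the dilated, translated order-$2$ B-splines (hat functions) $N_{j,\mu}$ forming a partition of unity at scale $2^{-j}$. The $N_{j,\mu}$ are Lipschitz and compactly supported; using their refinement equation one writes $\Lambda_j f-\Lambda_{j-1}f=\sum_\nu c_{j,\nu}N_{j,\nu}$, and since $\Lambda_j$ and $\Lambda_{j-1}$ both reproduce constants each detail coefficient $c_{j,\nu}$ is a fixed finite linear combination of nearby differences $\delta_{j,\nu'}(f)$ — so $|c_{j,\nu}|\ls\sum_{|\nu'-\nu|\le C}|\delta_{j,\nu'}(f)|$ — while moreover $\int(\Lambda_j f-\Lambda_{j-1}f)=0$. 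Thus the telescoping series exhibits $f$ (as an $\sB$-limit) as a sum of Lipschitz atoms at dyadic scales, each carrying one vanishing moment, whose coefficient sequence is, by the Reduction step, dominated in $b^s_{p,q}$ by $\{\fc_{j,\mu}(f)\}$. The atomic synthesis estimate for $B^s_{p,q}(\bbR)$ with atoms of this type is valid in the full range $\tfrac1p-1<s<1$ — Lipschitz smoothness suffices because $s<1$, one vanishing moment suffices because $s>\tfrac1p-1$ — so the series also converges in $B^s_{p,q}$ with norm $\ls\big\|\{\fc_{j,\mu}(f)\}\big\|_{b^s_{p,q}}$; since $B^s_{p,q}\hookrightarrow\sB$, the two limits coincide, and hence $f\in B^s_{p,q}$ with the asserted bound.

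\emph{Main difficulty.} The necessity part is routine Littlewood--Paley bookkeeping (the ceiling $s<1$ entering only through the single cancellation of $\delta_{j,\nu}$), and much of it can be read off from the estimates underlying Theorem~\ref{mainT-intro}. The substance will be the sufficiency in the new range $\tfrac1p\le s<1$: constructing the hat-function quasi-interpolant and, above all, verifying that its detail coefficients $c_{j,\nu}$ are controlled by the \emph{oversampled} Haar coefficients $\fc_{j,\mu}(f)$ — precisely the step that breaks down for the plain Haar system and that dictates the use of the shift. It then remains to carry the argument through the quasi-Banach regime $0<q\le1$ (replacing triangle inequalities by $p$- and $q$-convexity) and, for $p=\infty$, through the distributional framework of Proposition~\ref{prop:test-funct}.
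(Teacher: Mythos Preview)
Your necessity argument is essentially the paper's Proposition~\ref{prop1}(ii): the paper also expands $f=\sum_k L_k f_k$ and estimates $2^j\inn{f_k}{L_k h^\delta_{j,\mu}}$ by support/size considerations (Lemma~\ref{L_41}), obtaining the same two geometric decay factors $2^{-(j-k)(1-s)}$ and $2^{-(k-j)(1+s-1/p)}$ that you describe; so this half coincides.

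For the sufficiency, however, you take a genuinely different route from the paper. The paper (Proposition~\ref{main1}) does \emph{not} reconstruct $f$ directly: it uses the lifting $\|f\|_{B^s_{p,q}}\approx\|f\|_{B^{s-1}_{p,q}}+\|f'\|_{B^{s-1}_{p,q}}$, bounds the first term via the ordinary Haar characterization at a lower smoothness $r<1/p$ (Corollary~\ref{cor5.5}), and bounds $\|f'\|_{B^{s-1}_{p,q}}$ by invoking the Chui--Wang spline-wavelet characterization of $B^{s-1}_{p,q}$ from \cite{DeUl20}, together with the identity $\inn{f'}{\cN_{2;j+1,\nu}}=-2^{j+1}\inn{f}{\widetilde h_{j,\nu}}$ (Lemma~\ref{L_Nder}) --- which is exactly the integration-by-parts fact underlying your Reduction step. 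Your proposal instead builds the hat-function quasi-interpolant $\Lambda_j f$ directly, telescopes, and appeals to atomic synthesis. This is the more elementary, self-contained argument acknowledged in the paper as an alternative in Remark~\ref{R_sche} (attributed to a referee, and going back to the spline-approximation literature \cite{Sche74,dVL}). The trade-off: the paper's route via $f'$ and Chui--Wang applies uniformly to the $F$-scale as well (Proposition~\ref{main1}(i)), whereas your direct spline reconstruction is tailored to the $B$-scale but avoids importing the external characterization theorem.

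One small inaccuracy: the individual hat functions $N_{j,\nu}$ have no vanishing moment, so your atoms do not ``each carry one vanishing moment'' as written. This is harmless in your actual range $1/p\le s<1$, $p>1$: there $\sigma_p=0$ and $s>0$, so the standard atomic synthesis for $B^s_{p,q}$ requires only Lipschitz regularity and no moment condition. (Alternatively, since $\Lambda_j f-\Lambda_{j-1}f$ lies in the fine spline space and has global mean zero, you could re-expand it in genuine spline wavelets with moments --- but that would essentially reproduce the paper's Chui--Wang step.) Also, your ``Fourier decay'' heuristic for $k>j$ will need the maximal-function formulation of Lemma~\ref{L_41}(a) rather than a multiplier argument once $p\le 1$.
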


\begin{figure}[h]
 \centering
\subfigure
{\begin{tikzpicture}[scale=2]

\node [right] at (0.75,-0.5) {{\footnotesize $F^s_{p,q}$-spaces}};

\draw[->] (-0.1,0.0) -- (2.1,0.0) node[right] {$\frac{1}{p}$};
\draw[->] (0.0,1.0) -- (0.0,1.1) node[above] {$s$};
\draw (0.0,-1.1) -- (0.0,-1)  ;

\draw (1.0,0.03) -- (1.0,-0.03) node [below] {$1$};
\draw (2.0,0.03) -- (2.0,-0.03) node [below] {$2$};
\draw (0.03,1.0) -- (-0.03,1.00);
\node [left] at (0,0.9) {$1$};
\draw (0.03,.5) -- (-0.03,.5) node [left] {$\tfrac{1}{q}$};
\draw (0.03,-.5) -- (-0.03,-.5) node [left] {$\tfrac{1}{q}${\small{$-1$}}};
\draw (0.03,-1.0) -- (-0.03,-1.00) node [left] {$-1$};

\draw[dotted] (0,0) -- (1.0,1.0);
\draw[dotted] (2,0.0) -- (2,1.0);

\path[fill=green!70, opacity=0.4] (0.0,0.0) -- (0,1)-- (2,1) -- (1,0)--(.5,-.5) -- (0,-0.5)--(0,0);
\draw[dotted] (0,0.5)--(1.5,0.5);
\draw[dotted] (0,-0.5)--(0.5,-0.5);

\draw[dashed] (0.0,-1.0)  -- (2,1.0) -- (0.0,1.0)--(0,-1);

\end{tikzpicture}
}
\subfigure
{
\begin{tikzpicture}[scale=2]

\node [right] at (0.75,-0.5) {{\footnotesize $B^s_{p,q}$-spaces}};

\draw[->] (-0.1,0.0) -- (2.1,0.0) node[right] {$\frac{1}{p}$};
\draw[->] (0.0,-0.0) -- (0.0,1.1) node[above] {$s$};
\draw (0.0,-1.1) -- (0.0,0)  ;

\draw[thick, red] (0.0,-0.97) -- (0.0,0.97);

\draw (1.0,0.03) -- (1.0,-0.03) node [below] {$1$};
\draw (2.0,0.03) -- (2.0,-0.03) node [below] {$2$};
\draw (0.03,1.0) -- (-0.03,1.00);
\node [left] at (0,0.9) {$1$};
\draw (0.03,-1.0) -- (-0.03,-1.00) node [left] {$-1$};

\draw[dotted] (0,0.0) -- (1.0,1.0);
\draw[dotted] (2,0.0) -- (2,1.0);

\draw[dashed, thick]  (0.0,-1.0)--(2,1)--(0,1);


\draw[white, fill=red!70, opacity=0.4] (0,0) -- (0,1)
-- (2,1.0) -- (1.0,0) --(0,-1)--(0,0);

\fill[white] (0,1) circle (1pt);
\draw (0,1) circle (0.6pt);

\fill[white] (0,-1) circle (1pt);
\draw (0,-1) circle (0.6pt);



\end{tikzpicture}

}
\caption{Parameter domain  for  $\sH^{\rm ext}$ to be a characterizing frame for $F^s_{p,q}(\SR)$ (left figure, Theorem \ref{mainT-intro}) and $B^s_{p,q}(\SR)$ (right figure, Theorem \ref{mainB-intro}). 
}\label{fig_frame}
\end{figure}
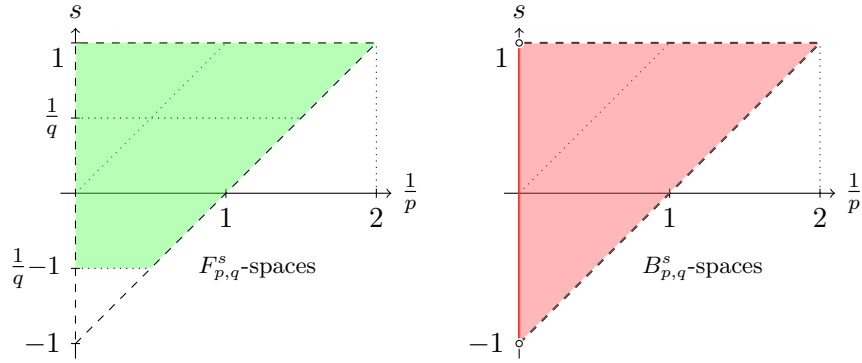

Figure \ref{fig_frame} shows the regions of parameters where  $\sHext$  is a characterizing frame for each of the spaces 
$F^s_{p,q}(\bbR)$ and $B^s_{p,q}(\bbR)$. 

We remark that a related  result, in the special case of the 
H\"older spaces $C^\al=B^\al_{\infty,\infty}(\SR)$, $\al\in(0,1)$,
and using a $1/3$-shifted Haar frame, has been recently obtained by Jaffard and Krim; see
\cite[Theorem \!1]{JaKr21}.
As pointed out by A. Cohen in \cite[Remark 5]{JaKr21}, related characterizations of Besov spaces $B^s_{p,q}[0,1]$, up to smoothness $s<1$, appeared in the spline community in the 70s (see e.g. \cite[\S12.2]{dVL}), in that case in terms of classes of best linear approximation by piecewise constant functions with equally spaced  (or sufficiently mixed) knots. In particular, compare the statements of Theorems \ref{mainB-intro} and \ref{W1-char-intro}, with \cite[Theorem 12.2.4]{dVL} parts (iv) and (ii), respectively; see also Remark \ref{R_sche} below.





\subsection{\it Characterization of $W^{1}_{p}(\bbR)$ via Haar frames} 
We now let  $s=1$, 
and consider in the Banach range  $1\leq p\leq \infty$
the Sobolev space 
$W^{1}_{p} (\bbR)$, endowed with the usual norm
\[
\|f\|_{W^{1}_{p}(\bbR)}=\|f\|_p+\|f'\|_p.
\] 
We also let $BV(\bbR) $ be the subspace of $L_1(\bbR)$ for which the distributional derivative belongs to the space $\cM$ of bounded Borel measures (with the norm given by the total variation of the measure)  and define
\[
\|f\|_{BV(\bbR)}=\|f\|_1+\|f'\|_{\cM}.
\]  
Note that by our definition $BV\subset L_1$ which  deviates from the definition in some other  places in the literature.
We have  the following  result,  that provides characterizations  in terms of  the oversampled Haar system $\sH^{\mathrm{ext}}$.


\begin{theorem}\label{W1-char-intro} For all $f\in \sB$ the following hold.

(i) If $1<p\leq\infty$ then
\[
	\|f\|_{W^1_p} \approx \sup\limits_{j\geq -1} 2^{j(1-1/p)}\Big(\sum\limits_{\mu \in \bbZ} |\fc_{j,\mu} (f)|^p\Big)^{1/p}.
\]

(ii) In the case $p=1$ we have instead 
\[	\|f\|_{BV} \approx \sup\limits_{j\geq -1} \sum\limits_{\mu \in \bbZ} |\fc_{j,\mu} (f)|.
\]
\end{theorem}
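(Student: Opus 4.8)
The plan is to deduce Theorem~\ref{W1-char-intro} from the $s<1$ Besov characterization (Theorem~\ref{mainB-intro}) by a limiting argument together with a direct analysis of the endpoint geometry, since the quantity $\sup_{j}2^{j(1-1/p)}\big(\sum_\mu|\fc_{j,\mu}(f)|^p\big)^{1/p}$ is precisely the $b^1_{p,\infty}$-norm of the extended Haar coefficients. First I would set up the two halves as usual: the \emph{upper bound} (the coefficient expression $\lesssim \|f\|_{W^1_p}$, resp. $\|f\|_{BV}$) and the \emph{lower bound} (the reverse). For the upper bound, fix $f\in W^1_p$ (resp.\ $BV$), write $2^j\inn{f}{h_{j,\mu}}$ and $2^j\inn{f}{\widetilde h_{j,2\mu+1}}$ as averages of $f$ against a mean-zero bump of total mass $2$ supported in an interval of length $2^{-j}$, and use $\inn{f}{\widetilde h_{j,\nu}}=-2^{-j}\int \Phi_{j,\nu}(x)\,f'(x)\,dx$ where $\Phi_{j,\nu}\ge 0$ is the (scaled) tent/primitive of $h(2^jx-\nu/2)$, with $\|\Phi_{j,\nu}\|_\infty\le 2^{-j-2}$ and $\sum_\nu \Phi_{j,\nu}\lesssim 2^{-j}$ with bounded overlap. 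Thus $\fc_{j,\mu}(f)\lesssim \int_{J_{j,\mu}}|f'|$ over an interval $J_{j,\mu}$ of length $\sim 2^{-j}$ with bounded overlap in $\mu$; Hölder on each interval gives $|\fc_{j,\mu}(f)|^p\lesssim 2^{-j(p-1)}\int_{J_{j,\mu}}|f'|^p$, and summing in $\mu$ (bounded overlap) yields $2^{j(1-1/p)}(\sum_\mu|\fc_{j,\mu}(f)|^p)^{1/p}\lesssim \|f'\|_p$, uniformly in $j\ge0$; for $p=1$ one gets $\sum_\mu|\fc_{j,\mu}(f)|\lesssim \|f'\|_{\cM}$ directly from $|f'|$ being a measure. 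The $j=-1$ term is controlled by $\|f\|_p$ (resp.\ $\|f\|_1$) by \eqref{eq:trivialL1}, completing the ``$\lesssim$'' direction.

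For the lower bound I would run the standard telescoping/atomic reconstruction: given $f\in\sB$ with $M:=\sup_{j\ge-1}2^{j(1-1/p)}\big(\sum_\mu|\fc_{j,\mu}(f)|^p\big)^{1/p}<\infty$, form the partial Haar reconstructions $S_N f=\sum_{j=-1}^{N}\sum_\mu \inn{f}{h_{j,\mu}}\,\langle h_{j,\mu},h_{j,\mu}\rangle^{-1} h_{j,\mu}$ (the conditional expectation onto $2^{-N}\bbZ$-dyadic step functions), which converges to $f$ in $\sB$ by Proposition~\ref{prop:test-funct}(iii), and estimate the increments $S_{j}f-S_{j-1}f=\sum_\mu 2^j\inn{f}{h_{j,\mu}}\,h_{j,\mu}$ in the relevant norm. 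The key point is that the \emph{shifted} coefficients are exactly what is needed to recover the derivative increments of the dyadic step functions: one has the pointwise/measure identity for the jumps of $S_j f$ at dyadic points of scale $2^{-j}$ in terms of $\inn{f}{\widetilde h_{j,2\mu+1}}$ (a shifted Haar coefficient ``sees'' the jump across a dyadic breakpoint). From the hypothesis $M<\infty$ one obtains that $(S_jf)'$ is a bounded sequence in $L_p$ (resp.\ in $\cM$, for $p=1$), hence a bounded sequence in $W^1_p$ (resp.\ $BV$); since $W^1_p$ for $1<p<\infty$ is reflexive and $S_jf\to f$ in $\sB$, a weak-$*$ compactness argument forces $f\in W^1_p$ with $\|f\|_{W^1_p}\lesssim M$, and for $p=\infty$ (Lipschitz) and $p=1$ ($BV$) one uses weak-$*$ compactness of bounded sets in $L_\infty$, resp.\ in $\cM$. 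An equivalent, more hands-on route for $1\le p<\infty$: show $\{(S_jf)'\}$ is Cauchy in $L_p$ (resp.\ total-variation-bounded and convergent) by summing the increments $\|(S_jf)'-(S_{j-1}f)'\|_p\lesssim 2^{j(1-1/p)}(\sum_\mu|\fc_{j,\mu}(f)|^p)^{1/p}\cdot 2^{-j\delta}$ --- but note the endpoint $s=1$ gives no such geometric decay, so the compactness argument (rather than norm-summability) is the honest approach.

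The main obstacle is precisely this lack of summability at $s=1$: unlike in Theorem~\ref{mainB-intro}, where $1/p-1<s<1$ yields geometric factors $2^{j(s-1)}$ or $2^{j(1/p-1)}$ that make the telescoping series absolutely convergent, here the $\ell_\infty$ (in $j$) nature of the norm means the reconstruction series need not converge in $W^1_p$-norm --- it only converges after applying a weak-$*$ compactness argument, and one must be careful that the weak-$*$ limit of $(S_jf)'$ is genuinely $f'$ (as distributions) rather than something else, which follows because $S_jf\to f$ in $\sB\hookrightarrow \cS'$ and differentiation is continuous on $\cS'$. A secondary technical point is handling $p=\infty$, where $W^1_\infty=\mathrm{Lip}$ and one should identify the coefficient expression with (a variant of) the Lipschitz seminorm; and for $p=1$ one must check that the weak-$*$ limit of the bounded sequence of derivatives in $\cM$ is a measure and that no mass escapes to infinity (which is ruled out once $f\in L_1$, itself following from the $j=-1$ term and the $L_1$-boundedness of $S_jf$). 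I expect the whole argument to be about two pages once the tent-function estimates and the jump identity for $S_jf$ are recorded.
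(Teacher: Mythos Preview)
Your upper bound (Step 1) is essentially the paper's argument: the ``tent/primitive'' $\Phi_{j,\nu}$ is exactly the hat function $\cN_2(2^{j+1}\cdot-\nu)$, and the identity $2^{j+1}\inn{f}{\widetilde h_{j,\nu}}=-\inn{f'}{\cN_2(2^{j+1}\cdot-\nu)}$ is Lemma~\ref{L_Nder}. H\"older plus bounded overlap gives the estimate for $W^1_p$, and for $BV$ the paper passes through a mollification rather than testing directly against the measure $f'$, but both are fine.

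The lower bound, however, has a genuine gap for $1<p\le\infty$. You propose to bound $(S_jf)'$ in $L_p$ uniformly in $j$, where $S_jf=\bbE_jf$ is the dyadic conditional expectation. But $\bbE_jf$ is a \emph{step function}, so its distributional derivative is a sum of Dirac masses at the dyadic breakpoints, $(\bbE_jf)'=\sum_\mu c_\mu\,\delta_{2^{-j}\mu}$. This is a measure, never an $L_p$ function for $p>1$; its $L_p$ norm is infinite regardless of how small the jumps $c_\mu$ are. So the sentence ``$(S_jf)'$ is a bounded sequence in $L_p$'' is false, and the weak-$*$ compactness step collapses. (For $p=1$ your argument \emph{does} work, since $\|(\bbE_jf)'\|_{\cM}=\sum_\mu|c_\mu|$ and the jumps $c_\mu$ are exactly shifted Haar coefficients; this gives the $BV$ statement.)

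The paper circumvents this by never differentiating a step function. Instead it approximates $f'$ directly inside the piecewise \emph{linear} multiresolution $V_N=\overline{\mathrm{span}}\{\cN_2(2^{N+1}\cdot-\mu)\}$, setting
\[
g_N(x)=\sum_{\mu}2^N\inn{f'}{\cN_{N,\mu}}\,\cN^*_{N,\mu}(x),
\]
where $\cN^*$ is the dual Riesz generator (with exponential decay). The point is that by Lemma~\ref{L_Nder} the coefficients $\inn{f'}{\cN_{N,\mu}}$ are precisely $-2^{N+1}\inn{f}{\widetilde h_{N,\mu}}$, so $\|g_N\|_p\lesssim 2^{N(1-1/p)}\big(\sum_\mu|2^N\inn{f}{\widetilde h_{N,\mu}}|^p\big)^{1/p}\le A$ uniformly. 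Weak-$*$ compactness in $L_p$ (resp.\ $\cM$) then gives a limit $g$, and one identifies $g=f'$ by testing against the Chui-Wang wavelets $\psi_{j,\nu}\in V_N$ (for $N\ge j$), since $\inn{g_N}{\psi_{j,\nu}}=\inn{f'}{P_N\psi_{j,\nu}}=\inn{f'}{\psi_{j,\nu}}$ stabilizes. The role of the \emph{shifted} Haar coefficients is thus not to record jumps of a step function, but to supply the sampling coefficients of $f'$ in a $C^0$ spline MRA. If you want to salvage your outline, replace $\bbE_jf$ by a piecewise linear interpolant (or, equivalently, replace $(\bbE_jf)'$ by the discrete difference quotient realized as a piecewise constant function) and redo the compactness step there.
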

Clearly part  (ii) implies the inequality 
\Be \sup\limits_{j\geq -1} \sum\limits_{\mu \in \bbZ} |\fc_{j,\mu} (f)|\lc \|f\|_{W^1_1}\Ee  for all $f\in \cB$. However the converse of this inequality fails as one checks by testing it with  $f=\bbone_{[0,1]} \in BV\setminus W^1_1$; we have  
$ \sup_j\sum_{\mu} |\fc_{j,\mu} (\bbone_{[0,1]})| <\infty$.

The fact that the Sobolev $W^1_p(\SR)$ norm can be expressed in terms of a discrete norm of $b^1_{p,\infty}$ type 
may seem surprising at first, but actually results of this sort can be found in the literature since the 60s, see \cite{Bo69}.
The theorem is also reminiscent of characterizations via the uniform bounds for difference quotients
$h^{-1}(f(\cdot+h)-f) $, see \cite[Prop V.3]{stein-diff} and more recently \cite{bvy, bsvy}.

\subsection{\it Dyadic Besov spaces} 
In this section we present stronger results involving the standard Haar system $\sH$, 
and suitable dyadic variants $B^{s,\mathrm{dyad}}_{p,q} $ of the Besov spaces.

We first recall the definition of the sequence spaces $b^s_{p,q}$ and $f^s_{p,q}$; see \cite{FrJa90}.
If $s\in \bbR$ and $0<p, q\leq \infty$, we define, for $\beta =
 \{\beta_{j,\mu}\}_{j\geq{-1}, \mu \in \bbZ}$,
\begin{equation}\label{b_sequence}
    \|\beta\|_{b^s_{p,q}} :=
    \Big(\sum\limits_{j=-1}^\infty \Big[2^{j(s-1/p)}\Big(\sum\limits_{\mu \in \bbZ} |\beta_{j,\mu}|^p\Big)^{\frac1p}
		\Big]^q\Big)^{\frac1q}, 
\end{equation}
and if $p<\infty$ we let
\begin{equation}\label{f_sequence}
    \|\beta\|_{f^s_{p,q}} := \Big\|\Big(\sum\limits_{j=-1}^\infty\Big|2^{js}\sum\limits_{\mu \in \bbZ} \beta_{j,\mu}
		\bbone_{I_{j,\mu}}(\cdot)\Big|^q\Big)^{1/q}\Big\|_p\,.
\end{equation}
These expressions have the obvious interpretations if $\max\{p,q\}=\infty$.

We additionally define for every $f\in \sB$ the quantity
\[
\|f\|_{B^{s,\mathrm{dyad}}_{p,q}}:=\Big\|\big\{2^j\langle f,h_{j,\mu}\rangle\big\}_{j,\mu}\Big\|_{b^s_{p,q}}
\]
and the vector spaces
\[
B^{s,\mathrm{dyad}}_{p,q}(\bbR) 
=
\Big\{f\in \sB\mid \|f\|_{B^{s,\mathrm{dyad}}_{p,q} }<\infty\Big\}.
\]
Observe that  $\Span \sH\subset B^{s,\mathrm{dyad}}_{p,q}$, so the spaces are not null.
Also, the quantity $\|f\|_{B^{s,\mathrm{dyad}}_{p,q}}$ is a quasi-norm (not just a semi-norm), by Proposition \ref{prop:test-funct}. 
Since $2^j|\inn{f}{h_{j,\mu}} |\le \fc_{j,\mu}(f)$
we note the following immediate consequence of  Theorem \ref{W1-char-intro}.
\begin{corollary}
 If $1\leq p \leq \infty$ then
\Be W^1_p\hookrightarrow B^{1,\dyad}_{p,\infty} .
		\label{BW1}
		\Ee
\end{corollary}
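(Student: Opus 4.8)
The plan is to read the inclusion directly off Theorem \ref{W1-char-intro}, combined with the elementary pointwise estimate
\[
2^j|\inn{f}{h_{j,\mu}}|\le \fc_{j,\mu}(f),\qquad j\ge-1,\ \mu\in\SZ .
\]
For $j\ge0$ this is immediate from the definition \eqref{ext-Haar-coeff}, since $\widetilde h_{j,2\mu}=h_{j,\mu}$ and the remaining term $2^j|\inn{f}{\widetilde h_{j,2\mu+1}}|$ is nonnegative; for $j=-1$ one has $2^{-1}|\inn{f}{h_{-1,\mu}}|=\tfrac12|\fc_{-1,\mu}(f)|\le|\fc_{-1,\mu}(f)|$. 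Unwinding the definition of $\|\cdot\|_{b^1_{p,\infty}}$ from \eqref{b_sequence} (with $s=1$, $q=\infty$, and the usual modification when $p=\infty$), this gives
\[
\|f\|_{B^{1,\dyad}_{p,\infty}}
=\sup_{j\ge-1}2^{j(1-1/p)}\Big(\sum_{\mu\in\SZ}\big(2^j|\inn{f}{h_{j,\mu}}|\big)^p\Big)^{1/p}
\le \sup_{j\ge-1}2^{j(1-1/p)}\Big(\sum_{\mu\in\SZ}|\fc_{j,\mu}(f)|^p\Big)^{1/p}.
\]

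For $1<p\le\infty$ the right-hand side is $\approx\|f\|_{W^1_p}$ by Theorem \ref{W1-char-intro}(i), so $\|f\|_{B^{1,\dyad}_{p,\infty}}\lc\|f\|_{W^1_p}$, which is exactly \eqref{BW1} in that range. For $p=1$ the right-hand side reduces to $\sup_{j\ge-1}\sum_{\mu\in\SZ}|\fc_{j,\mu}(f)|$, which by Theorem \ref{W1-char-intro}(ii) is $\approx\|f\|_{BV}$; since any $f\in W^1_1$ lies in $BV$ with $\|f\|_{BV}=\|f\|_1+\|f'\|_{\cM}=\|f\|_1+\|f'\|_1=\|f\|_{W^1_1}$, we again get $\|f\|_{B^{1,\dyad}_{1,\infty}}\lc\|f\|_{W^1_1}$, completing the corollary.

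There is essentially no obstacle here: the content is entirely contained in Theorem \ref{W1-char-intro}, and the only extra ingredient is the trivial one-sided bound $2^j|\inn{f}{h_{j,\mu}}|\le\fc_{j,\mu}(f)$ together with the harmless embedding $W^1_1\hookrightarrow BV$ used at the endpoint $p=1$.
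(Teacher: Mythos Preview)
Your proof is correct and is precisely the argument the paper gives: the paper states this corollary as an ``immediate consequence of Theorem \ref{W1-char-intro}'' via the inequality $2^j|\inn{f}{h_{j,\mu}}|\le\fc_{j,\mu}(f)$, and your handling of the endpoint $p=1$ through $W^1_1\hookrightarrow BV$ matches the paper's remark following Theorem \ref{W1-char-intro}.
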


To avoid pathological cases, below we shall typically consider the range
\Be
\frac1p-1 <s < 1,
\label{sp1}
\Ee
and some end-point cases of these. 
We remark that when $s>1$ (or $s=1$ and $0<q<\infty$), the spaces $B^{1,\dyad}_{p,q} $ contain no nontrivial  $C^1$ functions (see Proposition \ref{P_Bdyad1}), 
 while for $s<1/p-1$ the spaces are not complete (see Proposition \ref{P_Bdyad2}). Recall also that in the 
 range $1/p-1<s<\min\{1/p,1\}$
 we have $B^s_{p,q}=B^{s,\dyad}_{p,q}$, cf. \eqref{eq:BcharHaar}. 


Assume now that \eqref{sp1} holds. By Theorem \ref{mainB-intro} we have $B^s_{p,q}\hookrightarrow B^{s,\mathrm{dyad}}_{p,q} $, and 
the inclusion is proper provided that
\Be\label{Haar-dyadic-besov}
1/p<s<1,\quad \text{ or }\; s=1/p\mand \,q<\infty
\Ee 
(since in that range Haar functions do not belong to $B^s_{p,q}$).
Our goal is to prove 
converse inequalities of the form
\Be
\label{BBd}
\|f\|_{B^s_{p,q}}\, \lesssim \, \|f\|_{B^{s,\mathrm{dyad}}_{p,q}} ,\quad \mbox{\emph{provided} that }\; f\in B^s_{p,q}(\SR).
\Ee
Such inequalities will imply that $\|\cdot\|_{B^{s,\mathrm{dyad}}_{p,q}}$ is an equivalent norm in $B^s_{p,q}$,
a result which may seem surprising outside the usual unconditionality region. 
Our first result in this direction is the following.

\begin{theorem} \label{thm:Besov-equiv} Let $1< p \leq \infty$, $0<q\leq \infty$, and $1/p<s<1$.
Then \eqref{BBd} holds. In particular, $B^s_{p,q}$ is a proper closed subspace of $B^{s,\mathrm{dyad}}_{p,q}$, and we have
\Be
	\|f\|_{B^s_{p,q}} \approx  \|f\|_{B^{s,\mathrm{dyad}}_{p,q}}, 
	\text{  for all  
$f \in B^s_{p,q}(\bbR)$.}
\label{B_equiv}
\Ee
\end{theorem}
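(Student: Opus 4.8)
The goal is the reverse inequality \eqref{BBd} for $f\in B^s_{p,q}$ with $1<p\le\infty$ and $1/p<s<1$. The plan is to compare the \emph{unshifted} Haar coefficients $\{2^j\inn{f}{h_{j,\mu}}\}$ with the \emph{full} extended system coefficients $\fc_{j,\mu}(f)$, which already control $\|f\|_{B^s_{p,q}}$ by Theorem \ref{mainB-intro}. So it suffices to show, for $f\in B^s_{p,q}$, that the shifted coefficients are dominated by the unshifted ones in $b^s_{p,q}$-norm, i.e.
\[
\Big\|\big\{2^j\inn{f}{\wt h_{j,2\mu+1}}\big\}\Big\|_{b^s_{p,q}}\ \lesssim\ \Big\|\big\{2^j\inn{f}{h_{j,\mu}}\big\}\Big\|_{b^s_{p,q}} + \|f\|_{B^s_{p,q}},
\]
and then argue that the right side is itself $\lesssim\|f\|_{B^{s,\dyad}_{p,q}}$ once $f$ is known to lie in $B^s_{p,q}$; in fact for such $f$ one has $\|f\|_{B^s_{p,q}}\approx\|\fc(f)\|_{b^s_{p,q}}$ (Theorem \ref{mainB-intro}) and $\|f\|_{B^{s,\dyad}_{p,q}}=\|\{2^j\inn{f}{h_{j,\mu}}\}\|_{b^s_{p,q}}$, so the whole content is the domination of shifted by unshifted coefficients \emph{on} $B^s_{p,q}$.

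\textbf{Key step: telescoping the shifted coefficient.} The shifted Haar function $\wt h_{j,2\mu+1}$ is supported on $I_{j,\mu}+2^{-j-1}$, which straddles the two children $I_{j+1,2\mu+1}$ and $I_{j+1,2\mu+2}$ coming from \emph{different} level-$j$ dyadic parents. Write $\inn{f}{\wt h_{j,2\mu+1}}$ as a difference of local averages: $\inn{f}{\wt h_{j,2\mu+1}} = \tfrac12 2^{-j}\big(A_{j+1,2\mu+1}(f)-A_{j+1,2\mu+2}(f)\big)$ where $A_{j+1,\nu}(f)=2^{j+1}\int_{I_{j+1,\nu}}f$. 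Now express each dyadic average $A_{j+1,\nu}(f)$ via the Haar expansion: $A_{j+1,\nu}(f)=A_{-1,\lfloor 2^{-j-1}\nu\rfloor}(f)+\sum_{\ell=-1}^{j}\pm\,2^\ell\inn{f}{h_{\ell,\mu(\ell,\nu)}}$, a telescoping sum over the ancestors of $I_{j+1,\nu}$. Subtracting the two such expansions for the two adjacent (but differently-rooted) intervals, the common high-level ancestors cancel except near the "break point," and one is left with controlling $2^{-j}$ times a sum of the form $\sum_{\ell\le j}\pm 2^\ell\inn{f}{h_{\ell,\cdot}}$ over ancestors of the two intervals, plus a coarse-scale term $2^{-j}|A_{-1,\cdot}(f)-A_{-1,\cdot'}(f)|$ handled by the $j=-1$ part of the norm together with the embedding $B^s_{p,q}\hookrightarrow\sB$. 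This reduces matters to a weighted estimate: the $b^s_{p,q}$-norm of $\{2^{-j}\sum_{\ell\le j}2^\ell d_{\ell}\}_{j}$ is bounded by that of $\{d_\ell\}$, which is a discrete Hardy-type inequality valid precisely when $s<1$ (the geometric factor $2^{-j}2^\ell$ must be summable against $2^{(s-1/p)(\ell-j)}$-type weights, forcing $s-1<0$). The condition $s>1/p$ enters to guarantee (via Theorem \ref{mainB-intro}) that $f\in B^s_{p,q}$ is genuinely determined by its Haar data and the coefficient norms are finite in the first place; it is not needed for the telescoping estimate itself, matching the fact that \eqref{Haar-dyadic-besov} is exactly where the inclusion becomes proper.

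\textbf{Alternative route via local polynomial/constant approximation.} A cleaner organization, closer to the spline-approximation viewpoint mentioned after Theorem \ref{mainB-intro}, is: (a) for $f\in B^s_{p,q}$ with $s<1$, the quantity $\|f\|_{B^s_{p,q}}$ is equivalent to $\big\|\{2^{j(s-1/p)}(\sum_\mu \|f-c_{j,\mu}\|_{L_p(I_{j,\mu})}^p\,2^{j})^{1/p}\}_j\big\|_{\ell_q}$ where $c_{j,\mu}$ is the mean of $f$ over $I_{j,\mu}$ — i.e. a Besov characterization by best $L_p$-approximation by dyadic step functions; (b) similarly with \emph{shifted} dyadic step functions one gets the analogous quantity with $\fc$; and the point is that a step function constant on the $2^{-j}$-dyadic grid differs from one constant on the $2^{-j-1}$-shifted grid by a controlled amount, quantified by the level-$(j{+}1)$ Haar coefficients, so the two approximation functionals are comparable with loss absorbed by summing a geometric series (again needing $s<1$). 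One then only needs, once, that $B^s_{p,q}$ coincides with the step-approximation space in the range $s<1$ — a classical fact (DeVore–Lorentz, cited) — and Theorem \ref{mainB-intro} to identify the shifted-grid version.

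\textbf{Main obstacle.} The delicate point is the mismatch of dyadic trees: the shifted intervals $I_{j,\mu}+2^{-j-1}$ do not nest with the standard dyadic grid, so one cannot telescope a shifted average purely through standard Haar ancestors of a \emph{single} interval; the break forces two telescoping chains whose lengths grow with $j$, and one must extract genuine cancellation between them rather than merely bounding each by the triangle inequality (which would lose a factor $j$ or worse and destroy the $\ell_q$ summability at $q=\infty$, $s=1$ adjacency). Making this cancellation quantitative — showing the two chains agree down to the scale where their hosting intervals split and the residual is a single Haar-type difference at each intervening scale — is the technical heart, and getting the constants uniform in $j$ (so the $s<1$ geometric gain is the only thing used) is where the real work lies. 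The endpoint $p=\infty$ requires the $f^s_{p,q}$/$b^s_{p,q}$ expressions to be read with sup in $\mu$, which is routine once the finite-$p$ case is done.
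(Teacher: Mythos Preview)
Your telescoping is correct through the identification of the surviving terms, but the Hardy-type inequality you extract is off by one normalizing factor of $2^j$, and once this is corrected it fails precisely in the range of the theorem. From $\inn{f}{\tilde h_{j,2\mu+1}}=\tfrac12\, 2^{-j}\big(A_{j+1,2\mu+1}-A_{j+1,2\mu+2}\big)$ and the Haar expansion of the averages one obtains, after the ancestor cancellation you describe,
\[
2^j\big|\inn{f}{\tilde h_{j,2\mu+1}}\big|\ \lesssim\ \sum_{\ell\le j}\big|2^\ell\inn{f}{h_{\ell,\nu_\ell(\mu)}}\big|,
\]
with $O(1)$ multiplicity in $\mu$ for each fixed $(\ell,\nu)$. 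Crucially there is \emph{no} residual geometric factor $2^{\ell-j}$ here: your $2^{-j}$ is exactly absorbed by the normalization $2^j\inn{f}{\tilde h}$ that enters the $B^{s,\dyad}$ norm. The Hardy inequality you actually need is therefore
\[
\Big\|\Big\{\sum_{\ell\le j}D_\ell\Big\}_j\Big\|_{\ell_q(2^{j(s-1/p)})}\ \lesssim\ \big\|\{D_\ell\}\big\|_{\ell_q(2^{\ell(s-1/p)})},
\]
which holds if and only if $s<1/p$ --- the trivial range where already $B^s_{p,q}=B^{s,\dyad}_{p,q}$. Your claim that ``$s>1/p$ is not needed for the telescoping estimate itself'' is thus wrong, and it is falsified concretely by the paper's own example: for $f_N=\sum_{j=0}^{N-1}h_{j,0}$ (Theorem~\ref{thm:neg}) one has $\|f_N\|_{B^{1/p,\dyad}_{p,\infty}}=1$, while $f_N\equiv N$ on $[0,2^{-N-1})$ gives $2^N\inn{f_N}{\tilde h_{N,-1}}=-N/2$, so the shifted $b^{1/p}_{p,\infty}$ norm is $\gtrsim N$. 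No uniform domination of shifted by unshifted coefficients in $b^s_{p,q}$ can hold at (or above) $s=1/p$. The step-approximation route suffers the same defect, since the characterization of $B^s_{p,q}$ by dyadic piecewise-constant approximation is valid only for $s<1/p$.

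The paper's argument avoids this by passing to \emph{finer} rather than coarser scales. Via Lemma~\ref{L_Nder} one has $a_{j,\nu}:=2^j|\inn{f}{\tilde h_{j,\nu}}|=\tfrac12|\inn{f'}{\cN_{2;j+1,\nu}}|$, and the two-scale refinement identity $\cN_{2;j+1,\nu}=\tfrac12\cN_{2;j+2,2\nu}+\cN_{2;j+2,2\nu+1}+\tfrac12\cN_{2;j+2,2\nu+2}$ yields the recursion
\[
a_{j,2\mu+1}\ \le\ \tfrac12\, a_{j+1,4\mu+2}+a_{j+1,4\mu+3}+\tfrac12\, a_{j+1,4\mu+4}.
\]
The middle term is again an odd-index (shifted) coefficient, with weight $\la_1=1$, and a bootstrapping lemma (Lemma~\ref{L1}) shows $\|\fa\|_{b^s_{p,q}}\lesssim\|\faeven\|_{b^s_{p,q}}$ precisely under the condition $|\la_1|<2^{s-1/p}$, i.e.\ $s>1/p$. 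The a~priori assumption $f\in B^s_{p,q}$ is used only to ensure $\|\fa\|_{b^s_{p,q}}<\infty$, so that this self-improving inequality can be closed.
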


There are also some precedent results of this nature in the literature. 
When $p=q=\infty$, a norm equivalence as in \eqref{B_equiv} (for continuous functions in the interval [0,1])
was proved by Golubov \cite[Corollary 6]{Go64}; see also 
\cite[Corollary 3.2]{McL69}, \cite[Theorem 7.c.3]{NoSe97} and references therein.


\subsection{\it Inclusions for the limiting case $s=1$}  In what follows the notation $\fX_1\hookrightarrow \fX_2$ will indicate a continuous embedding of the space $\fX_1$ in the space $\fX_2$. As already remarked above we may focus on the cases $s<1$ or $s=1$, $q=\infty$, cf. 
Proposition \ref{P_Bdyad1}.



We now state inclusions into the spaces
$\Bdyad$, in the case that $s=1$ and $q=\infty$. Note that in one of the inclusions we use the smaller space \[
F^{1,\mathrm{dyad}}_{p,\infty}:=\Big\{f\in\sB\mid \big\{2^j\hdot{f,h_{j,\mu}}\big\}_{\substack{j\ge -1\\ \mu\in \bbZ}}\in f^1_{p,\infty}\Big\}.
\]

\begin{theorem}\label{emb_s=1} \phantom{.}  Let $1/2\le p\le \infty$. Then the following hold.

\sline (i) If $1/2\leq p < \infty$ then 
\begin{align} \label{B1pqintoBdyad} 
B^1_{p,q} &\hookrightarrow B^{1,\dyad}_{p,\infty}  \,\iff \, q\le \min\{p,2\}\,,
\\
\label{F1pqintoBdyad} 
F^1_{p,q} &\hookrightarrow B^{1,\dyad}_{p,\infty}  \,\iff \, q\le 2\, .
\end{align} 
For $p=\infty$ we have 
\begin{equation}\label{B1inftyqintoBdyad}
B^1_{\infty,q} \hookrightarrow B^{1,\dyad}_{\infty,\infty}  \,\iff \, q\le 1\,.
\end{equation} 

(ii)  For $1/2<p<\infty$ 
\Be F^1_{p,2} \hookrightarrow F^{1,\dyad}_{p,\infty} .
\Ee
\end{theorem}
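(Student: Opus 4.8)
The plan is to prove Theorem~\ref{emb_s=1} by exploiting an exact identity expressing the Haar coefficients at smoothness $1$ through local averages of $f'$, using it for the positive embeddings, and establishing sharpness by an a priori lower bound together with explicit lacunary families. Let $G(x)=\int_0^xh$ be the tent function, so $\supp G=[0,1]$, $G'=h$, $\int G=\tfrac14$, and set $\Phi_{j,\mu}(x)=2^jG(2^jx-\mu)$; then $\Phi_{j,\mu}\ge0$, $\supp\Phi_{j,\mu}=\overline{I_{j,\mu}}$, $\|\Phi_{j,\mu}\|_1=\tfrac14$, $\|\Phi_{j,\mu}\|_\infty\approx2^j$, and $\Phi_{j,\mu}'=2^{2j}h_{j,\mu}$. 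Hence, for every $f\in\sB$,
\[
2^{2j}\langle f,h_{j,\mu}\rangle=\langle f,\Phi_{j,\mu}'\rangle=-\langle f',\Phi_{j,\mu}\rangle,
\]
the last pairing being legitimate since $\Phi_{j,\mu}$ is Lipschitz with compact support. If $p\ge1$ then $f'\in L^1_\loc$ and one reads off the pointwise bounds $|2^{2j}\langle f,h_{j,\mu}\rangle|\le\tfrac12\inf_{I_{j,\mu}}Mf'$ for $j\ge0$ and $|2^{-2}\langle f,h_{-1,\mu}\rangle|\le\tfrac14\inf_{I_{-1,\mu}}Mf$; in particular $|2^{2j}\langle f,h_{j,\mu}\rangle|\le\tfrac14\|f'\|_\infty$ when $p=\infty$.

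I first prove (ii). When $1<p<\infty$ we have $F^1_{p,2}=W^1_p$; since $\|\{2^j\langle f,h_{j,\mu}\rangle\}\|_{f^1_{p,\infty}}=\big\|\sup_{j\ge-1}2^j\big|\sum_\mu2^j\langle f,h_{j,\mu}\rangle\bbone_{I_{j,\mu}}\big|\big\|_p$ and the inner sum reduces, a.e., to a single term, the pointwise bound plus the $L^p$-boundedness of $M$ give $\|f\|_{F^{1,\dyad}_{p,\infty}}\lesssim\|Mf'\|_p+\|Mf\|_p\lesssim\|f\|_{W^1_p}$. When $1/2<p\le1$ we use that $f\in F^1_{p,2}$ implies $f'\in F^0_{p,2}=h^p$ with $\|f'\|_{h^p}\lesssim\|f\|_{F^1_{p,2}}$, and we must show $\big\|\sup_{(j,\mu):\,x\in I_{j,\mu}}|\langle g,\Phi_{j,\mu}\rangle|\big\|_{L^p_x}\lesssim\|g\|_{h^p}$ for $g\in h^p$. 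Decomposing $g=\sum_k\lambda_ka_k$ into $h^p$-atoms ($\supp a_k\subset Q_k$, $\|a_k\|_\infty\le|Q_k|^{-1/p}$, $\int a_k=0$, $\sum_k|\lambda_k|^p\approx\|g\|_{h^p}^p$), by $p$-subadditivity it suffices to bound the left side for each atom by a constant. As dyadic intervals are nested or disjoint, only $I_{j,\mu}$ with $I_{j,\mu}\subset Q_k$ or $I_{j,\mu}\supset Q_k$ contribute; in the first case $|\langle a_k,\Phi_{j,\mu}\rangle|\le\|\Phi_{j,\mu}\|_\infty\|a_k\|_1\lesssim|Q_k|^{-1/p}$, in the second (using $\int a_k=0$ and that $\Phi_{j,\mu}$ has Lipschitz constant $2^{2j}$)
\[
|\langle a_k,\Phi_{j,\mu}\rangle|\le\|a_k\|_1\,\osc_{Q_k}\Phi_{j,\mu}\lesssim|Q_k|^{2-1/p}\min\big\{2^{2j},\,\mathrm{dist}(x,Q_k)^{-2}\big\}\le|Q_k|^{2-1/p}\,\mathrm{dist}(x,Q_k)^{-2}
\]
for $x\notin2Q_k$. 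Thus $\sup_{(j,\mu):\,x\in I_{j,\mu}}|\langle a_k,\Phi_{j,\mu}\rangle|\lesssim|Q_k|^{-1/p}$ on $2Q_k$ and $\lesssim|Q_k|^{2-1/p}\mathrm{dist}(x,Q_k)^{-2}$ off it; raising to the power $p$ and integrating, the first piece contributes $\lesssim|Q_k|\cdot|Q_k|^{-1}=1$ and the second $\lesssim|Q_k|^{(2-1/p)p}\int_{|y|>|Q_k|}|y|^{-2p}\,dy\approx1$ — here the integrability of the tail uses precisely $2p>1$, i.e.\ $p>1/2$. The $j=-1$ term, $\big(\sum_\mu|\langle f,\bbone_{[\mu,\mu+1)}\rangle|^p\big)^{1/p}\lesssim\|f\|_{F^1_{p,2}}$, is handled by an analogous atomic estimate. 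This proves (ii) for $1/2<p\le1$. The remaining positive assertions are formal: $\|\cdot\|_{b^1_{p,\infty}}\le\|\cdot\|_{f^1_{p,\infty}}$ (an elementary pointwise inequality) gives $F^{1,\dyad}_{p,\infty}\hookrightarrow B^{1,\dyad}_{p,\infty}$, whence $F^1_{p,q}\hookrightarrow F^1_{p,2}$ for $q\le2$ yields $F^1_{p,q}\hookrightarrow B^{1,\dyad}_{p,\infty}$ for $q\le2$, and $B^1_{p,q}\hookrightarrow F^1_{p,2}$ for $q\le\min\{p,2\}$ yields $B^1_{p,q}\hookrightarrow B^{1,\dyad}_{p,\infty}$ for $q\le\min\{p,2\}$, all for $1/2<p<\infty$ (the endpoint $p=\tfrac12$ needs a small extra argument). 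For $p=\infty$, $B^1_{\infty,1}\hookrightarrow W^1_\infty$ together with $|2^{2j}\langle f,h_{j,\mu}\rangle|\le\tfrac14\|f\|_{W^1_\infty}$ gives $B^1_{\infty,1}\hookrightarrow B^{1,\dyad}_{\infty,\infty}$.

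For sharpness the key point is the a priori lower bound $\|f\|_{B^{1,\dyad}_{p,\infty}}\ge\tfrac14\|f'\|_{L^p}$ (and $\ge\tfrac14\|f'\|_\infty$ for $p=\infty$), valid for $f\in C^2_c$: since $\Phi_{j,\mu}\ge0$ and $\int\Phi_{j,\mu}=\tfrac14$, we have $\langle f',\Phi_{j,\mu}\rangle=\tfrac14f'(\xi_{j,\mu})$ for some $\xi_{j,\mu}\in I_{j,\mu}$, so $2^{j(1-1/p)}\|\{2^j\langle f,h_{j,\mu}\rangle\}_\mu\|_{\ell^p}=\tfrac14\big(2^{-j}\sum_\mu|f'(\xi_{j,\mu})|^p\big)^{1/p}\to\tfrac14\|f'\|_p$ as $j\to\infty$ by Riemann sums. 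Therefore $B^1_{p,q}\hookrightarrow B^{1,\dyad}_{p,\infty}$ (respectively with $F^1_{p,q}$, respectively $p=\infty$) would force $\|f'\|_p\lesssim\|f\|_{B^1_{p,q}}$ on $C^\infty_c$, which fails outside the stated ranges: for $1/2<p<2$, $q>p$, take $f_n$ a sum of $n$ smooth bumps $b_k$ of frequencies $2^k$ ($1\le k\le n$) and amplitudes $2^{-k}n^{-1/q}$ placed at pairwise disjoint locations, so that $\|f_n\|_{B^1_{p,q}}\approx1$ while $\|f_n'\|_p\approx\big(\sum_{k\le n}\|b_k'\|_p^p\big)^{1/p}\approx n^{1/p-1/q}$; for $p\ge2$, $q>2$ (and for the $F$-statement for all $1/2<p<\infty$, $q>2$), place the same bumps at a common location, where the lacunary square function forces $\|f_n'\|_p\approx n^{1/2-1/q}$ while $\|f_n\|_{B^1_{p,q}}\approx\|f_n\|_{F^1_{p,q}}\approx1$; and for $p=\infty$, $q>1$, take $f_n=n^{-1/q}\phi(x)\sum_{k=1}^n2^{-k}\sin(2^kx)$ with $\phi\in C^\infty_c$, $\phi(0)\ne0$, so that $\|f_n\|_{B^1_{\infty,q}}\approx1$ but $\|f_n'\|_\infty\ge|f_n'(0)|\approx n^{-1/q}\,n\,\phi(0)=n^{1-1/q}\phi(0)$.

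The delicate part is expected to be the quasi-Banach case $1/2<p\le1$ of (ii): through the identity, $\{2^j\langle f,h_{j,\mu}\rangle\}$ is controlled by a maximal function of $f'\in h^p$ built from the tent kernel, which is only Lipschitz — borderline for the classical maximal-function characterizations of $h^p$ — so this boundedness has to be obtained by hand via the atomic decomposition, and it is exactly there that the hypothesis $p>\tfrac12$ becomes essential.
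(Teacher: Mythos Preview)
Your approach is genuinely different from the paper's, and for the most part correct; the comparison is worth making.

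\textbf{Positive embeddings.} For (ii) the paper splits $f=\Pi_jf+\Pi_j^\perp f$ via a Littlewood--Paley cutoff at level $j$, estimates the low-frequency part by the Peetre maximal function of $(\Pi_jf)'$ and invokes the maximal-function characterization of $h^p=F^0_{p,2}$, and treats the high-frequency tail by the standard argument of Proposition~\ref{prop1}. Your route via the tent-function identity $2^{2j}\langle f,h_{j,\mu}\rangle=-\langle f',\Phi_{j,\mu}\rangle$ together with an atomic decomposition of $f'\in h^p$ is more elementary and avoids the Peetre machinery; it also makes transparent why $p>1/2$ is the threshold (integrability of the tail $|y|^{-2p}$). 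One slip: in the case $I_{j,\mu}\subset Q_k$ you should use $|\langle a_k,\Phi_{j,\mu}\rangle|\le\|a_k\|_\infty\|\Phi_{j,\mu}\|_1\le\tfrac14|Q_k|^{-1/p}$, not $\|\Phi_{j,\mu}\|_\infty\|a_k\|_1$ (the latter gives $2^j|Q_k|^{1-1/p}$, which is unbounded in $j$). The $j=-1$ term and the exchange $\langle g,\Phi_{j,\mu}\rangle=\sum_k\lambda_k\langle a_k,\Phi_{j,\mu}\rangle$ deserve one line each of justification.

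\textbf{Sharpness.} Your a~priori lower bound $\|f\|_{B^{1,\dyad}_{p,\infty}}\ge\tfrac14\|f'\|_p$ for $f\in C^2_c$ (via the mean-value theorem and Riemann sums) is cleaner than the paper's approach, which computes Haar coefficients of specific examples by Taylor expansion (Lemmas~\ref{lem:ex1_B}--\ref{lem:q=infty}) and, for $q>2$, uses Rademacher randomization and Khintchine's inequality. Your method reduces everything to the well-known failure of $\|f'\|_p\lesssim\|f\|_{B^1_{p,q}}$ or $\|f'\|_p\lesssim\|f\|_{F^1_{p,q}}$ outside the stated ranges, handled by lacunary examples. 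Both arguments cover all $1/2\le p<\infty$.

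\textbf{The gap.} You explicitly leave the endpoint $p=\tfrac12$ of part~(i) unproved, and your atomic argument genuinely does not extend there: already the ``local'' contribution from $I_{j,\mu}\subset Q_k$ gives $2^{j(1-1/p)}\bigl(\sum_{\mu:I_{j,\mu}\subset Q_k}|Q_k|^{-1}\bigr)^{1/p}\approx 2^j$ when $p=\tfrac12$, which is unbounded. The paper handles this case (Proposition~\ref{s=1=1/p-1}) by combining the Peetre-maximal estimate for $\Pi_jf$ with a refined bound for $\Pi_j^\perp f$ borrowed from the $s=1/p-1$ analysis (Proposition~\ref{prop:F1p-1dyadtoB}), exploiting that at $p=\tfrac12$ the two limiting lines $s=1$ and $s=1/p-1$ coincide. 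This is not a ``small extra argument'' accessible from your toolkit; you would need something comparable to the paper's tail estimate \eqref{f12_c}--\eqref{f12_d}.
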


The next result gives a converse inequality to the embedding \eqref{BW1}, 
which in particular implies that  
\[
 \|f\|_{B^{1,\mathrm{dyad}}_{p,\infty}} = \sup\limits_{j\geq -1} 2^{j(1-1/p)}
\Big(\sum\limits_{\mu \in \bbZ} |2^j\lan f{h_{j,\mu}}|^p\Big)^{1/p} 
\]
is an equivalent norm in $W^1_p(\SR)$. Earlier bounds of this type, 
for absolutely continuous functions in the interval $[0,1]$, 
can be found in the work of Bo\v ckarev, see  \cite[Theorem 7]{Bo69}, \cite[Theorem I.3.4]{Bo80}, or \cite[Corollary 7.b.2]{NoSe97} 
 and references therein. Below we establish, by different methods, the following result,
which is complementary to Theorem \ref{W1-char-intro}.

\begin{theorem}\label{W1-equiv-intro} Let $1< p\leq \infty$. Then  
\Be
\|f\|_{W^1_p}\,\lesssim\,\|f\|_{B^{1,\mathrm{dyad}}_{p,\infty}}, \quad \mbox{provided} \quad f\in W^1_p(\SR).
\label{W1Bd}
\Ee
In particular, $W^{1}_{p}(\SR)$ is a proper closed subspace of $B_{p,\infty}^{1,\mathrm {dyad}}$, and it holds 
\Be
\|f\|_{W^{1}_{p}}\approx \|f\|_{B^{1,\mathrm{dyad}}_{p,\infty}}\approx \|f\|_{F^{1,\mathrm{dyad}}_{p,\infty}},\quad\text{for }
f\in W^{1}_p (\bbR), \quad 1<p<\infty.
\label{W1BF}
\Ee
\end{theorem}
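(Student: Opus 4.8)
The plan is to reduce everything to one‑dimensional real–variable facts about the ``tent'' functions obtained by integrating the Haar functions once. Let $H(x)=\int_{-\infty}^x h(t)\,dt$, so $H$ is the continuous tent supported in $[0,1]$ with $H(\tfrac12)=\tfrac12$ and $\int_0^1 H=\tfrac14$, and set $\psi_{j,\mu}(x):=H(2^jx-\mu)$ for $j\ge0$, $\mu\in\bbZ$; thus $\psi_{j,\mu}\ge0$ is supported in $\overline{I_{j,\mu}}$, $\|\psi_{j,\mu}\|_\infty=\tfrac12$, and $\int\psi_{j,\mu}=2^{-j}/4$. Since $h_{j,\mu}=2^{-j}\psi_{j,\mu}'$ and any $f\in W^1_p(\SR)$ is locally absolutely continuous, integration by parts (boundary terms vanish as $\psi_{j,\mu}$ is compactly supported) gives
\[
2^j\langle f,h_{j,\mu}\rangle=-\langle f',\psi_{j,\mu}\rangle,\qquad j\ge0,\ \mu\in\bbZ .
\]
Hence, for $j\ge0$, the $j$‑th layer of $\|f\|_{B^{1,\dyad}_{p,\infty}}$ equals $2^{j(1-1/p)}\big(\sum_\mu|\langle f',\psi_{j,\mu}\rangle|^p\big)^{1/p}$, and \eqref{W1Bd} splits into: recover $\|f'\|_p$ from the ``tent averages'' $\langle f',\psi_{j,\mu}\rangle$, and separately control $\|f\|_p$.

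\textbf{The frame inequality for the tent system.} The heart of the matter is the claim that for every $g\in L^p(\SR)$, $1<p\le\infty$,
\[
\|g\|_p\ \lesssim\ \sup_{j\ge0}\,2^{j(1-1/p)}\Big(\sum_{\mu\in\bbZ}|\langle g,\psi_{j,\mu}\rangle|^p\Big)^{1/p}.
\]
(The reverse inequality, with constant $\tfrac12$, is Hölder on the disjoint supports $I_{j,\mu}$, but is not needed, being contained in Corollary~\ref{BW1}.) Because $2^j\int\psi_{j,\mu}=\tfrac14$, at each Lebesgue point $x$ of $g$, with $\mu(j,x)$ the index with $x\in I_{j,\mu}$, one has
\[
\Big|2^j\langle g,\psi_{j,\mu(j,x)}\rangle-\tfrac14 g(x)\Big|\le\tfrac12\,\frac{1}{|I_{j,\mu(j,x)}|}\int_{I_{j,\mu(j,x)}}|g-g(x)|\ \xrightarrow[\,j\to\infty\,]{}\ 0 .
\]
For $p<\infty$, the quantity $\big\|\,2^j|\langle g,\psi_{j,\mu(j,\cdot)}\rangle|\,\big\|_p^p=2^{j(p-1)}\sum_\mu|\langle g,\psi_{j,\mu}\rangle|^p$ is exactly the $p$‑th power of the $j$‑th layer above, so Fatou's lemma applied to $2^j|\langle g,\psi_{j,\mu(j,\cdot)}\rangle|\to\tfrac14|g|$ gives $\tfrac14\|g\|_p\le\liminf_j 2^{j(1-1/p)}\big(\sum_\mu|\langle g,\psi_{j,\mu}\rangle|^p\big)^{1/p}$. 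For $p=\infty$ (where $C_c$ is not dense) one argues directly: picking a Lebesgue point $x$ with $|g(x)|>\|g\|_\infty-\varepsilon$, the displayed convergence gives $\sup_j 2^j\sup_\mu|\langle g,\psi_{j,\mu}\rangle|\ge\tfrac14|g(x)|>\tfrac14(\|g\|_\infty-\varepsilon)$, and $\varepsilon\downarrow0$ finishes. Taking $g=f'$ and noting each layer with $j\ge0$ is dominated by $\|f\|_{B^{1,\dyad}_{p,\infty}}$ yields $\|f'\|_p\lesssim\|f\|_{B^{1,\dyad}_{p,\infty}}$.

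\textbf{Controlling $\|f\|_p$ and conclusions.} The $j=-1$ layer dominates a fixed multiple of $\big(\sum_\mu|\langle f,\bbone_{[\mu,\mu+1)}\rangle|^p\big)^{1/p}$, while the Poincaré–Wirtinger estimate on a unit interval gives $\|f\|_{L^p([\mu,\mu+1])}^p\lesssim|\langle f,\bbone_{[\mu,\mu+1)}\rangle|^p+\|f'\|_{L^p([\mu,\mu+1])}^p$ (and the $\sup_\mu$ analogue for $p=\infty$); summing over $\mu$ and invoking the previous step yields $\|f\|_p\lesssim\|f\|_{B^{1,\dyad}_{p,\infty}}$. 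This proves \eqref{W1Bd}, and together with Corollary~\ref{BW1} the equivalence $\|f\|_{W^1_p}\approx\|f\|_{B^{1,\dyad}_{p,\infty}}$ on $W^1_p$. Properness follows since $h_{0,0}\in\Span\sH\subset B^{1,\dyad}_{p,\infty}$ but $h_{0,0}\notin W^1_p$; closedness is then immediate, as $W^1_p$ is complete and $\|\cdot\|_{B^{1,\dyad}_{p,\infty}}$ separates points (Proposition~\ref{prop:test-funct}). Finally, for $1<p<\infty$, the identity above gives the pointwise bound $2^j|2^j\langle f,h_{j,\mu(j,x)}\rangle|\le\tfrac12\,Mf'(x)$ for $j\ge0$, and $\le\tfrac14\,Mf(x)$ for the $j=-1$ layer, with $M$ the Hardy–Littlewood maximal operator; hence $\|f\|_{F^{1,\dyad}_{p,\infty}}\lesssim\|Mf'\|_p+\|Mf\|_p\lesssim\|f\|_{W^1_p}$. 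Combining with the elementary embedding $f^1_{p,\infty}\hookrightarrow b^1_{p,\infty}$ (which gives $\|f\|_{B^{1,\dyad}_{p,\infty}}\le\|f\|_{F^{1,\dyad}_{p,\infty}}$) and the first part closes the chain $\|f\|_{W^1_p}\lesssim\|f\|_{B^{1,\dyad}_{p,\infty}}\le\|f\|_{F^{1,\dyad}_{p,\infty}}\lesssim\|f\|_{W^1_p}$, proving \eqref{W1BF}.

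\textbf{Main obstacle.} The only genuinely non‑formal step is the frame lower bound for the tent system; the rest is an integration by parts, a unit‑interval Poincaré inequality, and $L^p$‑boundedness of the maximal function. Within that step the care is in (i) passing from a.e. pointwise convergence at Lebesgue points to the $L^p$ lower bound — done by Fatou for $p<\infty$ and by an $\varepsilon$‑argument at an essential‑supremum point for $p=\infty$ — and (ii) tracking the normalization $2^j\int\psi_{j,\mu}=\tfrac14$, which makes $\{2^j\psi_{j,\mu}\}_\mu$ act like a scale‑$2^{-j}$ approximation to the identity.
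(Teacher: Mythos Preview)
Your argument is correct and takes a genuinely different, more elementary route than the paper. The paper proves \eqref{W1Bd} by first invoking Theorem~\ref{W1-char-intro} (which gives $\|f\|_{W^1_p}\approx\|\fc(f)\|_{b^1_{p,\infty}}$ in terms of the \emph{oversampled} coefficients) and then applying the bootstrapping Lemma~\ref{L1} together with the refinement identity \eqref{refine1} for the hat functions $\cN_{2;j,\nu}$ to show that the odd-shift coefficients are controlled by the even ones when $s=1>1/p$; for \eqref{W1BF} it closes the chain via Proposition~\ref{prop:embintoF1dyad}, i.e.\ the embedding $F^1_{p,2}\hookrightarrow F^{1,\dyad}_{p,\infty}$, which in turn relies on Peetre maximal functions and the Littlewood--Paley description of $F^1_{p,2}$. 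Your approach bypasses all of this: the single integration by parts $2^j\langle f,h_{j,\mu}\rangle=-\langle f',\psi_{j,\mu}\rangle$ converts the dyadic norm into tent averages of $f'$, and then Lebesgue differentiation plus Fatou gives the frame lower bound directly, with Poincar\'e--Wirtinger handling $\|f\|_p$; the $F^{1,\dyad}_{p,\infty}$ upper bound follows from the crude pointwise domination by $Mf'$ and the scalar Hardy--Littlewood inequality. What you gain is a self-contained argument that does not need Theorem~\ref{W1-char-intro}, the Chui--Wang machinery, or the bootstrapping lemma; what the paper's route buys is a unified structural mechanism (Lemma~\ref{L1}) that simultaneously handles Theorem~\ref{thm:Besov-equiv} for all $1/p<s<1$, whereas your Lebesgue-point argument is specific to the endpoint $s=1$. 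One cosmetic point: your symbol $\psi_{j,\mu}$ collides with the paper's Chui--Wang wavelets in \S\ref{S_chui}; in the paper's notation your tent is $\tfrac12\,\cN_{2;j+1,2\mu}$, and your integration-by-parts identity is exactly Lemma~\ref{L_Nder} with $\nu=2\mu$.
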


\begin{Remark}
The inequality in \eqref{W1Bd} (and hence, the first equivalence in \eqref{W1BF}) is also true when $p=1$, 
due to the result of Bo\v{c}karev \cite{Bo69}.
The proof we give here, however, is only valid for $p>1$.
\end{Remark}



\subsection{\it Inclusions for the limiting case $s=1/p-1$} 
We state inclusions into the spaces
$\Bdyad$, in the case that $s=1/p-1$ and $q=\infty$.

\begin{theorem}\label{emb_1/p-1}
(i) For  $0<p, u\leq\infty$ the embedding 
$B^{1/p-1}_{p,u}\hookrightarrow B^{1/p-1, \rm{dyad}}_{p,q}$ can only hold when $q=\infty$. 

(ii) If  $p\ge 1/2$ then 
\Be
B^{1/p-1}_{p,q} \hookrightarrow B^{1/p-1,\mathrm{dyad}}_{p,\infty} \,\iff \, q\le \min\{1,p\} 
\Ee



(iii) If $1/2<p\leq 1$ then 
\Be\label{FinftyinBinfty}
F^{1/p-1}_{p,\infty} \hookrightarrow B^{1/p-1,\mathrm{dyad}}_{p,\infty}.
\Ee
\end{theorem}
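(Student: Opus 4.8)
The plan is to reduce everything to a comparison between the classical (Littlewood--Paley) norms and their \emph{dyadic martingale} counterparts. Writing $E_k$ for the conditional expectation onto the $\sigma$-algebra generated by the dyadic intervals of length $2^{-k}$ and $D_k:=E_{k+1}-E_k$ for the martingale difference (with $D_{-1}:=E_0$), one has $D_kf=2^k\sum_\mu\langle f,h_{k,\mu}\rangle h_{k,\mu}$, so that by disjointness of supports and $\|h_{k,\mu}\|_p=2^{-k/p}$,
\[
\Big(\sum_\mu|2^k\langle f,h_{k,\mu}\rangle|^p\Big)^{1/p}=2^{k/p}\|D_kf\|_p,\qquad\text{hence}\qquad
\|f\|_{B^{s,\dyad}_{p,q}}=\big\|\{2^{ks}\|D_kf\|_p\}_{k\ge-1}\big\|_{\ell^q}.
\]
In particular $\|f\|_{B^{1/p-1,\dyad}_{p,\infty}}=\sup_k2^{k(1/p-1)}\|D_kf\|_p$, while the classical endpoint norm is $\|f\|_{B^{1/p-1}_{p,q}}=\|\{2^{k(1/p-1)}\|\Delta_kf\|_p\}\|_{\ell^q}$ with $\Delta_k$ the Littlewood--Paley pieces. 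Thus the theorem asserts that replacing $\Delta_k$ by $D_k$ in a $B^{1/p-1}$-type norm yields a strictly larger space unless $q=\infty$, together with the sharp list of classical spaces still contained in it. Throughout one records the elementary facts that $B^{1/p-1}_{p,u}\hookrightarrow\sB$ forces $u\le1$ (so (i) is only non-vacuous there) and $\ell^{q_0}\hookrightarrow\ell^\infty$ for $q_0<\infty$, used to organize the cases.

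For the positive inclusions (``$\Leftarrow$'' in (ii), and (iii)) I would use the atomic decomposition: write $f=\sum_{m\ge-1}\sum_\nu s_{m,\nu}a_{m,\nu}$ with suitably normalized smooth atoms supported in $3I_{m,\nu}$ (carrying a moment when $1/p-1\le0$) and with atomic sequence norm $\lesssim\|f\|$. The crux is the size estimate for $\langle a_{m,\nu},h_{k,\mu}\rangle$: it decays geometrically in $k-m$ when the Haar function is \emph{finer} than the atom ($m\le k$, using one cancellation), but when the atom is \emph{finer} than the Haar function ($m\ge k$) only the trivial $L^1$-bound is available, and for both regimes only $O(1)$ indices $\nu$ are relevant. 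Inserting this into $D_kf=2^k\sum_\mu\langle f,h_{k,\mu}\rangle h_{k,\mu}$ and computing $2^{k(1/p-1)}\|D_kf\|_p$, the $m\le k$ part is a harmless geometric average of the atomic data, while the $m\ge k$ part is essentially $\sum_{m\ge k}(\text{scale-}m\text{ atomic }\ell^p\text{-mass})$, which has \emph{no decay}. This is exactly where $q\le\min\{1,p\}$ enters in (ii): then the atomic data lie in $\ell^1$ over $m$, so this tail is bounded uniformly in $k$ and $\sup_k2^{k(1/p-1)}\|D_kf\|_p\lesssim\|f\|_{B^{1/p-1}_{p,\min\{1,p\}}}$. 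For (iii) one instead keeps the $k$-supremum inside the $L^p$-norm and uses that the $f^{1/p-1}_{p,\infty}$-norm already controls $\big\|\sup_m2^{m(1/p-1)}\sum_\nu|s_{m,\nu}|\bbone_{I_{m,\nu}}\big\|_p$; since $1/p-1>0$ the non-decaying $m\ge k$ tail, after being re-expressed in terms of the $\bbone_{I_{m,\nu}}$, is dominated by a fixed multiple of this maximal function — the quasi-Banach $F$-space analogue of the previous computation, where $p>1/2$ is used to make the relevant power sums converge.

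For the necessity in (ii) ($q>\min\{1,p\}$) I would build $f$ whose martingale differences \emph{accumulate across scales}: a spatially localized, suitably rescaled superposition $f=\sum_{m\ge1}c_mg_m$ in which each $g_m$ is an $L^p$-normalized oscillation at frequency $2^m$, arranged (e.g.\ nested around a common point) so that $D_kg_m$ is, for every $k\le m$, a fixed multiple of one Haar function; this saturates the $m\ge k$ cascade, giving $2^{k(1/p-1)}\|D_kf\|_p\approx\sum_{m\ge k}c_m$, whereas $\|f\|_{B^{1/p-1}_{p,q}}\approx\|\{c_m\}\|_{\ell^q}$ (up to a mild enlargement of the $\ell$-index). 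Choosing $\{c_m\}$ summable in the relevant $\ell$-norm but not absolutely summable — possible precisely when $q>\min\{1,p\}$ — makes $\|f\|_{B^{1/p-1,\dyad}_{p,\infty}}=\infty$. In the range $p<1$ one further exploits that $E_k$ is unbounded on $L^p$, which amplifies the cascade.

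Part (i) is the main obstacle: for \emph{every} finite $q$ and \emph{every} admissible $u\ (\le1)$ one must exhibit a single $f\in B^{1/p-1}_{p,u}$ with $\{2^{k(1/p-1)}\|D_kf\|_p\}_k\notin\ell^q$. The difficulty is that the naive superpositions of the previous paragraph keep the martingale sequence comparable to the (convergent) Littlewood--Paley sequence, so (ii)'s example cannot simply be re-used; one needs building blocks that are genuinely frequency-localized near $2^m$ (hence cheap in $B^{1/p-1}_{p,u}$, contributing only near scale $m$) yet whose martingale differences remain large at \emph{all} scales $k\le m$, i.e.\ a true mismatch between the Fourier support and the dyadic-martingale structure, and whose superposition and bookkeeping defeat the given pair $(u,q)$. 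Once such blocks are in hand, the failure for every finite $q$ is immediate from the norm identity of the first paragraph, and combined with the positive part of (ii) this establishes the dichotomy; the remaining inputs are exactly the atomic/operator estimates already described.
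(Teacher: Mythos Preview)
Your overall framework is reasonable, and for the positive inclusions your atomic-decomposition sketch is a legitimate alternative to the paper's direct Littlewood--Paley estimates (the paper uses the local-means operators $L_k$ and carries out essentially the same two-regime $m\le k$ versus $m\ge k$ analysis you describe, but on the level of $L_kf_k$ rather than atoms). However, there are two concrete problems.

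\textbf{Part (i) is not the obstacle you think it is.} You declare (i) the ``main obstacle'' and leave the construction unfinished, saying you need building blocks that are ``genuinely frequency-localized near $2^m$'' yet have large martingale differences at all coarser scales. This intuition is backwards. The paper's example is a single explicit function per $N$: take $f_N=2^N\bbone_{[0,2^{-N})}$ and its odd extension $g_N=f_N-f_N(-\cdot)$. Then $g_N$ is a mean-zero bump of width $2^{1-N}$ and height $2^N$, so it is (up to normalization) a $B^{1/p-1}_{p,u}$-atom and $\|g_N\|_{B^{1/p-1}_{p,u}}\lesssim 1$ uniformly in $N$ and in \emph{all} $u\in(0,\infty]$. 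On the dyadic side, $\langle f_N,h_{j,0}\rangle=1$ for every $0\le j<N$, so $2^{j(1/p-1-1/p)}\cdot 2^j|\langle g_N,h_{j,0}\rangle|=1$ for $N$ consecutive values of $j$, giving $\|g_N\|_{B^{1/p-1,\dyad}_{p,q}}\ge N^{1/q}$. The function is \emph{not} frequency-localized at all---it is a spatial approximation to $\delta_0-\delta_{0^-}$---and that is precisely what makes the martingale differences large at every scale. Your restriction to $u\le 1$ is also unnecessary: these $g_N$ lie in $\sB$ regardless, and their $B^{1/p-1}_{p,u}$ norm is bounded for every $u$.

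\textbf{Part (ii), necessity.} Your nested-oscillation construction is vague and would require care to make the $B^{1/p-1}_{p,q}$ norm cleanly equal $\|\{c_m\}\|_{\ell^q}$ when the pieces overlap spatially. The paper instead splits into two cases with much shorter arguments. For $p>1$ it uses duality: the embedding $B^{1/p-1}_{p,q}\hookrightarrow B^{1/p-1,\dyad}_{p,\infty}$ would make each $h_{j,\mu}$ a bounded linear functional on $B^{1/p-1}_{p,q}$, hence $h_{j,\mu}\in B^{1/p'}_{p',q'}$, which forces $q'=\infty$, i.e.\ $q\le 1$. For $p\le 1$ it places frequency-localized bumps at \emph{spatially separated} locations (not nested), namely $f_N=\sum_m a_m\eta_{N+m}(\cdot-2^{-N+5}m)$ with $\eta$ an odd Schwartz bump; separation makes the $\ell^p$ and $\ell^q$ norms decouple cleanly, giving $\|f_N\|_{B^{1/p-1}_{p,q}}\lesssim\|\{a_m\}\|_{\ell^q}$ and $\|f_N\|_{B^{1/p-1,\dyad}_{p,\infty}}\gtrsim\|\{a_m\}\|_{\ell^p}$, hence $q\le p$.
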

\begin{Remark} 
 When $p=1$, we also have the straightforward inequality 
\[\sup_{j\ge -1} \sum_{\mu\in \bbZ} |\inn{f}{h_{j,\mu}}| \lc \|f\|_1,\quad f\in L^1,\] 
which leads to  the inclusion  $L_1\cap \sB\subset  B^{0,  \mathrm{dyad}}_{1,\infty}$.
\end{Remark}




\subsection{\it The case $s=1/p$}
\label{counter_ex}
When $1<p<\infty$, the standard Haar characterization of Besov spaces implies that  
\[
B^s_{p,q}(\mathbb{R})= B^{s,\mathrm{dyad}}_{p,q}(\mathbb{R})\, ,
\quad \frac1p-1<s<\frac1p.
\]
On the other hand, Theorem \ref{thm:Besov-equiv} implies the
norm equivalence
\[
\|f\|_{B^s_{p,q}} \approx  \|f\|_{B^{s,\mathrm{dyad}}_{p,q}} \,, \quad  
f \in B^s_{p,q}, \quad \frac1p<s<1.
\]
These two results might suggest that the norm equivalence could hold also at the dividing line $s=1/p$. 
Here we show that this is not the case, at least when $q=\infty$. 

\begin{theorem}\label{thm:neg} 
Let $1\le  p<\infty$. Then

\sline (i) there exists a sequence $\{f_N\}_{N=1}^\infty$ of functions in $B^{1/p}_{p,\infty} $ such that \[\text{ $\|f_N\|_{B^{1/p,\dyad}_{p,\infty}}  =1$ and $\|f_N\|_{B^{1/p}_{p,\infty} }\gc N$, }
\]

\sline (ii) $B^{1/p,\dyad}_{p,\infty} \setminus B^{1/p}_{p,\infty} \neq \emptyset$.
\end{theorem}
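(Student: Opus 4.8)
\emph{Overview.} The plan is to prove (i) by an explicit construction, and then to deduce (ii) by gluing rescaled, translated copies of the functions from (i). For (i) I would take the ``truncated logarithmic staircase''
\[
f_N:=\sum_{l=0}^{N-1}h_{l,0}\qquad\bigl(=h(\cdot)+h(2\cdot)+\dots+h(2^{N-1}\cdot)\bigr).
\]
Since the intervals $I_{l,0}=[0,2^{-l})$ are nested and each $h_{l,0}$ has mean zero, one computes $\langle h_{l,0},h_{l',\mu}\rangle=2^{-l}$ if $(l',\mu)=(l,0)$ and $0$ otherwise, and also $\langle h_{l,0},h_{-1,\mu}\rangle=0$; hence $2^j\langle f_N,h_{j,\mu}\rangle=1$ when $\mu=0$ and $0\le j\le N-1$, and $=0$ in all other cases. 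Because at $s=1/p$ the weights $2^{j(s-1/p)}$ are trivial, this gives \emph{exactly} $\|f_N\|_{B^{1/p,\dyad}_{p,\infty}}=1$. To bound the classical norm from below I would use the explicit shape: $f_N$ is supported in $[0,1)$, equals $N$ on $[0,2^{-N})$, and equals $m-1$ on $[2^{-m-1},2^{-m})$ for $0\le m\le N-1$. Fix $1\le k\le N-1$. For $x\in[-2^{-k-1},0)$ one has $f_N(x)=0$, $f_N(x+2^{-k})=k-1$ (the value on $[2^{-k-1},2^{-k})$) and $f_N(x+2^{1-k})=k-2$, so the second difference $f_N(x+2^{1-k})-2f_N(x+2^{-k})+f_N(x)$ has modulus $k$ on a set of measure $2^{-k-1}$. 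Thus the second-order modulus of smoothness satisfies $\omega_2(f_N,2^{-k})_p\gtrsim k\,2^{-k/p}$, i.e. $2^{k/p}\omega_2(f_N,2^{-k})_p\gtrsim k$; taking $k=N-1$ and invoking the standard equivalence $\|f\|_{B^{1/p}_{p,\infty}}\approx\|f\|_p+\sup_{t>0}t^{-1/p}\omega_2(f,t)_p$ (valid since $0<1/p\le1<2$) yields $\|f_N\|_{B^{1/p}_{p,\infty}}\gtrsim N$. Finiteness ($f_N\in B^{1/p}_{p,\infty}$) follows from $\|f_N\|_{B^{1/p}_{p,\infty}}\le\sum_{l=0}^{N-1}\|h_{l,0}\|_{B^{1/p}_{p,\infty}}\lesssim N$, using the dilation invariance (at smoothness exactly $1/p$) of the norms of $h(2^l\cdot)$.

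\emph{Proof of (ii).} I would set $f:=\sum_{m\ge1}2^{-m}f_{4^m}(\cdot-m)$, the summands being supported in the pairwise disjoint intervals $[m,m+1)$. First, each $\|f_N\|_{\mathscr B}$ is bounded uniformly in $N$: the frequency-$2^j$ Littlewood--Paley piece of $f_N$ has $L^\infty$-norm $O(1)$ for every $j$ (the ``spike'' of height $N$ over $[0,2^{-N})$ only contributes at scales $j\gtrsim N$, where it is weighted by $2^{-j}\le2^{-N}$), so $\|f_N\|_{\mathscr B}=\sum_j2^{-j}\|\Delta_jf_N\|_\infty\lesssim1$; hence the series defining $f$ converges in $\mathscr B$. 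Second, the Haar coefficients of the $m$-th summand live at scales/positions disjoint from those of the other summands, so at each scale $j$ one gets $\sum_\mu|2^j\langle f,h_{j,\mu}\rangle|^p\le\sum_{m\ge1}2^{-mp}<\infty$, i.e. $f\in B^{1/p,\dyad}_{p,\infty}$. Third, multiplying $f$ by a fixed $C^\infty_c$ cutoff equal to $1$ on $[m,m+1)$ and using the boundedness of this multiplication on $B^{1/p}_{p,\infty}$ gives $\|f\|_{B^{1/p}_{p,\infty}}\gtrsim\|2^{-m}f_{4^m}(\cdot-m)\|_{B^{1/p}_{p,\infty}}=2^{-m}\|f_{4^m}\|_{B^{1/p}_{p,\infty}}\gtrsim 2^{-m}4^m=2^m$ for every $m$; hence $f\notin B^{1/p}_{p,\infty}$, which proves (ii).

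\emph{Main obstacle.} The delicate point is the lower bound in (i). The function $f_N$ is bounded, has $L^p$-norm $O(1)$, has first differences of order $O(1)$ at all \emph{coarse} scales, and at every dyadic scale its Haar coefficients form an $\ell^p$-singleton of size $1$; the linear-in-$N$ growth of the Besov norm is produced solely by the \emph{accumulation} of the logarithmic increments near the origin, visible only when one tests differences that straddle $x=0$ at the finest relevant scale $2^{-(N-1)}$. Choosing the right test difference --- a second difference anchored at $x\in[-2^{-k-1},0)$, where $f_N$ jumps from $0$ to the staircase value $k-1$ --- and verifying it is genuinely of size $k$ (and not $k^{1/p}$, which a naive count ignoring that jump would suggest) is the heart of the argument; dually it is exactly this jump-versus-ramp subtlety that explains why the analogous equivalence \emph{does} hold for $1/p<s<1$ (Theorem~\ref{thm:Besov-equiv}) but fails at $s=1/p$.
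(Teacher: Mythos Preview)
Your proof of (i) is correct and essentially the same as the paper's: the same function $f_N=\sum_{l=0}^{N-1}h_{l,0}$, the same dyadic norm computation, and a second-difference test to get $\|f_N\|_{B^{1/p}_{p,\infty}}\gtrsim N$. (The paper tests $\Delta^2_{2^{-N}}f_N$ on $[-2^{1-N},-2^{-N})$ and gets the constant value $N$ there; your choice of interval $[-2^{-k-1},0)$ with $k=N-1$ gives the constant value $-(N-1)$ --- equally good.)

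For (ii) your route is different from the paper's, and as written it has a gap. The paper simply takes the single function $f=\sum_{j\ge0}h_{j,0}$, observes that $f_N\to f$ in $L_p$ with $\|f-f_N\|_p\lesssim 2^{-N/p}$, checks $\|f\|_{B^{1/p,\dyad}_{p,\infty}}=1$, and then bounds $\|f\|_{B^{1/p}_{p,\infty}}\gtrsim 2^{N/p}\|\Delta^2_{2^{-N}}f_N\|_p - O(1)\gtrsim N$ for every $N$. This is shorter and avoids any gluing or cutoff arguments.

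In your construction $f=\sum_{m\ge1}2^{-m}f_{4^m}(\cdot-m)$ the summands have supports $[m,m+1)$ which are \emph{adjacent}, not separated. A smooth cutoff $\chi_m$ equal to $1$ on $[m,m+1)$ must extend into $[m-1,m)$ and $[m+1,m+2)$, so $\chi_m f$ picks up pieces of the neighboring summands; in particular the right neighbor contributes a term whose $B^{1/p}_{p,\infty}$-norm is $\lesssim 2^{-(m+1)}\|f_{4^{m+1}}\|_{B^{1/p}_{p,\infty}}\lesssim 2^{m+1}$, which is of the \emph{same} order as your main term $\gtrsim 2^m$, and the argument does not close. The fix is trivial --- place the $m$-th block in, say, $[3m,3m+1)$ so that a smooth cutoff can isolate it --- but the step ``$\chi_m f = 2^{-m}f_{4^m}(\cdot-m)$'' is false as stated.
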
 

\begin{Remarks}

\sline (i) The previous result shows that, if $1<p<\infty$, then  Theorem \ref{thm:Besov-equiv} cannot hold at the dividing line $s=1/p$ (at least if $q=\infty$). Namely, 
the embedding of $B^{1/p}_{p,\infty}$ into $B^{1/p,\mathrm{dyad}}_{p,\infty}$, which is established by Theorem \ref{mainB-intro}, is  proper, that is, 
    $B^{1/p}_{p,\infty}\,\subsetneq B^{1/p, \mathrm{dyad} }_{p,\infty} $,
and moreover, on the smaller space $B^{1/p}_{p,\infty}$, the norms are not equivalent, i.e.,
\Be\label{B1p}
\sup\big\{ \|f\|_{B^{1/p}_{p,\infty}} \,: \,  f\in  B^{1/p}_{p,\infty}\text{ and } 
\|f\|_{B^{1/p,\mathrm{dyad}}_{p,\infty}}=1 \big\} 
=\infty\,.
\Ee
Both statements are an immediate consequence of Theorem \ref{thm:neg}.

(ii) Observe that  $BV(\bbR) \hookrightarrow B^1_{1,\infty}(\bbR) $ and that for $p=1$ we have the embedding  $BV(\bbR)\hookrightarrow B^{1,\dyad}_{1,\infty}$ as  a consequence of  Theorem \ref{W1-char-intro}. 
Theorem \ref{thm:neg} shows that this embedding is also  proper, i.e.  
$B^{1,\dyad}_{1,\infty}  \setminus BV(\bbR) \neq \emptyset$.
\end{Remarks}


\subsection{\it Further directions} 
We mention a few problems left open in this paper.
\subsubsection{Besov-type spaces} Concerning \eqref{B1p} in Theorem \ref{thm:neg}, we do not know whether the inequality
\Be
\|f\|_{B^{1/p}_{p,\infty}}\,\leq \,C\,\|f\|_{B^{1/p, \text{dyad} }_{p,\infty}}
\label{B1pgood}
\Ee
 could be true  for $1<p<\infty$ when restricted  to $f\in\cS(\bbR)$.
It is also open to determine whether such inequality
could hold if the $B^{1/p}_{p,\infty}$ norm is replaced by $B^{1/p}_{p,q}$ with $q<\infty$.

\subsubsection{$F^s_{p,q}$ versus $F^{s,\dyad}_{p,q}$ } It would be interesting to establish an optimal analogue of Theorem \ref{thm:Besov-equiv}
for Triebel-Lizorkin spaces.

\subsubsection{Wavelet frames}
The sharp results on the failure of unconditional convergence  of Haar expansions in \cite{SeeUl17} (described  above) have been extended by R. Srivastava  \cite{Srivastava20} to classes of spline wavelets with more restrictive smoothness assumptions. It is then natural  to investigate   extensions of our results on Haar frames to suitable classes of oversampled systems of spline wavelets. 

\subsubsection{Higher dimensions}\label{high_dim}
In this paper we have found appropriate to present our results for function spaces in $\SR$ (rather than $\SR^d$).
It seems likely that many arguments in the paper could be adapted, with only standard modifications, to the higher dimensional context (such as those in \S4 or \S6). On the other hand, the arguments in which the 1-dimensional setting is more present (such as the bootstrapping in \S7) may require more elaborate 
changes.  
We do not pursue these questions here. 

\subsection{\it Structure of the paper} 
In \S\ref{sect:FS} we compile notation and known results about maximal operators in function spaces. We also review some properties about the Chui-Wang wavelet  basis, that will be used later in our proofs. 

In \S\ref{sec:distrspace} we clarify the role of the space $\sB$ and prove Proposition \ref{prop:test-funct}. 

In \S\ref{sec:framescharacterization}  we consider the characterization of function spaces via Haar frames, giving the proofs of Theorems \ref{mainT-intro} and \ref{mainB-intro}  (as a combination of four  Propositions \ref{prop1}, \ref{prop:duality}, \ref{prop4} and \ref{main1}). 

In \S\ref{sec:W1p} we establish Haar frame characterizations of Sobolev and bounded variation spaces and give a proof of Theorem \ref{W1-char-intro}.

In \S\ref{sec:embintoBdyad}
we prove the sufficiency of the conditions for the  embeddings into $B^{1,\dyad}_{p,\infty}$ or $F^{1,\dyad}_{p,\infty} $ in Theorem \ref{emb_s=1}, and the sufficiency for the conditions of embedding into $B^{1/p-1,\dyad}_{p,\infty} $ in Theorem \ref{emb_1/p-1}. 

In \S\ref{sec:normequivalences} 
we prove Theorems
\ref{thm:Besov-equiv} and \ref{W1-equiv-intro}.

In \S\ref{sec-s=1-example} we prove necessary conditions for the embeddings into $B^{1,\dyad}_{p,\infty} $.
Specifically,  in Theorem \ref{emb_s=1} the necessary conditions $q\le p$ in \eqref{B1pqintoBdyad}, $q\le 2$ in \eqref{F1pqintoBdyad}, \eqref{B1pqintoBdyad},  and 
$q\le 1$ in \eqref{B1inftyqintoBdyad}
correspond to Lemma \ref{lem:ex1_B},  Lemma \ref{lem:ex1_F}, 
and  Lemma \ref{lem:q=infty}, respectively.

In \S\ref{sec:s=1/p-1-ex} we obtain  necessary conditions in part (ii) of Theorem \ref{emb_1/p-1}  for the embeddings into $B^{1/p-1,\dyad}_{p,\infty}$.

In \S\ref{sec:neg}
we prove Theorem \ref{thm:neg}.

In \S\ref{S_Bdyad1}
we give a simple proof for the fact that $C^1$ functions in $B^{1,\dyad}_{p,q} $, with $q<\infty$, are constant (Proposition \ref{P_Bdyad1}); moreover show that $B^{s,\dyad}_{p,q}$ is not complete when $s<1/p-1$ (Proposition \ref{P_Bdyad2}) and finally prove 
 part (i) of Theorem \ref{emb_1/p-1}  (see Proposition \ref{P_ex3}).

\subsection*{Acknowledgements} The authors would like to thank two anonymous referees for valuable comments, which have been taken into account in Remarks \ref{R_sche} and \ref{high_dim}. The authors thank Hans Triebel for his comments on duality, see Remark \ref{triebel_rem}. In addition, T.U.  would like to thank Dorothee Haroske and Winfried Sickel for having the opportunity to present the material in a plenary talk at the conference ``Function spaces and applications'' in Apolda 2022.  
G.G. was supported in part by grant
PID2019-105599GB-
I00 from Ministerio de Ciencia e Innovación (Spain), and grant 20906/PI/18 from Fundaci\'on S\'eneca (Regi\'on de Murcia, Spain). A.S. was supported in part by National Science Foundation grant DMS 2054220. T.U. was supported in part by  Deutsche Forschungsgemeinschaft (DFG), grant 403/2-1.

\section{Preliminaries on function spaces and wavelet bases} 
\label{sect:FS}

 \subsection{\it Definition of spaces}
\label{sect:MF_FS} 

Let $s\in\SR$ and $0<p, q\leq \infty$ be given. 
We shall use both definitions and characterizations of $B^s_{p,q}$ and $F^s_{p,q}$ in terms of dyadic frequency decompositions and in terms of sequences of compactly supported  kernels with cancellation (see e.g. 
\cite[2.5.3, 2.4.6]{Tr92} or \cite[\S1.3,1.4]{Tr06} where the terminology local means is used). 

Consider two functions $\beta_0, \beta \in \cS(\SR)$ such that
 $|\widehat{\beta}_0(\xi)|>0$ when  $|\xi|\leq1$ and 
 $|\widehat{\beta}(\xi)|>0$ when $1/4\leq|\xi|\leq1$. Assume further that $\beta(\cdot)$ has  vanishing
moments up to a sufficiently large order $M\in \bbN$, that is, 
\Be
\label{betacanc}
\int_{\bbR} \beta(x)\,x^{m}\,dx = 0\quad\mbox{when}\quad m < M\,.
\Ee 
The precise value of $M$ is not relevant, but for the properties used in the paper it will suffice with  
\Be 
\label{Mcondition} M >1/p+|s|+2 . 
\Ee 
We let  $\beta_k(x):=2^{k}\beta(2^kx)$, $k\geq1$, and define for $k\in\SN_0$ the convolution operators 
\[L_kf:=\beta_k*f,\]
acting on distributions $f\in \cS'(\SR)$. 

The Besov space $B^s_{p,q}(\bbR)$ is the set of all distributions $f\in \cS'(\bbR)$ such that 
\Be
\label{localmeans_B} 
    \|f\|_{B^s_{p,q}} := \Big(\sum_{k=0}^\infty \Big(2^{ks}\|L_k f\|_p\Big)^q\Big)^{1/q} <\infty.
    \Ee 
If $p<\infty$, the Triebel-Lizorkin space $F^s_{p,q}(\bbR)$ is the set of all $f\in \cS'(\bbR)$ such that 
\Be\label{localmeans_F} 
    \|f\|_{F^s_{p,q}} := \Big\|\Big(\sum_{k=0}^\infty 2^{ksq}|L_k f(x)|^q\Big)^{1/q}\Big\|_p<\infty.
    \Ee
Different choices of $\beta_0,\beta$ give rise to the same spaces and equivalent quasi-norms; see e.g. \cite[Theorem 1.7]{Tr06}. 
From now on we will assume that  
\[
\supp\beta_0\subset(-1/2,1/2)\mand \supp\beta\subset(-1/2,1/2).
\]

We shall often use the following decomposition of distributions in $\cS'(\SR)$.
Let $\eta_0\in C^\infty_0(\bbR)$ be supported on $\{|\xi|<3/4\}$ and such that $\eta_0(\xi)=1$ when $|\xi|\leq1/2$. 
Define the convolution operators $\La_0$, and $\La_k$ for $k\ge 1$, by
\begin{align*}
\widehat{\La_0 f}(\xi) &=\frac{\eta_0(\xi)}{\widehat \beta_0(\xi)}\widehat f(\xi)
\\
\widehat{\La_k f}(\xi) &=\frac{\eta_0(2^{-k}\xi) -\eta_0(2^{-k+1}\xi)}{\widehat \beta(2^{-k}\xi)}\widehat f(\xi), \quad k\ge 1.
\end{align*}
Then, for all $f\in\cS'(\SR)$ we have
\Be
f=\sum_{k=0}^\infty L_k \La_k f
\label{LLak}
\Ee
 with convergence in $\cS'(\SR)$. 
Also, 
it holds
\[
\Big(\sum_{k=0}^\infty \Big(2^{ks}\|\La_k f\|_p\Big)^q\Big)^{1/q}\lesssim \|f\|_{B^s_{p,q}},
\]  
 and likewise for the $F$-norms.

\subsection{\it Maximal functions}

We follow Triebel \cite{Tr83,Tr92}. 
Given $f\in L_1^{\rm loc}(\SR)$, consider the Hardy-Littlewood maximal function, defined by 
\begin{equation}\label{HLmax}
    Mf(x) := \sup\limits_{x \in I} \frac{1}{|I|}\int_I |f(x)|\,dx\,,
\end{equation}
where the $\sup$ is taken over all intervals $I$ that contain $x$. 
A classical result of Fefferman and Stein asserts that, if $1<p<\infty$ and $1<q\leq\infty$ then
\begin{equation}\label{HLmaxineq}
    \Big\|\Big(\sum\limits_{j} |Mf_j|^q\Big)^{1/q}\Big\|_p \lesssim \Big\|\Big(\sum\limits_{j} |f_j|^q\Big)^{1/q}\Big\|_p\,.
\end{equation}
for all sequences of measurable functions $\{f_j\}$ with finite right hand side.

Let us further define the Peetre maximal functions \cite{Pe75}. Given $j\in \bbN$ and $A>0$ we let  
 \[\fM^{**}_{j,A} f(x)= \sup_{h\in \bbR}  \frac{|f(x+h)|}{(1+2^j|h|)^A} .\]
 Let $\cE_j$ be the set of distributions $f\in \cS'(\bbR)$ such that $\supp \widehat f$ is supported 
in an interval of diameter $\le 2^{j+2}$. Then for all $f\in\cE_j$ it holds
\Be
\fM^{**}_{j,A} f(x) \lesssim_{s, A}\,\big[M(|f|^s)(x)\big]^{1/s},
\label{MPee}
\Ee
provided that $s\geq 1/A$; see \cite{Pe75} or \cite[Theorem 1.3.1]{Tr83}. 
In particular, if  $0<p\leq \infty$ and $A>1/p$ then
 \Be
 \|\fM^{**}_{j,A} f\|_p \le C_{p,A} \|f\|_p, \quad f\in\cE_j.
 \label{Peetremax1}
\Ee
 Also, from \eqref{HLmaxineq} and \eqref{MPee}, if $0<p<\infty$, $0<q\leq \infty$ and $A>\max\{ 1/p, 1/q\} $, then
 \begin{equation}\label{Peetremax}
    \Big\| \Big(\sum_{j} |\fM^{**}_{j,A} f_j|^q\Big)^{1/q} \Big\|_p 
 \le C_{p,q,A} 
 \Big\| \Big(\sum_{j} | f_j|^q\Big)^{1/q} \Big\|_p 
 \end{equation} 
for all sequences of functions $(f_j)$ such that $f_j\in \cE_j$.

Below we shall also use the (smaller) maximal functions
\Be\label{eq:fMk*}
\fM_jf(x)=\sup_{|h|\leq 2^{-j}}|f(x+h)|\mand \fM^*_jf(x)=\sup_{|h|\leq 2^{-j+2}}|f(x+h)|.
\Ee 
Note that for all $A>0$ it holds 
\[
\fM_jf(x)\leq \fM^*_jf(x)\lesssim \fM^{**}_{j,A}f(x),
\]
so in particular, for all $0<p\leq\infty$ we have
\Be
\big\|\fM^*_jf\|_p\lesssim\|f\|_p,\quad f\in\cE_j.
\label{fM1}
\Ee
We shall also make use of the following elementary inequality: if $0<p\leq\infty$ then
\Be
\label{5.8}
\|\fM^*_j f\|_p\lesssim 2^{\ell/p}\,\|\fM_{j+\ell}f\|_p,\quad \ell\geq0.
\Ee
To prove this assertion, if we let $x_\nu=\nu 2^{-(j+\ell)}$, then we have 
\[
\fM^*_jf(x)\leq \sup_{|\nu|\leq 2^{\ell+2}}|\fM_{j+\ell}f(x+x_\nu)|\leq
 \Big(\sum_{|\nu|\leq 2^{\ell+2}}|\fM_{j+\ell}f(x+x_\nu)|^p\Big)^\frac1p.
\]
Then, taking $L_p$ quasi-norms one easily obtains \eqref{5.8}. 

 \subsection{\it Chui-Wang wavelets}\label{S_chui}
 The proofs of Theorems \ref{mainT-intro} and \ref{mainB-intro}  will require a characterization in terms of a wavelet basis generated by the {\it  Chui-Wang polygon}  and its dual. 
 
 Define the $m$-fold convolution $\cN_m$ of characteristic functions of $[0,1)$, i.e. 
 $\cN_1=\bbone_{[0,1)}$, and, for $m\ge 2$,  $\cN_m=\cN_{m-1} *\bbone_{[0,1)}$.
 In particular we get for $m=2$ the 
  \emph{hat function} 
 \Be \label{eq:CW-N2} \cN_2(x)= \begin{cases} x,  &\quad  \text{ $x\in [0,1]$,}
 \\2-x, &\quad \text{ $x\in [1,2]$,} 
 \\
 0, &\quad \text{ $x\in \bbR\setminus[0,2]$. }
 \end{cases}\Ee
Let
\[
\cN_{2;j,\nu}(x):=\cN_2(2^{j}x-\nu),\quad j\geq0, \;\nu\in\SZ,
\]
which is a hat function adapted to $\supp \cN_{2;j,\nu}=[2^{-j}\nu, 2^{-j}(\nu+2)]$. 

The next elementary observation will be crucial in what follows.

\begin{lemma}
\label{L_Nder}
If $f$ is locally absolutely continuous in $\SR$, then for all $j\geq1$ and $\nu\in\SZ$ it holds
\Be
\label{Nder}
\lan {f'}{\cN_{2;j,\nu}} \,=\,-2^j\,\lan{f}{\th_{j-1,\nu}}, 
\Ee
while for $j=0$ it holds 
\[
\lan {f'}{\cN_{2;0,\nu}}=-\lan f
{h_{-1,\nu}}+\lan f{h_{-1,\nu+1}},\quad \nu\in\SZ.
\]
\end{lemma}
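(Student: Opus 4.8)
The plan is to prove the identity by a direct computation, using integration by parts together with the elementary fact that $\cN_{2;j,\nu}'$ is, up to scaling, a linear combination of characteristic functions of dyadic intervals of one generation finer, arranged so as to reproduce $\pm\th_{j-1,\nu}$.

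First I would compute the derivative of the hat function. Since $\cN_2=\bbone_{[0,1)}*\bbone_{[0,1)}$, one has $\cN_2'=\bbone_{[0,1)}-\bbone_{[1,2)}$ in the distributional sense, i.e. $\cN_2' = h_{0,0}$ shifted appropriately; more precisely $\cN_2'(x)=\bbone_{[0,1)}(x)-\bbone_{[1,2)}(x)$. Scaling, for $j\ge 1$ we get $\frac{d}{dx}\cN_{2;j,\nu}(x)=2^j\big(\bbone_{[2^{-j}\nu,\,2^{-j}\nu+2^{-j})}(x)-\bbone_{[2^{-j}\nu+2^{-j},\,2^{-j}\nu+2^{1-j})}(x)\big)$. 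I want to recognize the bracketed function as $-\th_{j-1,\nu}$. Recall $\th_{j-1,\nu}(x)=h(2^{j-1}x-\nu/2)$, which is supported on $[2^{-(j-1)}(\nu/2),\,2^{-(j-1)}((\nu/2)+1)) = [2^{-j}\nu,\,2^{-j}(\nu+2))$ and equals $+1$ on the left half $[2^{-j}\nu,\,2^{-j}\nu+2^{-j})$ and $-1$ on the right half $[2^{-j}\nu+2^{-j},\,2^{-j}(\nu+2))$. Hence $\frac{d}{dx}\cN_{2;j,\nu} = 2^j(\bbone_{\text{left}}-\bbone_{\text{right}}) = -2^j\cdot(-\th_{j-1,\nu})$... let me instead just check signs carefully: the bracket equals $+\th_{j-1,\nu}$ on those halves, so $\cN_{2;j,\nu}'=2^j\th_{j-1,\nu}$? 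No — I must be careful about whether the shift parameter $\nu/2$ lands on an integer or half-integer, but the support computation works for all integer $\nu$, and the $+1/-1$ pattern of $h$ is fixed. So $\cN_{2;j,\nu}'=2^j\th_{j-1,\nu}$ as distributions. Then for $f$ locally absolutely continuous, integration by parts (valid because $\cN_{2;j,\nu}$ is continuous, compactly supported, and piecewise $C^1$, vanishing at the endpoints of its support) gives $\lan{f'}{\cN_{2;j,\nu}}=-\lan{f}{\cN_{2;j,\nu}'}=-2^j\lan{f}{\th_{j-1,\nu}}$, which is \eqref{Nder}.

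For the case $j=0$, the function $\cN_{2;0,\nu}(x)=\cN_2(x-\nu)$ is supported on $[\nu,\nu+2]$ and again vanishes at both endpoints, so integration by parts gives $\lan{f'}{\cN_{2;0,\nu}}=-\lan{f}{\cN_{2;0,\nu}'}$. Here $\cN_{2;0,\nu}'(x)=\bbone_{[\nu,\nu+1)}(x)-\bbone_{[\nu+1,\nu+2)}(x)=h_{-1,\nu}(x)-h_{-1,\nu+1}(x)$, using the convention $h_{-1,\mu}=\bbone_{[\mu,\mu+1)}$. Therefore $\lan{f'}{\cN_{2;0,\nu}}=-\lan{f}{h_{-1,\nu}}+\lan{f}{h_{-1,\nu+1}}$, as claimed.

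The only genuine point requiring a little care — and the place I would expect a referee to look — is the justification of integration by parts at the level of local absolute continuity rather than $C^1$ regularity: one should note that $f$ locally absolutely continuous means $f$ is an indefinite integral of $f'\in L^1_{\mathrm{loc}}$, so $\int_a^b f'(x)g(x)\,dx = [fg]_a^b - \int_a^b f(x)g'(x)\,dx$ holds for any Lipschitz $g$ supported in a bounded interval (both $\cN_{2;j,\nu}$ and $\cN_{2;0,\nu}$ are Lipschitz and compactly supported), and the boundary term vanishes because $\cN_{2;j,\nu}$ and $\cN_{2;0,\nu}$ are zero outside their supports and continuous. Everything else is the bookkeeping of supports and signs described above; it is entirely routine.
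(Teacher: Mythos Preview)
Your proof is correct and follows essentially the same approach as the paper: integrate by parts and compute $(\cN_{2;j,\nu})'=2^j\widetilde h_{j-1,\nu}$ directly. The paper's version is more streamlined (it omits your sign-checking detour and the explicit justification of integration by parts for locally absolutely continuous $f$), but the mathematical content is identical.
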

\begin{proof}
Integrating by parts one has
\[
\lan {f'}{\cN_{2;j,\nu}} \,=\,-\lan f {(\cN_{2;j,\nu})'}.
\]
Now, a simple computation gives
\Beas
(\cN_{2;j,\nu})' & = & 2^j\bbone_{(2^{-j}\nu, 2^{-j}(\nu+1))}-2^j\bbone_{(2^{-j}(\nu+1), 2^{-j}(\nu+2))}\\
& = & 2^j\bbone_{I_{j,\nu}}-2^j\bbone_{I_{j,\nu+1}} \,=\,2^j\,\th_{j-1,\nu},
\Eeas
where the last equality follows from the definition of the shifted Haar system in \eqref{thjnu} (if $j\geq1$).
Combining the two expressions one obtains \eqref{Nder}. The case $j=0$ is similar.
\end{proof}

The Chui-Wang polygon \cite[Theorem 1]{ChuiWang92}, \cite[6.2.5, 6.2.6]{Chui92} is the compactly supported wavelet given by 
\begin{equation} \label{bk-coefficients} 
  \begin{split}
    \psi(x) &= \frac{1}{2}\sum\limits_{\ell = 0}^2(-1)^{\ell}\cN_4(\ell+1)\mathcal{N}_{4}^{\,''}(2x-\ell)\,,\\
    &= \frac{1}{2}\sum\limits_{\ell = 0}^2(-1)^{\ell}\mathcal{N}_4(\ell+1)\sum\limits_{j=0}^2(-1)^j\binom{2}{j}\mathcal{N}_2(2x-j-\ell)\\
    &= \sum\limits_{k\in \bbZ} b_k\cdot \mathcal{N}_2(2x-k)
  \end{split}
\end{equation}
where $(b_k)$ is a finite sequence supported in $\{0,\ldots,4\}$.
 The wavelet $\psi$ is compactly supported and has two vanishing moments, i.e., 
$\int \psi(x)  dx = \int x\psi(x)dx = 0$. For $j\in \bbN_0$ and $\mu\in \bbZ$ let 
\[\psi_{j,\mu} (x)=\psi(2^j x-\mu), \]
while for $j=-1$ we let
\[
\psi_{-1,\mu}(x)=\cN_{2;0,\mu}(x)=\cN_2(x-\mu).
\]
 Then we have the orthogonality relations  with respect to different scales
$$
	\langle \psi_{j,\mu}, \psi_{j',\mu'}  \rangle = 0\quad,\quad j \neq j'\,.
$$ 

In contrast to that it only forms a Riesz basis within one and the same scale with respect to different translations. 
The dual basis can be computed precisely \cite{DeUl20} and does not provide compact support. 
\begin{figure}[h]
 	\includegraphics[scale = 0.45]{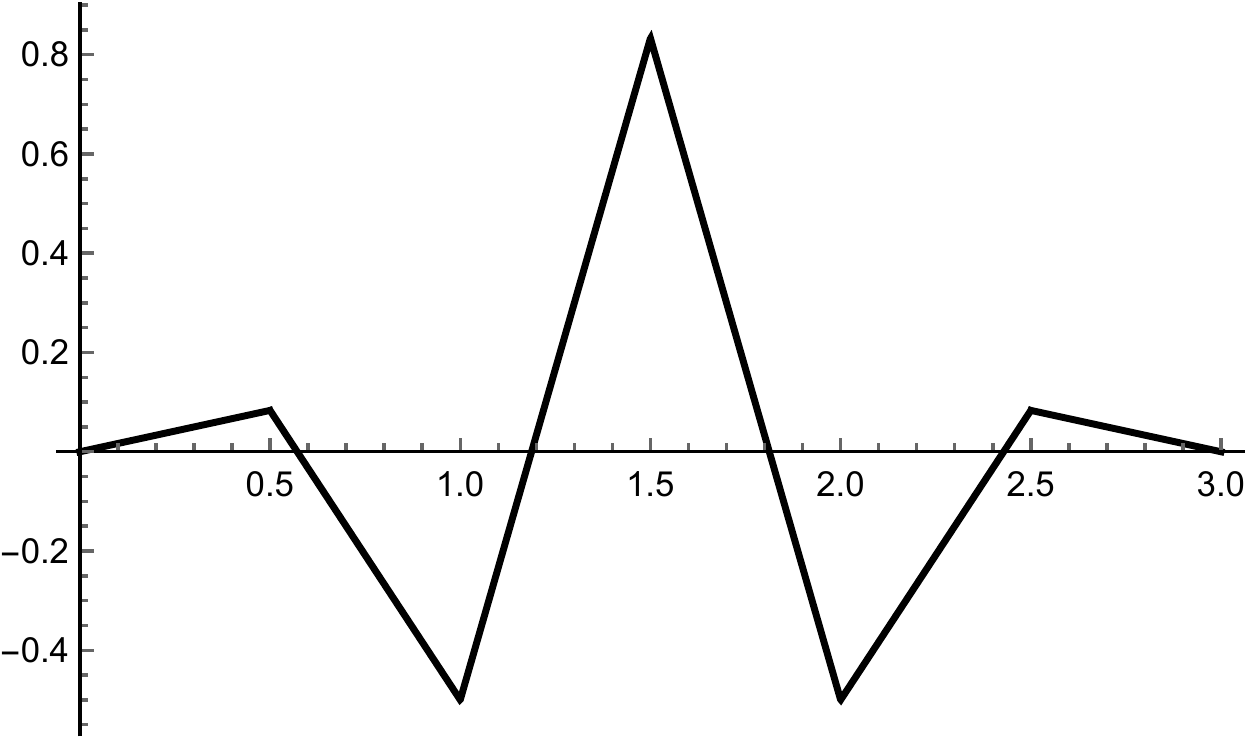}
 	\includegraphics[scale = 0.45]{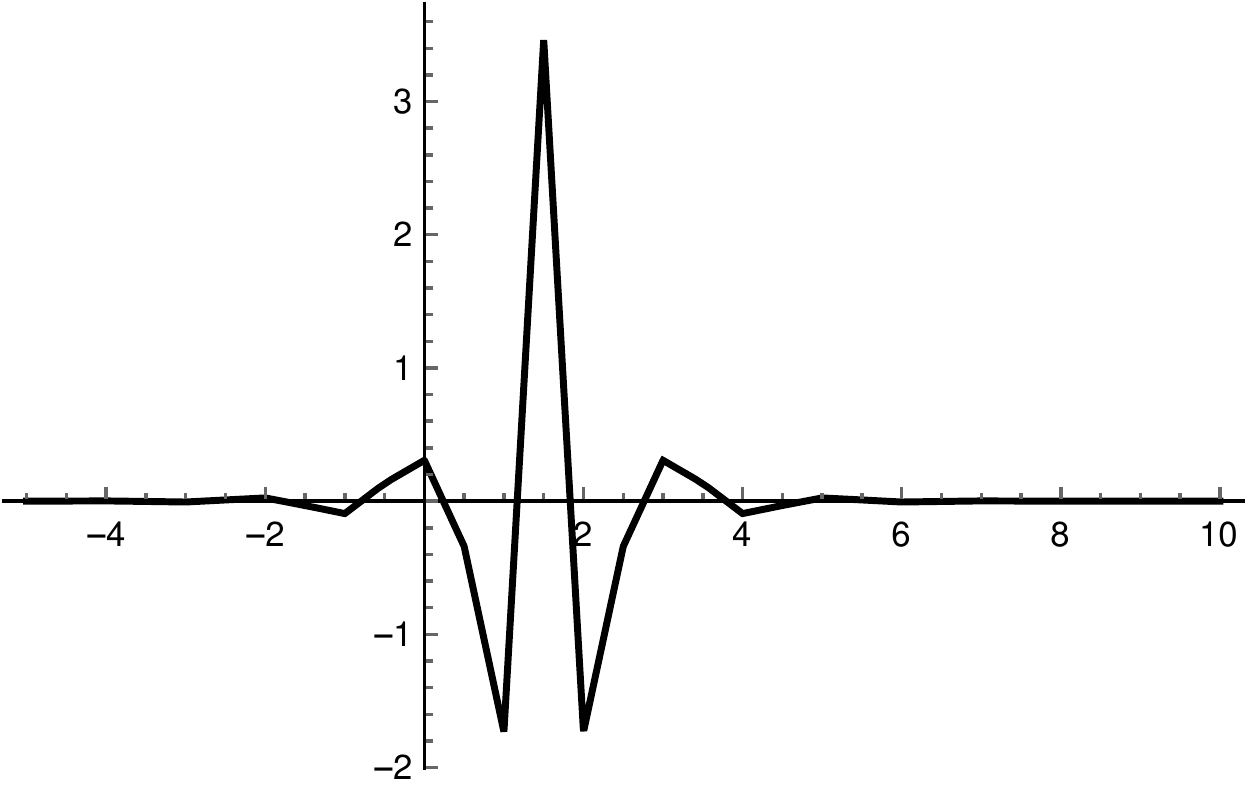}
 	\caption{The Chui-Wang wavelet $\psi_2$ of order $2$ and its dual $\psi^*_2$}
\end{figure}
However, the coefficients $a_k$ in 
\begin{equation}\label{f13}
	\psi^*(x) = \sum\limits_{k\in \bbZ} a_k\psi(x-k)
\end{equation}
are exponentially decaying; see the paper \cite{DeUl20} for explicit formulas for $a_k$.
Observe from \eqref{f13} that also $\psi^\ast$ has two vanishing moments. 

Using this construction, Derevianko and Ullrich provided the following characterization for
the $F^s_{p,q}$ and $B^s_{p,q}$ spaces; see \cite[Theorem 5.1]{DeUl20}.

\begin{theorem}\cite{DeUl20} \label{thm:Chui-Wang} Let $0<p\leq \infty$, $0<q\leq \infty$. 

(i) If $p<\infty$ and \Be \label{Chui-1} \max\{1/p,1/q\}-2 < r < 1+\min\{1/p,1/q, 1\}\Ee then we have for all $f \in B^{-2}_{\infty,1}(\bbR)$  
\begin{equation}\label{f0_C}        
    \|f\|_{F^r_{p,q}}\approx\Big\|\Big(\sum\limits_{j=-1}^{\infty} 2^{jrq}\Big|\sum\limits_{\mu\in \bbZ} 
	2^j\langle f,\psi_{j,\mu}\rangle\bbone_{I_{j,\mu}}\Big|^q \Big)^{1/q}\Big\|_p.
\end{equation}

(ii) If 
\Be \label{Chui2} 1/p-2<r<\max\{1+1/p,2\} \Ee  then we have for all $f\in B^{-2}_{\infty,1}(\bbR)$
\begin{equation}\label{f2_C}
	\|f\|_{B^r_{p,q}} \approx
	\Big(\sum\limits_{j=-1}^{\infty} 2^{j(r-1/p) q}\Big[\sum\limits_{\mu\in \bbZ} 
	|2^j\langle f,\psi_{j,\mu}\rangle|^p\Big]^{q/p} \Big)^{1/q} \,.
\end{equation}
\end{theorem}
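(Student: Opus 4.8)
The plan is to prove Theorem~\ref{thm:Chui-Wang} as a \emph{biorthogonal wavelet characterization} built on the pair consisting of the Chui--Wang generator $\psi$ in \eqref{bk-coefficients} and its dual $\psi^\ast$ in \eqref{f13}, following the same local-means scheme already used in the excerpt for the Haar system. First I would record the structural facts that make the coefficients meaningful on $B^{-2}_{\infty,1}(\bbR)$: each $\psi_{j,\mu}$ is, up to the dilation normalization, a fixed finite linear combination of second differences of the $C^2$ spline $\cN_4$, so $f\mapsto\langle f,\psi_{j,\mu}\rangle$ extends to a bounded functional on $B^{-2}_{\infty,1}$ (uniformly in $\mu$ for fixed $j$), and the same holds for $\psi^\ast_{j,\mu}$ thanks to the exponential decay of the coefficients $a_k$. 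The cross-scale orthogonality together with the Riesz-basis property inside each scale then yields, for a suitable normalization of the $\psi^\ast_{j,\mu}$, the biorthogonal reconstruction
\[
f=\sum_{j\ge-1}\sum_{\mu\in\bbZ}\langle f,\psi_{j,\mu}\rangle\,\psi^\ast_{j,\mu},
\]
with convergence in $B^{-2}_{\infty,1}$; this identity is what produces the ``$\gtrsim$'' halves of \eqref{f0_C} and \eqref{f2_C}.

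The analytic heart is a pair of almost-diagonal estimates. For the analysis bound I would insert the decomposition $f=\sum_{k\ge0}L_k\La_k f$ from \eqref{LLak} and estimate $\langle L_k\La_k f,\psi_{j,\mu}\rangle$ separately in the regimes $k\le j$ and $k\ge j$: when $k\le j$ one transfers $L_k$ onto $\psi_{j,\mu}$ and uses the two vanishing moments of $\psi$ against the band-limited (frequencies $\lesssim2^k$) function $L_k\La_k f$, gaining $2^{-2(j-k)}$; when $k\ge j$ one uses the $M$-fold cancellation of $\beta$ inside $L_k$ together with the compact support and $C^{0,1}$ regularity of $\psi$, gaining $2^{-(k-j)}$, localized near $I_{j,\mu}$. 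In either case the leftover integral is controlled, after the usual $\ell^{\min(1,p,q)}$ bookkeeping, by a translate of $\fM^{**}_{\min(j,k),A}\La_k f$ (or of $\fM^{*}_{\min(j,k)}$ of a suitable piece). Plugging these into the $f^r_{p,q}$- and $b^r_{p,q}$-norms of $\{2^j\langle f,\psi_{j,\mu}\rangle\}$, invoking \eqref{Peetremax} and \eqref{fM1}, and summing the two geometric series in $|j-k|$ gives $\|\{2^j\langle f,\psi_{j,\mu}\rangle\}\|_{f^r_{p,q}}\lesssim\|f\|_{F^r_{p,q}}$ and the $b^r_{p,q}$/$B^r_{p,q}$ analogue. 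These series converge exactly under \eqref{Chui-1} and \eqref{Chui2}: the factor $2^{-2(j-k)}$ (the two vanishing moments) forces the lower thresholds $r>\max\{1/p,1/q\}-2$, resp.\ $r>1/p-2$, while the factor $2^{-(k-j)}$ (one order of smoothness) together with the quasi-Banach loss forces the upper thresholds $r<1+\min\{1/p,1/q,1\}$, resp.\ $r<\max\{1+1/p,2\}$.

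For the reverse inequalities I would run the mirror-image argument on the synthesis operator: for a finitely supported sequence $s=\{s_{j,\mu}\}$, estimate $L_k\big(\sum_{j,\mu}s_{j,\mu}\psi^\ast_{j,\mu}\big)$ by testing $L_k$ against each $\psi^\ast_{j,\mu}$, now using the two vanishing moments and the Lipschitz regularity of $\psi^\ast$, with its exponential tails in place of compact support; the same almost-diagonal decay and the same maximal-function estimates give $\big\|\sum_{j,\mu}s_{j,\mu}\psi^\ast_{j,\mu}\big\|_{F^r_{p,q}}\lesssim\|s\|_{f^r_{p,q}}$ and the $B$-analogue (for the Besov case only the scalar bound \eqref{fM1} and $\ell^q$-summation are needed) in the same parameter ranges, and by density the synthesis operator extends boundedly. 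Taking $s_{j,\mu}=\langle f,\psi_{j,\mu}\rangle$ and applying the reconstruction formula then yields $\|f\|_{F^r_{p,q}}\lesssim\|\{2^j\langle f,\psi_{j,\mu}\rangle\}\|_{f^r_{p,q}}$, and likewise for $B$, completing the equivalences.

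I expect the main obstacle to be the non-compact support of the dual generator. Since $\{\psi(2^j\cdot-\mu)\}_\mu$ is only a Riesz basis of its span, $\psi^\ast$ is merely exponentially decaying, so all the kernel estimates above must be performed for molecules with exponential tails rather than finite support, and both the convergence of the reconstruction series and the completeness of $\{\psi_{j,\mu}\}$ in $B^{-2}_{\infty,1}$ have to be checked directly. Once the explicit exponential decay of the $a_k$ in \eqref{f13} is in hand this is routine, if somewhat delicate, bookkeeping; endpoint sharpness of \eqref{Chui-1}--\eqref{Chui2} is not needed for the theorem, but could be read off from the same geometric-series thresholds.
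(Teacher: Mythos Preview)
The paper does not prove Theorem~\ref{thm:Chui-Wang} at all: it is quoted from \cite{DeUl20} (their Theorem~5.1), and the only contribution made here is Remark~\ref{R_DU}. Your outline --- analysis and synthesis bounds for the biorthogonal pair $(\psi,\psi^\ast)$ via almost-diagonal kernel estimates, followed by Peetre and Fefferman--Stein maximal inequalities --- is the standard scheme for such wavelet characterizations and is, in spirit, what \cite{DeUl20} does.

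There is, however, one genuine gap in your sketch, and it is exactly the point of Remark~\ref{R_DU}. You assert that the geometric series in the synthesis bound already converge throughout the range \eqref{Chui-1}, but this is not what the direct local-means argument gives for $F$-spaces. The obstruction is the same as in Step~1 of the proof of Proposition~\ref{prop4}: the Kyriazis-type maximal estimate (Lemma~\ref{L_kyr}) for the regime $j\le k$ only produces a factor $a(\ell)$ that is constant in $\ell\le 0$ for Haar, and gains one extra order for Chui--Wang, which yields merely $r<1$, not $r<1+\min\{1/p,1/q,1\}$. As Remark~\ref{R_DU} explains, \cite{DeUl20} in fact states part~(i) only with the restriction $r<1$; the full range in \eqref{Chui-1} is obtained by complex interpolation in the triple $(s,1/p,1/q)$ between this region and the Besov diagonal $p=q$, exactly as in Step~3 of Proposition~\ref{prop4}. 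Your proposal does not mention this step.

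A smaller point: in the regime $k\ge j$ you claim only the decay $2^{-(k-j)}$, citing the $C^{0,1}$ regularity of $\psi$. In fact, because $\psi$ is piecewise \emph{linear} and $\beta$ has at least two vanishing moments, $L_k\psi_{j,\mu}$ vanishes away from $O(2^{-k})$-neighbourhoods of the finitely many kinks and has size $O(2^{j-k})$ there; this gives decay $2^{-2(k-j)}$, which is what one actually needs to reach the lower threshold $\max\{1/p,1/q\}-2$ (rather than $\max\{1/p,1/q\}-1$) in the analysis direction.
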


\begin{remark}\label{R_DU}
Concerning part (i), we remark that the result stated in \cite[Theorem 5.1]{DeUl20}, requires the additional restriction $r<1$, 
which comes from a similar restriction in \cite[Proposition 5.4]{DeUl20}. This restriction, however, can be lifted and replaced
by $r<1+\max\{1/p,1/q\}$, using a complex interpolation argument which involves part (ii) (case $p=q$), as we discuss 
in Step 3 of Proposition \ref{prop4} below.
\end{remark}

\section{Haar functions as linear functionals on $\sB$:\\Proof of Proposition \ref{prop:test-funct}} 
\label{sec:distrspace}
\subsection{\it Proof of (i) and (ii)} \label{SS_3.1}
Since every $h\in \sH$ is a difference of two characteristic functions of intervals of length $\le 1$ part  (ii) is an immediate consequence of part (i). It suffices to analyze  $\la_I$ on $\sB$,  for each bounded interval $I$.
Let $f\in \sB=B^{-1}_{\infty,1}(\SR)$. Using the decomposition in \eqref{LLak}
we can write $f=\sum_{k=0}^\infty  L_k f_k$ where the Fourier transform  $\widehat f_k$ is supported in $\{\xi:|\xi|\le 2^k\} $ and the $f_k$ satisfy
\Be\label{eq:Bspace-dec}\sum_{k\ge 0} 2^{-k} \|f_k\|_\infty \lc \|f\|_{\sB}; \Ee here,  $L_k f=\beta_k*f$
where $\beta_0,\beta$ are even functions in $C^\infty_c(-1/2,1/2)$, and  $\beta_k=2^{k}\beta(2^{k}\cdot)$,  for $k\ge 1$. 
Also, $\int\beta (x)dx=0$. 

We let $\lambda_I(f):=\sum_{k=0}^\infty\lan{f_k}{L_k\bbone_I}$, which we sometimes denote by $\lan{f}{\bbone_I}$.
Note that 
\Be\label{eq:elemk=0} \|L_0\bbone_I\|_1\lc |I|.\Ee By \eqref{eq:Bspace-dec} one needs   to show that $\|L_k  \bbone_I \|_1\lc  2^{-k}$ for $k\ge 1$;  one actually gets the better estimate
\Be\label{eq:elem} 
\|L_k  \bbone_I \|_1\lc  \min\{ |I|, 2^{-k}\}, \quad k\ge 1.
\Ee
Hence $|\lambda_I(f)|\lc \max\{|I|,1\} \sum_{k\ge 0} 2^{-k} \|f_k\|_\infty$ and we  deduce that $\la_I$ extends to a  bounded  linear functional on $\sB$, with $\|\la_I\|_{\sB^*}\lc \max\{1,|I|\}$.

It remains to verify  \eqref{eq:elem}. Fix $I$, with center $y_I$, 
and assume first that $2^{-k}>|I|$. Then the function 
$L_k \bbone_I=\beta_k*\bbone_I$ is supported in an interval centered at $y_I$ with length $O(2^{-k})$ 
and satisfies
$|\beta_k *\bbone_I(x)|\leq \|\beta_k\|_\infty\,\|\bbone_I\|_1
\lesssim 2^{k} |I|.$
Thus we obtain
$\|\beta_k *\bbone_I\|_1\lesssim |I| $ which is in \eqref{eq:elem} in this case.

Now assume $2^{-k}\le |I|$. Let $y_+$ and $y_-$ be the endpoints of $I$.
 Let $\cU_k$ be the union of the two closed  intervals of length $2^{-k+2}$ 
centered at $y_+$ and $y_-$.
Then $\beta_k*\bbone_I $ is supported in $\cU_k$,
which has size  $|\cU_k|=O(2^{-k})$. This assertion, combined with 
$\|\beta_k *\bbone_I \|_\infty \leq \|\beta_k\|_1\,=O(1)$, also implies
\eqref{eq:elem} in this case.

\subsection{\it Proof of (iii)} For the argument below we shall use the dyadic averaging operators,
defined for $N\geq0$ by
\Be
\label{expect}
\bbE_N f(x):=\sum_{\mu \in \bbZ} \,2^{N} \,\langle f, \bbone_{I_{N,\mu }}\rangle\,\bbone_{I_{N,\mu }}(x).
\Ee
In view of (i), these operators can be defined acting
on distributions $f\in\sB$ such that $E_N:\sB\to L_\infty$ has operator norm $O(2^N)$.

Let now $f\in\sB$ such that $\inn fh=0$, for all $h\in\sH$. Since each $\bbone_{I_{N,\mu}}$ belongs to $\Span\cH$, 
this implies that $\SE_Nf=0$, for all $N\geq0$. 
We then must show that $f=0$, which is a direct consequence of part (b) in the following lemma.

\begin{lemma}
\label{L_sB}
(a) The operators $\SE_N$ satisfy the uniform bound

\[
\sup_{N\geq0}\big\|\SE_N\|_{\sB 
\to B^{-1}_{\infty,\infty}}<\infty.
\]

(b) If $f\in \sB 
$\; then $\;\|\SE_Nf -f \|_{B^{-1}_{\infty,\infty}}\to0$ as $N\to\infty$.
\end{lemma}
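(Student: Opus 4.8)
The plan is as follows. The real content is part (a), the uniform bound $\sup_N\|\SE_N\|_{\sB\to B^{-1}_{\infty,\infty}}<\infty$; part (b) will then drop out of a soft three‑$\varepsilon$ argument. Throughout I would work with a standard inhomogeneous Littlewood–Paley decomposition $\{\Delta_j\}_{j\ge0}$ (spectrum of $\Delta_jf$ in $\{|\xi|\le2\}$ for $j=0$ and in $\{2^{j-1}\le|\xi|\le2^{j+1}\}$ for $j\ge1$), and use the equivalences $\|g\|_{B^{-1}_{\infty,\infty}}\approx\sup_{j\ge0}2^{-j}\|\Delta_jg\|_\infty$ and $\|f\|_\sB\approx\sum_{j\ge0}2^{-j}\|\Delta_jf\|_\infty$. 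The whole of (a) is then reduced to the single "almost orthogonality" estimate
\Be\label{eq:key-plan}
\|\Delta_{k'}\SE_N\Delta_kf\|_\infty\;\lesssim\;2^{k'-k}\,\|\Delta_kf\|_\infty\qquad(k\ge1,\ k'\ge0),
\Ee
with a constant \emph{independent of $N$}; the case $k=0$ is trivial since $\SE_N$ is a contraction and $\Delta_{k'}$ is bounded on $L_\infty$.

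The main obstacle is exactly \eqref{eq:key-plan}: the naive route—bound $\|\SE_N\Delta_kf\|_\infty$ and then apply $\Delta_{k'}$—is far too lossy, because $\SE_N\Delta_kf$ is a step function of height comparable to $\|\Delta_kf\|_\infty$ whose block means oscillate, and a random‑looking such step function has large $B^{-1}_{\infty,\infty}$ norm. The point that saves this is that $\SE_N\Delta_kf$ is the \emph{derivative} of a uniformly small function. Concretely: since the spectrum of $\Delta_kf$ ($k\ge1$) misses the origin, I would write $\Delta_kf=\partial G_k$ with $G_k=\rho_k(D)\Delta_kf$, where $\rho_k(\xi)=\chi_k(\xi)/(i\xi)$ and $\chi_k$ is a smooth cut‑off equal to $1$ on the annulus $\{2^{k-1}\le|\xi|\le2^{k+1}\}$ and supported in $\{2^{k-2}\le|\xi|\le2^{k+2}\}$; a routine scaling estimate of the $L_1$‑norm of the kernel of $\rho_k(D)$ shows $G_k$ is a bounded ($C^\infty$) function with $\|G_k\|_\infty\lesssim2^{-k}\|\Delta_kf\|_\infty$. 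Now let $G_{k,N}$ be the continuous, piecewise‑affine interpolant of $G_k$ at the dyadic grid $2^{-N}\SZ$: its slope on $I_{N,\mu}$ is $2^N\bigl(G_k((\mu{+}1)2^{-N})-G_k(\mu 2^{-N})\bigr)=2^N\int_{I_{N,\mu}}\Delta_kf$, so its distributional derivative is precisely $\partial G_{k,N}=\SE_N\Delta_kf$, while $\|G_{k,N}\|_\infty\le\sup_\mu|G_k(\mu 2^{-N})|\lesssim2^{-k}\|\Delta_kf\|_\infty$ (interpolation does not increase the sup‑norm). Hence $\Delta_{k'}\SE_N\Delta_kf=\partial\bigl(\Delta_{k'}G_{k,N}\bigr)$, and Bernstein's inequality together with the $L_\infty$‑boundedness of $\Delta_{k'}$ gives $\|\Delta_{k'}\SE_N\Delta_kf\|_\infty\lesssim2^{k'}\|\Delta_{k'}G_{k,N}\|_\infty\lesssim2^{k'}\|G_{k,N}\|_\infty\lesssim2^{k'-k}\|\Delta_kf\|_\infty$, with no dependence on $N$, which is the crux.

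To finish (a): since $\sum_k\Delta_kf\to f$ in $\sB$ (here $q=1$ is used) and $\SE_N:\sB\to L_\infty$ is continuous by Proposition~\ref{prop:test-funct}(i), one has $\Delta_{k'}\SE_Nf=\sum_k\Delta_{k'}\SE_N\Delta_kf$; combining \eqref{eq:key-plan}, the trivial $k=0$ term, and the universal bound $\|\Delta_{k'}\SE_N\Delta_kf\|_\infty\lesssim\|\Delta_kf\|_\infty$, I get for each $k'$
\[
2^{-k'}\|\Delta_{k'}\SE_Nf\|_\infty\;\lesssim\;\sum_k 2^{-k'}\min\{1,2^{k'-k}\}\,\|\Delta_kf\|_\infty\;\le\;\sum_k 2^{-k}\|\Delta_kf\|_\infty\;\approx\;\|f\|_\sB,
\]
using $2^{-k'}\min\{1,2^{k'-k}\}\le2^{-k}$ for all $k,k'\ge0$; taking the supremum over $k'$ yields (a).

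Finally, for (b): with $S_Kf=\sum_{j\le K}\Delta_jf$, the function $S_Kf$ is band‑limited, hence bounded and (by Bernstein) Lipschitz, so $\|\SE_N S_Kf-S_Kf\|_\infty\le 2^{-N}\|(S_Kf)'\|_\infty\to0$ as $N\to\infty$ for each fixed $K$, and therefore also in $B^{-1}_{\infty,\infty}$ since $L_\infty\hookrightarrow B^{-1}_{\infty,\infty}$. As $\|f-S_Kf\|_\sB\to0$, given $\varepsilon>0$ I would first choose $K$ so that, by part (a), $\|\SE_N(f-S_Kf)\|_{B^{-1}_{\infty,\infty}}+\|f-S_Kf\|_{B^{-1}_{\infty,\infty}}<\varepsilon/2$ for \emph{all} $N$, and then $N_0$ so that $\|\SE_N S_Kf-S_Kf\|_{B^{-1}_{\infty,\infty}}<\varepsilon/2$ for $N\ge N_0$; splitting $\SE_Nf-f=\SE_N(f-S_Kf)+(\SE_N S_Kf-S_Kf)+(S_Kf-f)$ then concludes the proof.
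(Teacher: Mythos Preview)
Your argument is correct. Part (b) is essentially identical to the paper's: both approximate $f$ by a partial Littlewood--Paley sum $g=S_Kf$ (in the paper's notation $g=\sum_{k\le J}L_kf_k$), observe that $g$ is Lipschitz so that $\|\SE_Ng-g\|_\infty\to0$, and close with the same three-term splitting using (a) and the trivial embedding $L_\infty\hookrightarrow B^{-1}_{\infty,\infty}$.

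Part (a), however, takes a genuinely different route. The paper does not give a self-contained argument here; it simply invokes the four propositions in \cite[\S4]{GaSeUl21} (specialized to $s=-1$, $p=\infty$), which establish the uniform $B^{-1}_{\infty,1}\to B^{-1}_{\infty,\infty}$ bound for $\SE_N$ via direct kernel estimates for $L_{k'}\SE_N L_k$. Your proof instead bypasses that machinery with the antiderivative trick: writing $\Delta_kf=\partial G_k$ with $\|G_k\|_\infty\lesssim2^{-k}\|\Delta_kf\|_\infty$, and then recognizing $\SE_N\Delta_kf$ as the derivative of the piecewise-affine interpolant $G_{k,N}$ of $G_k$ (which inherits the same $L_\infty$ bound), so that Bernstein applied after $\Delta_{k'}$ immediately gives the key estimate $\|\Delta_{k'}\SE_N\Delta_kf\|_\infty\lesssim2^{k'-k}\|\Delta_kf\|_\infty$. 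This is a clean and fully self-contained argument; its cost is that it is specific to dimension one and to the particular regularity index $s=-1$ (the derivative/antiderivative relation being what makes the piecewise-linear interpolation step exact), whereas the cited propositions in \cite{GaSeUl21} cover a range of $s$ and $p$ in $\SR^d$.
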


\begin{proof}

Part (a) is implicit in \cite{GaSeUl21}. Indeed, it follows by combining the estimates stated 
in the four propositions in \cite[\S4]{GaSeUl21}, for the cases $s=-1$ and $p=\infty$.

We now show part (b). Let $f\in\sB$ and write $f=\sum_{k=0}^\infty L_k f_k$ as at the beginning of \S\ref{SS_3.1}. The above series converges in $\cS'$ and also in the $\sB$-norm. Then, given $\e>0$ one can find $g=\sum_{k=0}^JL_kf_k$ such that
\[
\|f-g\|_\sB<\e.
\]
Observe that $g$ is bounded, since
\[
\|g\|_\infty\leq \sum_{k=0}^J\|L_kf_k\|_\infty\lesssim 
\sum_{k=0}^J \|f_k\|_\infty\lesssim 2^J\|f\|_\sB.
\]
A similar reasoning shows that $\|g'\|_\infty<\infty$, so in particular $g$ is uniformly continuous.
Thus exists an integer $N_0\in\SN$ such that
\[
\|g-\SE_Ng\|_\infty<\e, \text{ for all }\,N\geq N_0.
\]
Combining these assertions, and using the trivial embeddings \[
L_\infty\hookrightarrow \sB\hookrightarrow B^{-1}_{\infty,\infty},
\]
we obtain, for all $N\geq N_0$,
\Beas
\|f-\SE_Nf\|_{B^{-1}_{\infty,\infty}} & \leq & 
\|f-g\|_{\sB}
+\|g-\SE_Ng\|_\infty + 
\|\SE_N(g-f)\|_{B^{-1}_{\infty,\infty}}\\
& \lesssim & 2\e+ C\,\|g-f\|_\sB\lesssim \e,
\Eeas
where in the second inequality we have used part (a).
\end{proof}
\begin{remark}\label{triebel_rem} As H. Triebel pointed out to us, it is possible to give a different proof of Proposition \ref{prop:test-funct} based on duality identities as in
\cite[Remark 2, p.180]{Tr83}.
To do so, one can regard $\lan{f}{\bbone_I}$ as a duality pairing using the facts $\bbone_I\in B^1_{1,\infty}=(\mathring{B}^{-1}_{\infty,1})^*$, and $f\in \mathring{B}^{-1}_{\infty,1}$ whenever $f\in\sB$ with compact support.
\end{remark}

\section{Characterizations by Haar frames:\\Proofs of Theorems \ref{mainT-intro} and \ref{mainB-intro}}
\label{sec:framescharacterization}

The proofs  of Theorems \ref{mainT-intro} and \ref{mainB-intro}  will follow from  the four Propositions 
\ref{prop1}, \ref{prop:duality}, \ref{prop4} and \ref{main1} stated below.

The first proposition is a strengthening of \cite[Proposition 2.8]{Tr10}. It gives one of the inclusions asserted in 
Theorems \ref{mainT-intro} and \ref{mainB-intro}. The  region of indices is the same as in Figure \ref{fig_frame}. We set $\fc(f)=\{\fc_{j,\mu}(f)\}_{j\ge -1, \mu\in \bbZ}$
with $\fc_{j,\mu}(f)$ as in \eqref{ext-Haar-coeff}.

\begin{proposition}\label{prop1} Let $0< p, q \leq \infty$ and $s\in\SR$.

(i) If $p<\infty$ and $\max\{1/p-1,1/q-1\} < s < 1\,$,
then for all $f\in F^s_{p,q}$   
\Be \|\fc(f)\|_{f^s_{p,q}}
	\lesssim \|f\|_{F^s_{p,q}}\,.
	\label{fF1}
	\Ee
(ii) If $1/p-1 <s<1$, then
for all $f\in B^s_{p,q}$
\[ \|\fc(f)\|_{b^s_{p,q}}
 \lc \|f\|_{B^s_{p,q}}.	
\]
\end{proposition}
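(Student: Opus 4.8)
The strategy is to bound the extended Haar coefficients $\fc_{j,\mu}(f)$ by a local average of a Peetre-type maximal function, and then to invoke the maximal inequalities \eqref{Peetremax}, \eqref{Peetremax1} together with the local means characterizations \eqref{localmeans_B}, \eqref{localmeans_F}. The first reduction is to the main scales $j\ge 0$; the term $j=-1$ is handled separately using $\la_I$ from Proposition \ref{prop:test-funct}(i) and the trivial embeddings, since $\fc_{-1,\mu}(f)=\inn{f}{\bbone_{[\mu,\mu+1)}}$. For $j\ge 0$, write $f=\sum_{k\ge 0}L_k\La_k f$ as in \eqref{LLak}, so that
\[
\inn{f}{\wt h_{j,\nu}}=\sum_{k\ge 0}\inn{L_k\La_k f}{\wt h_{j,\nu}}=\sum_{k\ge 0}\inn{\La_k f}{L_k\wt h_{j,\nu}},
\]
using that $\beta$ is even. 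I would then split the sum at $k=j$: for $k\le j$ I exploit the vanishing moments (the cancellation \eqref{betacanc}, indeed one moment suffices) of $\wt h_{j,\nu}$ and the smoothness/localization of $\beta_k$; for $k>j$ I exploit instead the cancellation of $\beta_k$ together with the (bounded, $L_1$-normalized up to $2^j$) bump $\wt h_{j,\nu}$.

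\textbf{Key estimate.} The goal of the pointwise work is to prove, for a suitable $A>\max\{1/p,1/q\}$ and some $\eps>0$ with $\eps<\min\{1,A^{-1}\cdot(\text{slack})\}$, a bound of the shape
\[
2^j|\inn{f}{\wt h_{j,\nu}}|\;\lesssim\;\sum_{k\ge 0}2^{-|j-k|\eps}\,\inf_{x\in I_{j,\nu}^{*}}\fM^{**}_{k,A}(\La_k f)(x),
\]
where $I_{j,\nu}^{*}$ is a fixed dilate of the support of $\wt h_{j,\nu}$; equivalently $\fc_{j,\mu}(f)\bbone_{I_{j,\mu}}(x)\lesssim\sum_k 2^{-|j-k|\eps}\fM^{**}_{k,A}(\La_k f)(x)$ after summing the two ($2\mu$, $2\mu+1$) contributions and noting their supports are adjacent to $I_{j,\mu}$. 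Here the factor $2^{-|j-k|\eps}$ comes out as $2^{(k-j)}$ (from one vanishing moment of $\wt h_{j,\nu}$, times $2^{-j}$ normalization versus the $2^j$ prefactor) when $k\le j$, and as $2^{(j-k)}$ (from one vanishing moment of $\beta$) when $k>j$; the condition $s<1$ is exactly what lets one vanishing moment on the Haar side suffice, while $s>1/p-1$ (resp. $s>\max\{1/p-1,1/q-1\}$) gives the room to absorb the $k\le j$ tail after weighting by $2^{js}$. Plugging this into $\|\cdot\|_{b^s_{p,q}}$ (resp. $\|\cdot\|_{f^s_{p,q}}$), using the triangle inequality in $\ell_q$ (or its $\rho$-quasi-triangle version when $q<1$) to sum the geometric factor $2^{-|j-k|\eps}$, and applying \eqref{Peetremax1} (resp. \eqref{Peetremax}) together with $\big(\sum_k(2^{ks}\|\La_k f\|_p)^q\big)^{1/q}\lesssim\|f\|_{B^s_{p,q}}$ (resp. the $F$-analogue) stated right after \eqref{LLak}, yields \eqref{fF1} and the Besov bound. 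One must check that $A$ can be chosen with $A>\max\{1/p,1/q\}$ while keeping $\eps-$slack positive; this is where the open endpoints in \eqref{params} enter, and is routine given $\La_k f\in\cE_k$.

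\textbf{Main obstacle.} The technically delicate point is the case $k\le j$, i.e. controlling $\inn{\La_k f}{L_k\wt h_{j,\nu}}$ by a maximal function \emph{evaluated at scale $k$} rather than at the finer scale $j$: one has the bump $\wt h_{j,\nu}$ of width $2^{-j}$, so its $L_1$ mass against the slowly varying, smooth $L_k f$-kernel only gives cancellation $O(2^{k-j})$, but the resulting expression is naturally $\fM^{**}_{j,A}$-controlled, and one must pass from scale $j$ to scale $k$. This is handled by the elementary inequalities in \eqref{eq:fMk*}--\eqref{5.8} (trading $\fM^*_k$ for $2^{(j-k)/p}\fM_j$, or rather its reverse), at the cost of a harmless $2^{(j-k)/p}$ which must be beaten by the gain $2^{(k-j)}$ — again possible precisely because $s<1$. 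I would also need to be slightly careful that the sum over $\nu$ at fixed $j$ of $\fc_{j,\mu}(f)\bbone_{I_{j,\mu}}$ has bounded overlap, which it does since each $\wt h_{j,\nu}$ is supported in at most two consecutive dyadic intervals $I_{j,\cdot}$. With these bookkeeping points in place the two assertions follow; note the proof never uses that $s<1/p$ or $s<1/q$, which is why the region here is strictly larger than \eqref{eq:uncond-range}.
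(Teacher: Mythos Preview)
Your overall strategy is the paper's: decompose $f=\sum_k L_k\La_k f$, bound $\inn{\La_k f}{L_k\wt h_{j,\nu}}$ using the cancellation of $\wt h_{j,\nu}$ when $k\le j$ and that of $\beta_k$ when $k>j$, then control by Peetre maximal functions and sum. The confusion is in the bookkeeping of the $k>j$ regime, and it makes your ``key estimate'' false in the quasi-Banach range.

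For $k>j$ the function $\beta_k*\wt h_{j,\nu}$ is supported in $O(2^{-k})$-neighbour\-hoods of the three jump points of $\wt h_{j,\nu}$, which are spread over an interval of length $\sim 2^{-j}$. Hence for $x\in I_{j,\mu}$ one only obtains
\[
2^j|\inn{\La_kf}{L_k\wt h_{j,\nu}}|\;\lesssim\; 2^{-(k-j)}\,\fM^*_j(\La_kf)(x),
\]
and passing to $\fM^{**}_{k,A}$ (needed in order to invoke \eqref{Peetremax}) costs a factor $(1+2^{k-j})^A\sim 2^{(k-j)A}$. The net coefficient is $2^{(k-j)(A-1)}$, which \emph{grows} whenever $A>1$, i.e.\ precisely in the quasi-Banach range; so a uniform bound $2^{-|j-k|\eps}$ with $\eps>0$ is not available there. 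The correct bookkeeping is the paper's \eqref{pointwise}--\eqref{f6}: one does not require decay in $\ell=k-j>0$, only that $a(\ell,A)\,2^{-s\ell}=2^{(A-1-s)\ell}$ be summable, forcing $A<1+s$ and hence $s>\max\{1/p,1/q\}-1$. You attribute this lower bound on $s$ to the $k\le j$ tail, but it is the other way round: for $k\le j$ the support of $\beta_k*\wt h_{j,\nu}$ already has diameter $\sim 2^{-k}$, the control by $\fM_k(\La_kf)(x)\le\fM^{**}_{k,A}(\La_kf)(x)$ is free, the gain stays at $2^{-(j-k)}$, and summability after weighting gives $s<1$. Likewise your use of \eqref{5.8} belongs to $k>j$, not $k\le j$: in the paper it appears only in the $B$-case, in $L_p$ norm, to replace the loss $2^{(k-j)A}$ by the sharper $2^{(k-j)/p}$ and thereby remove any $q$-dependence in part (ii). Once the two regimes are untangled your argument is exactly the paper's proof.
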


\begin{proof} 
To avoid dealing separately with $\widetilde{h}_{j,\nu}$ with $\nu$ even or odd, we prove a slightly more general result.
For a fixed $\dt\in[0,1]$ and for $j\geq0$ and $\mu\in\SZ$, consider the shifted Haar function
\[
h^{\dt}_{j,\mu}(x):=h_{j,\mu}(x-\dt2^{-j})=h\big(2^jx-(\mu+\dt)\big),
\]
whose support is the  interval $I_{j,\mu}+\dt2^{-j}$. 
When $j=-1$, we just let $h^{\dt}_{-1,\mu}=h_{-1,\mu}=\bbone_{[\mu,\mu+1)}$.
Part (i) will then be a consequence
of the following estimate
\Be
\Big\|\Big(\sum\limits_{j=-1}^{\infty} 2^{jsq}\Big|\sum\limits_{\mu\in \bbZ} 
	2^j\langle f, \hdt\rangle\;
	\bbone_{I_{j,\mu}}\Big|^q \Big)^{1/q}\Big\|_p 
	\lesssim \|f\|_{F^s_{p,q}}\,,
\label{fFdt}
\Ee
where the constants are independent of $\dt\in[0,1]$. Indeed, \eqref{fF1} follows from \eqref{fFdt}
applied to $\dt=0$ and $\dt=1/2$.

We now prove \eqref{fFdt} for a fixed $\dt\in[0,1]$. 
In the proof below we denote by $\Dt_{j,\mu}$ the set of discontinuity points of $\hdt$, that is
\[
\Dt_{j,\mu}=\big\{(\mu+\dt+i)2^{-j}\mid i=0,\tfrac12,1\big\}.
\]

\begin{lemma}\label{L_41}
Let $g\in L_1^{\rm loc}(\SR)$, $k\in\SN_0$ and $j\geq-1$. Then
\sline a) If $k\geq j$ then
\Be
\big|2^j\lan g{L_k\hdt} \big|\lesssim 2^{-(k-j)}\,\sum_{z\in\Dt_{j,\mu}}\fM_k(g)(z);
\label{41a}
\Ee
moreover, for every $A>0$,
\Be
\begin{split}
\big|2^j\lan g{L_k\hdt}\big|\,\bbone_{I_{j,\mu}}(x)\, & \lesssim\, 2^{-(k-j)}\,\fM^*_{j}(g)(x)\\
& \lesssim\, 2^{-(k-j)}\,2^{(k-j)A}\,\fM^{**}_{k,A}(g)(x).
\end{split}
\label{41A}
\Ee

\sline b) If $j>k$ then, for every $A>0$,
\Be
\begin{split}
\big|2^j\lan g {L_k\hdt}\big|\,\bbone_{I_{j,\mu}}(x) & \lesssim\, 2^{-(j-k)}\,\fM_{k}(g)(x)\\
& \lesssim\, 2^{-(j-k)}\,\fM^{**}_{k,A}(g)(x).
\end{split}
\label{41b}
\Ee
\end{lemma}
\begin{proof}
\sline a) If $k\geq j$ then the function $L_k\hdt=\beta_k*\hdt$ is supported in $\Dt_{j,\mu}+O(2^{-k})$ and has size
\[
\|\beta_k*\hdt\|_\infty\leq \|\beta_k\|_1=O(1).
\]
This immediately gives \eqref{41a}. Now, if $z\in\Dt_{j,\mu}$ and $x\in I_{j,\mu}$ we have
\Beas
\fM_kg(z)  =  \sup_{|h|\leq 2^{-k}}|g(z+h)| & \leq &  \sup_{|u|\leq 2^{-j+2}}|g(x+u)|= \fM^*_j(g)(x)\\
& \lesssim & 
2^{(k-j)A}\,\fM^{**}_{k,A}(g)(x),
\Eeas
which together with \eqref{41a} proves \eqref{41A}.

\sline b) If $k<j$ and $x\in I_{j,\mu}$, then the function $L_k\hdt=\beta_k*\hdt$ is supported in $x+O(2^{-k})$,
and we can bound its size by
\Be
|L_k\hdt(u)|\lesssim 2^{2(k-j)}.
\label{h2kj}
\Ee
This last assertion follows from the property $\int\hdt=0$, by writing
\Beas
|L_k\hdt(u)| & = & \Big|\int \big(\beta_k(u-y)-\beta_k(u-x)\Big)\,\hdt(y)\,dy\Big|\\
& = & \Big|\int\int_0^1 \beta'_k\big(u-(1-t)x-ty\big)\,dt\,(x-y)\,\hdt(y)\,dy\Big|\\
& \lesssim & 2^{2k-2j},
\Eeas
using in the last step that $|x-y|\leq 2^{-j+1}$ when $x\in I_{j,\mu}$ and $y\in \supp \hdt$.
Combining the above support and size estimates, one easily obtains \eqref{41b}.
\end{proof}

We continue the proof of Proposition \ref{prop1}.i. Let $f\in F^s_{p,q}$, and write it as $f=\sum_{k=0}L_kf_k$
with $f_k=\La_kf$ as in \eqref{LLak}. Note that, since $F^s_{p,q}\subset\sB$, we have
\[
\lan f\hdt = \sum_{k=0}^\infty\lan {f_k}{L_k\hdt}.
\]
Now, the estimates in Lemma \ref{L_41}, suitably applied to each $f_k$, can be grouped into
\Bea
\sum_{\mu}\big|2^j\lan f\hdt\big|\,\bbone_{I_{j,\mu}}(x) & \lesssim & 
\sum_{k\geq0} 2^{-|k-j|} 2^{A\,(k-j)_+}\,\fM^{**}_{k,A}(f_k)(x)\nonumber \\
\mbox{{\tiny ($\ell=k-j$)}} &		\lesssim & \sum\limits_{\ell \in \bbZ} a(\ell,A)\,\fM^{**}_{j+\ell,A} [f_{j+\ell}] (x)\,
=:G_j(x),\label{pointwise}
\Eea
where we set $f_m\equiv 0$ for $m<0$ and 
\begin{equation}\label{f6}
	a(\ell,A) = \left\{\begin{array}{rcl}
		2^{\ell}&:& \ell<0,\\
		2^{(A-1)\ell}&:& \ell\geq 0\,.
	\end{array}\right.
\end{equation}
At this point one takes $L_p(\ell_q)$ quasi-norms of the above expressions.
Letting $u:=\min\{p,q,1\}$, and using the $u$-triangle inequality we obtain
\begin{equation}\label{f9}
\begin{split}
&\Big\|\Big(\sum\limits_{j \ge -1}\Big|2^{js}G_j\Big|^q\Big)^{1/q}\Big\|_p\\
&\lesssim \Big(\sum\limits_{\ell \in \bbZ}
\Big[a(\ell,A)2^{-s\ell}\,\Big\|\Big(
\sum\limits_{j\in\SZ}|2^{(j+\ell)s}\fM^{**}_{j+\ell,A}[f_{j+\ell}](x)|^q\Big)^{1/q}\Big\|_p\Big]^u\Big)^{1/u}\\
&\lesssim \|f\|_{F^s_{p,q}}\;\Big(\sum\limits_{\ell\in\SZ}\big(a(\ell,A)2^{-s\ell}\big)^u\Big)^{1/u}\,\lc\|f\|_{F^s_{p,q}} ,
\end{split}
\end{equation}
where in the last line we use Peetre's maximal inequality \eqref{Peetremax} and  $A>\max\{1/p,1/q\}$,
 and in the last step we additionally need that $A-1 < s < 1$. This can always be achieved for an appropriate choice 
of $A$ because of our assumption $\max \{\frac 1p,\frac 1q\}-1<s<1.$

\

As before, part (ii) in Proposition \ref{prop1} will be a consequence of the more general estimate
\Be
\Big(\sum\limits_{j=-1}^{\infty} 2^{j(s-1/p)q}\Big(\sum\limits_{\mu\in \bbZ}| 2^j\lan f \hdt |^p\Big)^{q/p}\Big)^{1/q} \lc \,
\|f\|_{B^s_{p,q}},
\label{bBdt}
\Ee
for $\dt\in[0,1]$. 
Notice that
\Be
B_j:=2^{-j/p}\Big(\sum_{\mu}\big|2^j\lan f\hdt\big|^p\Big)^\frac1p=\Big\|\sum_\mu\big|2^j\lan f\hdt\big|\,\bbone_{I_{j,\mu}}\Big\|_{p}.
\label{Bj}
\Ee
We shall argue a bit differently to refine the pointwise estimate in \eqref{pointwise}.
Observe from Lemma \ref{L_41} that we can also write 
\Beas
\Big\|\sum_\mu\big|2^j\lan {f_k}\hdt\big|\,\bbone_{I_{j,\mu}}\Big\|_{p} & \lesssim &  2^{-|k-j|}\big\|\fM^*_{\min\{j,k\}}(f_k)\big\|_p\\
\mbox{{\tiny by \eqref{5.8}}} & \lesssim & 2^{-|k-j|}\,2^{\frac{(k-j)_+}p}\,\big\|\fM_{k}(f_k)\big\|_p \\
& = & a(k-j, \tfrac1p)\,\big\|\fM_{k}(f_k)\big\|_p ,
\Eeas
using 
in the last step the definition of $a(\ell,A)$ in \eqref{f6}.
So, letting as before $u:=\min\{p,q,1\}$, and using the $u$-triangle inequality we obtain
\Be
\begin{split}
\Big(\sum_{j\geq-1}(2^{js}&B_j)^q\Big)^\frac1q  \lesssim   
\\
 &\hskip-1cm\lesssim  \Big(\sum\limits_{\ell \in \bbZ}
\Big[a(\ell,\tfrac1p)\,2^{-s\ell}\,\Big(
\sum\limits_{j\in\SZ}|2^{(j+\ell)s}\big\|\fM_{j+\ell}[f_{j+\ell}]\big\|_{p}^q\Big)^{1/q}\Big]^u\Big)^{1/u}\\
&\hskip-1cm \lesssim  \|f\|_{B^s_{p,q}}\;\Big(\sum\limits_{\ell\in\SZ}\big(a(\ell,\tfrac1p)\,2^{-s\ell}\big)^u\Big)^{1/u}\,\lc\|f\|_{B^s_{p,q}}.
\end{split}
\label{f41}
\Ee
Observe that this time we apply the simpler estimate $\|\fM_{k}[f_{k}]\|_{p}\lesssim\|f_k\|_p$, see \eqref{fM1},
while the very last step requires $1/p-1 < s < 1$. 
\end{proof}

\begin{remark}\label{R_fF}
In view of the examples in \cite{SeeUl17}, the condition $s>1/q-1$ is necessary in Proposition \ref{prop1}.i, even for
the validity of the weaker inequality
\Be
	\|f\|_{F^{s, \rm{dyad}}_{p,q}}:=
	\Big\|\Big(\sum\limits_{j=-1}^{\infty} 2^{jsq}\Big|\sum\limits_{\mu\in \bbZ} 
	2^j\langle f,{h}_{j,\nu}\rangle
	\bbone_{I_{j,\mu}}\Big|^q \Big)^{1/q}\Big\|_p 
	\lesssim \|f\|_{F^s_{p,q}}\,.
	\label{fF}
	\Ee
Indeed, arguing as in \cite[\S6]{SeeUl17}, for each $N\geq2$ one constructs a (smooth) function $f=f_N\in F^{1/q-1}_{p,q}(\SR)$ 
and a finite set\footnote{
In the notation of \cite[\S6]{SeeUl17}, one should consider sets $A$ of \emph{consecutive} Haar frequencies, 
so that the associated ``density'' number in \cite[(43)]{SeeUl17} takes the value $Z=N$.} 
$E=E_N\subset\sH$ 
such that if $0<q\leq p<\infty$ then
\Be
\|f\|_{F^{1/q-1}_{p,q}(\SR)}\lesssim N^{1/q} \mand \|P_{E}(f)\|_{F^{1/q-1}_{p,q}(\SR)} \geq N^{1+\frac1q},
\label{fNR}
\Ee
where $P_E(f)$ denotes the projection onto $\Span\{h\}_{h\in E}$. Now observe that  
\[
\|f\|_{F^{s, \rm{dyad}}_{p,q}}\geq \|P_E(f)\|_{F^{s, \rm{dyad}}_{p,q}} \gtrsim \|P_E(f)\|_{F^{s}_{p,q}},
\] 
using in the last step the inequality in \eqref{f2} (which holds for $s=1/q-1$).  
Thus, we conclude from \eqref{fNR} that
\[
{\|f\|_{F^{1/q-1, \rm{dyad}}_{p,q}}}\gtrsim N\,{\|f\|_{F^{1/q-1}_{p,q}}}.
\]
\end{remark}

\

We turn to the converse inequalities in Theorems \ref{mainT-intro} and \ref{mainB-intro},
which will be proved in Proposition \ref{main1} below.
Before doing so we shall need some results concerning the inclusions $F^{s, \rm{dyad}}_{p,q}\hookrightarrow F^s_{p,q}$
(and likewise for $B$-spaces) for the usual Haar system. 
Results of this nature can be found in \cite[Proposition 2.6]{Tr10}, but
we give here direct proofs which are valid in a larger range of indices.
These may have an independent interest.

The first result is obtained by duality from Proposition \ref{prop1}, but
the range of indices is restricted to  $1<p,q<\infty$. Recall the definitions of the sequence spaces  $b^s_{p,q}$, $f^s_{p,q}$ in \eqref{b_sequence} and \eqref{f_sequence}.

\begin{proposition} \label{prop:duality}Let $1<p,q<\infty$.

(i) If $-1<s < \min\{1/p,1/q\}$ and $\beta=\{\beta_{j,\mu}\} \in f^s_{p,q}$ then 
\begin{equation}\label{decompo}
        f:=\sum\limits_{j=-1}^\infty \sum\limits_{\mu \in \bbZ} \beta_{j,\mu}h_{j,\mu}
\end{equation}
converges in $F^s_{p,q}(\bbR)$ and 
  \begin{equation}\label{duality}
	\|f\|_{F^s_{p,q}} \lesssim  \|\beta\|_{f^s_{p,q}}
  \end{equation}

  (ii) If $-1<s<1/p$ and $\beta=\{\beta_{j,\mu}\} \in b^s_{p,q}$ then \eqref{decompo} 
converges in $B^s_{p,q}(\bbR)$ and
  \begin{equation} \label{duality{besov} } 
  \|f\|_{B^s_{p,q}} \lc \|\beta\|_{b^s_{p,q}}.
  \end{equation}
\end{proposition}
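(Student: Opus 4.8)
The plan is to prove both parts by a duality argument that feeds the already-established Proposition~\ref{prop1} in on the dual side. Fix $1<p,q<\infty$, let $1/p+1/p'=1/q+1/q'=1$, and recall the standard duality $\big(F^{-s}_{p',q'}(\bbR)\big)^{*}=F^{s}_{p,q}(\bbR)$ (respectively $\big(B^{-s}_{p',q'}\big)^{*}=B^{s}_{p,q}$), valid for $1<p,q<\infty$, the pairing being the continuous extension of $\langle f,g\rangle=\int_{\bbR}fg$; see \cite{Tr83}. The first point to record is that the hypothesis $-1<s<\min\{1/p,1/q\}$ is precisely the range $\max\{1/p'-1,1/q'-1\}<-s<1$ in which Proposition~\ref{prop1}(i) applies with $(p,q,s)$ replaced by $(p',q',-s)$; likewise $-1<s<1/p$ is exactly $1/p'-1<-s<1$, the range for Proposition~\ref{prop1}(ii). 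Since $F^{s}_{p,q}$ is complete and finitely supported sequences are dense in $f^{s}_{p,q}$ when $p,q<\infty$ (dominated convergence in \eqref{f_sequence}), it suffices to establish \eqref{duality} for finitely supported $\beta$: the partial sums of \eqref{decompo} then form a Cauchy net in $F^{s}_{p,q}$, yielding both convergence of the series in $F^{s}_{p,q}$ and the bound; the same reduction applies in part (ii).

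So fix a finitely supported $\beta$ and put $f=\sum_{j,\mu}\beta_{j,\mu}h_{j,\mu}$, a bounded compactly supported function lying in $F^{s}_{p,q}$ for the range of $s$ under consideration (each $h_{j,\mu}\in F^{s}_{p,q}$ since $s<\min\{1/p,1/q,1\}$). Let $g\in F^{-s}_{p',q'}$; since $-s>1/p'-1$ we have $F^{-s}_{p',q'}\hookrightarrow\sB$, so by Proposition~\ref{prop:test-funct} the coefficients $\langle g,h_{j,\mu}\rangle$ are well defined, and the abstract pairing of $h_{j,\mu}$ with $g$ coincides with $\langle g,h_{j,\mu}\rangle$ — both are continuous in $g\in F^{-s}_{p',q'}$ and agree for $g$ Schwartz; cf.\ Remark~\ref{triebel_rem}. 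Hence, by finite linearity,
\[
\langle f,g\rangle=\sum_{j,\mu}\beta_{j,\mu}\,\langle g,h_{j,\mu}\rangle .
\]
Set $\gamma_{j,\mu}:=|I_{j,\mu}|^{-1}\langle g,h_{j,\mu}\rangle$ (so $\gamma_{j,\mu}=2^{j}\langle g,h_{j,\mu}\rangle$ for $j\ge0$ and $\gamma_{-1,\mu}=\langle g,h_{-1,\mu}\rangle$), and let $b_{j}=\sum_{\mu}\beta_{j,\mu}\bbone_{I_{j,\mu}}$, $c_{j}=\sum_{\mu}\gamma_{j,\mu}\bbone_{I_{j,\mu}}$. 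Using $\int_{\bbR}\bbone_{I_{j,\mu}}\bbone_{I_{j,\mu'}}=|I_{j,\mu}|\,\delta_{\mu\mu'}$ one obtains $\langle f,g\rangle=\sum_{j\ge-1}\int_{\bbR}b_{j}c_{j}\,dx$, so distributing the weights $2^{js}\cdot2^{-js}$ and applying Hölder's inequality first in $\ell^{q}$ against $\ell^{q'}$ and then in $L^{p}$ against $L^{p'}$ gives
\[
|\langle f,g\rangle|\ \le\ \|\beta\|_{f^{s}_{p,q}}\,\|\gamma\|_{f^{-s}_{p',q'}} .
\]

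Finally, since $\widetilde h_{j,2\mu}=h_{j,\mu}$ we have $|\gamma_{j,\mu}|\le\fc_{j,\mu}(g)$ for all $j,\mu$, and $\|\cdot\|_{f^{-s}_{p',q'}}$ is monotone with respect to this inequality; hence Proposition~\ref{prop1}(i), applied to $g$ with exponents $(p',q',-s)$, yields
\[
\|\gamma\|_{f^{-s}_{p',q'}}\ \le\ \|\fc(g)\|_{f^{-s}_{p',q'}}\ \lesssim\ \|g\|_{F^{-s}_{p',q'}}.
\]
Combining the last two displays gives $|\langle f,g\rangle|\lesssim\|\beta\|_{f^{s}_{p,q}}\|g\|_{F^{-s}_{p',q'}}$ for all $g\in F^{-s}_{p',q'}$, whence $\|f\|_{F^{s}_{p,q}}\lesssim\|\beta\|_{f^{s}_{p,q}}$ by duality; together with the density/completeness step this proves (i). Part (ii) is word for word the same, replacing $F,f$ by $B,b$, using $\big(B^{-s}_{p',q'}\big)^{*}=B^{s}_{p,q}$, Hölder in $\ell^{q}(\ell^{p})$ against $\ell^{q'}(\ell^{p'})$ for the sequence pairing, and Proposition~\ref{prop1}(ii), whose hypotheses hold precisely because $-1<s<1/p$. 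The only genuinely delicate point is the identification of the abstract $F^{s}_{p,q}$--$F^{-s}_{p',q'}$ duality pairing with the concrete functionals $f\mapsto\langle f,h_{j,\mu}\rangle$ on $\sB$, which I expect to be the main obstacle; it is handled by density of Schwartz functions in $F^{-s}_{p',q'}$ together with the embedding $F^{-s}_{p',q'}\hookrightarrow\sB$ from Proposition~\ref{prop:test-funct}. The remaining ingredients — the sequence-space Hölder estimate and the truncation argument — are routine, with only a harmless factor of $2$ appearing at scale $j=-1$, where $|I_{-1,\mu}|=1$.
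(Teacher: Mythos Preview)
Your proof is correct and follows essentially the same duality approach as the paper: both arguments feed Proposition~\ref{prop1} in on the dual side $(p',q',-s)$ and use the sequence-space H\"older inequality to bound the $F^s_{p,q}$ norm of the synthesis. The paper phrases this more compactly as ``the analysis operator $\sA f = \{2^j\langle f,h_{j,\mu}\rangle\}$ is bounded, hence its adjoint $\sA'\beta=\sum\beta_{j,\mu}h_{j,\mu}$ is bounded'', while you carry out the duality pairing explicitly, but the content is identical.
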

\begin{proof} Consider the following duality pairing for sequences
\begin{equation}\nonumber
  \begin{split}	
  &\biginn{\alpha}{\beta} :=
	\sum\limits_{j}2^{-j}\sum\limits_{\mu} 
	\alpha_{j,\mu}\,\beta_{j,\mu} 
\\
	&= \sum\limits_{j} \int_{-\infty}^{\infty}\Big(\sum\limits_{\mu}\alpha_{j,\mu} \bbone_{I_{j,\mu}}(x)\Big)\Big(
	\sum\limits_{\mu'}\beta_{j,\mu'}\bbone_{j,\mu'}(x)\Big)\,dx
	\leq \|\alpha \|_{f^s_{p,q}} \|\beta \|_{f^{-s}_{p',q'}}\,.
  \end{split}	
\end{equation}
We define the so-called \emph{analysis} (or sampling) \emph{operator} $\sA$ by
\begin{equation}\label{OpA}
	\begin{split}
		&\sA: \, F^s_{p,q}(\bbR) \longrightarrow f^s_{p,q}\\
		  &\quad \quad f\longmapsto \sA f(j,\mu) = 2^j\langle f,h_{j,\mu }\rangle.
	\end{split}
\end{equation}
Its dual operator $\sA'$ (the  \emph{synthesis operator}) is given by
\begin{equation}\label{OpA'}
\begin{split}
	&\sA'\beta(x) =
	\sum\limits_{j,\mu} \beta _{j,\mu} h_{j,\mu}(x) \,.
\end{split}\end{equation}
Indeed, if $f\in F^s_{p,q}$
and  $\beta  = \{\beta_{j,\mu} \}$ 
is a finite sequence, then 
\begin{equation}\nonumber
  \begin{split}
	\langle \sA f, \beta\rangle &= \sum\limits_{j} 2^{-j} \sum\limits_{\mu} 2^j\langle f,h_{j,\mu }\rangle\cdot \beta_{j,\mu } \\
	&= \int f(x)  \sum\limits_{j}\sum\limits_{\mu}  \beta_{j,\mu}h_{j,\mu}(x)\,dx= \int f(x)\,\sA'\beta(x)\,dx.
  \end{split}
\end{equation}
By Proposition \ref{prop1}, the operator
$\sA: F^s_{p,q}(\bbR)\to f^s_{p,q}$ is bounded when $\max\{1/p-1,1/q-1\}<s<1$. 
Hence, if we assume that $1\leq p,q<\infty$, then $\sA'$ will be bounded from $f^{-s}_{p',q'}$ to $F^{-s}_{p',q'}(\bbR)$, where 
$$
	-1<-s<\min\{1/p',1/q'\}\,.
$$
In other words, if $-1<s<\min\{1/p,1/q\}$ and $1<p,q\leq \infty$ then 
$$
	\Big\|\sum\limits_{j,\mu} \beta_{j,\mu }\h_{j,\mu }\Big\|_{F^s_{p,q}} \lesssim \|\beta\|_{f^s_{p,q}}\,.
$$
The proof of part (ii) is completely analogous.
\end{proof}

In the following proposition we extend the previous estimate to the quasi-Banach range of parameters.
The proof, which is now more involved, uses a non-trivial interpolation argument from \cite[Proposition 2.6]{Tr10}.  
The range of indices we obtain is larger than in \cite{Tr10}; see Figure \ref{fig_prop4} below.

\begin{figure}[h]
 \centering
\subfigure
{
\begin{tikzpicture}[scale=1.45]

\node [right] at (1.5,-1) {{\footnotesize $F^s_{p,q}$-spaces}};

\draw[->] (-0.1,0.0) -- (3.1,0.0) node[below] {$1/p$};
\draw[->] (0.0,-0.0) -- (0.0,1.6) node[above] {$s$};
\draw (0.0,-1.1) -- (0.0,0)  ;

\draw (1.0,0.03) -- (1.0,-0.03) node [below] {$1$};
\draw (2.0,0.03) -- (2.0,-0.03) node [below] {$2$};
\draw (0.03,1.0) -- (-0.03,1.00);
\node [left] at (0,0.9) {$1$};
\draw (0.03,-1.0) -- (-0.03,-1.00) node [left] {$-1$};

\node [left] at (0,-0.75) {{\tiny $\tfrac1q-2$}};

\node [left] at (.75,.75) {{\tiny $s=\tfrac1p$}};
\node [right] at (2.25, 0.25) {{\tiny $s=\tfrac1p\!-\!2$}};

\draw[dotted] (0,-1) -- (2.5,1.5);
\draw[dotted] (0,1) -- (1,1)--(1,-1);
\draw[dotted]  (1.25,1.25)-- (0,1.25) node[left] {{\tiny $1/q$}};
\draw[dotted] (0,-1)--(1,-1)--(1.25,-0.75);

\draw[dashed, thick]  (0,0)--(1.25,1.25)--(3.25,1.25)-- (1.25,-0.75)--(0,-.75)--cycle;

\draw[white, fill=red!70, opacity=0.4] (0,0)--(1.25,1.25)--(3.25,1.25)-- (1.25,-0.75)--(0,-.75)--cycle;

\end{tikzpicture}
}
\subfigure
{
\begin{tikzpicture}[scale=1.45]

\node [right] at (1.5,-1) {{\footnotesize $B^s_{p,q}$-spaces}};

\draw[->] (-0.1,0.0) -- (3.1,0.0) node[below] {$1/p$};
\draw[->] (0.0,-0.0) -- (0.0,1.6) node[above] {$s$};
\draw (0.0,-1.1) -- (0.0,0)  ;

\draw (1.0,0.03) -- (1.0,-0.03) node [below] {$1$};
\draw (2.0,0.03) -- (2.0,-0.03) node [below] {$2$};
\draw (0.03,1.0) -- (-0.03,1.00);
\node [left] at (0,0.9) {$1$};
\draw (0.03,-1.0) -- (-0.03,-1.00) node [left] {$-1$};

\draw[dotted] (0,-1) -- (2.5,1.5);
\draw[dotted] (0,1) -- (1,1)--(1,-1);

\draw[dashed, thick]  (2,2)--(0,0)-- (0.0,-1.0)--(1,-1)--(3,1);

\draw[white, fill=red!70, opacity=0.4] (2,2)--(0,0)-- (0.0,-1.0)--(1,-1)--(3,1)--cycle;

\end{tikzpicture}

}
\caption{Parameter domain for the validity of Proposition \ref{prop4}, for $F^s_{p,q}(\SR)$ (left figure) 
and $B^s_{p,q}(\SR)$ (right figure). 
}\label{fig_prop4}
\end{figure}
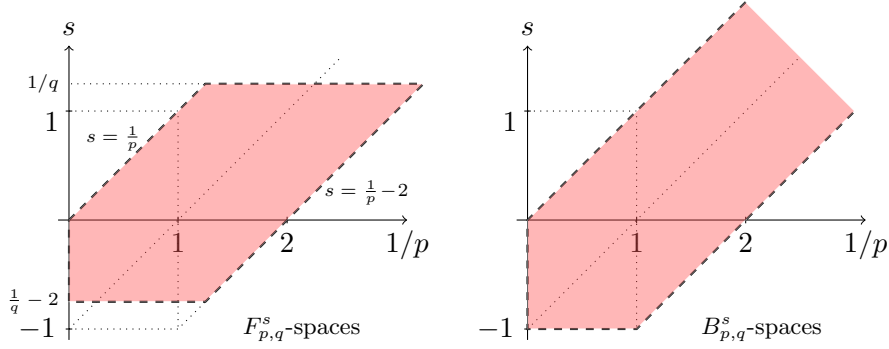

\begin{proposition}\label{prop4}Let $0<p, q\leq \infty$.
\sline (i) If $p<\infty$ and $\max\{1/p,1/q,1\}-2<s<\min\{1/p,1/q\}$,
then for every $\beta \in f^s_{p,q}$ the series in
\eqref{decompo} converges to a distribution $f$ in $F^s_{p,q}(\bbR)$, and it holds    
\begin{equation}\label{f2}
	\|f\|_{F^s_{p,q}} \lesssim \|\beta\|_{f^s_{p,q} }.
\end{equation}
Moreover if $q<\infty$ the series 
in  \eqref{decompo} converges unconditionally in  $F^s_{p,q}$, and otherwise in  $F^{s-\varepsilon}_{p,\infty}(\bbR)$, for all $\varepsilon>0$.

\sline (ii) If $\max\{1/p,1\}-2<s<1/p$,
then for every $\beta \in b^s_{p,q}$ the series in
\eqref{decompo} converges to a distribution $f$ in $B^s_{p,q}(\bbR)$, and it holds
\begin{equation}\label{f2-Besov} 
\|f\|_{B^s_{p,q}}\lc \|\beta\|_{b^s_{p,q}}.
\end{equation} 
Moreover, if $q <\infty$ the series in \eqref{decompo} converges unconditionally in the
norm of $B^s_{p,q}$, and otherwise in the quasi-norm of $B^{s-\varepsilon}_{p,\infty}(\bbR)$, for all $\varepsilon>0$.
\end{proposition}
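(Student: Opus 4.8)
I would derive the two synthesis estimates from the local means descriptions \eqref{localmeans_B}--\eqref{localmeans_F}, running the bookkeeping of Proposition \ref{prop1} with the roles of the ``$f$-side'' and the ``$h$-side'' exchanged, and then sweep out the full parameter pentagons of Figure \ref{fig_prop4} by a complex interpolation argument in the spirit of \cite[Proposition 2.6]{Tr10} (the same argument also removes the superfluous hypothesis $r<1$ in Theorem \ref{thm:Chui-Wang}(i), cf.\ Remark \ref{R_DU}). Fix $\beta$ in $f^s_{p,q}$, resp.\ $b^s_{p,q}$, put $f=\sum_{j,\mu}\beta_{j,\mu}h_{j,\mu}$, and for each $j\geq -1$ set $G_j:=\sum_\mu\beta_{j,\mu}h_{j,\mu}$ and $g_j:=\sum_\mu|\beta_{j,\mu}|\bbone_{I_{j,\mu}}$, so that $|G_j|\leq g_j$ and $L_kf=\sum_j\beta_k*G_j$. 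As in \S\ref{SS_3.1} the terms $j=-1$ (where $h_{-1,\mu}=\bbone_{[\mu,\mu+1)}$ carries no cancellation) are split off and handled by the elementary bound $\|\beta_k*h_{-1,\mu}\|_1\lesssim\min\{1,2^{-k}\}$; below, $j\geq 0$.

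\textbf{Step 1: two kernel estimates.} For $k>j$ the vanishing moment $\int h_{j,\mu}=0$ localizes $\beta_k*h_{j,\mu}$ to $2^{-k}$-neighbourhoods of the (at most three) jumps of $h_{j,\mu}$, where it is $O(\|\beta_k\|_1)=O(1)$; hence $\beta_k*G_j$ is a ``smoothed step function'' supported on a set of relative measure $2^{-(k-j)}$ inside $\supp G_j$, which gives both $\sum_\mu|\beta_{j,\mu}\,\beta_k*h_{j,\mu}(x)|\lesssim\fM_k g_j(x)$ and the $L_p$-gain $\|\beta_k*G_j\|_p\lesssim 2^{-(k-j)/p}\|g_j\|_p$. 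For $k\leq j$ I would instead use the elementary identity $h_{j,\mu}=2^{-j}\frac{d}{dx}\big[H(2^jx-\mu)\big]$, where $H(x)=\int_0^x h$ is the tent of height $\tfrac12$ on $[0,1]$ (a ``transposed'' version of Lemma \ref{L_Nder}); then $G_j=2^{-j}\Phi_j'$ with $\Phi_j=\sum_\mu\beta_{j,\mu}H(2^j\cdot-\mu)$ a function whose summands have pairwise disjoint supports, so $\beta_k*G_j=2^{-j}\beta_k'*\Phi_j$, and from $\|\beta_k'\|_1\lesssim 2^k$, $|\Phi_j|\leq g_j$ one gets $\|\beta_k*G_j\|_p\lesssim 2^{-(j-k)}\|g_j\|_p$ (for $p\geq1$, by Young) together with the pointwise bound $|\beta_k*G_j(x)|\lesssim 2^{-(j-k)}\fM_k g_j(x)$. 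These are the analogues of Lemma \ref{L_41} for the synthesis map; note that only \emph{one} vanishing moment of $h$ is available, which is where the (harmless) restriction $s<2$ enters.

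\textbf{Step 2 (direct estimates) and Step 3 (interpolation).} Summing the Step 1 bounds over $j$, replacing $g_j$ by Peetre maximal functions, and summing the geometric series in $\ell=k-j$ via the $u$-triangle inequality with $u=\min\{p,q,1\}$ (as in \eqref{f9}, \eqref{f41}, using \eqref{5.8}, \eqref{fM1} and, in the $F$-case, \eqref{Peetremax}), I expect to obtain $\|f\|_{B^s_{p,q}}\lesssim\|\beta\|_{b^s_{p,q}}$ for the \emph{entire} range $\max\{1/p,1\}-2<s<1/p$ when $p\geq 1$, and $\|f\|_{F^s_{p,q}}\lesssim\|\beta\|_{f^s_{p,q}}$ in the portion $s<0$ (and, for $1<p,q<\infty$, in the full strip, which is already Proposition \ref{prop:duality}); the $s<0$ cases can alternatively be read off from the atomic decomposition, since $h_{j,\mu}$ is a fixed multiple of an $L^\infty$-normalized atom with one vanishing moment. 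The remaining cases --- the quasi-Banach regime $p\leq 1$ or $q\leq 1$ with $s\geq 0$, and $s>1$ --- I would reach by complex interpolation: since $F^s_{p,p}=B^s_{p,p}$, the already-proved cases furnish the synthesis bound at diagonal points $(s_0,\tfrac1{p_0},\tfrac1{p_0})$ with $s_0$ up to $\tfrac1{p_0}$, and Proposition \ref{prop:duality} at off-diagonal points $(s_1,\tfrac1{p_1},\tfrac1{q_1})$, $1<p_1,q_1<\infty$; using the identities $[F^{s_0}_{p_0,q_0},F^{s_1}_{p_1,q_1}]_\theta=F^s_{p,q}$ and the matching Calder\'on-product identity for the sequence spaces $f^s_{p,q}$ (valid in the quasi-Banach range), a convexity check shows that every interior point of the pentagon of Figure \ref{fig_prop4} lies on such an interpolation segment, giving \eqref{f2}, and \eqref{f2-Besov} is obtained along the way. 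Replacing $h$ by the Chui--Wang polygon $\psi$ and interpolating Theorem \ref{thm:Chui-Wang}(i) (for $r<1$) against Theorem \ref{thm:Chui-Wang}(ii) at $p=q$ lifts the restriction $r<1$ in the same way.

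\textbf{Step 4 (convergence) and the main obstacle.} Convergence of \eqref{decompo} follows by applying the a priori estimate to finite partial sums and to their complements: when $q<\infty$, finitely supported sequences are dense in $f^s_{p,q}$, resp.\ $b^s_{p,q}$, so the series is unconditionally Cauchy --- hence convergent --- in $F^s_{p,q}$, resp.\ $B^s_{p,q}$; when $q=\infty$ this density fails, but summing the factors $2^{-\varepsilon j}$ yields the embedding $f^s_{p,\infty}\hookrightarrow f^{s-\varepsilon}_{p,1}$ (and likewise for $b$), which reduces matters to the case $q=1$ and gives unconditional convergence in $F^{s-\varepsilon}_{p,\infty}$, resp.\ $B^{s-\varepsilon}_{p,\infty}$, for every $\varepsilon>0$. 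The step I expect to be the genuine obstacle is Step 3: pushing the direct argument of Step 2 all the way to the boundary of the $F$-pentagon when $\min\{p,q\}<1$ runs into the familiar difficulty of controlling $\ell_q$-valued maximal functions of the non-band-limited pieces $g_j$ in the regime $s\geq 0$, so one is essentially forced into the interpolation, whose delicate points are the availability of complex interpolation for the quasi-normed spaces $F^s_{p,q}$ and their sequence-space counterparts, and the (elementary but fussy) verification that the whole pentagon is covered by segments joining the two families of endpoints already established.
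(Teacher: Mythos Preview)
Your overall architecture (kernel estimates on $L_k h_{j,\mu}$, maximal-function control, then complex interpolation in $(s,1/p,1/q)$ as in \cite[Prop.~2.6]{Tr10}) matches the paper's. There is, however, a genuine gap in the quasi-Banach range, and it stems from your Step~1 estimate for $k\le j$.

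Your integration-by-parts via the tent primitive gives only $|\beta_k*G_j|\lesssim 2^{-(j-k)}\fM_k g_j$, i.e.\ a \emph{single} factor $2^{-(j-k)}$. The paper instead uses the vanishing moment of $h_{j,\mu}$ together with a first-order Taylor expansion of $\beta_k$ to obtain the pointwise bound $|L_k h_{j,\mu}|\lesssim 2^{-2(j-k)}$ (this is \eqref{jk1}), and then controls the overlap of the $\sim 2^{j-k}$ contributing terms by the Kyriazis-type Lemma~\ref{L_kyr}, which replaces $\fM_k g_j$ by $M_r g_j=[M(g_j^r)]^{1/r}$ (note that $g_j$ is not band-limited, so Peetre's inequality does not apply; one must use Hardy--Littlewood $M_r$ and Fefferman--Stein). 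This yields $G_{j,k}\lesssim 2^{(j-k)(1/r-2)}M_r g_j$, hence summability for $s>1/r-2$, i.e.\ for $s>\max\{1/p,1\}-2$ with $r<\min\{p,1\}$. That gives the full $B$-range directly, for \emph{all} $p$.

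With only $2^{-(j-k)}$, your direct $B$-argument works only for $p\ge 1$ (Young), and the $M_r$ route would require $s>1/r-1$, which is vacuous for $p<1$. You then claim diagonal endpoints $(s_0,1/p_0,1/p_0)$ ``with $s_0$ up to $1/p_0$'' for the interpolation, but in fact you only have them for $p_0\ge 1$, hence $s_0<1$. Since your atomic endpoints have $s_0<0$ and your Proposition~\ref{prop:duality} endpoints also have $s_1<\min\{1/p_1,1/q_1\}\le 1$, \emph{every} convex combination satisfies $s<1$. You therefore cannot reach, e.g., $(s,1/p,1/q)=(3/2,2,2)$, which lies in both the $F$- and $B$-pentagons. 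The paper's interpolation succeeds precisely because Step~2 supplies diagonal points with $p_0<1$ and $s_0$ up to $1/p_0>1$ (cf.\ Figure~\ref{fig_spq}). The fix is to replace your tent-function estimate by the $2^{-2(j-k)}$ bound and invoke Lemma~\ref{L_kyr}; the rest of your plan then goes through.
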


\begin{proof} It suffices to prove the results when $f=\sum_{j\geq-1}\sum_{\mu \in \bbZ} \beta_{j,\mu}h_{j,\mu}$,
with $(\beta_{j,\mu})$ a finite sequence of scalars. The other assertions will then follow by completeness of the spaces.

\sline {\em Step 1. } Let $L_k$, $k\geq0$, be the local convolution operators from Section \ref{sect:MF_FS}. 
If $j\geq-1$ is fixed, then we have
\begin{equation}\label{f11}
		\Big|L_k \Big(\sum\limits_{\mu\in \bbZ}\beta_{j,\mu}h_{j,\mu}\Big)(x)\Big| \leq \sum\limits_{\mu\in \bbZ}
		|\beta_{j,\mu}|\cdot |(L_k h_{j,\mu})(x)|\,=:\,G_{j,k}(x).
\end{equation}
Arguing as in Lemma \ref{L_41}, one sees that, in the case $\ell=j-k>0$, then
\Be
\supp L_k h_{j,\mu}\subset \mu2^{-j}+O(2^{-k})\mand \big|(L_k h_{j,\mu})(x)\big| \lesssim 2^{-2\ell},
\label{jk1}
\Ee
while in the case  $\ell=j-k \leq 0$ we have
\Be
\supp L_k h_{j,\mu}\subset \mu2^{-j}+O(2^{-j})\mand \big|(L_k h_{j,\mu})(x)\big| \lesssim 1.
\label{jk2}
\Ee
The following lemma is a variation of a result by Kyriazis; see  \cite[Lem. 7.1]{Ky03}.
For $r>0$ we denote 
\[
M_r(g)(x)=\big[M(|g|^r)(x)\big]^{1/r},
\]
where $M$ is the usual Hardy-Littlewood maximal operator.

\begin{lemma}\label{L_kyr}
Let $0<r\leq 1$. Then, 
it holds
\Be
G_{j,k}(x)\,\lesssim\, a(j-k)\,
M_r\Big(\sum_\mu|\beta_{j,\mu}|\,\bbone_{I_{j,\mu}}\Big)(x).
\label{f10}
\Ee
where $a(\ell)=2^{-2\ell}2^{{\ell}/r}$ if $\ell>0$, and $a(\ell)=1$ if $\ell\leq0$.
\end{lemma}
\begin{proof}
Assume that $\ell=j-k>0$. If $x\in I_{j,\nu}$, for some fixed $\nu\in\SZ$, then the size and support estimates in \eqref{jk1}
give
\[
G_{j,k}(x)^r\lesssim\,\sum_{\mu:|\mu-\nu|\leq 2^{j-k}}|\beta_{j,\mu}|^r\,2^{-2\ell r}.
\]
On the other hand, if $x\in I_{j,\nu}$,
\Beas
M\Big(\sum_\mu|\beta_{j,\mu}|^r\,\bbone_{I_{j,\mu}}\Big)(x) & \gtrsim & 2^k\int_{|y-2^{-j}\nu|\leq 2\cdot 2^{-k}}
\Big(\sum_\mu|\beta_{j,\mu}|^r\,\bbone_{I_{j,\mu}}\Big)(y)\,dy\\
& \geq & 2^{k-j}\,\sum_{\mu:|\mu-\nu|\leq 2^{j-k}}|\beta_{j,\mu}|^r\,.
\Eeas
These two estimates clearly give \eqref{f10} in the case $\ell>0$. The case $\ell\leq0$ is proved similarly using \eqref{jk2}.
\end{proof}


We continue with the proof of Proposition \ref{prop4}.i. 
Below we shall agree that $\beta_{m, \mu}=0$ and $G_{m,k}\equiv0$ whenever $m<-1$.
Then
letting $u = \min\{p,q,1\}$, we can apply the $u$-triangle inequality
and the above results to obtain
\Beas
\|f\|_{F^s_{p,q}} & = & \Big\|\Big(\sum_{k\geq0}|2^{ks}L_kf|^q\Big)^{1/q}\Big\|_p\, \leq \,
 \Big\|\Big(\sum_{k\geq0}(2^{ks}\sum_{\ell\in\SZ}G_{k+\ell,k})^q\Big)^{1/q}\Big\|_p\\
& \hskip-2cm \lesssim & \hskip-1cm \Big[\sum\limits_{\ell\in\SZ}\Big(a(\ell)\,2^{-\ell s}\Big)^u
\Big\|\Big(\sum\limits_{k\geq0 }\Big[2^{(k+\ell)s}\,M_r\Big(\sum\limits_{\mu\in \bbZ}|\beta_{k+\ell,\mu}|\bbone_{I_{k+\ell,\mu}}\Big)
\Big]^{q}\Big)^{\frac 1q}\Big\|^u_p\Big]^{\frac 1u}\\
		& \hskip-2cm \lesssim & \hskip-1cm\Big\|\Big(\sum\limits_{m\in\SZ} \Big(2^{ms}\sum\limits_{\mu\in \bbZ} 
	|\beta_{m,\mu}|\bbone_{I_{m,\mu}}\Big)^q \Big)^{\frac 1q}\Big\|_p\,,
\Eeas
where the last line is justified by the Fefferman-Stein inequality \eqref{HLmaxineq}, provided
$r<\min\{p,q,1\}$, and the finite summation in $\ell\in\SZ$  holds whenever $1/r-2<s<0$\,. Such an $r$ can always be 
chosen under the assumption 
\[
\max\{1/p,1/q,1\}-2<s<0
\]
(which in particular implies $p,q>1/2$).
We shall see in Step 3 below how to enlarge this range to cover as well the cases $s\geq0$.

{\em Step 2. } We now prove \eqref{f2-Besov}. The same notation as above gives
\[
\|f\|_{B^s_{p,q}} = \Big(\sum_{k\geq0}\big(2^{ks}\|L_kf\|_p\big)^q\Big)^{1/q}
\leq \Big(\sum_{k\geq0}(2^{ks}\big\|\sum_{\ell\in\SZ}G_{k+\ell,k}\big\|_p)^q\Big)^{1/q}.
\]
At this point we distinguish two cases, $\ell> 0$ and $\ell\leq 0$. In the first case we use literally the same arguments as above;
since for the $\ell_q(L_p)$ quasi-norm we just use the \emph{scalar} Hardy-Littlewood maximal inequality 
we only need to impose $r<\min\{1,p\}$, together with $s>1/r-2$. Such an $r$ can always be chosen under the assumption
\[
\max\{1/p,1\}-2<s.
\]
To control the sum over $\ell\leq0$ we must replace the crude bound in \eqref{jk2} by the sharper estimate
\Be
\label{jk3}
\supp L_k h_{j,\mu}\subset \Dt_{j,k}+O(2^{-k})\mand \big|(L_k h_{j,\mu})(x)\big| \lesssim 1,
\Ee
where $\Dt_{j,\mu}$ are the discontinuity points of $h_{j,\mu}$; see the proof of Lemma \ref{L_41}.a. 
So, if $\ell=j-k\leq0$ we have
\[
\big\|G_{k+\ell,k}\big\|_p\lesssim 2^{-k/p}\Big(\sum\limits_{\mu \in \bbZ} |\beta_{k+\ell,\mu}|^p\Big)^{1/p}.
\]
Using as before the $u$-triangle inequality, this yields
\[
\begin{split}
\Big(\sum_{k\geq0} &\Big (2^{ks}\big\|\sum_{\ell\leq0}G_{k+\ell,k}\big\|_p\Big)^q\Big)^{1/q}\\ 
 & \lesssim\; \Big[\sum_{\ell\leq0}2^{u(\frac1p-s)\ell}\Big(\sum_{k\geq0}\Big\{ 2^{(k+\ell)(s-\frac1p)}
\Big(\sum\limits_{\mu \in \bbZ} |\beta_{k+\ell,\mu}|^p\Big)^{1/p}\Big\}^q\Big)^{\frac uq}\Big]^\frac1u
\\ 
 & \lesssim\; \Big(\sum_{\ell\leq0}2^{u(\frac1p-s)\ell}\Big)^\frac1u\,\Big(\sum_{m\in\SZ}\Big\{2^{m(s-\frac1p)}
\Big(\sum\limits_{\mu \in \bbZ} |\beta_{m,\mu}|^p\Big)^{1/p}\Big\}^q\Big)^{\frac 1q},
\end{split}
\]
where the sum in $\ell\leq0$ is a finite constant due to the assumption $s<1/p$.
This completes the proof of part (ii) in Proposition \ref{prop4}.

{\em Step 3.} In the Triebel-Lizorkin case, the direct argument in Step 1 only allows for $s<0$
(and $p,q>1/2$),
which is the desired region only when $q=\infty$ or $p\to\infty$. 
By Step 2, the range of parameters can be extended to $s<1/p$ when $p=q$.
Then, a complex interpolation argument in the three indices $(s,1/p,1/q)$, 
as proposed by Triebel in \cite[Prop.\ 2.6]{Tr10},
gives the validity of the result for all $\max\{1/p,1/q,1\}-2<s < \min\{1/p,1/q\}$;
see Figure \ref{fig_spq}.
\end{proof}

\newcommand\zeugs{
		\coordinate (O) at (0,0,0);
		\coordinate (P) at (2,0,0);
		\coordinate (Q) at (0,2,0);
		\coordinate (PQ) at (2,2,0);
		\coordinate (A) at (4,4,3);		
		\coordinate (B) at (4,4,3-1.5);
		\coordinate (Axy) at (4,4,0);

		\draw[dashed,fill=red,fill opacity=.1] (O) -- (P) -- (PQ) --(Q) --cycle;
		\draw[fill=red,opacity=.5] (O) -- (A) -- (B) -- (PQ) --cycle;
		
		\draw[-latex](O)--(4,0,0)node[pos=1.1]{$\frac1p$};
		\draw[-latex](O)--(0,4,0)node[pos=1.1]{$\frac1q$};
		\draw[-latex](O)--(0,0,1.5)node[pos=1.1]{$s$};
		\draw[fill](P)circle(.75pt)node[above]{2};
		\draw[fill](Q)circle(.75pt)node[above]{2};

		\draw[thick,dotted] (PQ) --++(0,0,1.5);
		\draw[thick,dotted] (PQ) -- (Axy) -- (B);
		\draw[thick,dotted] (4,0,0)--(Axy)--(0,4,0);
}
\begin{figure}[h]\center
	\tdplotsetmaincoords{70}{140}		
	\subfigure{
	\begin{tikzpicture}[scale=0.9,tdplot_main_coords]
		\zeugs
	\end{tikzpicture}}%
	\subfigure{
	\begin{tikzpicture}[scale=0.9,tdplot_main_coords]
		\zeugs
		\draw[fill,blue,fill opacity=.2](O)--(A)--(Q)--cycle;
		\draw[fill,blue,fill opacity=.2](O)--(A)--(P)--cycle;
		\draw[fill,Purple,fill opacity=.2](A)--(B)--(Q)--cycle;
		\draw[fill,Purple,fill opacity=.2](A)--(B)--(P)--cycle;
		\draw[fill,Green,fill opacity=.2](B)--(PQ)--(Q)--cycle;
		\draw[fill,Green,fill opacity=.2](B)--(PQ)--(P)--cycle;
	\end{tikzpicture}}

	\caption{Parameter domain  for  $F$-spaces in Steps 1 and 2 (left figure), and after the interpolation argument in Step 3 (right figure) of the proof of Proposition \ref{prop4}.}
	\label{fig_spq}
\end{figure}
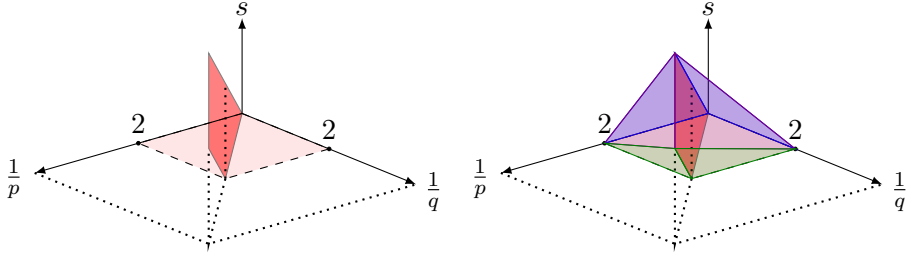
\begin{remark}\label{R_dt}
We remark that the decomposition of a distribution $f\in\cS'$ as an infinite series
\Be
f=\sum_{j\geq-1}\sum_{\mu \in \bbZ} \beta_{j,\mu}h_{j,\mu}
\label{fbj}
\Ee
may not necessarily be unique. For instance, the Dirac delta satisfies
\[
\dt=\bbone_{[0,1)}+\sum_{j=0}^\infty 2^j h_{j,0}=\bbone_{[-1,0)}-\sum_{j=0}^\infty 2^j h_{j,-1} \quad \mbox{in $S'(\SR)$.}
\]
In this example, the coefficient sequences belong to $b^s_{p,q}$ if 
$s<\frac1p-1$ (or $s=\frac1p-1$ and $q=\infty$),
and the same happens for the property $\dt\in B^s_{p,q}(\SR)$.
For such cases of non-uniqueness, Proposition \ref{prop4} should be interpreted as 
\[
\|f\|_{B^s_{p,q}}\lesssim \inf\Big\{\|(\beta_{j,\mu})\|_{b^s_{p,q}}\mid \eqref{fbj} \mbox{ holds }\Big\},
\]
and likewise for the $F^s_{p,q}$-quasinorms.
\end{remark}

The next result shows that uniqueness holds when $s>1/p-1$.

\begin{corollary}\label{cor5.5} Let $0<p,q\leq\infty$ and $s\in\SR$.

 (i) If $p<\infty$ and $\max\{1/p-1,1/q-2\} < s < \min\{1/p,1/q\}$
then for all $f \in \sB$ it holds
\begin{equation}\label{cor5.5_F}
    \|f\|_{F^s_{p,q}} \lesssim \Big\|\Big(\sum\limits_{j=-1}^{\infty}\Big|2^{js}\sum\limits_{\mu\in \bbZ} 
		2^j\langle f, h_{j,\mu}\rangle\bbone_{I_{j,\mu}}\Big|^q\Big)^{1/q}\Big\|_p\,.
\end{equation}
(ii) If $~1/p-1 < s < 1/p$
then for all $f \in \sB$ it holds
\begin{equation}\label{cor5.5_B}
    \|f\|_{B^s_{p,q}} \lesssim \Big(\sum\limits_{j=-1}^{\infty} 2^{j(s-1/p)q}
		\Big(\sum\limits_{\mu\in \bbZ}|2^j\langle f,h_{j,\mu}\rangle|^p\Big)^{q/p}\Big)^{1/q}.
\end{equation}
\end{corollary}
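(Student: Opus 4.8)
The plan is to deduce Corollary \ref{cor5.5} from Proposition \ref{prop4} together with the uniqueness statement in Proposition \ref{prop:test-funct}(iii); I describe part (i), part (ii) being completely parallel with $B$-spaces, $b^s_{p,q}$ and Proposition \ref{prop4}(ii) in place of $F$-spaces, $f^s_{p,q}$ and Proposition \ref{prop4}(i).

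Let $f\in\sB$ with the right-hand side of \eqref{cor5.5_F} finite. First I would introduce the \emph{Haar reconstruction coefficients} $\beta=\{\beta_{j,\mu}\}$, defined by $\beta_{j,\mu}=2^j\langle f,h_{j,\mu}\rangle$ for $j\ge 0$ and $\beta_{-1,\mu}=\langle f,h_{-1,\mu}\rangle$, which are normalized so that $\beta_{j,\mu}\,\|h_{j,\mu}\|_2^2=\langle f,h_{j,\mu}\rangle$ for all $j\ge-1$ (recall $\|h_{j,\mu}\|_2^2=2^{-j}$ for $j\ge 0$ and $=1$ for $j=-1$). Since $\beta$ and the sequence $\{2^j\langle f,h_{j,\mu}\rangle\}$ differ only at the single level $j=-1$, and there only by the fixed factor $2$, one has $\|\beta\|_{f^s_{p,q}}$ comparable to the right-hand side of \eqref{cor5.5_F}; in particular $\beta\in f^s_{p,q}$. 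Next I would check that the hypothesis $\max\{1/p-1,1/q-2\}<s<\min\{1/p,1/q\}$ places $(s,p,q)$ inside the range $\max\{1/p,1/q,1\}-2<s<\min\{1/p,1/q\}$ of Proposition \ref{prop4}(i): the upper bounds coincide, and for the lower bound one has $s>1/q-2$ by assumption, $s>1/p-1>1/p-2$, and $s>1/p-1>-1$ since $p<\infty$. Proposition \ref{prop4}(i) then produces a distribution
\[
g:=\sum_{j\ge-1}\sum_{\mu\in\bbZ}\beta_{j,\mu}h_{j,\mu}\in F^s_{p,q}(\bbR),\qquad \|g\|_{F^s_{p,q}}\lesssim\|\beta\|_{f^s_{p,q}},
\]
the series converging unconditionally in $F^s_{p,q}$ if $q<\infty$, and in $F^{s-\varepsilon}_{p,\infty}$ for all $\varepsilon>0$ if $q=\infty$.

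The only substantive step — and the one I expect to be the main obstacle — is to identify $g$ with $f$. Since $s>1/p-1$, in the case $q=\infty$ I would fix $\varepsilon>0$ with $s-\varepsilon>1/p-1$; then \eqref{FsB} shows that the space in which the series converges ($F^s_{p,q}$, respectively $F^{s-\varepsilon}_{p,\infty}$) embeds continuously into $\sB$, so the partial sums converge to $g$ in $\sB$ as well. By Proposition \ref{prop:test-funct}(ii) every functional $u\mapsto\langle u,h_{j',\mu'}\rangle$, $h_{j',\mu'}\in\sH$, is bounded on $\sB$ and hence commutes with this limit; using the $L^2$-orthogonality of $\sH$ (all cross terms vanish, and once $(j',\mu')$ enters the partial sums the diagonal term contributes $\beta_{j',\mu'}\|h_{j',\mu'}\|_2^2$) together with the normalization of $\beta$, I get $\langle g,h_{j',\mu'}\rangle=\langle f,h_{j',\mu'}\rangle$ for every $j'\ge-1$ and $\mu'\in\bbZ$. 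Thus $f-g\in\sB$ annihilates all of $\sH$, so Proposition \ref{prop:test-funct}(iii) gives $f=g$, whence $f\in F^s_{p,q}$ and $\|f\|_{F^s_{p,q}}=\|g\|_{F^s_{p,q}}$ is controlled by the right-hand side of \eqref{cor5.5_F}, as claimed.

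For part (ii) I would repeat this verbatim, using Proposition \ref{prop4}(ii) and the embedding \eqref{BsB}; the range $\max\{1/p,1\}-2<s<1/p$ needed there is immediate from $1/p-1<s<1/p$, since $1/p-2<1/p-1$ and $-1<1/p-1$. Everything outside the identification step is routine bookkeeping with the parameter inequalities and with the harmless constant at level $j=-1$; the one place requiring genuine care is ensuring that the convergence furnished by Proposition \ref{prop4} — which in the case $q=\infty$ is only in the slightly larger space $F^{s-\varepsilon}_{p,\infty}$ — still implies convergence in $\sB$, so that the Haar coefficient functionals of Proposition \ref{prop:test-funct} may legitimately be applied term by term.
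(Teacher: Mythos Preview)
Your argument is correct and follows the same route as the paper's proof: apply Proposition~\ref{prop4} to the sequence of Haar coefficients to build $g$, use the embedding into $\sB$ (via \eqref{FsB}, \eqref{BsB}, choosing $\varepsilon$ small when $q=\infty$) to pass to the limit in the Haar functionals, and invoke Proposition~\ref{prop:test-funct}(iii) to conclude $f=g$. Your treatment is in fact slightly more careful than the paper's in two cosmetic respects---you explicitly verify that the parameter range of the corollary sits inside that of Proposition~\ref{prop4}, and you adjust the normalization at $j=-1$ so that the identity $\langle g,h\rangle=\langle f,h\rangle$ holds exactly rather than up to a harmless factor of~$2$ at that level.
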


\begin{proof} 
Let $f\in\sB$ be such that the right hand side of \eqref{cor5.5_F} is finite.
By Proposition \ref{prop4} this implies the convergence of the series 
$$
    g:=\sum\limits_{j=-1}^{\infty} \sum\limits_{\mu \in \bbZ} 2^j\langle f,h_{j,\mu} \rangle h_{j,\mu}\,,
$$
to some distribution $g\in F^s_{p,q}\hookrightarrow\sB$. 
Due to the range of parameters, 
and the convergence in $F^{s-\varepsilon}_{p,q}(\bbR)$ for $\varepsilon$ small enough, 
we also have convergence in $\sB$.
We deduce that $\langle g, h\rangle = \langle f,h\rangle$ for all $h \in \sH$,
and therefore, by Proposition \ref{prop:test-funct}, that $f=g$. Finally, Proposition \ref{prop4} gives \eqref{cor5.5_F}. 
The proof for \eqref{cor5.5_B} works analogously. 
\end{proof}


We finally turn to the remaining implications in Theorems \ref{mainT-intro} and \ref{mainB-intro},
which are also valid in a larger range.

\begin{proposition}\label{main1} Let $0<p, q\leq \infty$ and $s\in\SR$ be such that
\[
\frac1p-1<s<1+\frac1p.
\]

(i) If $p<\infty$ and additionally 
$1/q-2<s<2+1/q$, then for all $f\in \sB$ 
\begin{equation}\label{f15}
	\|f\|_{F^s_{p,q}} \lesssim \|\fc(f)\|_{f^s_{p,q}}
\end{equation}

(ii) For all $f\in \sB$ it holds
\begin{equation}\label{f15-Besov}
\|f\|_{B^s_{p,q}}\lc \|\fc(f)\|_{b^s_{p,q}}
\end{equation}
\end{proposition}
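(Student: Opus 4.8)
\emph{Strategy.} The plan is to reduce Proposition~\ref{main1} to the Chui--Wang wavelet characterization of Theorem~\ref{thm:Chui-Wang}, applied not to $f$ itself but to its distributional derivative $f'$, using (the computation behind) Lemma~\ref{L_Nder} as the bridge. Since $f\in\sB=B^{-1}_{\infty,1}$ we have $f'\in B^{-2}_{\infty,1}$, which is exactly the reference space in Theorem~\ref{thm:Chui-Wang}. Writing $\psi=\sum_{k=0}^{4}b_k\,\cN_2(2\cdot-k)$ as in \eqref{bk-coefficients}, so that $\psi_{j,\mu}=\sum_{k=0}^{4}b_k\,\cN_{2;j+1,2\mu+k}$ for $j\ge0$ (and $\psi_{-1,\mu}=\cN_{2;0,\mu}$), the identity $(\cN_{2;j+1,\nu})'=2^{j+1}\widetilde h_{j,\nu}$ established in the proof of Lemma~\ref{L_Nder} gives, for every $f\in\sB$ and $j\ge0$,
\[
\langle f',\psi_{j,\mu}\rangle=-2^{j+1}\sum_{k=0}^{4}b_k\,\langle f,\widetilde h_{j,2\mu+k}\rangle ,
\]
while the companion identity at the coarsest scale yields $\langle f',\psi_{-1,\mu}\rangle=\langle f,h_{-1,\mu+1}\rangle-\langle f,h_{-1,\mu}\rangle$. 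Since $(b_k)$ is finite and $2^j|\langle f,\widetilde h_{j,n}\rangle|\le\fc_{j,\lfloor n/2\rfloor}(f)$, only indices with $|\mu'-\mu|\le 2$ occur, so
\[
\sum_{\mu}|2^{j}\langle f',\psi_{j,\mu}\rangle|\,\bbone_{I_{j,\mu}}\ \lesssim\ 2^{j}\sum_{\mu}\Big(\sum_{|\mu'-\mu|\le 2}\fc_{j,\mu'}(f)\Big)\bbone_{I_{j,\mu}},\qquad j\ge-1 .
\]
A bounded shift $\mu\mapsto\mu'$ changes the $f^{s-1}_{p,q}$ and $b^{s-1}_{p,q}$ quasi-norms only by a constant, and the extra factor $2^{j}$ turns the weight $2^{j(s-1)}$ into $2^{js}$; hence $\|\{2^{j}\langle f',\psi_{j,\mu}\rangle\}\|_{f^{s-1}_{p,q}}\lesssim\|\fc(f)\|_{f^{s}_{p,q}}$, and likewise with $b$ in place of $f$.

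\emph{Main steps.} For part (ii): the hypothesis $1/p-1<s<1+1/p$ places $s-1$ in the range \eqref{Chui2}, so the ``$\lesssim$'' half of Theorem~\ref{thm:Chui-Wang}(ii) gives $\|f'\|_{B^{s-1}_{p,q}}\lesssim\|\{2^{j}\langle f',\psi_{j,\mu}\rangle\}\|_{b^{s-1}_{p,q}}\lesssim\|\fc(f)\|_{b^{s}_{p,q}}$. It then remains to pass from $f'$ back to $f$: the frequency-$\gtrsim1$ part of $f$ is recovered by the standard lifting isomorphism $B^{s-1}_{p,q}\to B^{s}_{p,q}$, while the low-frequency piece $L_0f$ is estimated directly by the coarse coefficients $\{\fc_{-1,\mu}(f)\}=\{\langle f,\bbone_{[\mu,\mu+1)}\rangle\}$, using that $L_0f-\beta_0*\bbE_0 f$ has mean zero on the unit intervals together with the cancellation of $\beta_0$. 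This produces a function $g\in B^{s}_{p,q}$ with $\|g\|_{B^{s}_{p,q}}\lesssim\|\fc(f)\|_{b^{s}_{p,q}}$, with $g'=f'$ and the same unit-scale averages, so $\langle g,h\rangle=\langle f,h\rangle$ for all $h\in\sH$; Proposition~\ref{prop:test-funct}(iii) then forces $g=f$. For part (i) the argument is the same but uses the ``$\lesssim$'' half of Theorem~\ref{thm:Chui-Wang}(i) together with \eqref{f0_C} (extended upper range as in Remark~\ref{R_DU}); since \eqref{Chui-1} at smoothness $s-1$ requires $s>\max\{1/p,1/q\}-1$, this handles the subrange $s\ge\max\{1/p,1/q\}-1$ — in particular the full range of Theorem~\ref{mainT-intro}. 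The complementary subrange $s<\min\{1/p,1/q\}$ is already contained in Corollary~\ref{cor5.5}(i), because $2^j|\langle f,h_{j,\mu}\rangle|\le\fc_{j,\mu}(f)$; the narrow band that can remain when $1/p$ and $1/q$ are far apart is filled by an interpolation step between these two regimes, of the type used in the proof of Proposition~\ref{prop4}.

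\emph{Expected obstacle.} The delicate step is the final ``lift back'': upgrading a bound on $\|f'\|_{F^{s-1}_{p,q}}$ (resp.\ $B^{s-1}_{p,q}$) to one on $\|f\|_{F^{s}_{p,q}}$. One must reconstruct $f$ itself rather than a primitive determined only up to a constant, keep the additive constant and $L_0 f$ controlled by the coarse coefficients, and — in the quasi-Banach regime $0<p,q<1$ — verify that the multiplier and maximal-function estimates underlying the lifting and the treatment of $L_0f$ are available, as recorded in Section~\ref{sect:FS}. By contrast, the algebraic reduction through Lemma~\ref{L_Nder} and the passage from $\{2^j\langle f',\psi_{j,\mu}\rangle\}$ to $\fc(f)$ (finitely many bounded index shifts) are routine.
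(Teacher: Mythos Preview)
Your core reduction---pass to $f'$, use Lemma~\ref{L_Nder} to rewrite $\langle f',\psi_{j,\mu}\rangle$ in terms of the extended Haar coefficients, then invoke the Chui--Wang characterization---is exactly the heart of the paper's proof. The difference lies in how you recover control of $f$ itself from control of $f'$.

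The paper bypasses your ``lift back'' step entirely by starting from the standard equivalence
\[
\|f\|_{F^s_{p,q}}\approx\|f\|_{F^{s-1}_{p,q}}+\|f'\|_{F^{s-1}_{p,q}}
\]
(see \cite[2.3.8]{Tr83}), and likewise for $B^s_{p,q}$. The derivative term is handled as you do. For the term $\|f\|_{F^{s-1}_{p,q}}$, one simply picks $r$ with $1/p-1<r<1/p$ and $s-1<r<s$ (possible under the hypothesis $1/p-1<s<1+1/p$), uses the embedding $F^{s-1}_{p,q}\hookleftarrow B^r_{p,p}$, and applies Corollary~\ref{cor5.5}(ii) at smoothness $r$ together with the sequence embedding $f^s_{p,q}\hookrightarrow b^r_{p,p}$. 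This is a two-line argument with no reconstruction, no additive-constant bookkeeping, and no appeal to Proposition~\ref{prop:test-funct}(iii).

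Your reconstruction sketch is not clearly wrong, but it is underspecified precisely where you flag it as delicate: the ``standard lifting isomorphism $B^{s-1}_{p,q}\to B^s_{p,q}$'' is not antidifferentiation, so it does not send $f'$ back to (the high-frequency part of) $f$; you would have to build $g$ by hand as an explicit antiderivative and then control $\|g\|_{B^s_{p,q}}$, which is more work than the one-line use of the Triebel equivalence above.

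One further simplification for part~(i): instead of the two-sided Theorem~\ref{thm:Chui-Wang}(i), the paper uses only the one-sided inequality (the Chui--Wang analogue of Corollary~\ref{cor5.5}(i), from \cite[Prop.~5.4]{DeUl20} with Remark~\ref{R_DU}), whose range of validity at smoothness $r=s-1$ already covers the full hypothesis $1/p-1<s<1+1/p$, $1/q-2<s<2+1/q$. This removes the need for the interpolation patch you propose.
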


\begin{proof}  Note that, for all $f\in \mathcal{S}'(\bbR)$,  it holds 
\begin{equation}\label{f3}
	\|f\|_{F^s_{p,q}} \approx \|f\|_{F^{s-1}_{p,q}} + \|f'\|_{F^{s-1}_{p,q}}\,,
\end{equation}
see e.g. \cite[2.3.8]{Tr83}\,. We shall bound each of
the summands in \eqref{f3} by 
the right hand side of \eqref{f15}.

Clearly $\|f\|_{F^{s-1}_{p,q}} \lesssim \|f\|_{B^{r}_{p,p}}$ for any $s-1 < r$. We distinguish two cases. 
In case $s>1/p$ we choose $r:=1/p-\varepsilon$, for a sufficiently small $\varepsilon>0$ so that
\Be
1/p-1<r<1/p \mand  s-1 < r < s;
\label{sr}
\Ee
this is possible by the assumption $s<1+1/p$. 
In case $s \leq 1/p$ we put $r := s-\varepsilon$, for some $\varepsilon >0$
so that \eqref{sr} also holds (this time using the assumption $s>1/p-1$). 
Hence, from
the embeddings $B^r_{pp}\hookrightarrow F^{s-1}_{pq}$ and $f^s_{pq}\hookrightarrow b^r_{pp}$, 
together with Corollary \ref{cor5.5}.ii, we obtain
\Be
\label{f14}
\|f\|_{F^{s-1}_{p,q}}\lesssim \|f\|_{B^{r}_{p,p}}\lesssim \|\{2^j\langle f,h_{j,\mu}\rangle\}_{j,\mu}\|_{b^r_{pp}}\lesssim
\|\{\fc_{j,\mu} (f)\}_{j,\mu}\|_{f^s_{pq}}.
\Ee

We now take care of  the second term in \eqref{f3}. Here we quote the analog of Corollary 
\ref{cor5.5}.i for the Chui-Wang system $\{\psi_{j,\mu}\}$, which can be obtained 
from \cite[Proposition 5.4]{DeUl20} and Remark \ref{R_DU} above.
Letting $r=s-1$, this gives 
\begin{equation}\label{f4}
	\|f'\|_{F^r_{p,q}} \lesssim \Big\|\Big(\sum\limits_{j\geq-1} 2^{jrq}\Big|\sum\limits_{\mu\in \bbZ} 
	2^j\langle f',\psi_{j,\mu}\rangle\bbone_{I_{j,\mu}}(x)\Big|^q \Big)^{1/q}\Big\|_p\,,
\end{equation}
provided
\[
\max\{1/p-1,1/q-2\}-1 < r < 1+\min\{1/p,1/q\},
\]
which holds when $1/p-1<s<1/p+1$ and $1/q-2<s<2+1/q$.


Now, recall that
\Be
\label{psiN}
\psi_{j,\mu}(x)=\sum_{k\in\SZ}b_k \cN_2(2^{j+1}x-(2\mu+k)\big),
\Ee
for a sequence of coefficients $b_k$ supported in $\{0,\ldots,4\}$; see 
\eqref{bk-coefficients}. 
In addition, we know from Lemma \ref{L_Nder} that
\Be
\label{haarvshat}
\langle f',\mathcal{N}_2(2^{j+1}\cdot -\nu) \rangle =-2^{j+1}\langle f, \widetilde{h}_{j,\nu} \rangle, \quad j\geq0,\;\nu\in\SZ
\Ee
(and a similar expression for $j=-1$).
So, combining \eqref{psiN} and \eqref{haarvshat}, we obtain an estimate for $|2^j\lan{f'}{\psi_{j,\mu}}|$ in terms
of the coefficients $\fc_{j,\mu+\ell}(f)$, $\ell\in\{0,1,2\}$, which inserted into \eqref{f4} gives
\Beas
	\|f'\|_{F^r_{p,q}}  & \lesssim & \sum_{\ell=0}^2
	\Big\|\Big(\sum\limits_{j\geq-1} 2^{jrq}\Big|\sum\limits_{\mu\in \bbZ} 
	\fc_{j,\mu+\ell}(f)\bbone_{I_{j,\mu}}(x)\Big|^q \Big)^{1/q}\Big\|_p\,
	\\
	& \lesssim &  \|(\fc_{j,\mu} (f))_{j,\mu}\|_{f^s_{pq}},
\Eeas
using in the last step a maximal function estimate as in Lemma \ref{L_kyr}.
This, together with \eqref{f14} concludes the proof of part (i).

The result for $B^s_{p,q}$ in part (ii) goes similarly, using instead Corollary \ref{cor5.5}.ii, 
and 
 the corresponding version for the Chui-Wang system $\{\psi_{j,\mu}\}$ which can be obtained 
from \cite[Proposition 5.3]{DeUl20}.
\end{proof}

\begin{remark}\label{R_sche}
As pointed out by one of the referees, in the specific case of Besov spaces $B^s_{p,q}(\SR)$ with $1/p\leq s<1$ (and $1<p\leq\infty$), one could give a more direct proof of the inequality
\[
\|f\|_{B^s_{p,q}}\lesssim \|\fc(f)\|_{b^s_{p,q}},\quad f\in L^1_{\rm loc}(\SR),
\]
by estimating the modulus of continuity $\om(f,2^{-j})_{L^p(\SR)}$ in terms of the errors of best linear approximation by piecewise constants over the collections of intervals $\{I_{j,\mu}\}_{\mu\in\SZ}$ and $\{I_{j,\mu}+2^{-j-1}\}_{\mu\in\SZ}$.  When $s>1$ (or $s=1$ and $q<\infty$) this argument also shows that \[
\|\fc(f)\|_{b^s_{p,q}}<\infty \implies 
\om(f,2^{-j})_{L^p(\SR)}=o(2^{-j}),\]
which in turn implies that  $f$ is constant (see e.g. \cite[Ch 2, Prop 7.1]{dVL}).  This type of reasoning is reminiscent of some works that appeared in the spline community in the 70s, see e.g. \cite[Theorem 3]{Sche74} or the references quoted in \cite[\S12.2]{dVL}. 
\end{remark}

We are finally ready to give the
\begin{proof}[Proof of Theorems  \ref{mainT-intro} and \ref{mainB-intro}]
Just combine Proposition \ref{prop1} and Proposition \ref{main1}. Note that the smallest range of
parameters corresponds to that in Proposition \ref{prop1}.
\end{proof}

\section{$W^1_p$ and $BV$: Proof of Theorem \ref{W1-char-intro} }
\label{sec:W1p}
The proof has three steps. We use the classical norm definition for $W^1_p(\bbR)$, when $1\leq p\leq \infty$, namely 
$$
	\|f\|_{W^1_p} := \|f\|_p +  \|f'\|_p\,.
$$



\subsection*{\it Step 1.} We show that, for $1\leq p\leq \infty$, it holds
\Be\label{BoundingW11Haarext}
\sup\limits_{j\geq -1} 2^{j(1-1/p)}\Big(\sum\limits_{\nu \in \bbZ} 
	|2^j\langle f,\widetilde{h}_{j,\nu}\rangle|^p\Big)^{1/p}\lesssim \|f\|_{W^1_p},
\quad f\in W^1_p(\SR),
\Ee
moreover
\Be\label{BoundingBV1Haarext}
\sup\limits_{j\geq -1} \sum\limits_{\nu \in \bbZ} 
	|2^j\langle f,\widetilde{h}_{j,\nu}\rangle|\lesssim \|f\|_{BV},
\quad f\in BV(\SR),
\Ee

In view of Lemma \ref{L_Nder}, for every $j\geq 0$ we have  
\begin{equation}
 \big|2^{j+1}\langle f,\widetilde{h}_{j,\nu} \rangle\big| \leq  \int |f'(x)\cN_{2;j+1,\nu}(x)|\,dx
\lesssim 2^{-j/p'}\Big[\int_{\tilde{I}_{j,\nu}}|f'|^p\,dx\Big]^{1/p},
\end{equation}
where $\tilde{I}_{j,\nu}=\supp\cN_{2;j+1,\nu}=[\nu/2^{j+1},(\nu+2)/2^{j+1}]$.
Hence,
$$
	\sup\limits_{j\geq 0} 2^{j(1-1/p)}\Big(\sum\limits_{\nu \in \bbZ} |2^j\langle f,\widetilde{h}_{j,\nu}\rangle|^p\Big)^{1/p}
	\lesssim \|f'\|_p\,.
$$
Likewise, if  $j = -1$ we simply have 
$$
	|\langle f, \widetilde{h}_{-1,\nu}\rangle| = \Big|\int_{{I}_{0,\nu}}f(x)dx\Big| \leq \Big(\int_{{I}_{0,\nu}}|f(x)|^p dx\Big)^{1/p}
$$
and hence 
$$
	\Big(\sum\limits_{\nu \in \bbZ} |\langle f, \widetilde{h}_{-1,\nu}\rangle|^p\Big)^{1/p} \lesssim \|f\|_p\,.
$$
We have thus established \eqref{BoundingW11Haarext}. To handle \eqref{BoundingBV1Haarext} we work with an approximation of the identity, $\{\Phi_\ell\}$ where $\Phi_\ell =2^\ell\Phi(2^\ell\cdot)$ with $\Phi\in C^\infty_c$ and $\int\Phi=1$. Let $f\in BV$ (which implies $f\in L_\infty$). Then $\Phi_\ell*f\in W^1_1$ with
$\|\Phi_\ell*f\|_{W^1_1} \lc \|f\|_{BV}$ and $\Phi_\ell*f(x)\to f(x) $ almost everywhere. By dominated convergence $\inn{\Phi_\ell*f}{\widetilde h_{j,\nu}}\to \inn{f}{\widetilde h_{j,\nu}}$ and by a further application of Fatou's lemma and \eqref{BoundingW11Haarext}
\begin{align*}
    \sum\limits_{\nu \in \bbZ} 
	|2^j\langle f,\widetilde{h}_{j,\nu}\rangle|
	\le \liminf_{\ell\to\infty} 
	\sum\limits_{\nu \in \bbZ} 
	|2^j\langle f*\Phi_\ell ,\widetilde{h}_{j,\nu}\rangle|
	\lc
	\liminf_{\ell\to\infty} \|\Phi_\ell*f\|_{W^1_1} 
	\lesssim \|f\|_{BV},
\end{align*}
where the implicit constants are independent of $j$. This yields \eqref{BoundingBV1Haarext}.


{\em Step 2.} We show that, for $1\le p\leq \infty$, we have
$$
	\|f\|_p 
	\,\lesssim\, \sup\limits_{j\geq -1} 2^{j(1-1/p)}\Big(\sum\limits_{\nu \in \bbZ} 
	|2^j\langle f,\widetilde{h}_{j,\nu}\rangle|^p\Big)^{1/p}\,=:\,A(p)=A.
$$
Recall from Lemma \ref{L_sB}.b that $\SE_Nf\to f$ in $\cS'$.
Assuming (as we may) that  $f\in\sB$ has finite right hand side it suffices to show that $\bbE_N f$ converges in $L_p$ and 
 $
\sup_{N\geq0}\big\|\SE_Nf\big\|_p\lesssim A.$
To see this, one expands \[\SE_Nf=\SE_0f+\sum_{0\leq j<N}\sum_{\mu\in\SZ} 2^j \lan f{h_{j,\mu}}h_{j,\mu},\]
and notes that 
\[
\big\|\sum_{\mu\in\SZ} 2^j \lan f{h_{j,\mu}}h_{j,\mu}\big\|_{p}=2^{-j/p}\Big(\sum_{\mu\in\SZ} |2^j \lan f{h_{j,\mu}}|^p\Big)^\frac1p
\leq 2^{-j}\,A,
\]
hence $\|\bbE_{N_1}f-\bbE_{N_2} f\|_p\lc 2^{-N_1}\, A$, for all $N_2> N_1$, and thus
\[\|f\|_p\le \sup_N\|\bbE_N f\|_p\le A.\]


{\em Step 3.} 
We finally show, for $1< p\leq \infty$, that 
$$
	\|f'\|_p 
	\,\lesssim\, \sup\limits_{j\geq -1} 2^{j(1-1/p)}\Big(\sum\limits_{\nu \in \bbZ} 
	|2^j\langle f,\widetilde{h}_{j,\nu}\rangle|^p\Big)^{1/p}\,=:\,A,
$$
and when $p=1$ then $f'$ is a finite Borel measure and $\|f'\|_{\cM}\lc A.$
 Consider the multiresolution analysis in $L_2(\SR)$ generated by the subspaces 
$$
V_N = {\overline{\text{span}}}\,\Big\{\cN_{N,\mu}:=\cN_2(2^{N+1}\cdot-\mu)~:~\mu \in \bbZ\Big\}\quad,\quad N=-1,0,1,2,...$$
That is, $V_N$ consists of continuous piecewise linear functions with nodes in $2^{-N-1}\SZ$.
Let $\cN^\ast(\cdot)$ be the (polygonal) function which generates the dual Riesz basis to $\{\cN_{N,\mu}~:~\mu \in \bbZ\}$; 
see e.g. \cite[\S3]{ChuiWang92}. 
Then, the operator
\[
h\in L_2\longmapsto P_N(h):=\sum_{\mu\in\SZ}2^N\hdot{h,\cN^{\ast}_{N,\mu}}\cN_{N,\mu}
\]
is the orthogonal projection onto $V_N$.
Let 
\[
	g_N(x):= \sum\limits_{\mu \in \bbZ} 2^N\langle f' , \cN_{N,\mu} \rangle\cN^{\ast}_{N,\mu}(x)\,.
\]
Using Lemma \ref{L_Nder},\footnote{Note that, in view of Proposition \ref{prop:test-funct},
one can give a meaning to the identity 
$\lan {f'}{\cN_{N,\mu}} \,=\,-2^{N+1}\,\lan{f}{\th_{N,\mu}}$ in Lemma \ref{L_Nder} also
for distributions $f\in\sB$.}  we have the uniform bound
\[
2^{-\frac Np}\Big(\sum\limits_{\mu \in \bbZ} |2^N\langle f', \cN_{N,\mu}\rangle |^p\Big)^\frac1p\\
	\lesssim 2^{N(1-\frac1p)}\Big(\sum\limits_{\mu \in \bbZ} |2^N\langle f, \widetilde{h}_{N,\mu}\rangle |^p\Big)^\frac1p\,\leq A<\infty.
\]
So, the exponential decay of $\cN^{\ast}(\cdot)$ guarantees that the series defining $g_N(x)$ converges, and moreover   
\begin{equation}
  \begin{split}
	\|g_N\|_p &\lesssim 2^{-N/p}\Big(\sum\limits_{\mu \in \bbZ} |2^N\langle f', \cN_{N,\mu}\rangle |^p\Big)^{1/p}\,\leq A.\\
  \end{split}
\end{equation}
Then, if $p>1$ there exists $g\in L_p$ which is the weak $^*$-limit of a subsequence of $\{g_N\}$.
Now, if $j,\nu$ are fixed, for all $N\geq j$ we have
\Be
\label{ggN}
\lan {g_N}{\psi_{j,\nu}}=\Hdot{f',\sum_{\mu\in\SZ}2^N\hdot{\psi_{j,\nu},\cN^{\ast}_{N,\mu}}\cN_{N,\mu}}=\lan {f'}{\psi_{j,\nu}},
\Ee
because $P_N(\psi_{j,\nu})=\psi_{j,\nu}$.
Thus, taking limits as $N\to\infty$ in \eqref{ggN} we obtain
\[
\hdot{g,\psi_{j,\,\nu}}=\hdot{f',\psi_{j,\,\nu}},\quad \mbox{for all $j,\nu$}.
\]
This implies that $g=f'\in L_p$ and $\|f'\|_p\lc A$.

When $p=1$ the weak* sequential-compactness argument  only provides that $\|g\|_\cM\lc A$ and then $\|f'\|_\cM\lc A$.
\qed 

\section{Embeddings into $B^{s,\dyad}_{p,\infty}$: The cases $s=1$ and $s=1/p-1$}\label{sec:embintoBdyad}

In this section we prove the sufficiency of the conditions for the  embeddings into $B^{1,\dyad}_{p,\infty}$ or $F^{1,\dyad}_{p,\infty} $ in Theorem \ref{emb_s=1} and the sufficiency for the conditions of embedding into $B^{1/p-1,\dyad}_{p,\infty} $ in Theorem \ref{emb_1/p-1}.


\begin{lemma}\label{lem:emb-min-p1}
Let $1/2\le p\le\infty$, $\frac 1p-1\le s\le 1$. Then
$ B^s_{p,\min\{p,1\}} \hookrightarrow B^{s,\dyad}_{p,\infty} $ is a continuous embedding.
\end{lemma}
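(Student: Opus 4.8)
The plan is to reprise the proof of Proposition \ref{prop1}(ii), paying attention to the two endpoint cases $s=1$ and $s=\frac1p-1$ and absorbing the resulting loss into the small summation exponent $u:=\min\{p,1\}$ of the source space. Writing $f=\sum_{k\ge0}L_kf_k$ with $f_k=\La_kf$ as in \eqref{LLak}, one has $\big(\sum_{k\ge0}(2^{ks}\|f_k\|_p)^u\big)^{1/u}\lesssim\|f\|_{B^s_{p,\min\{p,1\}}}$. Since $h_{j,\mu}$ is the Haar function $h^0_{j,\mu}$ in the notation of that proof, Lemma \ref{L_41} together with \eqref{5.8} and \eqref{fM1} yields, for every $j\ge-1$ and $k\ge0$,
\[
\Big\|\sum_{\mu}|2^j\langle f_k,h_{j,\mu}\rangle|\,\bbone_{I_{j,\mu}}\Big\|_p\;\lesssim\;a(k-j,\tfrac1p)\,\|f_k\|_p,
\]
where $a(\ell,\tfrac1p)=2^{\ell}$ for $\ell\le0$ and $a(\ell,\tfrac1p)=2^{(1/p-1)\ell}$ for $\ell\ge0$ (the scale $j=-1$, where $h_{-1,\mu}=\bbone_{[\mu,\mu+1)}$ carries no cancellation, falls under part (a) of Lemma \ref{L_41}, since then automatically $k\ge0>-1=j$). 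As $f\in\sB$, Proposition \ref{prop:test-funct}(ii) and the self-adjointness of $L_k$ give $\langle f,h_{j,\mu}\rangle=\sum_k\langle f_k,L_kh_{j,\mu}\rangle$; hence, with $B_j:=2^{-j/p}\big(\sum_\mu|2^j\langle f,h_{j,\mu}\rangle|^p\big)^{1/p}$, the $u$-triangle inequality in $k$ and the substitution $\ell=k-j$ (putting $f_m\equiv0$ for $m<0$) lead to
\[
(2^{js}B_j)^u\;\lesssim\;\sum_{\ell\in\bbZ}c_\ell^{\,u}\,d_{j+\ell}^{\,u},\qquad c_\ell:=2^{-s\ell}a(\ell,\tfrac1p),\quad d_m:=2^{ms}\|f_m\|_p.
\]

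Since $\|f\|_{B^{s,\dyad}_{p,\infty}}=\sup_{j\ge-1}2^{js}B_j$ by definition of $\|\cdot\|_{b^s_{p,\infty}}$, I would then take the supremum over $j$ and use the crude bound $\sup_j\sum_\ell c_\ell^u d_{j+\ell}^u\le(\sup_\ell c_\ell)^u\sum_m d_m^u$ to get
\[
\|f\|_{B^{s,\dyad}_{p,\infty}}\;\lesssim\;\Big(\sup_{\ell\in\bbZ}c_\ell\Big)\,\Big(\sum_{m\ge0}d_m^{\,u}\Big)^{1/u}\;\lesssim\;\Big(\sup_{\ell\in\bbZ}c_\ell\Big)\,\|f\|_{B^s_{p,\min\{p,1\}}}.
\]
The decisive check is that $\{c_\ell\}$ is bounded: for $\ell\le0$ one has $c_\ell=2^{(1-s)\ell}$, bounded precisely because $s\le1$, and for $\ell\ge0$ one has $c_\ell=2^{(1/p-1-s)\ell}$, bounded precisely because $s\ge\frac1p-1$; in fact $\sup_\ell c_\ell\le1$ in the closed range of the lemma.

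I expect this endpoint bookkeeping to be the only genuine obstacle. In the open range $\frac1p-1<s<1$ the weights $c_\ell$ decay geometrically in both directions, so one may afford a true convolution (Young/Hölder) estimate with any summation exponent — indeed the statement there already follows from Proposition \ref{prop1}(ii) via $2^j|\langle f,h_{j,\mu}\rangle|\le\fc_{j,\mu}(f)$ and $B^s_{p,\min\{p,1\}}\hookrightarrow B^s_{p,\infty}$. At the two endpoints, however, $\{c_\ell\}$ is merely bounded, which is exactly what both forces the crude $\ell_\infty\!\cdot\!\ell_u$ estimate above and explains why the summation exponent on the source side cannot be taken larger than $\min\{p,1\}$. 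Everything else — the maximal-function bounds and the $u$-triangle manipulations — is routine and has already appeared in Section \ref{sec:framescharacterization}.
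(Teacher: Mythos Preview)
Your proposal is correct and matches the paper's own proof essentially step for step: both recycle the bounds from the proof of Proposition~\ref{prop1}(ii), apply the $u$-triangle inequality with $u=\min\{p,1\}$, and then observe that the weight $c_\ell=2^{-s\ell}a(\ell,\tfrac1p)$ is merely bounded (rather than summable) on the closed range $\tfrac1p-1\le s\le 1$, which suffices because the target space has $q=\infty$. The paper records this a bit more tersely, but the argument is the same.
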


\begin{proof} Using the notation in the proof of part (ii) of  Proposition \ref{prop1}
we can write
\[
\|f\|_{B^{s,{\rm dyad}}_{p,\infty}}=\sup_{j\geq-1} 2^{js}\,B_j
\] 
where $B_j$ is defined  as in \eqref{Bj} with $\delta=0$. 
Letting $u=\min\{p,1\}$, we obtain for each $j$, arguing as in relation \eqref{f41}
\Beas
2^{js}\,B_j & \lesssim & 
\Big(\sum\limits_{\ell\in\SZ}\Big[a(\ell,\tfrac1p)\,2^{js}\,\big\|\fM_{j+\ell}(f_{j+\ell})\big\|_p\Big]^u\Big)^{1/u}\\
& \lesssim & 
\Big(\sup_{\ell\in\SZ}\, a(\ell,\tfrac1p)2^{-\ell s}\Big) \,\|f\|_{B^s_{p,u}},
\Eeas
where  $a(\ell, 1/p)=2^{(1/p-1)\ell}$ for $\ell>0$ and $a(\ell,1/p)=2^\ell$ for $\ell<0$. Since
$\sup_{\ell\in\SZ}\, a(\ell,1/p)2^{-s\ell} < \infty
$
whenever $1/p-1\leq s \leq 1$,  we obtain the desired inequality 
$    \|f\|_{B^{s,\mathrm{dyad}}_{p,\infty}} \lesssim \|f\|_{B^s_{p,\min\{p,1\}}} $.
\end{proof}

\begin{proposition} \label{prop:embintoF1dyad}
Let $1/2<p<\infty$. Then
$F^1_{p,2} \hookrightarrow F^{1,\dyad}_{p,\infty}$ is a continuous embedding.
\end{proposition}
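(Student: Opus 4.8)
The plan is to transfer the Haar‑frame quantity to $f'$ by means of the spline identity of Lemma~\ref{L_Nder}, and then dominate everything by a grand maximal function. Unwinding the definition of the $f^1_{p,\infty}$‑quasinorm,
\[
\|f\|_{F^{1,\dyad}_{p,\infty}}=\Big\|\sup_{j\ge-1}\Big|\sum_{\mu\in\bbZ}2^{2j}\langle f,h_{j,\mu}\rangle\,\bbone_{I_{j,\mu}}\Big|\Big\|_p ,
\]
and since the summands are nonnegative this is at most $\tfrac14\|\bbE_0f\|_p+\big\|\sup_{j\ge0}\big|\sum_\mu 2^{2j}\langle f,h_{j,\mu}\rangle\bbone_{I_{j,\mu}}\big|\big\|_p$, the first term being the $j=-1$ contribution. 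That term is lower order: for $1\le p<\infty$ it is $\le\|f\|_p\lesssim\|f\|_{F^1_{p,2}}$ (using $F^1_{p,2}\hookrightarrow B^1_{p,\infty}\hookrightarrow B^0_{p,1}\hookrightarrow L^p$), while for $\tfrac12<p<1$ one invokes the atomic decomposition of $F^1_{p,2}$: its atoms carry at least one vanishing moment, so $\bbE_0$ maps an atom to a function uniformly bounded in $L^p$ whenever $p>\tfrac12$, hence $\|\bbE_0f\|_p\lesssim\|f\|_{F^1_{p,2}}$. It then remains to bound the $j\ge0$ part.

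Since $h_{j,\mu}=\widetilde h_{j,2\mu}$, Lemma~\ref{L_Nder} applied at level $j+1$ (extended to $f\in\sB$ as in the footnote to the proof of Theorem~\ref{W1-char-intro}) gives, for every $j\ge0$ and $\mu\in\bbZ$,
\[
2^{2j}\langle f,h_{j,\mu}\rangle=-\tfrac14\,\langle f',\Psi_{j,\mu}\rangle ,\qquad \Psi_{j,\mu}:=2^{j+1}\cN_2\big(2^{j+1}(\cdot)-2\mu\big),
\]
where $\Psi_{j,\mu}$ is an $L^1$‑normalized hat function with $\supp\Psi_{j,\mu}=\overline{I_{j,\mu}}$. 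As the intervals $\{I_{j,\mu}\}_\mu$ are pairwise disjoint, for a.e.\ $x$ the inner sum reduces to a single term, so
\[
\sup_{j\ge0}\Big|\sum_\mu 2^{2j}\langle f,h_{j,\mu}\rangle\bbone_{I_{j,\mu}}(x)\Big|=\tfrac14\sup_{j\ge0}\big|\langle f',\Psi_{j,\mu(j,x)}\rangle\big| ,
\]
with $\mu(j,x)$ the index for which $x\in I_{j,\mu(j,x)}$. The decisive observation is that $\Psi_{j,\mu(j,x)}$ is supported in $\overline B(x,2^{-j})$ with $2^{-j}\le1$, and satisfies $\|\Psi_{j,\mu(j,x)}\|_\infty\le2\cdot2^{j}$ and $\|\Psi_{j,\mu(j,x)}'\|_\infty\le4\cdot2^{2j}$; hence a fixed multiple of $\Psi_{j,\mu(j,x)}$ is, for every $j\ge0$, an admissible test function ``at $x$'' for the inhomogeneous grand maximal function
\[
\cG g(x)=\sup\big\{\,|\langle g,\phi\rangle|\ :\ \supp\phi\subseteq\overline B(x,r),\ 0<r\le1,\ \|\phi\|_\infty\le r^{-1},\ \|\phi'\|_\infty\le r^{-2}\,\big\}.
\]
Therefore $\sup_{j\ge0}|\langle f',\Psi_{j,\mu(j,x)}\rangle|\lesssim\cG f'(x)$ pointwise, and consequently $\big\|\sup_{j\ge0}\big|\sum_\mu 2^{2j}\langle f,h_{j,\mu}\rangle\bbone_{I_{j,\mu}}\big|\big\|_p\lesssim\|\cG f'\|_p$.

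To conclude, for $1<p<\infty$ one has $\cG f'\lesssim Mf'$ (Hardy--Littlewood), so $\|\cG f'\|_p\lesssim\|f'\|_p=\|f'\|_{F^0_{p,2}}$; for $\tfrac12<p\le1$ the smoothness order $N=1$ of the testing bumps satisfies $N>1/p-1$, so $\cG$ characterizes the local Hardy space and $\|\cG f'\|_p\lesssim\|f'\|_{h^p}=\|f'\|_{F^0_{p,2}}$. Combining with the lifting equivalence $\|f\|_{F^1_{p,2}}\approx\|f\|_{F^0_{p,2}}+\|f'\|_{F^0_{p,2}}$ (see \cite[2.3.8]{Tr83}) yields $\|\cG f'\|_p\lesssim\|f\|_{F^1_{p,2}}$, which finishes the argument. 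The genuinely delicate range is $\tfrac12<p\le1$: there the Fefferman--Stein/Hardy--Littlewood inequality is unavailable, and one must use both that $F^0_{p,2}$ is precisely the local Hardy space $h^p$ and that a single bounded derivative — all the hat $\cN_2$ provides — is exactly enough to control the $h^p$ grand maximal function when $p>\tfrac12$; this is also where the hypothesis $q=2$ is forced, since $F^0_{p,q}\hookrightarrow F^0_{p,2}=h^p$ only for $q\le2$.
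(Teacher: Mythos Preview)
Your argument is correct and takes a genuinely different route from the paper's. The paper does not use Lemma~\ref{L_Nder} here; instead it splits $f=\Pi_jf+\Pi_j^\perp f$ by Littlewood--Paley frequency, linearizes $2^j\langle\Pi_jf,h_{j,\mu}\rangle$ via a Taylor/mean-value bound to get the Peetre maximal function of $2^{-j}(\Pi_jf)'$, and then invokes the radial maximal characterization $\|\sup_j\Pi_j(f')\|_p\lesssim\|f'\|_{h^p}$; the tail $\Pi_j^\perp f$ is handled separately by the general estimates in Proposition~\ref{prop1}. Your route is more direct: the exact identity $2^{2j}\langle f,h_{j,\mu}\rangle=-\tfrac14\langle f',\Psi_{j,\mu}\rangle$ sidesteps the frequency splitting entirely and feeds straight into the grand maximal function of $f'$. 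What your approach buys is brevity and a single unified estimate for all $j\ge0$; what the paper's buys is that its test functions $\Pi_j$ are genuinely smooth, so the $h^p$ maximal theory applies verbatim. In your version the hat $\cN_2$ is only Lipschitz, not $C^1$, so strictly speaking one must justify that $W^{1,\infty}$ test functions (one bounded weak derivative) suffice to control the $h^p$ grand maximal function when $p>\tfrac12$; this is true and follows by mollifying $\Psi_{j,\mu}$ and passing to the limit via the duality $(h^p)^*=\Lambda_{1/p-1}$ with $1/p-1<1$, but it deserves a sentence. Your treatment of the $j=-1$ term for $p<1$ via atoms is a bit informal (no vanishing moments are actually \emph{required} for $F^1_{p,2}$ atoms when $p>\tfrac12$); a cleaner fix is to observe that $\|\bbE_0f\|_p$ is the $j=-1$ piece of the $f^s_{p,q}$ norm of the Haar coefficients, hence controlled by $\|f\|_{F^s_{p,2}}$ for any $s\in(\tfrac1p-1,1)$ by Proposition~\ref{prop1}, and then by $\|f\|_{F^1_{p,2}}$.
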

\begin{proof}
Let $f\in F^1_{p,2}$. we must show that
\[
\|f\|_{F^{1,{\rm dyad}}_{p,\infty}}=\big\|\sup_{j\geq-1} 2^j\sum_{\nu\in\SZ}|2^j\langle f,h_{j,\nu}\rangle|
\,\bbone_{I_{j,\nu}}\big\|_p \lesssim \|f\|_{F^1_{p,2}}.
\]
With the notation from Section \ref{sect:MF_FS}, we write $f=\sum_{k\geq0}L_kf_k$, and for each $j\geq-1$ we 
split $f=\Pi_jf +\Pi_j^\perp f$
where
\[
\Pi_jf:=\sum_{k=0}^jL_kf_k,\mand \Pi_j^\perp f :=\sum_{k>j}L_kf_k.
\]
If $j=-1$, we understand that $\Pi_{-1}f=0$ and $\Pi^\perp_{-1}f=f$.
We shall first bound
\[
A_0:=\big\|\sup_{j\geq0} 2^j\sum_{\nu\in\SZ}|2^j\langle \Pi_jf,h_{j,\nu}\rangle|
\,\bbone_{I_{j,\nu}}\big\|_p.
\]
The same argument in the proof of \cite[Lemma 3.3]{GaSeUl21_2} gives
\[
|2^j\langle \Pi_jf,h_{j,\nu}\rangle|\lesssim\fM^{**}_{j,A}\big(2^{-j}(\Pi_jf)'\big)(x), \quad x\in I_{j,\nu}.
\]
Thus, 
\[
2^j\sum_{\nu\in\SZ}|2^j\langle \Pi_jf,h_{j,\nu}\rangle|\,\bbone_{I_{j,\nu}}(x)\lesssim \fM^{**}_{j,A}\big((\Pi_jf)'\big)(x), \quad x\in\SR,
\]
and taking a supremum  over all $j\geq0$, and then $L_p$-norms, we obtain
\Be
A_0\,\lesssim\, 
\big\|\sup_{j\geq0} \fM^{**}_{j,A}\big((\Pi_jf)'\big)\big\|_p\,
\lesssim \big\|\sup_{j\geq0} |(\Pi_jf)'|\big\|_p\,,
\label{f00}
\Ee
 using \eqref{Peetremax} in the last step with $A>1/p$.
Now, the maximal function characterization of the $h^p=F^0_{p,2}$ norms yields 
\Be
    \big\|\sup_{j \geq 0} |(\Pi_jf)'|\big\|_p = \big\|\sup_{j\geq0} |\Pi_j(f')|\big\|_p \lesssim \|f'\|_{F^0_{p,2}} 
		\,\lesssim\,\|f\|_{F^1_{p,2}}.
    \label{perp0}
\Ee
To estimate the remaining part involving $\Pi_j^\perp f$, we may 
quote 
the standard proof in Proposition \ref{prop1}\footnote{That is, the part of the proof of
 Proposition \ref{prop1} involving the indices $\ell=k-j\geq0$.} above, which gives
\Be\label{perp1}
A_1:=\big\|\sup\limits_{j\geq -1} 2^j\sum_{\nu\in\SZ}|2^j\langle \Pi_j^\perp f,h_{j,\nu}\rangle|\,\bbone_{I_{j,\nu}}(x)\big\|_p 
\lesssim \|f\|_{F^1_{p,\infty}},
\Ee
provided that $s>\max\{1/p,1/q\}-1$, with $s=1$ and $q=\infty$. So, we obtain
\[
A_1\,\lesssim\, \|f\|_{F^1_{p,\infty}}\lesssim  \|f\|_{F^1_{p,2}},
\]
under the assumption that $p>1/2$. Finally, \eqref{f00}, \eqref{perp0}, \eqref{perp1} yield the desired estimate 
        $\|f\|_{F^{1,\mathrm{dyad}}_{p,\infty}} \lesssim \|f\|_{F^1_{p,2}}
$.
\end{proof}

\begin{corollary} \label{cor:B1-qle2emb}
Let $1/2\le p<\infty$  and $q\le \min\{p,2\}$. 
Then $B^1_{p,q}\hookrightarrow B^{1,\dyad}_{p,\infty}$ is a continuous embedding.
\end{corollary}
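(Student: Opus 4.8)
The plan is to deduce the embedding purely by combining the two results already established, Lemma~\ref{lem:emb-min-p1} (in the case $s=1$) and Proposition~\ref{prop:embintoF1dyad}, with classical Besov--Triebel--Lizorkin embeddings and one elementary inequality relating the dyadic $f$- and $b$-scales. That inequality is: for every sequence $\beta=\{\beta_{j,\mu}\}$ and every $0<p<\infty$ one has $\|\beta\|_{b^1_{p,\infty}}\le\|\beta\|_{f^1_{p,\infty}}$. Indeed, for fixed $j_0$ the intervals $\{I_{j_0,\mu}\}_{\mu}$ are pairwise disjoint, so pointwise
\[
\sup_{j\ge-1}\Big|2^j\sum_{\mu}\beta_{j,\mu}\bbone_{I_{j,\mu}}\Big|\ \ge\ 2^{j_0}\sum_{\mu}|\beta_{j_0,\mu}|\bbone_{I_{j_0,\mu}},
\]
and taking $L_p$-norms gives $\|\beta\|_{f^1_{p,\infty}}\ge 2^{j_0(1-1/p)}\big(\sum_\mu|\beta_{j_0,\mu}|^p\big)^{1/p}$; the supremum over $j_0$ is $\|\beta\|_{b^1_{p,\infty}}$. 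Applying this with $\beta_{j,\mu}=2^j\langle f,h_{j,\mu}\rangle$ yields the continuous inclusion $F^{1,\dyad}_{p,\infty}\hookrightarrow B^{1,\dyad}_{p,\infty}$ for every $0<p<\infty$.

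With this in hand I would split according to the hypothesis $q\le\min\{p,2\}$. If $1/2\le p\le1$ then $\min\{p,2\}=p=\min\{p,1\}$, so $q\le p$, and monotonicity in the fine index gives $B^1_{p,q}\hookrightarrow B^1_{p,p}=B^1_{p,\min\{p,1\}}$; Lemma~\ref{lem:emb-min-p1} with $s=1$ (its hypothesis $1/p-1\le s\le1$ holds since $p\ge1/2$) then gives $B^1_{p,p}\hookrightarrow B^{1,\dyad}_{p,\infty}$. If $1<p<\infty$, I would chain
\[
B^1_{p,q}\ \hookrightarrow\ B^1_{p,\min\{p,2\}}\ \hookrightarrow\ F^1_{p,2}\ \hookrightarrow\ F^{1,\dyad}_{p,\infty}\ \hookrightarrow\ B^{1,\dyad}_{p,\infty},
\]
where the first inclusion is monotonicity of the fine index ($q\le\min\{p,2\}$), the second is the standard embedding $B^s_{p,\min\{p,q\}}\hookrightarrow F^s_{p,q}$ with $q=2$ (see e.g. \cite{Tr83}), the third is Proposition~\ref{prop:embintoF1dyad} (valid for $1/2<p<\infty$), and the last is the inclusion from the previous paragraph. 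Combining the two cases proves the corollary.

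I do not expect a genuine obstacle here; the argument is essentially bookkeeping of index conditions. The one point requiring attention is the endpoint $p=1/2$, where Proposition~\ref{prop:embintoF1dyad} is unavailable and one must route through Lemma~\ref{lem:emb-min-p1} instead --- which is precisely why the first case above is stated for $1/2\le p\le1$ rather than $1/2<p\le1$. A minor remark: for $1<p\le2$ one could equally well run the $F$-space chain, so the split at $p=1$ is a matter of convenience rather than necessity.
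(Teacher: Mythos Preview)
Your proof is correct and follows essentially the same approach as the paper: both route the case $p\le 1$ (in particular the endpoint $p=1/2$) through Lemma~\ref{lem:emb-min-p1}, and handle the remaining range via the chain $B^1_{p,\min\{p,2\}}\hookrightarrow F^1_{p,2}\hookrightarrow F^{1,\dyad}_{p,\infty}\hookrightarrow B^{1,\dyad}_{p,\infty}$ using Proposition~\ref{prop:embintoF1dyad} and the elementary sequence-space inequality $\|\beta\|_{b^1_{p,\infty}}\le\|\beta\|_{f^1_{p,\infty}}$. The only cosmetic difference is that the paper allows the two cases to overlap (writing ``$q\le p\le 1$'' and ``$1/2<p<\infty$'') rather than splitting cleanly at $p=1$ as you do.
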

\begin{proof}
For $q\le p\le 1$ this follows from Lemma \ref{lem:emb-min-p1}.
For $1/2<p<\infty$  it  follows from Proposition \ref{prop:embintoF1dyad} together with  the inequality
\Be\label{FdyadvesrsusBdyad} 
\|f\|_{B^{1,\dyad}_{p,\infty}} \lc \|f\|_{F^{1,\dyad}_{p,\infty} }
\Ee 
and the inequality 
\[\|f\|_{F^1_{p,2}} \lc \|f\|_{B^1_{p,\min \{p,2\}}};\]
the latter being a consequence of Minkowski's inequalities.

Inequality \eqref{FdyadvesrsusBdyad} in turn follows by definition from the  sequence space inequality 
$\|\beta\|_{b^s_{p,\infty} }\le \|\beta\|_{f^s_{p,\infty} }$
(in the case $s=1$), i.e. from the elementary inequality
\begin{multline} \label{eq:fbersusbsequenceemb}
\sup_j   \Big(\sum_{\mu\in \bbZ  } 
\int_{I_{j,\mu}} | 2^{js} \beta_{j,\mu} \bbone_{I_{j,\mu}}(x)  |^p dx\Big)^{1/p} 
\\
\le \Big(\int \Big[ \sup_j 2^{js} \Big|\sum_{\mu\in \bbZ} \beta_{j,\mu} \bbone_{I_{j,\mu}}(x) \Big|\Big]^p dx\Big)^{1/p} .\qedhere\end{multline} 
\end{proof}

We now consider the other limiting case, where $s=1/p-1$.
\begin{proposition} \label{prop:F1p-1dyadtoB}
Let $1/2<p\le 1$ and $0<q\le\infty$. Then
$F^{\frac 1p-1}_{p,q} \hookrightarrow B^{\frac 1p-1,\dyad}_{p,\infty}$ is a continuous embedding. 
\end{proposition}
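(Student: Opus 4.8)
Since $F^{1/p-1}_{p,q}\hookrightarrow F^{1/p-1}_{p,\infty}$ for every $0<q\le\infty$ (the inclusion $\ell_q\subset\ell_\infty$), and since $\|\beta\|_{b^s_{p,\infty}}\le\|\beta\|_{f^s_{p,\infty}}$ for every sequence $\beta$ (cf. \eqref{b_sequence}, \eqref{f_sequence}), it suffices to prove, with $s:=\tfrac1p-1\in[0,1)$,
\[
\big\|\{2^j\langle f,h_{j,\mu}\rangle\}_{j,\mu}\big\|_{f^s_{p,\infty}}\,\lesssim\,\|f\|_{F^s_{p,\infty}},\qquad f\in F^s_{p,\infty}(\SR).
\]
The point is that $s=\tfrac1p-1$ is precisely the endpoint at which the direct Haar estimate of Proposition \ref{prop1} breaks down, so instead I will pass through the \emph{Chui--Wang} system, whose generator is Lipschitz (hence smoother than $h$) and which, thanks to $\tfrac1p-2<s<1$, still characterizes $F^s_{p,\infty}$. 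Concretely, since $1/p<\infty$ and $\max\{1/p,0\}-2=\tfrac1p-2<s<1=1+\min\{1/p,0,1\}$, Theorem \ref{thm:Chui-Wang}(i) applies (recall $F^s_{p,\infty}\hookrightarrow\sB\hookrightarrow B^{-2}_{\infty,1}$), giving $\big\|\{2^j\langle f,\psi_{j,\nu}\rangle\}\big\|_{f^s_{p,\infty}}\approx\|f\|_{F^s_{p,\infty}}$. Writing the biorthogonal expansion $f=\sum_{j',\nu}d_{j',\nu}\psi_{j',\nu}$ with $d_{j',\nu}=\langle f,\psi^*_{j',\nu}\rangle=\sum_k a_k\langle f,\psi_{j',\nu+k}\rangle$ (the $a_k$ from \eqref{f13} decaying exponentially), the sequence $\{2^{j'}d_{j',\nu}\}$ is obtained from $\{2^{j'}\langle f,\psi_{j',\nu}\rangle\}$ by a discrete convolution with the summable kernel $(a_k)$, hence $\big\|\{2^{j'}d_{j',\nu}\}\big\|_{f^s_{p,\infty}}\lesssim\|f\|_{F^s_{p,\infty}}$; moreover each $\psi_{j',\nu}$ is compactly supported.

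Next I would set up the near‑diagonal estimate. From the expansion of $f$,
\[
2^j\langle f,h_{j,\mu}\rangle=\sum_{j',\nu}\big(2^{j'}d_{j',\nu}\big)\,B_{(j,\mu),(j',\nu)},\qquad B_{(j,\mu),(j',\nu)}:=2^{j-j'}\langle\psi_{j',\nu},h_{j,\mu}\rangle.
\]
For $j'\le j$: $\langle\psi_{j',\nu},h_{j,\mu}\rangle=0$ unless $\supp\psi_{j',\nu}$ meets $I_{j,\mu}$ (only $O(1)$ such $\nu$, all with $I_{j',\nu}$ contained in a fixed $O(2^{-j'})$‑neighbourhood of $I_{j,\mu}$), and, using $\int h_{j,\mu}=0$ together with the Lipschitz bound $\|(\psi_{j',\nu})'\|_\infty\lesssim 2^{j'}$,
\[
|\langle\psi_{j',\nu},h_{j,\mu}\rangle|\le |I_{j,\mu}|\,\|(\psi_{j',\nu})'\|_\infty\,\|h_{j,\mu}\|_1\lesssim 2^{j'}2^{-2j},\quad\text{so}\quad |B_{(j,\mu),(j',\nu)}|\lesssim 2^{-j}.
\]
For $j'>j$: since $h_{j,\mu}$ is constant on any interval missing its (at most three) jump points and $\int\psi=0$, $\langle\psi_{j',\nu},h_{j,\mu}\rangle=0$ unless $\supp\psi_{j',\nu}$ contains one of those jump points (only $O(1)$ such $\nu$, all with $I_{j',\nu}$ in a fixed $O(2^{-j})$‑neighbourhood of $I_{j,\mu}$), and then $|\langle\psi_{j',\nu},h_{j,\mu}\rangle|\le\|\psi_{j',\nu}\|_1\lesssim 2^{-j'}$, so $|B_{(j,\mu),(j',\nu)}|\lesssim 2^{j-2j'}$. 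Writing $D_{j'}:=\sum_\nu 2^{j'}|d_{j',\nu}|\,\bbone_{I_{j',\nu}}$, the localization of the relevant $\nu$'s yields, for $x\in I_{j,\mu}$, $\sum_{\nu\in N'}2^{j'}|d_{j',\nu}|\lesssim M(D_{j'})(x)$ when $j'\le j$ and $\sum_{\nu\in N''}|d_{j',\nu}|=\sum_{\nu\in N''}\int_{I_{j',\nu}}D_{j'}\le\int_{B(x,C2^{-j})}D_{j'}\lesssim 2^{-j}M(D_{j'})(x)$ when $j'>j$. Hence
\[
2^j|\langle f,h_{j,\mu}\rangle|\ \lesssim\ 2^{-j}\sum_{j'\le j}M(D_{j'})(x)\ +\ \sum_{j'>j}2^{-j'}M(D_{j'})(x).
\]
Multiplying by $2^{js}$, the first sum is handled using $2^{j(s-1)}\sum_{j'\le j}2^{-j's}\lesssim 1$ (valid since $s<1$, i.e.\ $p>1/2$, and $j\ge-1$), and the second using $2^{-j}\sum_{j'>j}2^{-(j'-j)(1+s)}\lesssim 1$ (valid since $1+s>0$); both bounds are uniform in $j\ge-1$, so
\[
2^{js}\,2^j|\langle f,h_{j,\mu}\rangle|\ \lesssim\ \sup_{j'}2^{j's}M(D_{j'})(x),\qquad x\in I_{j,\mu}.
\]

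Finally, taking the supremum over $j$ and then the $L^p$‑norm, and invoking $\|\sup_{j'}M(g_{j'})\|_p\le\|\sup_{j'}M_r(g_{j'})\|_p\lesssim\|\sup_{j'}g_{j'}\|_p$ for any $0<r<\min\{p,1\}=p$ (the last step being the maximal inequality applied in $L^{p/r}$, $p/r>1$, to the majorant $\sup_{j'}g_{j'}$), one obtains
\[
\big\|\{2^j\langle f,h_{j,\mu}\rangle\}\big\|_{f^s_{p,\infty}}\ \lesssim\ \big\|\sup_{j'}2^{j's}M(D_{j'})\big\|_p\ \lesssim\ \big\|\sup_{j'}2^{j's}D_{j'}\big\|_p=\big\|\{2^{j'}d_{j',\nu}\}\big\|_{f^s_{p,\infty}}\ \lesssim\ \|f\|_{F^s_{p,\infty}},
\]
which is the claimed embedding. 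The main obstacle is the pair of cancellation/localization estimates for $B_{(j,\mu),(j',\nu)}$ — in particular extracting the decay $|B|\lesssim 2^{-j}$ for $j'\le j$ from the single vanishing moment of $h$ (needed because the coarser‑scale terms must still be summable after the weight $2^{js}$ at the endpoint $s=\tfrac1p-1$) — together with the verification that the resulting operator, which decays only like $2^{-|j-j'|}$ in scale, is bounded on $f^s_{p,\infty}$ for $s\in[0,1)$; this is exactly where one uses $s<1$ and the $M_r$‑maximal function trick rather than a naive Fefferman--Stein inequality (which fails for $p\le1$).
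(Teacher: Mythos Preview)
Your approach via the Chui--Wang system is genuinely different from the paper's, which works directly with the Littlewood--Paley decomposition $f=\Pi_j f+\Pi_j^\perp f$ and handles the delicate high-frequency piece $\Pi_j^\perp f$ by a pointwise estimate near the jump points of $h_{j,\mu}$ combined with the Peetre maximal function on the band-limited pieces $f_k$ (so that the $L^p$ boundedness for $p\le 1$ comes for free). Your route through an almost-diagonal matrix acting on the Chui--Wang coefficients is a reasonable alternative, and the cancellation/localization estimates for $B_{(j,\mu),(j',\nu)}$ are correct.

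There is, however, a real error in the final maximal step. You assert $\sup_{j'}M(g_{j'})\le \sup_{j'}M_r(g_{j'})$ for $r<1$, but Jensen's inequality gives the opposite: for $r\le 1$ one has $M_r(g)\le M(g)$ pointwise. Since $p\le 1$, the Hardy--Littlewood maximal operator is unbounded on $L^p$, so you cannot simply pass from $\|\sup_{j'}M(D_{j'})\|_p$ to $\|\sup_{j'}D_{j'}\|_p$ as written. The fix is to use $M_r$ (with some $r<p$) from the outset in the pointwise estimates rather than switching at the end. For $j'\le j$ this is harmless: there are only $O(1)$ relevant $\nu$, each with $I_{j',\nu}$ within $O(2^{-j'})$ of $x$, so $2^{j'}|d_{j',\nu}|\lesssim M_r(D_{j'})(x)$ just as before. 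For $j'>j$ the replacement costs a factor: one only gets $2^{j'}|d_{j',\nu}|\lesssim 2^{(j'-j)/r}M_r(D_{j'})(x)$, hence the contribution becomes $\sum_{j'>j}2^{(j'-j)(1/r-2-s)}2^{-j}\cdot 2^{j's}M_r(D_{j'})(x)$, which is summable provided $1/r<2+s=1+1/p$; any $r\in(p/(p+1),p)$ works. With this modification your concluding step $\|\sup_{j'}M_r(G_{j'})\|_p\le\|M_r(\sup_{j'}G_{j'})\|_p\lesssim\|\sup_{j'}G_{j'}\|_p$ is valid (the last inequality being the $L^{p/r}$ bound for $M$), and the argument goes through.
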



\begin{proof} Since $F^{\frac 1p-1}_{p,q} \hookrightarrow F^{\frac 1p-1}_{p,\infty} $ it suffices to prove this for $q=\infty$. 

Let $f\in F^{1/p-1}_{p,\infty}$, which as before we shall split as 
\[
f=\Pi_jf+ \Pi_j^\perp f.
\]
This time, the standard proof in Proposition \ref{prop1} (that is,  the part of the proof 
 involving the indices $\ell=k-j\leq 0$)  gives
\[
\Big\|\Big(\sum\limits_{j\geq0} 2^{jsq}\sum_{\nu\in\SZ}|2^j\langle \Pi_j f,h_{j,\nu}\rangle|^q\,\bbone_{I_{j,\nu}}\Big)^{1/q}\Big\|_{L_p(\bbR)} \lesssim \|f\|_{F^s_{p,q}}
\]
provided that $s<1$. So in particular,  letting $s=1/p-1$ and $q=\infty$ we obtain (after trivial embeddings)
\Be\label{f12_b}
\sup_{j\geq0} 2^{j(1/p-1)} \Big\|\sum_{\nu\in\SZ}2^j\langle \Pi_j f,h_{j,\nu}\rangle\,\bbone_{I_{j,\nu}}\Big\|_{L_p(\bbR)}
\lesssim  \|f\|_{F^{\frac1p-1}_{p,\infty}},
\Ee
whenever $p>1/2$. 

So, it remains to establish a similar estimate with $\Pi_j^{\perp}f$ instead of $\Pi_j f$.
We borrow some notation from \cite{GaSeUl21_2}. Let $\cD_j$ denote the dyadic intervals of length $2^{-j}$.
If $I\in\cD_j$ is fixed and $k\geq j$ we let
\Benu
\item[(a)] $\cD_k(\partial I)=\big\{J\in\cD_k\mid \bar{J}\cap \partial I\not=\emptyset\big\}$
\item[(b)] $\om(J)=\{x\in J\mid \dist(x,\partial I)\geq 2^{-k-1}\}$, when $J\in\cD_k(\partial I)$.
\Eenu
We will use  the maximal function $\fM^*_kg$  
(\cf. \eqref{eq:fMk*})
and note that, as in \cite[(42)]{GaSeUl21_2}, when $J\in \cD_{k}(\partial I)$, $k>j$, it holds
\Be
\sup_{x\in J}|g(x)|\leq \Big[
\mint_{\om(J)} |\fM^*_kg|^p\,\Big]^{\frac1p}, \quad 0<p<\infty.
\label{gLp}
\Ee
Now, let $I=I_{j,\nu}\in\cD_j$ be fixed, and let $I^\pm$ be its dyadic sons. For each $k>j$,
the function $L_k(h_{j,\nu})$ has support contained in the union of the intervals $J$ belonging to $\cD_k(\partial I^\pm)$,
and $|L_k(h_{j,\nu})|\lesssim 1$. Thus, since we are assuming $p\leq1$, we have
\Beas
\big|\langle 2^j h_{j,\nu},\Pi^\perp_jf\rangle \big|^p & \leq & \sum_{k>j}\big|\langle L_k(2^jh_{j,\nu}),f_k \rangle\big|^p\\
& \lesssim & \sum_{k>j}2^{-(k-j)p}\sum_{J\in\cD_k(\partial I^\pm)}\|f_k\|_{L_\infty(J)}^p
\Eeas
which, by \eqref{gLp}, is bounded by
\[\sum_{k>j}2^{-(k-j)p}\sum_{J\in\cD_k(\partial I^\pm)}
\mint_{\om(J)} |\fM^*_kf_k(x)|^p \lesssim  2^{jp}\,\int_{I^*}\sup_{k>j}\big|2^{(\frac1p-1)k}\,\fM^*_kf_k\big|^p\,dx\,,
\]
where $I^*$ is the 2-fold dilation of the interval $I$.
Summing up in all intervals $I=I_{j,\nu}\in\cD_j$ we obtain
\[
\sum_{\nu\in\SZ}\big|\langle 2^j h_{j,\nu},\Pi^\perp_jf\rangle \big|^p\lesssim 2^{jp}\,
\int_\SR\sup_{k>j}\big|2^{(\frac1p-1)k}\,\fM^*_kf_k\big|^p.
\]
This implies
\Be\label{f12_c}
\sup_j 2^{-j}\Big(\sum_{\nu\in\SZ}\big|\langle 2^j h_{j,\nu},\Pi^\perp_jf\rangle \big|^p\Big)^{1/p}\lesssim 
\Big(\int_\SR\sup_{k>j}\big|2^{(\frac1p-1)k}\,\fM^*_kf_k\big|^p\Big)^{1/p}\,.
\Ee
Now, $\fM^*_kg\lesssim \fM_{k,A}^{**}g$ for any $A>0$, so choosing $A>1/p$ and using \eqref{Peetremax}
we have
\Be\label{f12_d}
\big\|\sup_{k\geq0}2^{(\frac1p-1)k}\,\fM^*_kf_k\big\|_p\lesssim \big\|\sup_{k\geq0}2^{(\frac1p-1)k}\,|f_k|\big\|_p
\lesssim \|f\|_{F^{\frac1p-1}_{p,\infty}}.
\Ee
Finally, the  inequality 
\[\|f\|_{B^{ \frac 1p-1,\dyad}_{p,\infty} }\lc\|f\|_{F^{\frac 1p-1}_{p,\infty}} \]
follows by combining \eqref{f12_b}, \eqref{f12_c} and \eqref{f12_d}.
\end{proof}

We now consider the remaining endpoint case  where the two limiting cases coincide, that is,   we have both $s=1$  and $s=1/p-1$, and thus  $p=1/2$ (the corresponding Besov embedding is already covered in 
Lemma \ref{lem:emb-min-p1}).

\begin{proposition} \label{s=1=1/p-1}
For $p=1/2$ we have the continuous embedding \[F^1_{  1/2,2} \hookrightarrow B^{1,\dyad}_{ 1/2,\infty}
.\]
\end{proposition}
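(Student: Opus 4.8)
The plan is to prove the embedding by splicing together two estimates that individually appear in the proofs of Propositions~\ref{prop:embintoF1dyad} and \ref{prop:F1p-1dyadtoB}, exploiting that at the corner $p=1/2$ each of those proofs fails only in the \emph{opposite} frequency regime. Fix $f\in F^1_{1/2,2}$; recalling $\|f\|_{B^{1,\dyad}_{1/2,\infty}}=\sup_{j\ge-1}2^{-j}\big(\sum_{\mu\in\SZ}|2^j\langle f,h_{j,\mu}\rangle|^{1/2}\big)^{2}$, I want this quantity $\lesssim\|f\|_{F^1_{1/2,2}}$. Write $f=\sum_{k\ge0}L_kf_k$ with $f_k=\La_k f\in\cE_k$ as in Section~\ref{sect:MF_FS}, and for $j\ge0$ split $f=\Pi_jf+\Pi_j^\perp f$ with $\Pi_jf=\sum_{k=0}^{j}L_kf_k$, $\Pi_j^\perp f=\sum_{k>j}L_kf_k$ (and $\Pi_{-1}f=0$, $\Pi_{-1}^\perp f=f$). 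Since $1/2\le1$, the $\ell^{1/2}$ quasi-triangle inequality reduces the claim to bounding the \emph{low-frequency} part $\sup_{j\ge0}2^{-j}\big(\sum_\mu|2^j\langle\Pi_jf,h_{j,\mu}\rangle|^{1/2}\big)^2$ and the \emph{high-frequency} part $\sup_{j\ge-1}2^{-j}\big(\sum_\mu|2^j\langle\Pi_j^\perp f,h_{j,\mu}\rangle|^{1/2}\big)^2$ separately by $\|f\|_{F^1_{1/2,2}}$; note that the coarse level $j=-1$ lands entirely in the high-frequency part.

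For the low-frequency part I would copy the estimate for $A_0$ in the proof of Proposition~\ref{prop:embintoF1dyad}, which survives at $p=1/2$. By Lemma~\ref{L_Nder} (as in \cite[Lemma~3.3]{GaSeUl21_2}) one has $|2^j\langle\Pi_jf,h_{j,\mu}\rangle|\lesssim\fM^{**}_{j,A}\big(2^{-j}(\Pi_jf)'\big)(x)$ for $x\in I_{j,\mu}$, so that
\[
2^{-j}\Big(\sum_\mu|2^j\langle\Pi_jf,h_{j,\mu}\rangle|^{1/2}\Big)^2=\Big\|2^j\sum_\mu|2^j\langle\Pi_jf,h_{j,\mu}\rangle|\,\bbone_{I_{j,\mu}}\Big\|_{1/2}\lesssim\big\|\fM^{**}_{j,A}\big((\Pi_jf)'\big)\big\|_{1/2}.
\]
Taking the supremum over $j\ge0$ inside the quasi-norm and applying \eqref{Peetremax} with $q=\infty$ and $A>1/p=2$ (note $(\Pi_jf)'=\Pi_j(f')\in\cE_j$) bounds this by $\big\|\sup_{j\ge0}|\Pi_j(f')|\big\|_{1/2}\lesssim\|f'\|_{F^0_{1/2,2}}\lesssim\|f\|_{F^1_{1/2,2}}$, using the maximal-function characterization of $h^{1/2}=F^0_{1/2,2}$ (the $\Pi_j$ are convolutions with $2^j\phi(2^j\cdot)$, $\widehat\phi=\eta_0$, $\int\phi=1$) and the lifting identity \eqref{f3}. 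None of these steps needs $p>1/2$.

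For the high-frequency part I would run the $\Pi_j^\perp f$ argument from the proof of Proposition~\ref{prop:F1p-1dyadtoB}, now with $p=1/2$ and $s=1/p-1=1$. For $I=I_{j,\mu}$ with dyadic sons $I^\pm$, the function $L_kh_{j,\mu}$ is $O(1)$ and supported in an $O(2^{-k})$-neighbourhood of $\partial I^\pm$ (also for $j=-1$, where $h_{-1,\mu}=\bbone_{[\mu,\mu+1)}$ jumps only at the endpoints of $I$); combining this with $p\le1$, with \eqref{gLp}, with the fact that the sets $\om(J)$ are disjoint across the scales $k>j$, and finally with \eqref{Peetremax} (again $A>1/p=2$), one obtains exactly \eqref{f12_c}--\eqref{f12_d}:
\[
\sup_{j\ge-1}2^{-j}\Big(\sum_\mu|2^j\langle\Pi_j^\perp f,h_{j,\mu}\rangle|^{1/2}\Big)^2\lesssim\Big\|\sup_{k\ge0}2^{(1/p-1)k}|f_k|\Big\|_{1/2}\lesssim\|f\|_{F^{1/p-1}_{1/2,\infty}}=\|f\|_{F^1_{1/2,\infty}}.
\]
This piece uses only $p\le1$ and $A>1/p$, hence is valid at $p=1/2$; crucially it does \emph{not} require $s<1$, which is the restriction responsible for excluding $p=1/2$ in Proposition~\ref{prop:F1p-1dyadtoB} itself. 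Combining the two bounds and using the trivial embedding $F^1_{1/2,2}\hookrightarrow F^1_{1/2,\infty}$ yields $\|f\|_{B^{1,\dyad}_{1/2,\infty}}\lesssim\|f\|_{F^1_{1/2,2}}$.

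The main obstacle here is conceptual rather than computational: one has to notice that the two neighbouring embeddings degenerate at $p=1/2$ for complementary reasons---Proposition~\ref{prop:embintoF1dyad} uses $s=1>\max\{1/p,1/q\}-1$ (i.e.\ $p>1/2$) only in its high-frequency piece $A_1$, while Proposition~\ref{prop:F1p-1dyadtoB} uses $s=1/p-1<1$ (i.e.\ $p>1/2$) only in its low-frequency piece---and then glue the two surviving halves. The minor care points are that the coarse level $j=-1$ must be absorbed into the high-frequency estimate (which it is, since $\Pi_{-1}f=0$) and that the gap between $q=\infty$ in the high-frequency bound and $q=2$ in the hypothesis is closed only at the very end, via $F^1_{1/2,2}\hookrightarrow F^1_{1/2,\infty}$.
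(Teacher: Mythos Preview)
Your proposal is correct and takes essentially the same approach as the paper: split $f=\Pi_jf+\Pi_j^\perp f$, bound the low-frequency piece via the $A_0$ estimate \eqref{f00}--\eqref{perp0} from Proposition~\ref{prop:embintoF1dyad} (which needs no restriction on $p$), bound the high-frequency piece via \eqref{f12_c}--\eqref{f12_d} from Proposition~\ref{prop:F1p-1dyadtoB} (which also survives at $p=1/2$), and close with $F^1_{1/2,2}\hookrightarrow F^1_{1/2,\infty}$. The only cosmetic difference is that the paper keeps the low-frequency bound as an $f^1_{1/2,\infty}$-norm and passes to $b^1_{1/2,\infty}$ at the end via \eqref{eq:fbersusbsequenceemb}, whereas you take the $L_{1/2}$-norm for each fixed $j$ first and then the supremum; these are equivalent since $\sup_j\|g_j\|_p\le\|\sup_j g_j\|_p$.
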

\begin{proof} 
We examine the proof of Propositions \ref{prop:F1p-1dyadtoB} and \ref{prop:embintoF1dyad} and note 
that \eqref{f12_c} and \eqref{f12_d} remain valid for $p=1/2$,
that is
\[
   \sup_{j \geq -1} 2^{-j}\Big(\sum_{\nu\in\SZ}\big|\langle 2^j h_{j,\nu},\Pi^\perp_jf\rangle \big|^\frac 12 \Big)^{2}\lesssim 
\|f\|_{F^{1}_{ 1/2,\infty}}.
\]
Similarly, the arguments in \eqref{f00} and \eqref{perp0} do not require a restriction on $p$, so we also have 
\[
\big\|\sup_{j\geq 0}2^j\sum\limits_{\nu\in \bbZ}|2^j\langle \Pi_j f,h_{j,\nu}\rangle| h_{j,\nu}\big\|_{1/2} \lesssim \|f\|_{F^1_{ 1/2,2}} .
\]
Now the proposition follows from 
the trivial embeddings $F^1_{1/2,2} \hookrightarrow F^1_{1/2,\infty} $ and $f^1_{1/2,\infty}\hookrightarrow b^1_{1/2,\infty}$
(\cf. \eqref{eq:fbersusbsequenceemb}).
 \end{proof}

\section{ Norm equivalences  on suitable subspaces:\\ The proofs of Theorems \ref{thm:Besov-equiv} and \ref{W1-equiv-intro}}
\label{sec:normequivalences}

\subsection{\it A bootstrapping lemma}

Consider a sequence $\fa=\{a_{j,\mu}\}$ indexed by $j\in \bbN\cup\{0\}$ and $\mu\in \bbZ$. 
As in \eqref{b_sequence} let $b^s _{p,q}$ be the set of all $\fa$ for which 
\Be 
\|\fa\|_{b^s _{p,q} }= \Big(\sum_{j=0}^\infty2^{j(s -\frac1p) q}\Big[\sum_{\mu\in\bbZ}|a_{j,\mu}|^p\Big]^{\frac qp}\Big)^{1/q}<\infty.
\Ee 
We split each sequence as $\fa=\faeven+\faodd$, where
\[
\fa^{\mathrm{even}}_{j,\mu} = \begin{cases} a_{j,\mu}
& \text{ if $\mu$ is even } \\0&\text{ if $\mu$ is odd,} 
\end{cases}  \mand
\fa^{\mathrm{odd}}_{j,\mu}= 
\begin{cases}0 
& \text{ if $\mu$ is even } \\a_{j,\mu}&\text{ if $\mu$ is odd.} 
\end{cases}  \]
Then  
\begin{align*}\big\|\faeven\big\|_{b^s _{p,q} }&= 
\Big(\sum_{j=0}^\infty2^{j(s -\frac1p) q}\Big[\sum_{\nu\in\bbZ}|a_{j,2\nu}|^p\Big]^{\frac qp}\Big)^{1/q}
\\
\big\|\faodd\big\|_{b^s _{p,q} }&= 
\Big(\sum_{j=0}^\infty2^{j(s -\frac1p) q}\Big[\sum_{\nu\in\bbZ}|a_{j,2\nu+1}|^p\Big]^{\frac qp}\Big)^{1/q}
\end{align*}
The key result is the following  lemma,
 which, under suitable conditions, allows us to control $\|\fa\|_{b^s _{p,q}}$ in terms of $\|\faeven\|_{b^{s }_{p,q}}$. 
The general hypothesis in \eqref{la} will be linked later to a refinement condition 
which will appear in \eqref{refine1}.

\begin{lemma}\label{L1}
Let $0<p, q\leq\infty$, $s \in\bbR$ and $\vec \la=(\la_0,\la_1,\la_2)\in\bbC^3$ such that
\Be
\label{la1sig}
|\la_1|<2^{s -\frac1p}.
\Ee
Then, there exists $C=C(p,q,s ,\vec\la)>0$ such that for every sequence $\fa\in b^s _{p,q}$ satisfying the condition 
\Be
\label{la}
|a_{j,2\nu+1}|\leq \sum_{\ell=0}^2 |\la_\ell| \,|a_{j+1, 4\nu+2+\ell}|, 
\text{ for all } j\ge 0,\, \nu \in \bbZ.
\Ee
we have 
\Be
\label{aeven}
\big\|\fa\big\|_{b^s _{p,q}}\leq \, C\, \big\|\faeven\big\|_{b^s _{p,q}}.
\Ee
\end{lemma}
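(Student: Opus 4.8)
The plan is to reduce everything to a one-step recursion relating the even-indexed and odd-indexed entries of $\fa$, and then to iterate that recursion. Since $\fa=\faeven+\faodd$ have disjoint supports, the quasi-norm of $b^s_{p,q}$ satisfies $\|\fa\|_{b^s_{p,q}}\le C_{p,q}\big(\|\faeven\|_{b^s_{p,q}}+\|\faodd\|_{b^s_{p,q}}\big)$, so it suffices to prove $\|\faodd\|_{b^s_{p,q}}\le C\,\|\faeven\|_{b^s_{p,q}}$. I would abbreviate $\sigma:=s-\tfrac1p$, $\rho:=2^\sigma$, $u:=\min\{p,1\}$, $\theta:=|\la_1|/\rho$ (so $\theta<1$ by \eqref{la1sig}) and, for $j\ge0$, set
\[
E_j:=\Big(\sum_{\nu\in\bbZ}|a_{j,2\nu}|^p\Big)^{1/p},\qquad O_j:=\Big(\sum_{\nu\in\bbZ}|a_{j,2\nu+1}|^p\Big)^{1/p},
\]
with the usual modification when $p=\infty$; then $\|\faeven\|_{b^s_{p,q}}=\big\|(2^{j\sigma}E_j)_{j\ge0}\big\|_{\ell^q}$ and $\|\faodd\|_{b^s_{p,q}}=\big\|(2^{j\sigma}O_j)_{j\ge0}\big\|_{\ell^q}$, so the task is to compare these two $\ell^q$-norms.

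\emph{Step 1: a one-step recursion.} In \eqref{la} the indices $4\nu+2$ and $4\nu+4$ are even while $4\nu+3$ is odd, and $\{4\nu+2\}_\nu\cup\{4\nu+4\}_\nu\subset2\bbZ$, $\{4\nu+3\}_\nu\subset2\bbZ+1$. Applying the $u$-triangle inequality in $\ell^p(\bbZ)$ to \eqref{la} — that is, Minkowski's inequality when $p\ge1$ and $(\sum_\ell x_\ell)^p\le\sum_\ell x_\ell^p$ when $p\le1$ — and bounding the partial sums over $\{4\nu+2\}_\nu,\{4\nu+4\}_\nu$ by the full even sum and the sum over $\{4\nu+3\}_\nu$ by the full odd sum, I would obtain the uniform recursion
\[
O_j^u\ \le\ \big(|\la_0|^u+|\la_2|^u\big)\,E_{j+1}^u+|\la_1|^u\,O_{j+1}^u,\qquad j\ge0.
\]
It is essential that the self-feedback coefficient here is exactly $|\la_1|^u$, with no extra multiplicative loss.

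\emph{Step 2: iteration and a discrete convolution.} Iterating the recursion $N$ times, the tail term $|\la_1|^{uN}O_{j+N}^u$ is handled via the standing hypothesis $\fa\in b^s_{p,q}$, which gives the crude bound $O_m\le M\,2^{-m\sigma}$ with $M:=\|\fa\|_{b^s_{p,q}}<\infty$; thus $|\la_1|^{uN}O_{j+N}^u\le M^u\,2^{-ju\sigma}\,\theta^{uN}\to0$ since $\theta<1$. Passing to $N\to\infty$ and writing $b_m:=2^{m\sigma}E_m$ for $m\ge0$ (and $b_m:=0$ otherwise), this yields
\[
2^{j\sigma}O_j\ \le\ \rho^{-1}\big(|\la_0|^u+|\la_2|^u\big)^{1/u}\Big(\sum_{k\ge1}\theta^{u(k-1)}\,b_{j+k}^{\,u}\Big)^{1/u},\qquad j\ge0.
\]
Taking $\ell^q$-norms in $j$ — using Minkowski's inequality in $\ell^{q/u}$ when $q\ge u$ and the $(q/u)$-triangle inequality when $q<u$, together with the shift-invariance $\|b_{\cdot+k}\|_{\ell^q}=\|b\|_{\ell^q}$ and the convergence of $\sum_{k\ge1}\theta^{(k-1)\min\{u,q\}}$ (again because $\theta<1$) — one concludes
\[
\|\faodd\|_{b^s_{p,q}}=\big\|(2^{j\sigma}O_j)_j\big\|_{\ell^q}\le C(p,q,\vec\la)\,\|b\|_{\ell^q}=C(p,q,\vec\la)\,\|\faeven\|_{b^s_{p,q}},
\]
which, together with the reduction above, gives \eqref{aeven}.

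\emph{Main obstacle.} The one point requiring genuine care is the iteration in Step 2: it must be organized so that the self-feedback coefficient stays exactly $|\la_1|^u$ — any extra constant $>1$ would make the geometric iteration diverge once $|\la_1|$ approaches $2^{s-1/p}$ — and the hypothesis $\fa\in b^s_{p,q}$ must be invoked to annihilate the boundary term after $N$ steps. Without that hypothesis the statement is false: propagating a single nonzero odd entry through the chain $2\nu+1\mapsto4\nu+3$ produces a sequence with $\faeven=0$ and $O_j$ growing like $|\la_1|^{-j}$, which satisfies \eqref{la} but has infinite $b^s_{p,q}$ quasi-norm. The remaining manipulations are routine Minkowski / $r$-triangle inequalities and summation of geometric series.
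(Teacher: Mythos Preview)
Your proof is correct, but it takes a somewhat more laborious route than the paper's. The paper avoids the iteration entirely: instead of working row by row with $u=\min\{p,1\}$, it applies the $\rho$-triangle inequality with $\rho=\min\{1,p,q\}$ directly at the level of the full $b^s_{p,q}$ quasi-norm. Writing $\sigma=s-1/p$, one step of \eqref{la} together with the index shift $j\mapsto j+1$ (which produces a factor $2^{-\sigma}$) immediately gives
\[
\|\faodd\|_{b^s_{p,q}}^\rho \le (|\la_1|2^{-\sigma})^\rho\,\|\faodd\|_{b^s_{p,q}}^\rho + (|\la_0|^\rho+|\la_2|^\rho)2^{-\sigma\rho}\,\|\faeven\|_{b^s_{p,q}}^\rho,
\]
and since $|\la_1|2^{-\sigma}<1$ and $\|\faodd\|_{b^s_{p,q}}<\infty$ (here is where the hypothesis $\fa\in b^s_{p,q}$ enters, exactly as in your boundary-term argument), one simply subtracts and solves. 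So the paper trades your iteration-plus-convolution estimate for a single subtraction of a finite quantity; your approach, on the other hand, makes the geometric mechanism behind the constant more visible and produces the explicit convolution bound $2^{j\sigma}O_j\lesssim\big(\sum_{k\ge1}\theta^{u(k-1)}b_{j+k}^u\big)^{1/u}$, which could be useful if one wanted quantitative control row by row. Both arguments rely on the same structural point you identified as the main obstacle: the self-feedback coefficient must be exactly $|\la_1|$ (to the relevant power), with no multiplicative loss.
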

\begin{proof} 
Let $\rho=\min\{1,p,q\}$, and for simplicity write $\sigma=s -1/p$. Condition \eqref{la} together with the $\rho$-triangle inequality gives 
\begin{align*}\big\|\faodd\big\|_{b^s _{p,q}}^\rho&\le \sum_{\ell=0,1,2}|\la_\ell|^\rho
\Big(\sum_{j=0}^\infty 2^{j\sigma q } \Big[\sum_{\nu\in \bbZ}  
|a_{j+1, 4\nu+2+\ell} |^p\Big]^{q/p} \Big)^{\rho/q} 
\\
&= (|\la_1| 2^{-\sigma})^\rho \Big(\sum_{j=0}^\infty 2^{ (j+1)\sigma q } \Big[\sum_{\nu\in \bbZ}  
|a_{j+1, 4\nu+3} |^p\Big]^{q/p} \Big)^{\rho/q} 
\\&\quad +
\sum_{\ell=0,2} (|\la_\ell| 2^{-\sigma})^{\rho} 
\Big(\sum_{j=0}^\infty 2^{ (j+1) \sigma q } \Big[\sum_{\nu\in \bbZ}  
|a_{j+1, 4\nu+2+\ell} |^p\Big]^{q/p} \Big)^{\rho/q}.
\end{align*}
Using the assumptions $|\la_1|< 2^\sigma$ and $\fa\in b^s _{p,q}$, the previous display implies
\[ \Big(1- \frac{|\la_1|^\rho }{2^{\sigma\rho} }\Big)\,
 \big\|\faodd\big\|_{b^s _{p,q}}^\rho  \le  (|\la_0|^\rho+|\la_2|^\rho) 2^{-\sigma\rho} \|\faeven \|_{b^s _{p,q}}^\rho.
\]
This gives 
\begin{align*} \|\fa\|_{b^s _{p,q} } &\le \Big( \|\faodd \|_{b^s _{p,q}} ^\rho+\|\faeven \|_{b^s _{p,q}}^\rho \Big)^{1/\rho} 
\\
&\le \Big(\frac{|\la_0|^\rho+|\la_2|^\rho} 
{2^{\sigma\rho}- |\la_1|^\rho}+1\Big)^{1/\rho }  \|\faeven\|_{b^s _{p,q}},
\end{align*}
which finishes the proof. 
\end{proof} 

\subsection{\it Proof of  Theorem \ref{thm:Besov-equiv}}
We must show \eqref{BBd}, that is
\Be
\label{direct}
\|f\|_{B^s_{p,q}} \lesssim 
\|f\|_{B^{s,{\rm dyad}}_{p,q}} ,
\quad \mbox{provided $f\in B^s_{p,q}$},\Ee
which, as we shall see, holds  actually in the larger range 
\[
\frac1p<s<1+\frac1p.\]
By part (ii) of Proposition \ref{main1}, we know that
\[
\|f\|_{B^s_{p,q}} \lesssim \big\|\{\fc_{j,\mu}(f)\}_{ {j\geq0, \mu\in\SZ}}\big\|_{b^s_{p,q}} 
\,+\,\big\|\big \{\lan f{h_{-1,\mu}}\big\}_{\mu\in\SZ}\big\|_{\ell_p}=A+B.
\]
Clearly, $B$ is bounded by the right hand side of \eqref{direct}, so we focus on $A$.
Define a sequence $\fa=\fa(f)=\{a_{j,\nu}\}$ by
\[
a_{j,\nu}=2^j|\lan{f}{\th_{j,\nu}}|, \quad j\in \SN_0, \;\nu\in\SZ.
\]
Observe from the definition of the coefficients $\fc_{j,\mu}(f)$ in \eqref{ext-Haar-coeff} that
\[
A\,\lesssim\, \|\fa\|_{b^s_{p,q}}.
\]
Note also from \eqref{thjnu} that $a_{j, 2\mu}=2^j|\lan f{h_{j,\mu}}|$, so in particular
\[
\|\faeven\|_{b^s _{p,q}}=\big\|\big\{ 2^j\lan f{h_{j,\mu}}\} _{j\geq0,\mu\in\SZ}\big\|_{b^s_{p,q}}\leq \|f\|_{B^{s,{\rm dyad}}_{p,q}}.
\]
Therefore, we have reduced matters to prove that
\[
\big\|\fa\big\|_{b^s _{p,q}}\lesssim\, \big\|\faeven\big\|_{b^s _{p,q}}.
\]
We shall do so using Lemma \ref{L1}, so we need to verify the hypothesis \eqref{la}, for suitable 
scalars $(\la_0,\la_1,\la_2)$. This will follow from an elementary property of the spline functions
\[
\cN_{j,\mu}(x):=\cN_{2;j,\mu}(x)=\cN_2(2^jx-\mu),
\]
defined in \S\ref{S_chui}. Recall that these are piecewise linear functions supported in the intervals
$[2^{-j}\mu, 2^{-j}(\mu+2)]$. It is then straightforward to verify that 
\Be
\cN_{j,\mu }(x)=\tfrac12\,\cN_{j+1, 2\mu }(x)+\,\cN_{j+1, 2\mu +1}(x)+\tfrac12\,\cN_{j+1, 2\mu +2}(x);
\label{refine1}
\Ee
see Figure \ref{fig_refine}. We refer to \eqref{refine1} as the \emph{refinement identity}.

\begin{figure}[h]
 \centering
{
\begin{tikzpicture}[scale=4]

\draw[->] (-0.7,0.0) -- (1.5,0.0) node[below] {$x$};
\draw[->] (-0.5,-0.2) -- (-0.5,1.1);


\draw[dotted] (0.5,1) --(-0.5,1) node[left] {$1$};
\draw[dotted] (0.25,0.5) --(-0.5,0.5) node[left] {$1/2$};

\draw[red] (0,0)--(.5,1)--(1,0);
\draw[dashed, very thick, blue] (0,0)--(.25,0.5)--(0.5,0);
\draw[dashed, very thick, blue] (.25,0)--(0.5,1)--(0.75,0);
\draw[dashed, very thick, blue] (0.5,0)--(.75,0.5)--(1,0);


\fill[black] (0,0) circle (0.015cm)  node [below] {$\tfrac\nu{2^{j}}$};
\fill[black] (0.5,0) circle (0.015cm)  node [below] {$\tfrac{\nu+1}{2^{j}}$};
\fill[black] (1,0) circle (0.015cm)  node [below] {$\tfrac{\nu+2}{2^{j}}$};

\fill[blue] (0.25,0) circle (0.015cm);
\fill[blue] (0.75,0) circle (0.015cm);


\node [right] at (0.65, 0.8 ) {{\color{red} $\cN_{j,\nu}$}};

\node [left] at (0.15, 0.35 ) {{\color{blue} $\tfrac12\cN_{j+1,2\nu}$}};
\node [right] at (0.85, 0.35 ) {{\color{blue} $\tfrac12\cN_{j+1,2\nu+2}$}};

\node [left] at (0.25, 0.8 ) {{\color{blue} $\cN_{j+1,2\nu+1}$}};
\draw[->] (0.1, 0.75 ) -- (0.4,0.65);

\end{tikzpicture}
}

\caption{Refinement equation for $\cN_{j,\nu}(x)$; see \eqref{refine1}.
}\label{fig_refine}
\end{figure}
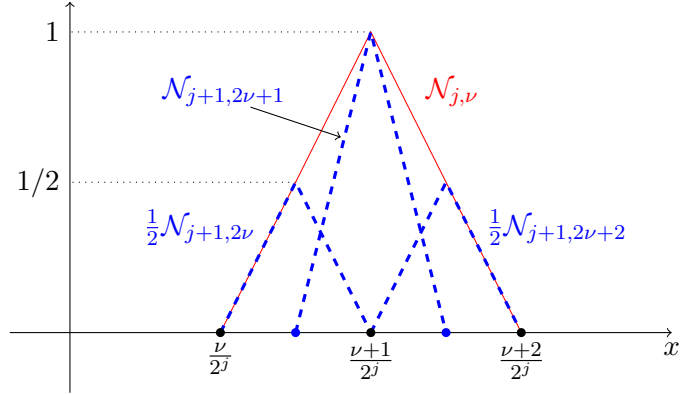

Now, if $\mu=2\nu+1$ is an odd integer, then the integration by parts formula and the refinement identity give
\Beas
a_{j,\mu} & = & 2^j\,|\hdot{f, \th_{j,\mu}}|=\frac12\,|\hdot{ f',\cN_{j+1,\mu}}|\\
\mbox{{\tiny by \eqref{refine1}}} & \leq & \frac14\,|\hdot{  f',\cN_{j+2,2\mu}}|+
\frac12\,|\hdot{ f',\cN_{j+2,2\mu+1}}|+\frac14\,|\hdot{  f',\cN_{j+2,2\mu+2}}|\\
\mbox{{\tiny by \eqref{Nder}}} & = & \frac12\,a_{j+1,2\mu}+
\,a_{j+1,2\mu+1}+\frac12\,a_{j+1,2\mu+2},
\Eeas
which coincides with \eqref{la} with $(\la_0,\la_1,\la_2)=(1/2, 1, 1/2)$. 
So we can apply Lemma \ref{L1}, under the assumption
\[
|\la_1|=1<2^{s-\frac1p},
\]
which holds precisely when $s>\frac1p$. This completes the proof.
\qed

\subsection{\it Proof of Theorem \ref{W1-equiv-intro}}
We first show that, if $1<p\leq \infty$ and  $f\in W^{1}_p(\bbR)$, then
$$
	\|f\|_{W^{1}_p} \approx \sup\limits_{j \geq -1} 2^{j(1-1/p)}\Big(\sum\limits_{\mu \in \bbZ}|2^j\langle f,h_{j,\mu}\rangle|^p\Big)^{1/p}\,.
$$
In view of Theorem \ref{W1-char-intro}, this reduces to prove
\begin{equation}\label{bootstrap}
  \begin{split}
	A &:= \sup\limits_{j\geq -1} 2^{j(1-1/p)}\Big(\sum\limits_{\mu \in \bbZ} 
	|\fc_{j,\mu}(f)|^p\Big)^{1/p}
	\\
	&\lesssim \,\sup\limits_{j\geq -1} 2^{j(1-1/p)}\Big(\sum\limits_{\mu \in \bbZ} |2^j\langle f,h_{j,\mu}\rangle|^p\Big)^{1/p}\,,
  \end{split}
\end{equation}
whenever $A$ is finite. The argument is completely analogous to that for proving Theorem \ref{thm:Besov-equiv}\,,
this time using the spaces $b^1_{p,\infty}$. 
Note, that this argument only needs that $1=s>1/p$. 

Finally, the assertion in \eqref{W1BF}, for $f\in W^1_p$, follows now easily from 
\[
\|f\|_{W^1_p}\lesssim \|f\|_{B^{1,{\rm dyad}}_{p,\infty}}\lesssim \|f\|_{F^{1,{\rm dyad}}_{p,\infty}}\lesssim 
\|f\|_{F^1_{p,2}}\equiv \|f\|_{W^1_p},
\]
where the third inequality was shown in part (ii) of Theorem \ref{emb_s=1} and for the last three steps we assume $p<\infty$.
\qed

\section{Necessary condition for the embeddings for $s=1$}\label{sec-s=1-example}

We prove the necessary conditions in Theorem \ref{emb_s=1} for various embeddings into $B^{1,\dyad}_{p,\infty}$.

\begin{lemma} \label{lem:ex1_B}
 Suppose $1/2\le p<\infty$.
Then
\Be  B^1_{p,q} \hookrightarrow B_{p,\infty}^{1,\dyad} \implies  q\le  p\label{eq:ex1_B}
\Ee
\end{lemma}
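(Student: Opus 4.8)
The plan is to prove the necessity of the condition $q\le p$ by constructing, for each $q>p$, an explicit function $f\in B^1_{p,q}(\SR)$ whose Haar coefficient sequence $\{2^j\langle f,h_{j,\mu}\rangle\}$ fails to lie in $b^1_{p,\infty}$. The natural candidates are lacunary-type sums of single Haar functions placed at well-separated dyadic scales: set
\[
f=\sum_{k=1}^\infty c_k\,\psi_{j_k,0},
\]
where $\psi$ is the Chui-Wang polygon (or equivalently a fixed smooth bump), $j_k$ is a rapidly increasing sequence of scales, and $c_k$ is a sequence of coefficients to be chosen. Since $\psi$ is a fixed Schwartz-class (compactly supported, $C^1$) function with two vanishing moments, its building blocks $\psi_{j_k,0}$ are, up to normalization, atoms for $B^1_{p,q}$ concentrated at scale $2^{-j_k}$; by Theorem \ref{thm:Chui-Wang}(ii) (with $r=1$, which lies in the admissible range $1/p-2<1<\max\{1+1/p,2\}$), one has
\[
\|f\|_{B^1_{p,q}}\approx \Big(\sum_{k=1}^\infty \big(2^{j_k(1-1/p)}|c_k|\big)^q\Big)^{1/q},
\]
because the single nonzero coefficient at each scale $j_k$ makes the inner $\ell_p$-sum trivial and the scales are distinct.

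First I would fix the scales to be lacunary, say $j_k=k$ or $j_k$ a fast-growing sequence, and choose $c_k$ so that $2^{j_k(1-1/p)}|c_k|=k^{-\alpha}$ with $p^{-1}<\alpha\le q^{-1}$; this is possible precisely because $q>p$. Then the displayed $B^1_{p,q}$-norm is $\|\{k^{-\alpha}\}\|_{\ell_q}<\infty$ (since $\alpha q>\alpha p>1$... wait, we need $\alpha>1/q$ only in the sense $\alpha q>1$), so $f\in B^1_{p,q}$, while on the other hand I compute the $b^1_{p,\infty}$-norm of the Haar coefficients. The key point is that $\langle f,h_{j,\mu}\rangle$ need not vanish even at scales $j$ far from the $j_k$: because $\psi_{j_k,0}$ is a fixed continuous function, its Haar coefficients $2^j\langle\psi_{j_k,0},h_{j,\mu}\rangle$ at a \emph{coarser} scale $j<j_k$ are of size $\sim 2^{-(j_k-j)}$ (integrating the slowly-varying function against an oscillating Haar function gains one power of the scale gap), summed over the $O(1)$ relevant $\mu$. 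Actually the relevant phenomenon is the reverse: to exhibit failure of $b^1_{p,\infty}$ one contrasts the $b^s_{p,q}$ embedding with $b^s_{p,\infty}$ — the sup over $j$ of $2^{j(1-1/p)}(\sum_\mu|2^j\langle f,h_{j,\mu}\rangle|^p)^{1/p}$. Because the contributions at scale $j_k$ itself give $2^{j_k(1-1/p)}|c_k|\cdot(\text{const})=k^{-\alpha}$, which is bounded, the failure must come from \emph{interference}: at intermediate dyadic scales the tails of all the bumps $\psi_{j_\ell,0}$ with $\ell>k$ superpose near the origin, and one should instead replace $\psi_{j_k,0}$ by translated copies $\sum_{\mu\in E_k}\psi_{j_k,\mu}$ spread over a set $E_k$ of $N_k$ consecutive positions, rescaling $c_k$ by $N_k^{-1/p}$ to keep the $B^1_{p,q}$-norm unchanged while the cross-scale Haar coefficients at a single coarse scale add constructively. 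This is exactly the mechanism in the examples of \cite{SeeUl17} invoked in Remark \ref{R_fF}.

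The main obstacle — and the step I would spend the most care on — is the bookkeeping of the Haar coefficients $2^j\langle f,h_{j,\mu}\rangle$ across the three regimes $j<j_k$, $j=j_k$, $j>j_k$, and in particular arranging the translates $E_k$ and the counts $N_k$ so that at some single well-chosen scale $j=j_k$ (or a scale slightly below) the quantity $(\sum_\mu|2^j\langle f,h_{j,\mu}\rangle|^p)^{1/p}$ picks up a factor growing in $k$ while the $B^1_{p,q}$-norm stays finite. Concretely I expect to need: the coefficients of a fixed bump $\psi$ satisfy $|\langle\psi,h_{j,\mu}\rangle|\lesssim 2^{-j}\min\{1,2^{-j}\}$ with the finer factor when $\supp h_{j,\mu}$ avoids the nodes of $\psi$, plus a comparable lower bound at $O(1)$ values of $\mu$, and then summing a lacunary family of such bumps — suitably translated and normalized — produces at scale $2^{-j_k}$ a contribution of order $N_k^{1/p}\cdot 2^{-j_k}\cdot |c_k|\cdot 2^{j_k(1-1/p)}\cdot(\text{interference gain})$. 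Choosing $N_k$ and $c_k$ so that the $\ell_q$ constraint $\sum_k(2^{j_k(1-1/p)}|c_k|N_k^{1/p})^q<\infty$ holds but $\sup_k(2^{j_k(1-1/p)}|c_k| N_k^{1/p}\cdot g(N_k))=\infty$ for the interference gain $g(N_k)\to\infty$ forces $q>p$ to be excluded, i.e. shows $q\le p$ is necessary. I would double-check the admissible range in Theorem \ref{thm:Chui-Wang}(ii) applies at $r=1$ (it does, for all $p\ge1/2$), and verify the embedding $B^1_{p,q}\hookrightarrow B^{1,\dyad}_{p,\infty}$ forces $\|\{2^j\langle f,h_{j,\mu}\rangle\}\|_{b^1_{p,\infty}}\lesssim\|f\|_{B^1_{p,q}}$ for every such $f$, contradicting the construction when $q>p$.
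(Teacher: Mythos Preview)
Your starting point --- a lacunary sum of wavelets at distinct scales, with the $B^1_{p,q}$ norm read off from Theorem~\ref{thm:Chui-Wang}(ii) --- is sound, and a corrected version of this construction does prove the lemma. But the proposal has a genuine gap: you never establish the lower bound on $\|f\|_{B^{1,\dyad}_{p,\infty}}$. The ``interference gain $g(N_k)$'' is not a real mechanism here. If you spread piece $k$ over $N_k$ translates and rescale by $N_k^{-1/p}$, then at a \emph{coarse} scale $j<j_k$ the Haar coefficient $\langle\sum_{\nu\in E_k}\psi_{j_k,\nu},h_{j,\mu}\rangle$ picks up a nonzero contribution only from the $O(1)$ translates whose support meets a discontinuity of $h_{j,\mu}$ (the others vanish since $\int\psi=0$), so nothing adds constructively; and at \emph{fine} scales the translates occupy disjoint $\mu$'s and therefore combine in $\ell_p$, leaving both norms unchanged by the spreading.

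The missing idea is simpler. Place the pieces at \emph{disjoint spatial locations}, $f_K=\sum_{k=1}^K c_k\psi_{j_k,\nu_k}$ with pairwise disjoint supports, and evaluate the dyadic norm at a single fine scale $j>\max_k j_k$. For $I_{j,\mu}$ inside a linear piece of some $\psi_{j_k,\nu_k}$ one has $|2^j\langle f_K,h_{j,\mu}\rangle|=2^{-j-2}|f_K'(b_{j,\mu})|\approx|c_k|2^{j_k-j}$; there are $\approx 2^{j-j_k}$ such $\mu$ per piece, and distinct pieces give disjoint sets of $\mu$, so
\[
2^{j(1-1/p)}\Big(\sum_\mu|2^j\langle f_K,h_{j,\mu}\rangle|^p\Big)^{1/p}\gtrsim\Big(\sum_{k=1}^K\big(2^{j_k(1-1/p)}|c_k|\big)^p\Big)^{1/p}.
\]
With $2^{j_k(1-1/p)}|c_k|=1$ this yields $\|f_K\|_{B^{1,\dyad}_{p,\infty}}\gtrsim K^{1/p}$ versus $\|f_K\|_{B^1_{p,q}}\approx K^{1/q}$, contradicting the embedding when $q>p$. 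This is exactly the paper's strategy with a different building block: the paper uses smooth modulated bumps $u(N(x-2j/N))e^{2\pi i2^jx}$, one per frequency $j\in(N/4,N/2)$ at disjoint positions $2j/N$, cites $\|f_N\|_{B^1_{p,q}}\lesssim N^{1/q-1/p}$ from \cite{GaSeUl21}, and gets the dyadic lower bound at scale $N$ via the same Taylor expansion $\langle f_N,h_{N,\mu}\rangle\approx 2^{-2N-2}f_N'(b_{N,\mu})$.
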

\begin{proof}We shall work with an example that has been used in \cite[\S6.2]{GaSeUl21} to prove lower bounds for the norms of $\SE_N$ on $B^1_{p,q}$. Let $u\in C^\infty_c$ be supported in $(1/8,7/8)$ so that $u(x)=1$  on $[1/4,3/4]$. For $N\gg 1$ and $N/4\le j\le N/2$ define 
\[g_{N,j}(x)= u\big(N(x-\tfrac{2j}{N})\big) e^{2\pi i 2^j x} \] and let 
$f_N(x) =\sum_{N/4\le j\le N/2} 2^{-j} g_{N,j}(x)$. Then by \cite[Lemma 29]{GaSeUl21} (Lemma 6.3 in arxiv:1901:09117) we have $\|f_N\|_{B^1_{p,q} }\lc N^{-(1/p-1/q)}$ for $p\leq q$. 
We show that $\|f_N\|_{B^{1,\dyad}_{p,\infty}} \gc 1$ for large $N$, which will imply that $B^1_{p,q}$ is not continuously embedded into $B^{1,\dyad}_{p,\infty}$ when $q>p$. 

To see this we prove lower bounds for many of the   Haar coefficients of $f_N$ at Haar frequency $2^N$.
 Let $J^{N,j}=(\frac{2j}{N}+\frac 1{4N}, \frac{2j}{N}+\frac{3}{4N})$; we observe that for fixed $N$ the  intervals $J^{N,j}$ are disjoint and that $f_N(x)=e^{ 2^j 2\pi i x} $ for $x\in J^{N,j}$.  
We get by a Taylor expansion
\begin{align}\inn{f_N}{h_{N,\mu}} &=2^{-2N-2} f_N'\big(2^{-N}(\mu+\tfrac 12)\big)  + R_{N,\mu}
\label{taylor}
\end{align} with $|R_{N,\mu}|\le 2^{-3N} \sup_{I_{j,\mu}} |f_N''|$.
 Let $\cZ^{N,j}$ be the set of all integers $\mu$ such that
$2^{-N}\mu$ and $2^{-N}(\mu+1)$ belong to $J^{N,j}$; then 
for $\mu\in \cZ_{N,j}$\[|\inn {f_N}{h_{j,\mu}}|=|\inn { 2^{-j} g_{N,j}}{h_{j,\mu}}|= 2\pi\,2^{-2N-2}+ O(2^{j-3N})\] and hence
\[\|f_N\|_{B^{1,\dyad}_{p,\infty}} \ge 2^{N(1-1/p)} \Big(
\sum_{\frac N4<j<\frac N2} \sum_{\mu\in \cZ_{N,j}} 
\big|2^N\inn {f_N}{h_{j,\mu}}\big|^p\Big)^{1/p}.
\]
Since 
$\#(\cZ^{N,j}) \approx 2^N N^{-1}$
for large $N$ and $N/4<j<N/2$ we obtain $\|f_N\|_{B^{1,\dyad}_{p,\infty}} \gc 1$.
\end{proof} 

\begin{lemma} \label{lem:ex1_F}
 Suppose $1/2\le p<\infty$.
Then
\begin{align}
F^1_{p,q} \hookrightarrow B_{p,\infty}^{1,\dyad} \implies  q\le 2, \label{eq:ex1_F}
\\
B^1_{p,q} \hookrightarrow B_{p,\infty}^{1,\dyad} \implies  q\le 2.\label{eq:ex1_B2}
\end{align} 
\end{lemma}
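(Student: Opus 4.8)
\textbf{Proof plan for Lemma \ref{lem:ex1_F}.} The plan is to construct, for each large $N$, a single function (or a family of functions) that lives at frequency $\approx 2^N$, has small $F^1_{p,q}$ (resp. $B^1_{p,q}$) norm when $q>2$, yet has $B^{1,\dyad}_{p,\infty}$ norm bounded below by a constant, thereby contradicting the embedding. The natural candidate is a lacunary-type sum of modulated bumps, as in the proof of Lemma \ref{lem:ex1_B}, but now arranged so that the relevant norm is governed by an $\ell^q$ sum over scales rather than over translations. Concretely, I would take $u\in C_c^\infty$ with $u=1$ on $[1/4,3/4]$ supported in $(1/8,7/8)$, fix a base interval, and set
\[
f_N(x) \;=\; \sum_{N/4\le j\le N/2} 2^{-j}\, u\big(x\big)\, e^{2\pi i 2^j x},
\]
i.e. many modulations superposed \emph{on the same spatial bump}, so that the frequency pieces at dyadic scale $2^j$, $N/4\le j\le N/2$, each contribute. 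For such a function the Littlewood--Paley piece at scale $2^j$ is essentially $2^{-j}u\,e^{2\pi i 2^j x}$, so $2^{j}\|L_j f_N\|_p\approx 1$ for each of the $\approx N$ relevant $j$, whence $\|f_N\|_{F^1_{p,q}}\approx \|f_N\|_{B^1_{p,q}}\approx N^{1/q}$ (the $F$ and $B$ norms coincide up to constants here because only $\approx N$ scales are active and the pieces do not overlap in frequency, so one may use e.g. \eqref{localmeans_B}, \eqref{localmeans_F} directly).

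Next I would estimate $\|f_N\|_{B^{1,\dyad}_{p,\infty}}$ from below by looking only at Haar frequency $2^N$ (i.e. $j=N$ in the Haar index, which is larger than all the modulation frequencies $2^j$, $j\le N/2$). On an interval $I_{N,\mu}$ the function $f_N$ is smooth with $f_N'(x)=\sum_j 2\pi i\, u(x) e^{2\pi i 2^j x} + O(1)$ on the support of $u$; more precisely, by a Taylor expansion exactly as in \eqref{taylor}, $\langle f_N,h_{N,\mu}\rangle = 2^{-2N-2} f_N'\big(2^{-N}(\mu+\tfrac12)\big) + O\big(2^{-3N}\sup_{I_{N,\mu}}|f_N''|\big)$, and since $\sup|f_N''|\lesssim \sum_j 2^{2j}\lesssim 2^{N}$ the remainder is $O(2^{-3N}2^{N})=O(2^{-2N})$ which is genuinely lower order after multiplication by $2^{2N}$ only in an averaged sense — so care is needed and instead I would argue via $\sum_\mu |\langle f_N,h_{N,\mu}\rangle|^p$: using $\int |f_N'|^p\gtrsim 1$ on a fixed-size set (oscillation keeps $|f_N'|$ from vanishing too often, or one averages $\sum_j e^{2\pi i 2^j x}$ and notes its $L^p$ norm on $\supp u$ is $\gtrsim N^{1/2}$ by Khintchine/Rudin--Shapiro type bounds — this is where $q\le 2$ enters!). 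This gives
\[
\|f_N\|_{B^{1,\dyad}_{p,\infty}} \;\ge\; 2^{N(1-1/p)}\Big(\sum_\mu |2^N\langle f_N,h_{N,\mu}\rangle|^p\Big)^{1/p} \;\gtrsim\; \Big\|\sum_{N/4\le j\le N/2} e^{2\pi i 2^j \cdot}\Big\|_{L^p(\supp u)} \;\gtrsim\; N^{1/2}.
\]
Thus the embedding would force $N^{1/2}\lesssim \|f_N\|_{F^1_{p,q}}\approx N^{1/q}$ for all $N$, i.e. $1/q\ge 1/2$, i.e. $q\le 2$; the same chain with $\|f_N\|_{B^1_{p,q}}\approx N^{1/q}$ gives \eqref{eq:ex1_B2}.

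The main obstacle is the lower bound $\big\|\sum_{N/4\le j\le N/2} e^{2\pi i 2^j x}\big\|_{L^p(\supp u)}\gtrsim N^{1/2}$, and more importantly the control of the Taylor remainder $R_{N,\mu}$ so that the discrete $\ell^p$ sum of Haar coefficients is truly comparable to $2^{-2N}\|f_N'\|_{L^p}$ rather than being swamped by error terms. For the lacunary exponential sum, orthogonality of the $e^{2\pi i 2^j x}$ over the fixed interval gives the $L^2$ lower bound $\gtrsim N^{1/2}$ immediately, and for $p<2$ one passes from $L^2$ to $L^p$ on a set of finite measure by Hölder only in the wrong direction — so instead for $p<2$ I would invoke that lacunary sums have comparable $L^p$ norms for all $0<p<\infty$ (Zygmund's theorem on lacunary series), giving $\gtrsim N^{1/2}$ for every $p$; for $p>2$ one uses the $L^2$ bound together with the fact that the sum is bounded by $N$ pointwise to interpolate, or again Zygmund. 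For the remainder control, I would partition the $\mu$'s according to whether $I_{N,\mu}\subset \supp u$ and use that on such intervals $f_N''$ is dominated by its top-frequency contribution $2^{2j_{\max}}\le 2^{N}$, so $\sum_\mu |R_{N,\mu}|^p \le 2^{-3Np}\sum_\mu \sup_{I_{N,\mu}}|f_N''|^p \lesssim 2^{-3Np}2^{Np}2^{N}=2^{-2Np}2^N$, which after multiplication by $2^{Np(1-1/p)}=2^{Np-N}$ contributes $\lesssim 2^{-N}\to 0$, hence is negligible compared to the main term of size $\approx N^{1/2}$ — making the argument go through.
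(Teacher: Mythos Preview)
Your approach is correct and uses the same test-function template as the paper—a lacunary sum $\sum_{N/4\le j\le N/2}2^{-j}\psi(x)e^{2\pi i 2^j x}$ with $\psi\in C^\infty_c$—and the same upper bound $\|f_N\|_{F^1_{p,q}}\approx\|f_N\|_{B^1_{p,q}}\approx N^{1/q}$. The difference is in the lower bound $\|\cdot\|_{B^{1,\dyad}_{p,\infty}}\gtrsim N^{1/2}$. The paper \emph{randomizes} the coefficients with Rademacher signs $r_j(t)$, then takes the $L^p(dt)$ average of $\|f_t\|_{B^{1,\dyad}_{p,\infty}}^p$ and applies Khintchine's inequality to each Haar coefficient $\langle f_t,2^N h_{N,\mu}\rangle=\sum_j r_j(t)\langle\psi_j,2^N h_{N,\mu}\rangle$ separately; since each individual $|\langle\psi_j,2^N h_{N,\mu}\rangle|\approx 2^{-N}$ by Taylor, the square function gives $N^{1/2}$ directly and one never needs to control $\|\sum_j e^{2\pi i 2^j x}\|_{L^p}$. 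Your deterministic route via Zygmund's theorem on lacunary series is an equally valid alternative; it produces a single explicit counterexample rather than an averaged one, at the cost of invoking a slightly less standard tool and having to pass through a Riemann-sum approximation.

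Two fixable inaccuracies in your remainder analysis. First, $\sup|f_N''|\lesssim 2^{N/2}$, not $2^N$, since all frequencies are $\le 2^{N/2}$; also your final arithmetic drops the factor $2^{Np}$ coming from $|2^N\langle\cdot\rangle|^p$, so with your stated $2^N$ bound the remainder actually contributes $O(1)$, and with the correct $2^{N/2}$ bound it contributes $O(2^{-N/2})$—either is harmless against the main term $N^{1/2}$. Second, you tacitly identify the discrete sum $2^{-N}\sum_\mu|f_N'(b_{N,\mu})|^p$ with $\int|f_N'|^p$; this is justified because $f_N'$ varies by $O(2^{-N/2})$ across each $I_{N,\mu}$, but you should say so.
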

\begin{proof} We consider the same example that was used in \cite[\S7.2.1]{GaSeUl21_2}. Namely, let $\psi\in C^\infty_c(0,1)$ with $\psi\equiv1$ in $[1/4,3/4]$, and 
for each large $N\gg1$, let \Be\label{eq:cZN} \fZ_N=\{j\in\SN\mid N/4< j<N/2\}
\Ee
and
\Be\label{ft}
f_t(x):=\sum_{j\in\fZ_N}\frac{r_j(t)}{2^j}e^{2\pi i 2^jx}\,\psi(x),\quad t\in[0,1],
\Ee
where $r_j(t)$, $t\in[0,1]$, are the usual Rademacher functions.
Using Lemma 7.3 from \cite{GaSeUl21_2} one can verify that
\Be\label{ftF}
\sup_{t\in [0,1]} \|f_t\|_{F^1_{p,q}} 
\lesssim N^{1/q};
\Ee
a similar argument also gives
\Be\label{ftB}
\sup_{t\in [0,1]} \|f_t\|_{B^1_{p,q}}
\lesssim N^{1/q}.
\Ee
Let $\psi_j(x)=2^{-j}e^{2\pi i 2^jx}\psi(x)$, for $j\in\fZ_N$, and let $\cZ_N$ be the set of all $\mu\in\SZ$ such that  $I_{N,\mu}\subset(1/4,3/4)$. Using the Taylor expansion as in \eqref{taylor} one sees that
\Be
|\hdot{\psi_j, 2^Nh_{N,\mu}}|=2\pi 2^{-N-2}+O(2^{j-2N}),\quad \mu\in\cZ_N.
\label{psihj}
\Ee
Now observe that
\Be\label{ftdyad}
\|f_t\|_{B^{1,{\rm dyad}}_{p,\infty}}\geq 2^{N(1-1/p)}\,\Big(\sum_{\mu\in\cZ_N}|\lan{f_t}{2^Nh_{N,\mu}}|^p\Big)^\frac1p.
\Ee
So, raising to the $p$-th power and taking the expectation 
in the $t$ variable, we obtain from Khintchine's inequality
\Beas
\Big(\int_0^1\|f_t\|_{B^{1,{\rm dyad}}_{p,\infty}}^pdt\Big)^\frac1p &\geq  & 2^{N(1-\frac1p)}\,\Big(\sum_{\mu\in\cZ_N}\int_0^1 \big|\sum_{j\in\fZ_N}r_j(t)\lan{\psi_j}{2^Nh_{N,\mu}}\big|^pdt\Big)^\frac1p
\\
&\gtrsim  & 2^{N(1-\frac1p)}\,\Big(\sum_{\mu\in\cZ_N}
\Big[
 \sum_{j\in\fZ_N}\big|\lan{\psi_j}{2^Nh_{N,\mu}}\big|^2\Big]^\frac p2\Big)^\frac1p.
\Eeas
An application of \eqref{psihj}, together with the cardinalities of $\fZ_N$ and $\cZ_N$, then gives
\[
\Big(\int_0^1 \|f_t\|_{B^{1,{\rm dyad}}_{p,\infty}}^p dt \Big)^\frac1p\,\gtrsim\, \sqrt{N}.
\]
This together with \eqref{ftF}, \eqref{ftB}  implies that the inclusions $B^1_{p,q}\hookrightarrow B^{1,{\rm dyad}}_{p,\infty}$
and 
$F^1_{p,q}\hookrightarrow B^{1,{\rm dyad}}_{p,\infty}$  can only hold if $q\leq 2$.
\end{proof}

\begin{lemma}\label{lem:q=infty}
For $p=\infty$ we have 
\Be\label{q=infty} B^1_{\infty,q} \hookrightarrow B_{\infty,\infty}^{1,\dyad} \implies q\leq1.
\Ee
\end{lemma}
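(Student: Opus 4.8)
The goal is to show that the embedding $B^1_{\infty,q}\hookrightarrow B^{1,\dyad}_{\infty,\infty}$ forces $q\le 1$, i.e.\ to produce, for $q>1$, a family of functions whose $B^1_{\infty,q}$ norms stay bounded but whose $B^{1,\dyad}_{\infty,\infty}$ norms blow up. The natural approach is to adapt the lacunary Fourier series constructions of Lemmas~\ref{lem:ex1_B} and \ref{lem:ex1_F} to the $p=\infty$ endpoint, where the relevant sequence quantity is $\sup_j\sup_\mu |2^j\langle f,h_{j,\mu}\rangle|$ (no summation in $\mu$), so one only needs \emph{one} large Haar coefficient per scale rather than many. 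This suggests replacing the single-scale wave packet by a superposition across \emph{many dyadic scales}, exploiting that $B^1_{\infty,q}$ sums $q$-th powers over $\approx N$ scales with a small gain while the dyadic sup-norm sees each scale's contribution undiminished.

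\textbf{Key steps.} First I would fix a bump $u\in C^\infty_c(0,1)$ with $u\equiv 1$ on $[1/4,3/4]$ and, for large $N$, set $\fZ_N=\{j\in\bbN: N/4<j<N/2\}$ and consider
\[
f_N(x)=\sum_{j\in\fZ_N} c_j\, 2^{-j} e^{2\pi i 2^j x}\, u(x)
\]
for suitable coefficients $c_j$ to be chosen (the simplest attempt being $c_j\equiv 1$, but one may need $c_j$ supported on a sublacunary or thinned set of scales, or distinct spatial translates $u(x-a_j)$ with disjoint or overlapping supports, to control the $B^1_{\infty,q}$ norm). Second, I would estimate $\|f_N\|_{B^1_{\infty,q}}$: using the local-means or Littlewood--Paley characterization, the frequency block at scale $2^j$ contributes $O(2^j\cdot 2^{-j})=O(1)$ to the $j$-th term of \eqref{localmeans_B}, so naively $\|f_N\|_{B^1_{\infty,q}}\lesssim (\#\fZ_N)^{1/q}\approx N^{1/q}$; to get a \emph{bounded} $B^1_{\infty,q}$ norm one must instead arrange that the spatial localizations are spread out (translating the $j$-th packet to have essentially disjoint support, so only $O(1)$ scales are ``active'' at any point) which caps $\|f_N\|_{B^1_{\infty,q}}$ by an absolute constant independently of $N$. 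Third, I would prove a lower bound for the Haar coefficients at frequency $2^N$ exactly as in \eqref{taylor}: a second-order Taylor expansion gives $\langle f_N, h_{N,\mu}\rangle = 2^{-2N-2} f_N'(2^{-N}(\mu+\tfrac12)) + O(2^{-3N}\sup_{I_{N,\mu}}|f_N''|)$, and since $f_N'$ has size $\approx 1$ on the support of each packet while $f_N''$ has size $\approx 2^j\le 2^{N/2}$, there are choices of $\mu$ for which $|2^N\langle f_N, h_{N,\mu}\rangle|\gtrsim 2^{-N}$, hence $2^N|2^N\langle f_N,h_{N,\mu}\rangle|\gtrsim 1$, giving $\|f_N\|_{B^{1,\dyad}_{\infty,\infty}}\gtrsim 1$ — but this alone only gives a bounded lower bound, not blow-up. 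To force \emph{divergence} of the dyadic norm while keeping $B^1_{\infty,q}$ bounded, the cleaner route is to sum over a \emph{geometric range} of frequency scales $2^{N_0},2^{N_1},\dots$ with $N_k$ lacunary, place a normalized packet at each, and read off a large Haar coefficient at each scale $2^{N_k}$; since the dyadic norm is a $\sup$ over $j$ it stays $\gtrsim 1$ at \emph{every} scale, whereas the $B^1_{\infty,q}$ norm over the disjoint-support collection is still bounded — the sharper blow-up factor $(\log)^{1-1/q}$ type then comes from overlaying $\sim M$ copies and using that $B^1_{\infty,q}$ sees an $\ell^q$ sum while $B^{1,\dyad}_{\infty,\infty}$ sees an $\ell^\infty$ (i.e.\ $\ell^1$ after testing against a single coefficient) sum.

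\textbf{Main obstacle.} The delicate point is simultaneously (a) keeping $\|f_N\|_{B^1_{\infty,q}}$ bounded — which at the $p=\infty$ endpoint is subtle because the Besov norm is an $\ell^q$ sum over scales with a smoothness weight $2^{j\cdot 1}$ that precisely cancels the $2^{-j}$ decay of the coefficients, so one has no room to lose and must genuinely exploit spatial disjointness or near-orthogonality of the frequency pieces, invoking the local-means estimate carefully rather than crude triangle inequality — and (b) arranging the corresponding Haar coefficients at the chosen dyadic frequencies to be bounded below by a quantity that diverges as $N\to\infty$ after the $B^{1,\dyad}_{\infty,\infty}$-normalization. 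The resolution is to mimic the finite-set construction behind \eqref{fNR}/\eqref{taylor}: take $q>1$, and over a set of $\approx M$ lacunary frequencies build $f$ as a sum of unit-scale packets with pairwise disjoint spatial supports; then $\|f\|_{B^1_{\infty,q}}\approx \|(1,\dots,1)\|_{\ell^q(\text{disjoint})}\approx 1$ (disjoint supports make the Besov sup-norm over $x$ collapse to a single active term per point, so there is no $M^{1/q}$ growth), while each of the $M$ frequencies contributes a Haar coefficient of normalized size $\approx 1$, and since these live at distinct scales and the dyadic norm takes a supremum over scales one concludes $\|f\|_{B^{1,\dyad}_{\infty,\infty}}\gtrsim 1$; to turn the constant lower bound into genuine blow-up one must additionally \emph{stack} the packets at a common location so that the $q$-summation in $B^1_{\infty,q}$ produces a factor $M^{1/q}$ while the Taylor-expansion argument forces a single Haar coefficient of size $\approx M/2^{2N}$ (all packets adding constructively at one point), yielding $\|f\|_{B^{1,\dyad}_{\infty,\infty}}/\|f\|_{B^1_{\infty,q}}\gtrsim M^{1-1/q}\to\infty$ when $q>1$. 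Verifying that this constructive superposition does not destroy the $B^1_{\infty,q}$ bound — i.e.\ that the $M$ overlapping frequency blocks really only cost $M^{1/q}$ and not $M$ in the Besov norm — is where the Littlewood--Paley / local-means machinery of Section~\ref{sect:FS} and the estimates from \cite{GaSeUl21,GaSeUl21_2} do the real work, and is the step I expect to require the most care.
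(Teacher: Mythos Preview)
Your final landing point --- stack $M\approx N$ packets $2^{-j}e^{2\pi i 2^j x}\psi(x)$, $j\in\fZ_N$, at a common spatial location, obtain $\|f\|_{B^1_{\infty,q}}\lesssim M^{1/q}$ and $\|f\|_{B^{1,\dyad}_{\infty,\infty}}\gtrsim M$ via constructive interference in a single Haar coefficient, yielding a ratio $\gtrsim M^{1-1/q}$ --- is exactly the paper's argument. The paper takes $c_j\equiv 1$ and shifts $\psi$ to be supported in $(-1/2,1/2)$ with $\psi\equiv 1$ near $0$; then the Taylor expansion \eqref{taylor} at $\mu=0$ gives $\langle f,2^N h_{N,0}\rangle = 2^{-N-2}\sum_{j\in\fZ_N}\psi_j'(0)+O(2^{-3N/2})$, and since $\psi_j'(0)=2\pi i$ for every $j$ the terms add constructively to give $\|f\|_{B^{1,\dyad}_{\infty,\infty}}\gtrsim N$, while $\|f\|_{B^1_{\infty,q}}\lesssim N^{1/q}$ as in \eqref{ftB}.

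Two remarks on your route. First, the detours through disjoint spatial supports and lacunary stacking at well-separated scales $N_k$ are dead ends for the reason you yourself noticed: each produces only a bounded ratio, never blow-up. You could have gone straight to the stacked example. Second, you have the difficulty exactly backwards. The $B^1_{\infty,q}$ upper bound $\lesssim N^{1/q}$ is the routine step --- it is just the $\ell^q$-sum over $\approx N$ scales of $O(1)$ contributions $2^{j}\|L_j f\|_\infty$, with no need for the machinery from \cite{GaSeUl21,GaSeUl21_2}. What actually requires a moment's thought is arranging the constructive interference: choosing the location of $\psi$ and the Haar index $\mu$ so that the phases $e^{2\pi i 2^j x_0}$ all align at the midpoint $x_0$ of $I_{N,\mu}$. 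Centering $\psi$ at the origin, so that $x_0=2^{-N-1}$ is tiny and every phase equals $1+O(2^{-N/2})$, is the cleanest way to secure this.
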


\begin{proof}
We assume that $B^1_{\infty,q} \hookrightarrow B_{\infty,\infty}^{1,\dyad}$, and we shall prove that necessarily $q\leq1$. Let $\cZ_N$ be as in \eqref{eq:cZN} and consider the function
\[
f(x):=\sum_{j\in\fZ_N}2^{-j}e^{2\pi i 2^jx}\,\psi(x),
\]
which is defined as in \eqref{ft}, but with all the  $r_j(t)$ set equal to 1. This time we shall assume that $\psi\in C^\infty_c(-1/2,1/2)$ with $\psi=1$ in $(-1/4,1/4)$. As in \eqref{ftB} we have
\Be\label{eq:fNB1pqupper}
\|f\|_{B^1_{p,q}}
\lesssim N^{1/q}.
\Ee
On the other hand,  note that
\Be\label{fhN0}
\|f\|_{B^{1,{\rm dyad}}_{\infty,\infty}}\geq 2^{N}\,|\lan{f}{2^Nh_{N,0}}|.
\Ee
Arguing as in \eqref{taylor} we see that
\Beas
\lan{f}{2^Nh_{N,0}} & = & \sum_{j\in\fZ_N}\lan{\psi_j}{2^Nh_{N,0}}=\sum_{j\in\fZ_N}\big[2^{-j}2^{-N-2}\psi_j'(0) +O(2^{j-2N})\big]\\
& = & 2\pi i 2^{-N-2}\,{\rm Card\,}(\fZ_N)\, +O(2^{-3N/2}),
\Eeas
which inserted into \eqref{fhN0} gives
\Be\label{eq:fNBduadlower}
\|f\|_{B^{1,{\rm dyad}}_{\infty,\infty}}\gtrsim \,N.
\Ee
The lemma is proved after combining \eqref{eq:fNB1pqupper}, \eqref{eq:fNBduadlower} and letting $N\to \infty$.
\end{proof}

\section{ Necessary condition for embedding into $B^{1/p-1,\dyad}_{p,\infty} $ }
\label{sec:s=1/p-1-ex}

\begin{proposition} \label{prop:neq-1/p'} Let $0<p,q\leq\infty$.
Then
\[
B^{1/p-1}_{p,q}\hookrightarrow B^{1/p-1,\mathrm{dyad}}_{p,\infty} \,\implies \, q\le \min \{1,p\}.
\]
\end{proposition}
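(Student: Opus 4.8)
\textit{Proof strategy.} I argue by contraposition, exhibiting for each $q>\min\{1,p\}$ an element of $B^{1/p-1}_{p,q}$ which is not contained in $B^{1/p-1,\mathrm{dyad}}_{p,\infty}$ (equivalently, a sequence $f_N$ with $\|f_N\|_{B^{1/p-1}_{p,q}}\to0$ but $\|f_N\|_{B^{1/p-1,\mathrm{dyad}}_{p,\infty}}\gtrsim1$). Since $\min\{1,p\}=1$ for $p\ge1$ and $=p$ for $p<1$, there are two regimes to rule out: (a) $q>1$, for arbitrary $p$; and (b) $p<1$ with $p<q\le1$. Recall from \eqref{BsB} that $B^{1/p-1}_{p,q}\hookrightarrow\sB$ only for $q\le1$, and that by definition $B^{1/p-1,\mathrm{dyad}}_{p,\infty}\subset\sB$.

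\textit{The case $q>1$.} Here I produce $f\in B^{1/p-1}_{p,q}\setminus\sB$, which immediately gives $f\notin B^{1/p-1,\mathrm{dyad}}_{p,\infty}$. Fix $\phi\in C^\infty_c$ with $\widehat\phi$ supported in $(-1/2,1/2)$ and $\widehat\phi(0)=1$, and set $g_j(x)=\phi(2^jx)\,e^{2\pi i\cdot3\cdot2^jx}$, so that $\widehat{g_j}$ is supported in the $j$-th Littlewood--Paley annulus, $\|g_j\|_\infty\approx1$ and $\|g_j\|_p\approx2^{-j/p}$. Given a sequence $\{d_j\}\in\ell_q\setminus\ell_1$ (possible since $q>1$), put $f=\sum_{j\ge1}2^jd_j\,g_j$; this converges in $\cS'$ because $\langle g_j,\eta\rangle$ decays faster than any power of $2^{-j}$ while $2^jd_j$ grows at most polynomially. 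Since the $g_j$ live in disjoint frequency bands one computes $\|f\|_{B^{1/p-1}_{p,q}}\approx\big\|\{2^{j(1/p-1)}2^jd_j\|g_j\|_p\}\big\|_{\ell_q}=\|\{d_j\}\|_{\ell_q}<\infty$, whereas $\|f\|_{\sB}\approx\sum_j2^{-j}\cdot2^jd_j\|g_j\|_\infty\approx\sum_jd_j=\infty$. This settles regime (a) (in particular the full statement when $p\ge1$), and also shows $q\le1$ is necessary when $1/2\le p<1$.

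\textit{The case $p<1$, $p<q\le1$.} This is the main obstacle. If $p<1/2$ then $1/p-1>1$, and already the (smooth, compactly supported) piecewise linear ``ramp'' function $f$ lies in $B^{1/p-1}_{p,q}$ for every $q$ (corners give smoothness up to $1+1/p$), while on its linear pieces $|\langle f,h_{k,\mu}\rangle|\approx2^{-2k}$ for $\approx2^k$ values of $\mu$, so the scale-$k$ term of $\|f\|_{B^{1/p-1,\mathrm{dyad}}_{p,\infty}}$ is $\approx2^{k(1/p-2)}\to\infty$; hence the embedding fails for all $q$, which is more than enough. If $p=1/2$ then $1/p-1=1$ and the claim $q\le1/2=\min\{1,p\}$ is exactly Lemma~\ref{lem:ex1_B} applied with $p=1/2$. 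There remains $1/2<p<1$, $p<q\le1$, i.e.\ $0<s:=1/p-1<1$. Here I follow the scheme of \S\ref{sec-s=1-example}: take a Rademacher-randomized superposition $f_t=\sum_{j\in\fZ_N}a_j\,r_j(t)\,g_{N,j}$ of modulated bumps $g_{N,j}$ of frequency $2^j$, with amplitudes $a_j$ and widths chosen so that $\sup_t\|f_t\|_{B^{s}_{p,q}}\lesssim N^{-(1/p-1/q)}$ (small since $q>p$), then estimate the Haar coefficients $\langle f_t,h_{k,\mu}\rangle$ at a carefully chosen family of detection scales and apply Khintchine's inequality to get $\big(\int_0^1\|f_t\|_{B^{s,\mathrm{dyad}}_{p,\infty}}^p\,dt\big)^{1/p}\gtrsim1$; comparing the two bounds forces $q\le p$. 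The delicate point — and the step I expect to be hardest to push through cleanly — is the choice of scales/amplitudes and the resulting Haar-coefficient estimates: unlike the endpoint $s=1$, at a single detection scale the dyadic quasi-norm of such examples tends to be comparable to the Besov quasi-norm, so the $(s,p)$-specific geometry (as in Lemmas~\ref{L_41} and \ref{L_kyr}) has to be exploited to separate them, and the $\ell_p$ (rather than $\ell_2$) summation built into $b^{s}_{p,\infty}$ for $p<1$ must be used decisively.

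\textit{Conclusion.} Combining the three cases, whenever $q>\min\{1,p\}$ the embedding $B^{1/p-1}_{p,q}\hookrightarrow B^{1/p-1,\mathrm{dyad}}_{p,\infty}$ fails, which is the assertion of the Proposition. \qed
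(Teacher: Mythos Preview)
Your treatment of the case $q>1$ is correct (and in fact works uniformly in $p$): building $f\in B^{1/p-1}_{p,q}\setminus\sB$ via a lacunary series is a clean way to see the failure, different from the paper's duality argument (which observes that the embedding would force $h_{j,\mu}\in B^{1/p'}_{p',q'}$, impossible for $q'<\infty$). Your $p<1/2$ argument with a piecewise-linear function is also fine, and the reduction at $p=1/2$ to Lemma~\ref{lem:ex1_B} is legitimate.

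The genuine gap is the range $1/2<p<1$, $p<q\le 1$. You sketch a Rademacher-randomized construction and appeal to Khintchine, but Khintchine's inequality produces an $\ell_2$-norm of the coefficients, not an $\ell_p$-norm; the lower bound you would extract from $\big(\int_0^1\|f_t\|_{B^{s,\dyad}_{p,\infty}}^p\,dt\big)^{1/p}$ is of the form $(\sum_j|c_j|^2)^{1/2}$, which cannot separate $q=p$ from $q$ slightly above $p$ when $p<1$. You yourself flag this (``the $\ell_p$ summation \ldots\ must be used decisively''), but no mechanism is given, and the examples of \S\ref{sec-s=1-example} you invoke were designed to detect the threshold $q=2$, not $q=p$. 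So as written the argument does not close.

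The paper avoids this entirely with a single deterministic example that works for \emph{all} $p\le 1$. One takes an odd bump $\eta\in C^\infty_c(-\tfrac12,\tfrac12)$ with $\int_0^{1/2}\eta=1$ and many vanishing moments, sets $\eta_l=2^l\eta(2^l\cdot)$, and forms
\[
f_N(x)=\sum_{m\ge 1} a_m\,\eta_{N+m}\big(x-2^{-N+5}m\big).
\]
The summands live at \emph{different} frequency scales $2^{N+m}$ but are placed at \emph{well-separated} spatial locations $2^{-N+5}m$ (all at the same coarse scale $2^{-N}$), so there is no interaction: one gets $\|f_N\|_{B^{1/p-1}_{p,q}}\lesssim\|a\|_{\ell_q}$ directly from the Littlewood--Paley side, while the single Haar scale $j=N$ already picks out $\inn{f_N}{h_{N,2^5m}}=a_m$, giving $\|f_N\|_{B^{1/p-1,\dyad}_{p,\infty}}\gtrsim\|a\|_{\ell_p}$. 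Choosing $a\in\ell_q\setminus\ell_p$ finishes the proof. The key idea you are missing is this decoupling of frequency scale (indexed by $m$) from spatial location (all visible at one Haar level), which converts the $\ell_q$-vs-$\ell_p$ question into a direct sequence-space comparison without any randomization.
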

\begin{proof}
We first assume $1<p\leq\infty$. Suppose the embedding  $B^{s}_{p,q}\hookrightarrow B^{s,\dyad}_{p,\infty}$ holds, with $s=1/p-1$.
By definition of the latter space we have the inequality
\Be \label{Haar-as-functional} |\inn{f}{h_{j,\mu} }| \lc 2^{j(\frac 1p-1-s)} \|f\|_{B^{s,\dyad}_{p,\infty}},
\Ee
so the assumed embedding would then imply that $h_{j,\mu}$ defines a bounded linear functional on $B^{-1+1/p}_{p,q}$ (or in the subspace $\mathring{B}^{-1+1/p}_{p,q}$ defined by the closure of $\cS$ in the $B^{-1+1/p}_{p,q}$ norm, in case that $p$ or $q$ are $\infty$). By the duality identities of Besov spaces, see \cite[\S2.11]{Tr83},
this means that $h_{j,\mu} \in B^{1/p'}_{p',q'}$ which cannot be the case if $q'<\infty$, i.e. if $1<q\leq\infty$.


Let $p\le 1$. We use an example from \cite[\S10.1]{GaSeUl21}. We let $\eta_l(x)=2^l \eta(2^l x) $ where $\eta\in C^\infty_c(\bbR)$ is an odd function supported in $(-1/2,1/2)$ such that $\int_0^{1/2} \eta(s) ds=1$ and such that $\int_0^{1/2}\eta(s)s^n ds=0$
for $n=1,2,\dots, M$, for a sufficiently large integer $M$. Let 
\[f_N(x)=\sum_{m=1}^\infty a_m \eta_{N+m} (x-2^{-N+5}m).\] 
By \cite[(85)]{GaSeUl21} we have
\[\|f_N\|_{B_{p,q}^{1/p-1}} \lc\Big(\sum_{m=1}^\infty|a_m|^q\Big)^{1/q}.\]
On the other hand  a calculation shows
$\inn{f_N}{h_{N, 2^5m}} =a_m$ and thus 
\[\|f_N\|_{B^{1/p-1,\dyad}_{p,\infty} } \ge
\Big(\sum_\mu |\inn{f_N}{h_{N,\mu}}|^p\Big)^{1/p} \gc\Big(\sum_{m=1}^\infty |a_m|^p\Big)^{1/p}
\]
which forces $q\le p$.
\end{proof} 

\section{$B^{1/p}_{p,q}$ and $B^{1/p,\dyad}_{p,q}$:  The proof of Theorem \ref{thm:neg} }\label{sec:neg}

\subsection{\it Proof of part (i) of Theorem \ref{thm:neg} } \label{sec:failureofnormequivalence} 
Let \Be\label{defoffN}
f_N=\sum_{j=0}^{N-1}h_{j,0}, \quad\mbox{for $N=1,2,\ldots$}
\Ee
 Observe that 
\Be
\label{fN_dyad}
\|f_N\|_{B^{s, \mathrm{dyad}}_{p,\infty}}=\sup_{j\geq0}2^{j(s-\frac1p)}\big[\sum_{\mu\in\bbZ}|2^j\inn{f_N}{h_{j,\mu}}|^p\big]^{1/p}=1, \quad
\mbox{if $s=1/p$.}
\Ee
On the other hand, using the characterization with differences of order 2 for the $B^s_{p,q}$-norm (since $s\in(0,1]$), see \cite[Theorem 2.5.12]{Tr83}, we have
\Bea
\|f_N\|_{B^{1/p}_{p,\infty}} & \gtrsim & 2^{N/p}\,\big\|\Dt^2_{2^{-N}}(f_N)\big\|_p\nonumber \\
& \geq & 2^{N/p}\,\Big[\int_{-2^{1-N}}^{-2^{-N}}\Big|\sum_{0\leq j<N}\Dt^2_{2^{-N}}h_{j,0}(x)
\Big|^p\,dx\Big]^\frac1p.\label{Dt2}
\Eea
Now a simple computation shows that, if $\dt\in (0, 2^{-j-1}]$, then
\[
h_{j,0}(\cdot+\dt)-h_{j,0}(\cdot) = \bbone_{[-\dt,0)}-2\bbone_{[2^{-j-1}-\dt,2^{-j-1})}+\bbone_{[2^{-j}-\dt,2^{-j})}
\] 
 and therefore, 
 \[
 \Dt_\dt^2h_{j,0}(x)=1, \quad x\in [-2\dt,-\dt).
 \]
 Setting $\delta=2^{-N}$ and inserting this expression into  \eqref{Dt2}
 we get 
\Be
\|f_N\|_{B^{1/p}_{p,\infty}}\gtrsim
 2^{N/p}\,\big\|\Dt^2_{2^{-N}}f_N\big\|_p\geq N,
\label{fNN}
\Ee
and hence part (i) of Theorem \ref{thm:neg} follows.
\qed

\subsection{\it Proof of part (ii) of Theorem \ref{thm:neg} } 
\label{sec:properemb}Consider this time the function 
\[
f=\sum_{j=0}^\infty h_{j,0},
\] and as before $f_N=\sum_{j=0}^{N-1}h_{j,0}$, 
i.e. $f=\lim_{N\to \infty} f_N$ with convergence in $L_p$. Indeed 
\Be
\|f-f_N\|_p\leq\sum_{j\geq N}\big\|h_{j,0}\|_p=\sum_{j\geq N}2^{-j/p}\,\approx\,2^{-N/p}.
\label{ffN}
\Ee
As in \eqref{fN_dyad}, it is again easy to verify that
\Be\label{Bdyadnormone}
\|f\|_{B^{1/p, \text{dyad} }_{p,\infty}}=1.
\Ee
We claim that $f\notin B^{1/p}_{p,\infty}$. Indeed, for large $N$ we have 
\Bea
\|f\|_{B^{1/p}_{p,\infty}} & \gtrsim & 2^{N/p}\,\big\|\Dt^2_{2^{-N}}(f-f_N)+\Dt^2_{2^{-N}}f_N\big\|_p\nonumber \\
& \geq & 2^{N/p}\big( \big\|\Dt^2_{2^{-N}}f_N\big\|_p-4\|f-f_N\|_p\big).\label{aux_fN}
\Eea
Inserting the bounds \eqref{fNN} and \eqref{ffN} into \eqref{aux_fN} gives
\[
\|f\|_{B^{1/p}_{p,\infty}}\,\gtrsim \,2^{N/p}\,\big\|\Dt^2_{2^{-N}}(f_N)\big\|_p-O(1)\,\gtrsim \, N,
\]
which letting $N\nearrow\infty$ proves the assertion.
\qed



\section{Some pathologies of the spaces $B^{s,\dyad}_{p,q}$}
\label{S_Bdyad1}

 We include in this section some pathologies of the spaces $\Bdyad$ when $s>1$, or
$s=1$, $q<\infty$, or $s<1/p-1$, which were mentioned in the introduction.



\subsection{\it Failure of embedding into $B^{s,\dyad}_{p,q}$ for $s>1$ or $s=1$, $q<\infty$}\label{sec:constantfunctions}
The following proposition  is a 
simple result on the theme of Brezis' paper \cite{brezis-constant} on how to recognize constant  functions. 

\begin{proposition}\label{P_Bdyad1}
Let $0<p,q\le \infty$ and assume that either  (i) $s>1$,  or  (ii) $s=1$ and $q<\infty$.  

Then every $f\in C^1(\bbR) \cap B_{p,q}^{s,\dyad}(\bbR)$ is a constant function. 
\end{proposition}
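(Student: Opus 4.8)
The idea is to show that if $f\in C^1(\bbR)$ has $\|f\|_{B^{s,\dyad}_{p,q}}<\infty$ with $s\ge 1$ (and $q<\infty$ when $s=1$), then $f'$ must vanish identically. The Haar coefficients control $f'$ through the elementary identity $2^{j+1}\langle f,h_{j,\mu}\rangle = -\langle f',\cN_{2;j+1,2\mu}\rangle$ from Lemma \ref{L_Nder} (valid for locally absolutely continuous $f$, in particular $C^1$ functions). Since $\cN_{2;j+1,2\mu}$ is a nonnegative bump of integral $2^{-j-2}$ supported on $I_{j,\mu}$, we get that $2^j|\langle f,h_{j,\mu}\rangle|$ is, up to a constant, an average of $|f'|$ over $I_{j,\mu}$ against a fixed probability-type weight; more precisely, by continuity of $f'$, for any point $x_{j,\mu}\in I_{j,\mu}$ we have $2^j|\langle f,h_{j,\mu}\rangle| \gtrsim 2^{-j}\big(|f'(x_{j,\mu})| - \omega(f';2^{-j})\big)$ where $\omega(f';\cdot)$ is the modulus of continuity of $f'$ on a neighbourhood.

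First I would reduce to showing $f'(x_0)=0$ for an arbitrary fixed $x_0$. Fix $x_0$ and for each $j\ge 0$ let $\mu_j$ be the index with $x_0\in I_{j,\mu_j}$. The finiteness of $\|f\|_{B^{s,\dyad}_{p,q}}$ forces, for $j$ along any sequence, that the $j$-th block $2^{j(s-1/p)}\big(\sum_\mu |2^j\langle f,h_{j,\mu}\rangle|^p\big)^{1/p}$ is bounded (when $q=\infty$), and tends to $0$ along a subsequence when $q<\infty$ (since a convergent series has terms tending to zero — here using $s=1$ so the exponent $2^{j(s-1/p)}=2^{j(1-1/p)}$ matches exactly). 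In either case, isolating the single term $\mu=\mu_j$, $2^{j(s-1/p)}\,2^j|\langle f,h_{j,\mu_j}\rangle| \le C$ (resp.\ $\to 0$). Combined with the lower bound $2^j|\langle f,h_{j,\mu_j}\rangle|\gtrsim 2^{-j}(|f'(x_0)|-\omega(f';2^{-j}))$, this gives $2^{j(s-1)}\big(|f'(x_0)|-\omega(f';2^{-j})\big)\lesssim C$ (resp.\ $\to 0$).

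Now the two cases close differently. If $s>1$: letting $j\to\infty$, the factor $2^{j(s-1)}\to\infty$ while $\omega(f';2^{-j})\to 0$ by continuity of $f'$, so the boundedness of the left side forces $|f'(x_0)|=0$. If $s=1$ and $q<\infty$: along the subsequence on which the $j$-th block tends to $0$, we get $|f'(x_0)| - \omega(f';2^{-j})\to 0$, and since $\omega(f';2^{-j})\to 0$ as well, again $f'(x_0)=0$. Since $x_0$ was arbitrary, $f'\equiv 0$, hence $f$ is constant.

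\textbf{Main obstacle.} The delicate point is making the lower bound $2^j|\langle f,h_{j,\mu_j}\rangle|\gtrsim 2^{-j}|f'(x_0)|$ honest: $\langle f',\cN_{2;j+1,2\mu}\rangle$ integrates $f'$ against a weight of \emph{fixed sign} and total mass $\asymp 2^{-j}$, so $|\langle f',\cN_{2;j+1,2\mu}\rangle| \ge 2^{-j-2}\,|f'(x_0)| - 2^{-j-2}\sup_{I_{j,\mu}}|f'-f'(x_0)|$, and the subtracted term is controlled by $\omega(f';2^{-j})$; one must verify there is no cancellation, which is exactly why using the nonnegative hat function $\cN_2$ (rather than the sign-changing $h_{j,\mu}$ directly) is essential. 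A secondary point in the $s=1$, $q<\infty$ case is that one only gets $f'(x_0)=0$ from a subsequence of scales, but since the estimate $|f'(x_0)|\le \omega(f';2^{-j}) + o(1)$ holds along that subsequence and the right side vanishes, that suffices. Everything else is routine.
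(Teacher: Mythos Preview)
There is a genuine gap: you dropped a factor of $2^{-j/p}$ in the key inequality. Isolating the single term $\mu=\mu_j$ from the $j$-th block gives
\[
2^{j(s-1/p)}\cdot 2^j|\langle f,h_{j,\mu_j}\rangle|\;\le\;2^{j(s-1/p)}\Big(\sum_\mu|2^j\langle f,h_{j,\mu}\rangle|^p\Big)^{1/p},
\]
and combining with your (correct) lower bound $2^j|\langle f,h_{j,\mu_j}\rangle|\gtrsim 2^{-j}(|f'(x_0)|-\omega(f';2^{-j}))$ yields
\[
2^{j(s-1-\frac1p)}\big(|f'(x_0)|-\omega(f';2^{-j})\big)\;\lesssim\;C,
\]
not $2^{j(s-1)}(\cdots)\lesssim C$ as you wrote. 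With the correct exponent $s-1-1/p$, the argument only forces $f'(x_0)=0$ when $s>1+1/p$; for $1<s\le 1+1/p$ the left side stays bounded regardless of $f'(x_0)$. In the case $s=1$, $q<\infty$ the situation is worse: the inequality becomes $2^{-j/p}(\cdots)\to 0$, which is automatic and tells you nothing about $f'(x_0)$.

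The missing ingredient is that at scale $j$ you must use not one but $\approx 2^{j}$ Haar coefficients. The paper argues by contradiction: if $f'\not\equiv 0$, pick a dyadic interval $J$ on which $|f'|\ge c>0$; then for every $\mu$ with $I_{j,\mu}\subset J$ (and there are $\approx 2^{j}|J|$ such $\mu$) one has $|\langle f,h_{j,\mu}\rangle|\gtrsim c\,2^{-2j}$ via the Taylor expansion $\langle f,h_{j,\mu}\rangle = 2^{-2j-2}f'(b_{j,\mu})+o(2^{-2j})$ (equivalently your hat-function identity). Summing over these $\mu$ restores exactly the factor $2^{j/p}$ you lost, and the $j$-th block is then $\gtrsim 2^{j(s-1)}$, so $\|f\|_{B^{s,\dyad}_{p,q}}\gtrsim \big(\sum_j 2^{j(s-1)q}\big)^{1/q}=\infty$ under either hypothesis (i) or (ii). Your local lower bound on individual coefficients is fine; the fix is simply to exploit all of them on a fixed interval rather than a single one per scale.
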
 

\begin{Remark} Bo\v ckarev's results \cite[Theorem 3]{Bo69} indicate that less restrictive assumptions can be made but we will not pursue the problem of optimal hypotheses here.
\end{Remark}



\begin{proof} 
We argue as in the proof of Lemma \ref{lem:ex1_B}, now using Taylor's formula  in the form 
$$\sup_{b\in K}\sup_{|b-y|\le \eps}  | f(y)- f(b)  - f'(b) (y-b) | = o(\eps) $$
for any compact $K$. Take  $b\equiv b_{j,\mu}= 2^{-j} (\mu+\frac 12)$  to see that
\Be \label{eq:taylor}\inn{f}{h_{j,\mu} }= f'(2^{-j}(\mu+\tfrac 12)) 2^{-2j-2} +o(2^{-2j})\Ee
with uniformity in the remainder as $b_{j,\mu}$ ranges over a compact set. 

Now assume  that  $f\in C^1$ and that $f'$ is not identically zero.  Then there is a dyadic  interval $J=[\nu2^{-\ell}, (\nu+1) 2^{-\ell} )$ and $c>0$ such that for $j\ge j_0>\ell$  
\[ |\inn{f}{h_{j,\mu} }| \ge c2^{-2j}, \quad\mbox{if $I_{j,\mu}\subset J$}.\]
Hence 
\Beas
\|f\|_{\Bdyad} & \geq & \Big(\sum_{j=j_0}^\infty \Big[2^{j(s-\frac1p)}\Big(\sum_{\mu: I_{j,\mu}\subset J} |2^j\hdot{f,h_{j,\mu}}|^p\Big)^\frac1p
\Big]^q\Big)^\frac1q\\
& \ge  & \Big(\sum_{j=j_0}^\infty \Big[2^{j(s-{1/p})} 2^{(j-\ell)/p} c2^{-j}\Big]^q\Big)^\frac1q \ge c_\ell \Big(\sum_{j=j_0}^\infty 2^{j(s-1)q}\Big)^{1/q} 
\Eeas
with $c_\ell>0$. Hence $\|f\|_{B^{s,\dyad}_{p,q}}=\infty$ when $s>1$ or when $s=1$ and $q<\infty$. 

We conclude that for this range we have $f'\equiv 0$  for every $f\in C^1\cap B^{s,\dyad}_{p,q}$ and 
Proposition \ref{P_Bdyad1}  follows. 
\end{proof}

\subsection{\it The dyadic Besov-spaces for $s<1/p-1$: Failure of completeness}

\begin{proposition}\label{P_Bdyad2}
Let $0<p,q\leq\infty$. If $s<1/p-1$ then the spaces $\Bdyad(\SR)$ are not complete.
\end{proposition}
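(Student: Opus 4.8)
The plan is to exhibit an explicit Cauchy sequence in $B^{s,\dyad}_{p,q}(\SR)$ whose limit, in the sense of the ambient space $\sB$, fails to lie in $B^{s,\dyad}_{p,q}$. The natural candidate is built from the same kind of family used in Remark \ref{R_dt} and in \S\ref{sec:neg}, namely a lacunary sum of Haar functions concentrated at the origin. Concretely, for $N\geq 1$ set
\[
f_N=\sum_{j=0}^{N-1} 2^{-j}h_{j,0},
\]
so that the Haar coefficients are $2^j\langle f_N,h_{j,0}\rangle=2^{-j}2^j\cdot 2^{-j}=2^{-j}$, wait — more carefully, since $\langle h_{j,0},h_{j,0}\rangle=2^{-j}$, one has $2^j\langle f_N,h_{j,0}\rangle=2^j\cdot 2^{-j}\cdot 2^{-j}=2^{-j}$ for $0\le j<N$ and $0$ otherwise. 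The key computation is that
\[
\|f_N-f_M\|_{B^{s,\dyad}_{p,q}}=\Big(\sum_{j=M}^{N-1}2^{j(s-1/p)q}2^{-jq}\Big)^{1/q}
\]
(only the $\mu=0$ column contributes at each scale), which tends to $0$ as $M,N\to\infty$ precisely because $s<1/p-1$ forces $s-1/p-1<-2<0$, so the geometric series converges. Hence $\{f_N\}$ is Cauchy in $B^{s,\dyad}_{p,q}$. I would first verify carefully that these $f_N$ belong to $\sB$ and that $f_N\to f:=\sum_{j\ge 0}2^{-j}h_{j,0}$ in $\sB$; this follows since the partial sums converge in $L^p$ (the tail is $\sum_{j\ge N}2^{-j}\|h_{j,0}\|_p=\sum_{j\ge N}2^{-j}2^{-j/p}$, summable) and $L^p\cap\sB\hookrightarrow\sB$ with, say, $L^1_{\loc}$ control via \eqref{eq:trivialL1}, or directly through the embedding $L^p\hookrightarrow\sB$ for the relevant $p$; alternatively one uses that convergence in $L^p$ plus the uniform boundedness of $\lambda_I$ on $\sB+L^1_{\loc}$ (Remark after Proposition \ref{prop:test-funct}) pins down the Haar coefficients of $f$ as $\langle f,h_{j,\mu}\rangle=\lim_N\langle f_N,h_{j,\mu}\rangle$.

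The second, and decisive, step is to show $f\notin B^{s,\dyad}_{p,q}$, \emph{or} — if $f$ happens to lie in the space — to perturb the example so the limit genuinely escapes; but in fact for this particular $f$ the coefficient norm $\|f\|_{B^{s,\dyad}_{p,q}}=\big(\sum_{j\ge 0}2^{j(s-1/p-1)q}\big)^{1/q}$ is again \emph{finite} when $s<1/p-1$, so $f$ does lie in $B^{s,\dyad}_{p,q}$ and this naive choice does not demonstrate incompleteness. The genuine obstruction is therefore to find a sequence whose coefficients \emph{spread out in $\mu$} so that the $\ell^p$-in-$\mu$ sums blow up in the limit while staying summable along the Cauchy tail. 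The right construction uses, at scale $j$, a growing number $K_j$ of translated Haar functions with carefully chosen amplitudes $a_{j}$: put
\[
f_N=\sum_{j=0}^{N-1}a_j\sum_{\mu=0}^{K_j-1}h_{j,\mu},
\qquad
a_j 2^j\Big(\sum_{\mu=0}^{K_j-1}1\Big)^{1/p}\cdot 2^{j(s-1/p)}=:c_j,
\]
i.e. $\|a_j\sum_\mu h_{j,\mu}\|_{b^s_{p,q}\text{-column}}=c_j$ with $c_j$ chosen so that $\sum_j c_j^q<\infty$ but the partial sums, \emph{re-indexed after a change of variable / dilation that rescales the single scale $j$ back to a fixed scale}, have $b^s_{p,q}$-norm $\to\infty$. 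The cleanest way to force non-completeness is: since $s<1/p-1$, for any \emph{fixed} scale $j_0$ the quantity $2^{j_0(s-1/p)}\big(\sum_\mu|\beta_{j_0,\mu}|^p\big)^{1/p}$ can be made arbitrarily large using only coefficients of size $O(2^{-j_0 s})$ supported on many $\mu$'s — but the true point is a completeness failure, so I would instead run the standard argument: the map $f\mapsto \langle f,h_{0,\mu}\rangle$ is continuous $B^{s,\dyad}_{p,q}\to\C$, and one builds $f_N$ Cauchy whose $(0,\mu)$-coefficients converge pointwise to a sequence \emph{not} in $\ell^p$.

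Making this precise, I expect the cleanest realization to be: choose scalars $b_\mu\in\ell^q\setminus\ell^p$ when $p<q$ (or, when $p\geq q$, a similar gap exploited across scales), wait — since $s<1/p-1$ we are below the Haar characterization region and the natural completeness failure is detected at a \emph{single} scale only if $p<\infty$; I would therefore present the argument for $p<\infty$ using a single-scale example $f_N=\sum_{\mu=0}^{N}b_\mu h_{0,\mu}$ with $b=\{b_\mu\}\in\ell^q\setminus\ell^p$ — no wait, the $b^s_{p,q}$ norm of a single-scale sequence is $2^{0\cdot(s-1/p)}\|b\|_{\ell^p}$, which does not see $q$ at all, so this shows nothing. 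The correct mechanism genuinely must be cross-scale: take $f_N=\sum_{j=1}^{N}\gamma_j h_{j,\mu_j}$ with the $\mu_j$ all distinct (say far apart), $\gamma_j=2^{-js}$ so each scale contributes $2^{j(s-1/p)}\cdot 2^{-js}\cdot 2^{j\cdot 1/p}$ — hmm, $\|\gamma_j h_{j,\mu_j}\|$-column $=2^{j(s-1/p)}|\gamma_j|$, so with $\gamma_j=2^{-j(s-1/p)}j^{-2/q}$ the tail $\sum_j j^{-2}<\infty$ (Cauchy) but then in the \emph{limit} the full $b^s_{p,q}$ norm is also $(\sum_j j^{-2})^{1/q}<\infty$. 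So even this lies in the space. The real issue — and the main obstacle I anticipate — is that $B^{s,\dyad}_{p,q}$ with these coefficient norms is, abstractly, a weighted $\ell^q(\ell^p)$ space, which \emph{is} complete as a sequence space; the failure of completeness must come from the requirement $f\in\sB$ together with the reconstruction $f=\sum\beta_{j,\mu}h_{j,\mu}$ \emph{not converging in $\sB$} when $s<1/p-1$ — i.e. there are coefficient sequences $\beta\in b^s_{p,q}$ for which no element $f\in\sB$ has those Haar coefficients (cf. the non-uniqueness of $\delta$ in Remark \ref{R_dt}, which showed $\delta$ has two such expansions at smoothness $s<1/p-1$). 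Concretely I would take $\beta^{(N)}$ = the coefficients of the symmetric $N$-truncation $\sum_{|j|,\ldots}$ of the two competing expansions of $\delta$; the differences $f_N-f_M$ have $b^s_{p,q}$-norm $\to 0$ (geometric, since $s<1/p-1$), they converge in $\sB$ to $\delta-\delta=0$, yet the corresponding coefficient sequences converge in $b^s_{p,q}$ to a \emph{nonzero} sequence $\beta^\infty$ (the coefficients of "$\delta$ expanded both ways"), which is \emph{not} $\{2^j\langle g,h_{j,\mu}\rangle\}$ for any $g\in\sB$ — so the Cauchy sequence $\{f_N\}$, viewed in $B^{s,\dyad}_{p,q}$, cannot converge there. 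Thus the hard part is precisely to pin down a concrete coefficient sequence in $b^s_{p,q}$ realizing this obstruction and to verify rigorously both (a) the $f_N$ are Cauchy in $B^{s,\dyad}_{p,q}$ and (b) no limit in $B^{s,\dyad}_{p,q}$ exists, using Proposition \ref{prop:test-funct}(iii) to rule out candidates; the routine geometric-series estimates are then straightforward.
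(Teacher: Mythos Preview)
Your exploration eventually lands on the correct mechanism: since $b^s_{p,q}$ is complete as a sequence space, the only way $B^{s,\dyad}_{p,q}$ can fail to be complete is if some Cauchy sequence of Haar coefficients corresponds to no element of the ambient space $\sB$. But your concrete construction never crystallizes. Your first attempt, with coefficients $2^{-j}$, has a limit in $B^{s,\dyad}_{p,q}$ (as you note); the subsequent attempts with spreading in $\mu$ or single-scale examples cannot work either, for the same structural reason. And your final suggestion --- subtracting truncations of the \emph{two} expansions of $\delta$ from Remark \ref{R_dt} --- is internally inconsistent: you claim the $f_N$ converge to $0$ in $\sB$ while their Haar coefficients converge to a nonzero sequence, but the Haar-coefficient functionals are continuous on $\sB$ by Proposition \ref{prop:test-funct}, so these two claims are incompatible. (In fact those $f_N$ do \emph{not} converge in $\sB$; they converge to $0$ only in $\cS'$.)

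The paper's proof uses exactly the object you cited but did not exploit: the $N$-truncation of \emph{one} expansion of $\delta$,
\[
f_N=\bbone_{[0,1)}+\sum_{j=0}^{N-1}2^j h_{j,0}=2^N\bbone_{[0,2^{-N})}.
\]
Here $2^j\langle f_N,h_{j,0}\rangle=2^j$ for $0\le j<N$ (note the coefficients must \emph{grow}, not decay, which is why your first example failed), so
\[
\|f_M-f_N\|_{B^{s,\dyad}_{p,q}}=\Big(\sum_{N\le j<M}2^{j(s+1-\frac1p)q}\Big)^{1/q}\to 0
\]
precisely because $s<1/p-1$. The distributional limit is $\delta$, which lies outside $\sB$; since any limit in $B^{s,\dyad}_{p,q}$ would have to be an element of $\sB$ with the limiting Haar coefficients, and none exists (by Lemma \ref{L_sB} such an element would satisfy $\bbE_N g=f_N\to g$ in $B^{-1}_{\infty,\infty}$, forcing $g=\delta$), the sequence has no limit. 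The whole argument is three lines.
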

\begin{proof}
Consider the functions 
\Be
f_N=\bbone_{[0,1)}+\sum_{j=0}^{N-1}2^jh_{j,0}=2^N\bbone_{[0,2^{-N})}, \quad N=1,2,\ldots
\label{fNx}
\Ee
It is easily seen that, under the assumption $s<1/p-1$, then
\[
\|f_{M}-f_N\|_{\Bdyad}=\Big(\sum_{N\leq j<M} [2^{j(s+1-\frac1p)}]^q\Big)^\frac1q\to 0,
\]
when $M>N\to\infty$. So, $\{f_N\}_{N\geq1}$ is a Cauchy sequence in $\Bdyad$. 
However, the distributional limit of $f_N$ is the Dirac measure $\dt$, which does not belong to the
space $\sB$.
\end{proof}

\subsection{\it Failure of an embedding for $s=1/p-1$}
A small variation of the last example shows also part (i) of Theorem \ref{emb_1/p-1} and at the same time the optimality of the condition $s>1/p-1$ in part (ii) of Proposition \ref{prop1}  when $q<\infty$.

\begin{proposition}\label{P_ex3}
Let $0<p, u\leq\infty$. Then
\[
B^{1/p-1}_{p,u}\not\hookrightarrow B^{1/p-1, \rm{dyad}}_{p,q}, \quad 0<q<\infty.
\]
\end{proposition}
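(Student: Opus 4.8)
The plan is to reuse the explicit example built from the functions $f_N=\bbone_{[0,1)}+\sum_{j=0}^{N-1}2^jh_{j,0}=2^N\bbone_{[0,2^{-N})}$ (cf. Proposition \ref{P_Bdyad2}), rescaled so that they become bounded in $B^{1/p-1}_{p,u}$ but unbounded in $B^{1/p-1,\mathrm{dyad}}_{p,q}$ for finite $q$. Since these functions have a very simple structure --- each is a single scaled characteristic function of a dyadic interval --- both norms can be computed essentially by hand, and the discrepancy comes purely from the $\ell_q$ versus $\ell_\infty$ summation over the scales $j=0,\dots,N-1$ where the dyadic Haar coefficients are of constant size.

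First I would compute the dyadic norm. Since $\inn{f_N}{h_{j,0}}=2^{-j}$ for $0\le j\le N-1$ and $\inn{f_N}{h_{j,\mu}}=0$ otherwise, while $\inn{f_N}{h_{-1,0}}=1$, we get
\[
\|f_N\|_{B^{1/p-1,\mathrm{dyad}}_{p,q}}^q \;=\; 1+\sum_{j=0}^{N-1}\big(2^{j(1/p-1-1/p)}\,|2^j\cdot 2^{-j}|\big)^q
\;=\;1+\sum_{j=0}^{N-1}2^{-jq}\,\asymp\, N\quad\text{(with the sup convention if $q=\infty$, giving $O(1)$)}.
\]
Wait --- this shows the dyadic norm stays bounded, not unbounded, so instead I would work with the \emph{rescaled} family $g_N:=N^{-1/q}\sum_{j=0}^{N-1}2^jh_{j,0}$, or more transparently estimate $\|f_N\|_{B^{1/p-1}_{p,u}}$ directly and compare. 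The point is that for $f_N=2^N\bbone_{[0,2^{-N})}$ one has, by the standard description of $B^{s}_{p,u}$ via dilates of a fixed bump at smoothness $s=1/p-1<0$ and homogeneity considerations, $\|f_N\|_{B^{1/p-1}_{p,u}}\asymp 1$ uniformly in $N$ (a scaled characteristic function of an interval of length $2^{-N}$ and height $2^N$ has constant $B^{1/p-1}_{p,u}$-norm, matching the embedding $B^{1/p-1}_{p,\infty}\hookrightarrow\sB$ at the scaling-critical line). Meanwhile the honest dyadic norm of the \emph{same} function, after peeling off $\bbone_{[0,1)}$ using $f_N-\bbone_{[0,1)}=\sum_{j=0}^{N-1}2^jh_{j,0}$, has contributions $2^{j(1/p-1-1/p)}\cdot 1=2^{-j}$ at each scale $j$; for $q=\infty$ these are $O(1)$, but we want $q<\infty$. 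So the correct choice is to test with coefficient sequences that are constant (not geometrically decaying) across a block of scales.

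Concretely, I would instead take $F_N(x):=\sum_{j=N}^{2N-1} h_{j,0}(x)$ (a "lacunary tower" of single Haar functions at consecutive small scales). Then $\inn{F_N}{h_{j,0}}=2^{-j}$ for... no --- $h_{j,0}$ is already $L^\infty$-normalized, so $\inn{F_N}{h_{j,0}}=\|h_{j,0}\|_2^2=2^{-j}$; that again decays. The cleanest fix is to use the $L^\infty$-normalized-coefficient convention built into $\|\cdot\|_{B^{s,\mathrm{dyad}}_{p,q}}$, namely the weight $2^j\inn{f}{h_{j,\mu}}$, and pick $F_N=\sum_{j=N}^{2N-1}2^{j}h_{j,0}(\cdot)\cdot 2^{-j(1/p-1)}$ so that $2^{j}\inn{F_N}{h_{j,0}}\cdot 2^{j(1/p-1-1/p)}$ is of unit size for each of the $N$ scales, forcing $\|F_N\|_{B^{1/p-1,\mathrm{dyad}}_{p,q}}\asymp N^{1/q}$; simultaneously, these are sums of very few (one per scale) mutually "almost orthogonal" atoms adapted to shrinking intervals all containing $0$, and a direct local-means estimate (using the kernels $L_k$ from \S\ref{sect:MF_FS}, exactly as in Lemma \ref{L_41}) shows $\|F_N\|_{B^{1/p-1}_{p,u}}\lesssim 1$ uniformly in $N$, because at smoothness $s=1/p-1$ the overlapping single-bump contributions at distinct scales do not reinforce in the Besov norm (the scale-$j$ piece contributes $\asymp 2^{j(1/p-1)}\cdot 2^{-j/p}\cdot\|L_k(\cdot)\|\lesssim 2^{-|k-j|}$, summable in the $\ell_u(L_p)$ norm to a constant independent of $N$). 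Letting $N\to\infty$ then gives $\|F_N\|_{B^{1/p-1,\mathrm{dyad}}_{p,q}}/\|F_N\|_{B^{1/p-1}_{p,u}}\to\infty$, which rules out the embedding; this simultaneously shows the condition $s>1/p-1$ in Proposition \ref{prop1}(ii) cannot be dropped when $q<\infty$, since the displayed bound $\|\fc(F_N)\|_{b^{1/p-1}_{p,q}}\lesssim\|F_N\|_{B^{1/p-1}_{p,q}}$ would fail.

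The main obstacle is the uniform upper bound $\|F_N\|_{B^{1/p-1}_{p,u}}\lesssim 1$: one must carefully check that the $N$ single-bump pieces at scales $j=N,\dots,2N-1$, all localized near the origin, genuinely do not accumulate in the Besov norm. I expect this to follow from the $u$-triangle inequality applied to $F_N=\sum_j c_j\, 2^j h_{j,0}$ combined with the size/support estimates for $L_k(h_{j,0})$ from the proof of Lemma \ref{L_41} (i.e. $|L_k h_{j,0}|\lesssim 2^{-2|k-j|}\wedge 1$ with appropriately small support), yielding $\sum_k (2^{k(1/p-1)}\|L_kF_N\|_p)^u\lesssim \sum_k\big(\sum_j c_j 2^{-|k-j|}\,2^{k(1/p-1)}2^{-j/p}\cdots\big)^u$ which, because $c_j\le 1$ and $s-1/p=-1<0$ ensures geometric decay away from the diagonal $k=j$, sums to $O(1)$ regardless of how many scales $j$ are present. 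Everything else is a routine computation with the definitions of $B^{s,\mathrm{dyad}}_{p,q}$ and the Haar coefficients.
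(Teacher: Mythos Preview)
Your proposal goes off the rails at the very first computation. With $f_N=\bbone_{[0,1)}+\sum_{j'=0}^{N-1}2^{j'}h_{j',0}$ and Haar orthogonality, one has $\inn{f_N}{h_{j,0}}=2^{j}\|h_{j,0}\|_2^2=2^j\cdot 2^{-j}=1$ for $0\le j\le N-1$, not $2^{-j}$. Consequently $2^{j(s-1/p)}\cdot 2^j\inn{f_N}{h_{j,0}}=2^{-j}\cdot 2^j=1$ for each such $j$, and the original $f_N$ already gives $\|f_N\|_{B^{1/p-1,\mathrm{dyad}}_{p,q}}\asymp N^{1/q}$. There was no need for the rescalings or the tower $F_N$; the later constructions inherit the same arithmetic slip (your ``unit size'' term in $F_N$ actually equals $2^{j(1-1/p)}$).

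The genuine obstacle is on the other side, and your treatment of it is incorrect. The claim that $\|f_N\|_{B^{1/p-1}_{p,u}}\asymp 1$ by ``scaling'' fails for finite $u$: since $\int f_N=1$, for $1\le k\ll N$ one has $L_kf_N\approx\beta_k$ and hence $2^{k(1/p-1)}\|L_kf_N\|_p\approx\|\beta\|_p$, so the $\ell_u$-sum over $k\le N$ blows up like $N^{1/u}$. To get a uniformly bounded Besov norm you must build in cancellation at the coarse scales. The paper does this by passing to the odd extension $g_N=f_N-f_N(-\cdot)$, which has mean zero and satisfies $\|g_N\|_{B^{1/p-1}_{p,u}}\lesssim 1$ for all $0<p,u\le\infty$ (this is quoted from \cite{GaSeUl21}); since the Haar supports of $f_N$ and $f_N(-\cdot)$ are disjoint, one still has $\|g_N\|_{B^{1/p-1,\mathrm{dyad}}_{p,q}}\ge\|f_N\|_{B^{1/p-1,\mathrm{dyad}}_{p,q}}\gtrsim N^{1/q}$. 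Your sketch of the upper bound via Lemma~\ref{L_41} could be salvaged once a mean-zero example is chosen, but as written it is applied to functions without the required cancellation and the conclusion does not hold.
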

\begin{proof} 
Consider $f_N$ as in \eqref{fNx}, and let $g_N=f_N-f_N(-\cdot)$ be its odd extension.
Then, it was shown in \cite[Proposition 52]{GaSeUl21} (Proposition 13.3 in arXiv:1901:09117)
that
\[
\|g_N\|_{B^{1/p-1}_{p,u}}\lesssim 1, 
\]
for all $0<p,u\leq \infty$. However, it is easily seen that
\[
\|g_N\|_{B^{1/p-1, \rm{dyad}}_{p,q}}\geq \|f_N\|_{B^{1/p-1, \rm{dyad}}_{p,q}}\geq N^{1/q}.
\qedhere
\]
\end{proof}
\begin{remark}
\label{R_ex3}
Note that the above proof also shows that $L_1$ is not continuously embedded into $\sB=B^{-1}_{\infty,1}$. Indeed, the functions $f_N=2^N\bbone_{[0,2^{-N})}$ in \eqref{fNx} satisfy $\|f_N\|_1=1$ and $\|f_N\|_{\sB}\gtrsim N$.
\end{remark}


\bibliographystyle{abbrv}

\begin{thebibliography}{10}

\bibitem{Bo69}
S.~V. Bo\v{c}karev.
\newblock The coefficients of {F}ourier series in {H}aar's system.
\newblock {\em Mat. Sb. (N.S.)}, 80 (122):97--116, 1969.

\bibitem{Bo80}
S.~V. Bo\v{c}karev.
\newblock A method of averaging in the theory of orthogonal series and some
  problems in the theory of bases.
\newblock {\em Proc. Steklov Inst. Math.}, (3):vi+92, 1980.
\newblock Translated by H. H. McFaden, A translation of Trudy Mat. Inst.
  Steklov. {{\bf{1}}46}.

\bibitem{brezis-constant}
H.~Brezis.
\newblock How to recognize constant functions. {A} connection with {S}obolev
  spaces.
\newblock {\em Uspekhi Mat. Nauk}, 57(4(346)):59--74, 2002.

\bibitem{bsvy}
H.~Brezis, A.~Seeger, J.~Van~Schaftingen, and P.-L. Yung.
\newblock Family of functions representing {S}obolev norms.
\newblock {\em Analysis and PDE, {\rm to appear. arxiv 2109.02903}}.

\bibitem{bvy}
H.~Brezis, J.~Van~Schaftingen, and P.-L. Yung.
\newblock A surprising formula for {S}obolev norms.
\newblock {\em Proc. Natl. Acad. Sci. USA}, 118(8):Paper No. e2025254118, 6,
  2021.

\bibitem{Chui92}
C.~K. Chui.
\newblock {\em An introduction to wavelets}, volume~1 of {\em Wavelet Analysis
  and its Applications}.
\newblock Academic Press, Inc., Boston, MA, 1992.

\bibitem{ChuiWang92}
C.~K. Chui and J.-Z. Wang.
\newblock On compactly supported spline wavelets and a duality principle.
\newblock {\em Trans. Amer. Math. Soc.}, 330(2):903--915, 1992.

\bibitem{DeUl20}
N.~Derevianko and T.~Ullrich.
\newblock A higher order {F}aber spline basis for sampling discretization of
  functions.
\newblock {\em J. Approx. Theory}, 257:105449, 38, 2020.

\bibitem{dVL}
R.~DeVore and G.~Lorentz.
\newblock {\em Constructive Approximation}.
\newblock Springer Verlag, Berlin, 1993.

\bibitem{FrJa90}
M.~Frazier and B.~Jawerth.
\newblock A discrete transform and decompositions of distribution spaces.
\newblock {\em J. Funct. Anal.}, 93(1):34--170, 1990.

\bibitem{GaSeUl17_2}
G.~Garrig\'{o}s, A.~Seeger, and T.~Ullrich.
\newblock On uniform boundedness of dyadic averaging operators in spaces of
  {H}ardy-{S}obolev type.
\newblock {\em Anal. Math.}, 43(2):267--278, 2017.

\bibitem{GaSeUl18}
G.~Garrig\'{o}s, A.~Seeger, and T.~Ullrich.
\newblock The {H}aar system as a {S}chauder basis in spaces of
  {H}ardy-{S}obolev type.
\newblock {\em J. Fourier Anal. Appl.}, 24(5):1319--1339, 2018.

\bibitem{GaSeUl21_2}
G.~Garrig\'{o}s, A.~Seeger, and T.~Ullrich.
\newblock The {H}aar system in {T}riebel-{L}izorkin spaces: endpoint results.
\newblock {\em J. Geom. Anal.}, 31(9):9045--9089, 2021.

\bibitem{GaSeUl21}
G.~Garrig\'{o}s, A.~Seeger, and T.~Ullrich.
\newblock Basis properties of the {H}aar system in limiting {B}esov spaces.
\newblock In {\em Geometric aspects of harmonic analysis}, volume~45 of {\em
  Springer INdAM Ser.}, pages 361--424. Springer, Cham, [2021] \copyright 2021.
\newblock ArXiv version in arXiv:1901.09117.

\bibitem{Go64}
B.~I. Golubov.
\newblock On {F}ourier series of continuous functions with respect to a {H}aar
  system.
\newblock {\em Izv. Akad. Nauk SSSR Ser. Mat.}, 28:1271--1296, 1964.

\bibitem{Gr91}
K.~Gr\"{o}chenig.
\newblock Describing functions: atomic decompositions versus frames.
\newblock {\em Monatsh. Math.}, 112(1):1--42, 1991.

\bibitem{JaKr21}
S.~Jaffard and H.~Krim.
\newblock Regularity properties of {H}aar frames, Comptes Rendus Math 359
  (9),1107-1117, 2021.

\bibitem{Ky03}
G.~Kyriazis.
\newblock Decomposition systems for function spaces.
\newblock {\em Studia Math.}, 157(2):133--169, 2003.

\bibitem{McL69}
J.~R. McLaughlin.
\newblock Haar series.
\newblock {\em Trans. Amer. Math. Soc.}, 137:153--176, 1969.

\bibitem{NoSe97}
I.~Novikov and E.~Semenov.
\newblock {\em Haar series and linear operators}, volume 367 of {\em
  Mathematics and its Applications}.
\newblock Kluwer Academic Publishers Group, Dordrecht, 1997.

\bibitem{Pe75}
J.~Peetre.
\newblock On spaces of {T}riebel-{L}izorkin type.
\newblock {\em Ark. Mat.}, 13:123--130, 1975.

\bibitem{Sche74}
K.~Scherer.
\newblock On compactly supported spline wavelets and a duality principle.
\newblock {\em SIAM J. Numer. Anal. 11, 283–304 (1974)}, 11(2):283--304,
  1974.

\bibitem{SeeUl17}
A.~Seeger and T.~Ullrich.
\newblock Haar projection numbers and failure of unconditional convergence in
  {S}obolev spaces.
\newblock {\em Math. Z.}, 285(1-2):91--119, 2017.

\bibitem{SeeUl17_2}
A.~Seeger and T.~Ullrich.
\newblock Lower bounds for {H}aar projections: deterministic examples.
\newblock {\em Constr. Approx.}, 46(2):227--242, 2017.

\bibitem{Srivastava20}
R.~Srivastava.
\newblock Orthogonal systems of spline wavelets as unconditional bases in
  {S}obolev spaces, Preprint 2020.
\newblock To appear in Math. Nachr. arxiv:2002:09980.

\bibitem{stein-diff}
E.~M. Stein.
\newblock {\em Singular integrals and differentiability properties of
  functions}.
\newblock Princeton Mathematical Series, No. 30. Princeton University Press,
  Princeton, N.J., 1970.

\bibitem{Tr83}
H.~Triebel.
\newblock {\em Theory of function spaces}, volume~38 of {\em Mathematik und
  ihre Anwendungen in Physik und Technik [Mathematics and its Applications in
  Physics and Technology]}.
\newblock Akademische Verlagsgesellschaft Geest \& Portig K.-G., Leipzig, 1983.

\bibitem{Tr92}
H.~Triebel.
\newblock {\em Theory of function spaces. {II}}, volume~84 of {\em Monographs
  in Mathematics}.
\newblock Birkh\"{a}user Verlag, Basel, 1992.

\bibitem{Tr06}
H.~Triebel.
\newblock {\em Theory of function spaces. {III}}, volume 100 of {\em Monographs
  in Mathematics}.
\newblock Birkh\"{a}user Verlag, Basel, 2006.

\bibitem{Tr10}
H.~Triebel.
\newblock {\em Bases in function spaces, sampling, discrepancy, numerical
  integration}, volume~11 of {\em EMS Tracts in Mathematics}.
\newblock European Mathematical Society (EMS), Z\"{u}rich, 2010.

\end{thebibliography}

\end{document}